\newcommand{\OO}{\mathcal{O}}
\newcommand{\QQ}{\mathcal{Q}}
\newcommand{\X}{\mathcal{X}}
\newcommand{\Pic}{\mathrm{Pic}}
\newtheorem{theorem}{Théorème}[subsubsection]
\newtheorem{prop}[theorem]{Proposition}
\newtheorem{rem}[theorem]{Remarque}
\newtheorem{lemma}[theorem]{Lemme}
\newtheorem{corollary}[theorem]{Corollaire}
\newtheorem{exam}[theorem]{Exemple}
\newtheorem{definition}[theorem]{Définition}
\newtheorem{notation}[theorem]{Notation}
\newtheorem{theoreme}{Théorème}[subsection]
\newtheorem{defn}[theoreme]{Définition}
\newtheorem{quest}[theoreme]{Question}
\newtheorem{proposition}[theoreme]{Proposition}
\newtheorem{theoreme1}{Théorème}[section]
\newtheorem{defn1}[theoreme1]{Définition}
\newtheorem{quest1}[theoreme1]{Question}
\newtheorem{proposition1}[theoreme1]{Proposition}
\newtheorem{lemme1}[theoreme1]{Lemme}
\newtheorem{remarque1}[theoreme1]{Remarque}
\newtheorem{corollaire}[theoreme]{Corollaire}
\newtheorem{remarques}[theoreme]{Remarque}
\newtheorem{lemme}[theoreme]{Lemme}
\newenvironment{remark}{\begin{rem} \upshape}{\end{rem}}
\newenvironment{example}{\begin{exam}\upshape}{\end{exam}}
\newenvironment{remarque}{\begin{remarques}\upshape}{\end{remarques}}
\newenvironment{theo}{\vspace{2mm}\noindent \textbf{Théorème.~}\itshape}{\hfill\vspace{2mm}}
\begin{document}

\title{Diviseur thêta et formes différentielles}
\author{Jilong Tong}
\email{jilong.tong@uni-due.de}
\address{Universität Duisburg-Essen, Fachbereich Mathematik, Campus Essen 45117 Essen, Germany}

\classification{14H60 (primary), 14H30, 14K30 (secondary).}
\keywords{fibré des formes différentielles localement exactes,
diviseur thêta, groupe fondamental}

\begin{abstract}This article concerns the geometry of algebraic
curves in characteristic $p>0$. We study the geometric and
arithmetic properties of the theta divisor $\Theta$ associated to
the vector bundle of locally exact differential forms of a curve.
Among other things, we prove that, for a generic curve of genus
$\geq 2$, this theta divisor $\Theta$ is always geometrically
normal. We give also some results in the case where either $p$ or
the genus of the curve is small. In the last part, we apply our
results on $\Theta$ to the study of the variation of fundamental
group of algebraic curves. In particular, we refine a recent
result of Tamagawa on the specialization homomorphism between
fundamental groups at least when the special fiber is
supersingular.
\end{abstract}

\maketitle


\section*{Introduction}
\addcontentsline{toc}{section}{Introduction}
\markboth{Introduction}{Introduction} Le but de ce travail est
l'étude géométrique du diviseur thêta associé au faisceau des
formes différentielles localement exactes sur une courbe, en
caractéristique $p>0$. Rappelons brièvement la situation. Soient
$k$ un corps algébriquement clos de caractéristique $p>0$, $X$ une
courbe projective lisse connexe de genre $g\geq 1$ sur $k$. Notons
$X_1$ l'image réciproque de $X$ par le Frobenius absolu de $k$,
$J_1$ sa jacobienne. Soit $F:X\rightarrow X_1$ le Frobenius
relatif. Définissons le faisceau $B$ par la suite exacte suivante:
$$
0\rightarrow \OO_{X_1}\rightarrow F_{\ast}\OO_{X}\rightarrow
B\rightarrow 0.
$$
Alors, $B$ est un fibré vectoriel sur $X_1$ de rang $p-1$ et de
pente $g-1$. Le point de départ de cet article est le théorème
suivant dû à Raynaud (\cite{R1}): pour un faisceau inversible $L$
de degré $0$ assez général, $H^{0}(X_1, B\otimes L)=0$. Par
conséquent, le fibré $B$ possède un diviseur thêta
$\Theta=\Theta_B$, défini comme déterminant du complexe
$\mathrm{R}f_{\ast}(B\otimes \mathcal{P})$, où $f:X_1\times
J_1\rightarrow J_1$ est le morphisme de projection, et
$\mathcal{P}$ est un faisceau de Poincaré sur $X_1\times J_1$.
C'est un diviseur effectif sur la jacobienne $J_1$ de $X_1$ dont
le support est l'ensemble des $L\in J_1$ tels que $H^{0}(X_1,
B\otimes L)\neq 0$. Le diviseur $\Theta$ est canoniquement attaché
à la courbe $X$ et algébriquement équivalent à
$(p-1)\Theta_{\mathrm{class}}$, où $\Theta_{\mathrm{class}}$
désigne la polarisation principale naturelle de la jacobienne
$J_1$.

Pour $p=2$, $B$ est un faisceau inversible, racine carrée
canonique du faisceau des formes différentielles sur $X_1$, et
$\Theta$ est un diviseur thêta classique; en particulier il est
normal irréductible et ses singularités ont été intensivement
étudiées.

Par contre, pour $p$ impair, on sait très peu de choses sur
$\Theta$, mise à part son existence, connue depuis plus de $25$
ans. La littérature sur les diviseurs thêta des fibrés est récente
et concerne surtout le rang $2$. Pour $p>0$, nous prouvons que
$\Theta$ est normal lorsque $X$ est une courbe assez générale.
Lorsque $p=3$ et $g\geq 2$, nous montrons que $\Theta$ est réduit,
et qu'il est intègre si $g=2$ et $p=3$. Nous donnons aussi des
exemples où $\Theta$ n'est pas normal. Pour $p>2$, nous ignorons
si $\Theta$ est toujours irréductible.

A côté de ses propriétés géométriques, le diviseur thêta présente
un intérêt arithmétique: comme l'a observé Raynaud, les points de
torsion d'ordre $n$ premier à $p$, contenus dans $\Theta$
contrôlent les revêtements étales cycliques $Y\rightarrow X$ de
degré $n$, tels que la partie nouvelle de la jacobienne de $Y$ ne
soit pas ordinaire. Du coup, le diviseur $\Theta$ a été utilisé
pour étudier la variation du groupe fondamental en caractéristique
$p>0$, en fonction de la variation de $X$ (\cite{Pop-Saidi},
\cite{R2}, \cite{Tamagawa}). Dans ce problème, on rencontre de
sérieuses difficultés, s'il existe des composantes irréductibles
de $\Theta$ qui sont des translatés de sous-variétés  abéliennes.
C'est la raison pour laquelle nous nous intéressons à cette
question, en particulier au chapitre $4$. Nous pouvons alors
améliorer les résultats  de Raynaud et Tamagawa, du moins si la
fibre spéciale est supersingulière.

Analysons brièvement le contenu des divers chapitres.

Le chapitre $1$ commence par la définition du diviseur $\Theta$.
Soit $X\rightarrow S$ une courbe lisse. Lorsque elle possède une
section, on dispose d'un faisceau de Poincaré $\mathcal{P}$ sur
$X_1\times_S J_1$ (où $J_1$ est la jacobienne de $X_1/S$). On
considère $\mathrm{R}f_{\ast}(B\otimes \mathcal{P})$, comme un
objet de $\mathrm{D}^{b}_{c}(J_1)$ la catégorie dérivée des
$\OO_{J_1}$-modules à cohomologie cohérente et à degrés bornées.
D'après \cite{R1}, ce complexe est concentré en degré $1$, par
suite, $\QQ:=\mathrm{R}^{1}f_{\ast}(B\otimes \mathcal{P})$ est un
$\OO_{J_1}$-module de dimension projective $1$. Le diviseur
$\Theta$ est alors défini comme le déterminant de $\QQ$, et sa
construction ne dépend pas du choix de faisceau de Poincaré
$\mathcal{P}$. Par conséquent, par descente étale, on peut
attacher canoniquement le diviseur $\Theta$ à n'importe quelle
$S$-courbe lisse. Afin d'employer les techniques de
dégénérescence, on présente aussi une définition de $\Theta$ pour
une $S$-courbe semi-stable en suivant une stratégie analogue.

Ensuite, on rassemble les premières propriétés de $\Theta$ à
partir de sa définition. Pour une $S$-courbe lisse, le fibré $B$
est équipé d'un accouplement alterné non-dégénéré à valeurs dans
$\Omega^{1}_{X_1/S}$ le faisceau canonique de $X_1/S$. Il en
résulte que $B$ est auto-dual pour la dualité de Serre. Le
diviseur $\Theta$ est donc symétrique, et même totalement
symétrique au-sens de Mumford \cite{Eq} pour $p$ impair. La
transformation de Fourier-Mukai intervient pour établir un lien
entre $B$ et le faisceau $\QQ$, qui sera utile pour montrer
certaines propriétés du diviseur $\Theta$ pour la courbe
générique. La dualité de Serre-Grothendieck nous permet de montrer
que les faisceaux $\QQ$ et $(-1)^{\ast}\QQ$ sont en dualité à
valeurs dans le faisceau canonique de $\Theta$.

Une particularité du diviseur thêta, liée au noyau du
Verschiebung, est la {\textquotedblleft propriété de
Dirac\textquotedblright}. Cette propriété sera utilisée très
souvent dans l'étude du diviseur $\Theta$. Comme dans le cas
classique, dès que $g\geq 4$, le faisceau $\QQ$ n'est jamais
inversible, en particulier, le diviseur $\Theta$ a toujours des
singularités lorsque la courbe est de genre $g\geq 4$. Puis, nous
revisitons la démonstration de Joshi \cite{Joshi} du fait que le
faisceau $B$ est stable lorsque $g\geq 2$, en en donnant une
preuve plus directe, qui fournit aussi un contrôle effectif du
degré des sous-fibrés de $B$. A la fin de ce chapitre, on
rassemble quelques questions qui se posent naturellement à propos
de l'étude géométrique et arithmétique du diviseur $\Theta$, et
qui feront aussi l'objet des chapitres suivants.

Le chapitre 2 est consacré à l'étude différentielle de $\Theta$.
On rappelle d'abord le travail de Laszlo (\cite{Laszlo}), sur la
multiplicité du diviseur thêta universel. Puis, on applique ce
résultat à l'étude de la multiplicité de $\Theta$, en particulier
aux points d'ordre $p$, et met en évidence le rôle des formes de
Cartier. Nous faisons également l'étude différentielle du schéma
de Hilbert $\mathcal{H}$ des faisceaux inversibles de degré $0$
plongés dans $B$. On examine notamment les points $x\in
\mathcal{H}$ où $\mathcal{H}$ est lisse de dimension $g-1$, ce
résultat sera utile dans l'étude de $\Theta$ en caractéristique
$p=3$.

Le but principal du chapitre 3 est de montrer que pour la courbe
générique de genre $\geq 2$ dans l'espace de modules des courbes,
le diviseur $\Theta=\Theta_{\mathrm{gen}}$ est géométriquement
intègre et normal. La preuve est un peu technique. Elle utilise:
(i) les courbes réductibles dégénérées, (ii) l'action de
monodromie, (iii) les déformations des points doubles ordinaires
(de codimension $1$), (iv) le fait que le groupe de Néron-Severi
de la jacobienne de la courbe générique est isomorphe à
$\mathbf{Z}$, et (v) une propriété de GAGA formel pour un schéma
qui n'est pas forcément propre (qui est montrée dans SGA2).
Examinons par exemple le cas du genre $2$. On dégénère la courbe
générique $X_{\eta}$ en une courbe $X_s$ semi-stable constituée de
$2$ courbes elliptiques (ordinaires) génériques se coupant
transversalement, telles que l'action de la monodromie sur les
points d'ordre $p$ soit maximale. Alors le diviseur $\Theta$
relatif est une courbe semi-stable, à fibre spéciale géométrique
$\Theta_s$ formée de courbes elliptiques telles que la monodromie
permute transitivement les points doubles ordinaires. Alors, ou
bien la fibre générique $\Theta_{\eta}$ est lisse, et l'on gagne,
ou bien sa normalisation est formée de courbes elliptiques. Ce
dernier cas est exclu car la jacobienne $J_{\eta}$ de la courbe
générique $X_{\eta}$ ne contient pas de courbes elliptiques
(\cite{Mori}). La preuve pour les cas $g\geq 3$ se fait par
récurrence sur le genre $g$, en utilisant une stratégie analogue
et les résultats de SGA2.

Au chapitre 4, nous étudions $\Theta$ lorsque $g$ est petit ou
lorsque $p=3$. Pour $p=3$, $B$ est un fibré vectoriel de rang $2$,
et l'on a une certaine prise sur le lieu singulier de $\Theta$.
Cette particularité nous permet de montrer que $\Theta$ est réduit
et ne contient pas de composantes irréductibles translatées d'une
sous-variété abélienne. Si on suppose de plus $g=2$, on montre que
$\Theta$ est intègre. Par contre, pour une courbe de genre $g=2$
en caractéristique $p\geq 5$, on ne sait pas si un tel énoncé
reste valable. Toutefois, supposons la courbe $X$ ordinaire (de
genre $2$), nous montrons qu'aucune composante irréductible de
$\Theta$ n'est le translaté d'une courbe elliptique.

Dans le chapitre 5, on donne des applications du diviseur $\Theta$
à l'étude du groupe fondamental d'une courbe en caractéristique
$p>0$. Le point de départ est le fait suivant: les points de
torsion, d'ordre premier à $p$, situés sur le diviseur thêta
déterminent un quotient métabélien du groupe fondamental $\pi_1$,
le $\pi^{\mathrm{new,ord}}_{1}$ qui rend compte des revêtements
cycliques finis étales $Y\rightarrow X$ de degré premier à $p$,
tels que la {\textquotedblleft partie nouvelle\textquotedblright}
de la jacobienne de $Y$ soit ordinaire. Supposons que
$\pi_{1}^{\mathrm{new,ord}}$ soit constant quand on passe de la
fibre générique géométrique $X_{\bar{\eta}}$ à la fibre spéciale
géométrique $X_{\bar{s}}$. Alors la courbe $X/S$ est-elle
constante, du moins lorsque $X_s$ est définissable sur un corps
fini? Une réponse positive permettait de préciser les résultats de
Tamagawa. En fait, faute de renseignements suffisants sur le
diviseur $\Theta$ et sur la saturation de la torsion, on rencontre
des difficultés très sérieuses. Dans cet article, on ne peut
donner une réponse positive à cette question que dans le cas où la
jacobienne de $X/S$ a une fibre spéciale supersingulière.


\section{Le fibré $B$ des formes différentielles localement exactes
et son diviseur thêta} \markboth{chapitre1}{chapitre1}

\subsection{Définition du diviseur thêta}

Sauf mention du contraire, $S$ désigne un schéma localement
noethérien de caractéristique $p>0$ (c'est-à-dire, $p\cdot
1_{\OO_S}=0$).

\subsubsection{Le cas relatif lisse.}\label{construction, cas lisse}

Soit $X/S$ une courbe relative propre lisse à fibres géométriques
connexes de genre $g\geq 0$. Définissons $X_1$ par le diagramme
cartésien suivant
$$
\xymatrix{X_1\ar[r]\ar[d]& X\ar[d] \\ S\ar[r]^{Fr_S}& S}
$$
où $Fr_S:S\rightarrow S$ désigne le Frobenius absolu. Soit
$F:X\rightarrow X_1$ le Frobenius relatif. On dispose de la suite
exacte courte suivante:
$$
0\rightarrow \mathcal{O}_{X_1}\rightarrow
F_{\ast}\mathcal{O}_{X}\rightarrow B\rightarrow 0,
$$
où le faisceau $B$ est, par définition, \emph{le faisceau des
formes différentielles localement exactes sur $X_1$}. C'est un
fibré vectoriel de rang $p-1$ sur $X_1$.

\begin{remark}
Si $S=\mathrm{Spec}(k)$, et si $X$ est la droite projective sur
$S$. Alors $B \simeq \OO_{X_1}(-1)^{\oplus^{p-1}}$.
\end{remark}

Supposons désormais dans ce chapitre que $g\geq 1$. Notons $J$
(resp. $J_1$) la jacobienne de $X/S$ (resp. $X_1/S$). Alors $J_1$
est l'image réciproque de $J/S$ par le Frobenius absolu
$F_S:S\rightarrow S$. Formons le carré cartésien suivant
$$
\xymatrix{X_1\times_S J_1\ar[r]^<<<<{pr_{X_1}}\ar[d]^{pr_{J_1}}& X_1\ar[d] \\
J_1\ar[r] & S}.
$$

Supposons d'abord que $X/S$ admette une section $\varepsilon\in
X(S)$, d'où une section $\varepsilon_1\in X_1(S)$. Soit
$\mathcal{P}_{\varepsilon,1}$ le faisceau de Poincaré rigidifié
sur $X_1\times_S J_1$ associé à $\varepsilon$ (\cite{BLR1} 8.2/1
et 8.2/4). Par définition, $\mathcal{P}_{\varepsilon,1}$ est un
faisceau inversible sur $X_1\times_S J_1$ muni d'un isomorphisme
$u:\varepsilon_{1,J_1}^{\ast}(\mathcal{P}_{\varepsilon,1})\simeq
\OO_{J_1}$ (où $\varepsilon_{1,J_1}:J_1\rightarrow X_1\times_S
J_1$ est le changement défini par la section $\varepsilon_1\in
X_1(S)$). Considérons le complexe
$\mathrm{R}pr_{J_1,\ast}(pr^{\ast}_{X_1}B\otimes
\mathcal{P}_{\varepsilon,1})$ qui est un objet de
$\mathrm{D}^{b}_{c}(J_1)$, la catégorie dérivée des complexes de
$\OO_{J_1}$-modules à degrés bornés et à cohomologie cohérente.
Comme $B$ est un faisceau cohérent sur $X_1$, plat sur $S$ et à
fibres de caractéristique d'Euler-Poincaré nulle, d'après
\cite{AV} (chap. 2, $\S$ 5 page 46 Theorem)
$\mathrm{R}pr_{J_1,\ast}(pr^{\ast}_{X_1}B\otimes
\mathcal{P}_{\varepsilon,1})\in \mathrm{D}^{b}_{c}(J_1)$ peut être
localement sur $J_1$ représenté par un complexe de faisceaux en
degré $0$ et $1$:
$$
\xymatrix{\cdots\ar[r] & 0\ar[r]& \mathcal{M}^0\ar[r]^{u}&
\mathcal{M}^{1}\ar[r] & 0\ar[r] & \cdots}
$$
où $\mathcal{M}^{0},\mathcal{M}^1$ sont libres de type fini et de
même rang.

Rappelons le résultat suivant, qui est notre point de départ pour
la théorie du diviseur $\Theta$.

\begin{theorem}\label{existence}(\cite{R1} théorème 4.1.1.) Si $S=\mathrm{Spec}(k)$
avec $k$ un corps algébriquement clos de caractéristique $p>0$.
Alors $h^{0}(B\otimes L)=0$ pour $L$ général dans $J_1(k)$.
\end{theorem}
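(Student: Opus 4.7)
By Riemann--Roch on $X_1$,
$$\chi(X_1, B\otimes L) \;=\; \deg B + (p-1)\deg L + (p-1)(1-g) \;=\; 0$$
for every $L \in J_1(k)$. The function $L \mapsto h^{0}(X_1, B\otimes L)$ is upper semi-continuous, so its non-vanishing locus is closed in $J_1$; it therefore suffices to exhibit a single $L_0 \in J_1(k)$ with $H^{0}(X_1, B\otimes L_0) = 0$.

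I would reduce this to a statement about the Cartier operator. Tensoring the defining sequence of $B$ with $L$ and applying the projection formula $F_{\ast}\OO_X \otimes L = F_{\ast}F^{\ast}L$, the long exact cohomology sequence reads
$$0 \to H^{0}(L) \to H^{0}(F^{\ast}L) \to H^{0}(B\otimes L) \to H^{1}(L) \xrightarrow{\alpha_L} H^{1}(F^{\ast}L) \to H^{1}(B\otimes L) \to 0.$$
By Grothendieck--Serre duality the transpose of $\alpha_L$ is the twisted Cartier operator
$$C_L \colon H^{0}(X, \Omega^{1}_{X}\otimes F^{\ast}L^{-1}) \longrightarrow H^{0}(X_1, \Omega^{1}_{X_1}\otimes L^{-1}).$$
Outside the proper closed subset $J_1[p] \subset J_1$, both source and target of $C_L$ have dimension $g-1$, and the problem becomes: find one $L_0 \notin J_1[p]$ for which $C_{L_0}$ is an isomorphism.

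To produce such an $L_0$, the favourable case is when $X$ is ordinary: then, taking $L_0 = \OO_{X_1}$ and redoing the long exact sequence (now with $H^{0}(L_0)$ and $H^{0}(F^{\ast}L_0)$ both equal to $k$ and mapping isomorphically onto one another), one finds $H^{0}(X_1, B) = \ker\bigl(F_X \colon H^{1}(X_1, \OO_{X_1}) \to H^{1}(X, \OO_X)\bigr)$, where $F_X$ is the Hasse--Witt operator; this kernel is zero by ordinariness. For a general $X$, I would look for $L_0 \in J_1[\ell]$ with $\ell$ coprime to $p$: such an $L_0$ corresponds to an \'etale cyclic cover $\pi \colon Y \to X_1$ of degree $\ell$, and the Galois-isotypic decomposition of the Hasse--Witt operator of $Y$ identifies its non-trivial isotypic blocks with the maps $\alpha_{L_0^{\otimes i}}$ for $i = 1, \dots, \ell - 1$. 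If $Y$ can be chosen ordinary, then each such block is an isomorphism and one obtains a suitable $L_0$.

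The main obstacle is thus the existence, for every curve $X$ in characteristic $p>0$, of an \'etale cyclic cover $Y \to X_1$ of degree coprime to $p$ with $Y$ ordinary: this is a non-trivial fact specific to positive characteristic, and it is here that the geometric heart of the argument lies. The remaining ingredients---Riemann--Roch, semi-continuity, the long exact sequence, and Grothendieck--Serre duality---are essentially formal.
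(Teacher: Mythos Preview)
Your reduction to finding a single $L_0$ with $H^{0}(B\otimes L_0)=0$ via semi-continuity, and the identification of $H^{0}(B\otimes L)$ with the kernel of a twisted Hasse--Witt map, are correct. The gap is in the production of $L_0$. As written, the plan fails for every non-ordinary $X$: if $\pi:Y\to X_1$ is \'etale then $B_Y\simeq \pi_1^{\ast}B_{X}$, so $H^{0}(Y_1,B_Y)\simeq\bigoplus_i H^{0}(X_1,B_X\otimes L^{\otimes i})$, and the summand $i=0$ is $H^{0}(X_1,B_X)$; hence $Y$ ordinary forces $X$ ordinary. Even under the charitable reading ``the new part of $Y$ is ordinary'' (i.e.\ some nontrivial isotypic block of the Hasse--Witt of $Y$ is invertible), this is not an independently known fact: it is precisely the statement that some $p'$-torsion point lies \emph{outside} $\Theta$, which is (a strengthening of) what you are trying to prove. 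So the argument is circular at the crucial step.

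The paper does not reprove the theorem; it cites \cite{R1} and indicates (remark after Th\'eor\`eme~\ref{Dirac}) that the actual mechanism is the \emph{propri\'et\'e de Dirac}. Raynaud's proof restricts the determinant line defining $\Theta$ to the finite subgroup scheme $\ker(V)\subset J_1$ and shows that a local equation there is a Dirac function (nonzero in the socle at $0$). This is an infinitesimal argument on $\ker(V)$ that makes no appeal to ordinariness of any cover, and it works uniformly for all $X$; that is the missing idea in your approach.
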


 Il en résulte que la cohomologie de
$\mathrm{R}pr_{J_1,\ast}(pr^{\ast}_{X_1}B\otimes
\mathcal{P}_{\varepsilon,1})$ est concentrée en degré $1$. Soit
$\QQ=\mathrm{R}^{1}pr_{J_1,\ast}(pr^{\ast}_{X_1}B\otimes
\mathcal{P}_{\varepsilon,1})$, alors la formation de $\QQ$ commute
aux changements de base $S'\rightarrow S$ et $\QQ$ est plat sur
$S$. De plus, pour chaque $s\in S$, $\mathrm{det}(u)_s$ est
injectif et définit un diviseur de Cartier effectif relatif sur
$J_1/S$, que l'on note $\Theta_B$ ou simplement $\Theta$. Alors
$\Theta$ est le sous-schéma fermé de $J_1$ défini par le $0$-ième
idéal de Fitting de $\QQ$ (pour la définition d'idéal de Fitting,
voir \cite{Eisenbud} definition 20.4.).

 Le faisceau $\QQ$ dépend du choix de la section
$\varepsilon$. Plus précisément, si $\varepsilon'\in X(S)$ est une
autre section de $X/S$, $\varepsilon_1'\in X_1(S)$ la section de
$X_1$ associée. Notons $\mathcal{P}_{\varepsilon',1}$ le faisceau
de Poincaré rigidifié associé à $\varepsilon_1'$, et
$\QQ'=\mathrm{R}^{1}pr_{J_1,\ast}\left(pr_{X_1}^{\ast}B\otimes
\mathcal{P}_{\varepsilon',1}\right)$. Par définition du faisceau
de Poincaré rigidifié,
$\mathcal{P}_{\varepsilon',1}=\mathcal{P}_{\varepsilon,1}\otimes
pr_{J_1}^{\ast}\varepsilon_1'^{\ast}\mathcal{P}_{\varepsilon,1}^{-1}$.
On obtient donc
$$
\QQ'=\mathrm{R}^{1}pr_{J_1,\ast}\left(pr_{X_1}^{\ast}B\otimes
\mathcal{P}_{\varepsilon',1}\right)=\mathrm{R}^{1}pr_{J_1,\ast}\left(pr_{X_1}^{\ast}B\otimes
\mathcal{P}_{\varepsilon,1}\otimes
pr_{J_1}^{\ast}\varepsilon_1'^{\ast}\mathcal{P}_{\varepsilon,1}^{-1}\right)=\QQ\otimes
\varepsilon_1'^{\ast}\mathcal{P}_{\varepsilon,1}^{-1}.
$$
Donc $\QQ$ et $\QQ'$ diffèrent par tensorisation par un faisceau
inversible algébriquement équivalent à zéro de $J_1$.

 Par suite, $\Theta$ ne dépend pas du choix de la
section $\varepsilon\in X(S)$. Par descente étale, on en déduit
l'existence du diviseur $\Theta$ en général (dans le cas où on ne
suppose plus que $X/S$ admette une section). De plus, la formation
de $\Theta/S$ est compatible aux changements de base
$S'\rightarrow S$.

\subsubsection{Le cas relatif semi-stable.}

 Soit maintenant $X$ une courbe semi-stable sur $S$,
c'est-à-dire, une courbe relative propre plate sur $S$ dont les
fibres géométriques sont réduites connexes, avec pour seules
singularités des points doubles ordinaires. Comme dans le cas
lisse, notons $X_1$ l'image réciproque de $X$ par le Frobenius
absolu $F_S:S\rightarrow S$, et $F:X\rightarrow X_1$ le Frobenius
relatif.

Comme $X/S$ est semi-stable, le morphisme naturel
$\OO_{X_1}\rightarrow F_{\ast}(\OO_X)$ est universellement
injectif pour les changements de base $S'\rightarrow S$.
Définissons le faisceau cohérent $B$ par la suite exacte suivante:
$$
0\rightarrow \OO_{X_1}\rightarrow F_{\ast}(\OO_X)\rightarrow
B\rightarrow 0.
$$
Alors $B$ est un $\OO_{X_1}$-module cohérent, plat sur $\OO_S$
dont la formation est compatible aux changements de base
$S'\rightarrow S$.

 Notons $\mathrm{Pic}_{X/S}$ (resp.
$\mathrm{Pic}_{X_1/S}$) le foncteur de Picard de $X/S$ (resp. de
$X_1/S$), d'après M. Artin (\cite{BLR1} 8.3/2),
$\mathrm{Pic}_{X/S}$ (resp. $\mathrm{Pic}_{X_1/S}$) est
représentable par un espace algébrique en groupes lisse sur $S$.
Notons $J$ (resp. $J_1$) la composante neutre de
$\mathrm{Pic}_{X/S}$ (resp. de $\mathrm{Pic}_{X_1/S}$), d'après
Deligne (\cite{BLR1} 9.4/1), $J$ (resp. $J_1$) est représentable
par un schéma semi-abélien sur $S$.

  Ensuite, formons le diagramme cartésien suivant:
$$
\xymatrix{X_1\times_S J_1\ar[r]^<<<<{pr_{X_1}}\ar[d]^{pr_{J_1}}& X_1\ar[d] \\
J_1\ar[r] & S}.
$$
Procédons comme dans le paragraphe précédent ($\S$
\ref{construction, cas lisse}) pour associer à $B$ un diviseur
thêta $\Theta$. Lorsque $X/S$ admet une section $\varepsilon\in
X(S)$, notons $\varepsilon_1\in X_1(S)$ la section de $X_1/S$
associée à $\varepsilon$. Nous disposons d'un faisceau de Poincaré
rigidifié $\mathcal{P}_{\varepsilon,1}$ sur $X_1\times J_1$. On
peut donc considérer le complexe
$\mathrm{R}pr_{J_1,\ast}(pr_{X_1}^{\ast}\mathcal{B}\otimes
\mathcal{P}_{\varepsilon,1})\in \mathrm{D}^{b}_{c}(J_1)$. Là
encore, il y a un seul objet de cohomologie non nul en degré $1$
(\cite{R2} la remarque après la proposition 1.1.2)
$\QQ=\mathrm{R}^{1}pr_{J_1,\ast}(pr_{X_1}^{\ast}\mathcal{B}\otimes
\mathcal{P}_{\varepsilon,1})$. Le diviseur de Cartier $\Theta$ sur
$J_1$ est défini comme le $0$-ième idéal de Fitting du faisceau
$\QQ$. La définition de $\QQ$ dépend du choix de la section
$\varepsilon\in X(S)$: soient $\varepsilon'\in X(S)$ une autre
section, $\QQ'$ le faisceau analogue, alors $\QQ$ et $\QQ'$
diffèrent entre eux par un faisceau inversible sur $J_1$. Donc la
définition de $\Theta$ ne dépend pas du choix de $\varepsilon$.
Par suite, dans le cas général où nous ne supposons plus que $X/S$
admette une section, nous pouvons encore définir le diviseur thêta
$\Theta$, et sa formation est compatible aux changements de base
$S'\rightarrow S$.

\begin{remark}\label{theta pour les courbes stables} Dans cette remarque, $S=\mathrm{Spec}(k)$ avec $k$
un corps algébriquement clos de caractéristique $p>0$. Soient
$X/k$ une courbe semi-stable, $\pi: \widetilde{X}\rightarrow X$ sa
normalisée. Alors l'image réciproque
$\pi_1:\widetilde{X}_1\rightarrow X_1$ de $\pi$ par le Frobenius
absolu $Fr_k:\mathrm{Spec}(k)\rightarrow \mathrm{Spec}(k)$ est la
normalisée de $X_1$. Notons $\widetilde{J}_1$ la jacobienne de
$\widetilde{X}_1$, d'après \cite{BLR1} 9.2/8, $J_{1}$ est une
extension de $\widetilde{J}_1$ par un tore $T$:
$$
\xymatrix{0\ar[r]& T\ar[r]& J_1\ar[r]^{\nu}&
\widetilde{J}_{1}\ar[r] & 0}.
$$
avec $\nu:J_1\rightarrow \widetilde{J}_1=J_{\widetilde{X}_1/k}$ le
morphisme naturel induit par $\pi_1:\widetilde{X}_1\rightarrow
X_1$. Notons $\widetilde{B}$ le faisceau des formes
différentielles localement exactes sur $\widetilde{X}_1$. D'après
la fonctorialité du Frobenius relatif (\cite{SGA5}, Exposé XV,
$\S$ 1, proposition 1(a)), on a un diagramme commutatif
$$
\xymatrix{\widetilde{X}\ar[d]^{\pi}\ar[r]^{\widetilde{F}} &
\widetilde{X}_1\ar[d]^{\pi_1}\\ X\ar[r]^{F}& X_1}.
$$
Alors $B=\pi_{1,\ast}\widetilde{B}$ (\cite{R2} la remarque après
la proposition 1.1.2). Notons $\widetilde{\Theta}\subset
\widetilde{J}_1$ le diviseur thêta sur $\widetilde{J}_1$ associé à
$\widetilde{B}$. Alors $\Theta=\nu^{\ast}(\widetilde{\Theta})$
comme diviseur.
\end{remark}

\subsection{Les premières propriétés de $\Theta$}

Dans cette section, sauf mention du contraire, $S$ est un schéma
localement noethérien de caractéristique $p>0$, $X/S$ est une
courbe propre lisse à fibres géométriques connexes de genre $g\geq
1$.

\subsubsection{Auto-dualité.}

 Rappelons que l'on a la suite exacte suivante:
$$
\xymatrix{0\ar[r]& B\ar[r] & F_{\ast}\Omega^{1}_{X/S}\ar[r]^{c}&
\Omega^{1}_{X_1/S}\ar[r]&  0},
$$
dans laquelle la flèche $c:F_{\ast}(\Omega^{1}_{X/S})\rightarrow
\Omega^{1}_{X_1/S}$ désigne l'opérateur de Cartier de $X/S$.
L'application $(f,g)\mapsto c(fdg)$ de $F_{\ast}(\OO_{X})\times
F_{\ast}(\OO_{X})\rightarrow \Omega_{X_1/S}^{1}$ définit, par
passage au quotient, une application bilinéaire alternée:
$$
(\cdot,\cdot):B\otimes_{\OO_{X_1}}B\rightarrow \Omega^{1}_{X_1/S}
$$

\begin{prop}[\cite{R1}, $\S$ 4.1] \label{autodual} Cet accouplement est non-dégénéré.
Donc $B$ est auto-dual sous la dualité de Serre.
\end{prop}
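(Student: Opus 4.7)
The plan is first to check that the formula $(f,g)\mapsto c(f\,dg)$ descends to a well-defined $\OO_{X_1}$-bilinear pairing on $B\otimes_{\OO_{X_1}}B$. Given local representatives $f_{0},g_{0}\in F_{\ast}\OO_{X}$ of $f,g\in B$, modifying $g_{0}$ by the image in $F_{\ast}\OO_X$ of a section of $\OO_{X_1}$ (locally a $p$-th power) leaves $dg_{0}$ unchanged; modifying $f_{0}$ instead produces an extra term $c(s\,dg_{0})=\bar s\cdot c(dg_{0})=0$, exploiting both the $p$-linearity of $c$ with respect to $F$ and its vanishing on exact forms. The alternating property then follows from the identity $(f,g)+(g,f)=c(d(fg))=0$.

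The real content is non-degeneracy. The pairing induces a morphism
$$\phi:B\longrightarrow \mathcal{H}om_{\OO_{X_1}}(B,\Omega^{1}_{X_1/S})$$
between locally free $\OO_{X_1}$-modules of the same rank $p-1$, so it suffices to show that $\phi$ is a local isomorphism, i.e., that the Gram matrix of $(\cdot,\cdot)$ in a local basis has unit determinant. I would choose a local coordinate $x$ on $X$, together with the local coordinate $y$ on $X_1$ satisfying $F^{\sharp}(y)=x^p$. Then $F_{\ast}\OO_X$ is $\OO_{X_1}$-free on $1,x,\ldots,x^{p-1}$, and the classes $\bar{x},\bar{x}^{2},\ldots,\bar{x}^{p-1}$ form a local basis of $B$. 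A direct computation gives $(x^i,x^j)=j\cdot c(x^{i+j-1}\,dx)$; together with the standard identities $c(x^k\,dx)=0$ for $k\not\equiv -1\pmod p$ and $c(x^{p-1}\,dx)=dy$, this implies that the Gram matrix (restricted to $1\leq i,j\leq p-1$) is anti-diagonal with anti-diagonal entries $(p-i)\,dy$ for $i=1,\ldots,p-1$. These are all units times the local generator $dy$ of $\Omega^{1}_{X_1/S}$, hence $\phi$ is an isomorphism. The Serre self-duality of $B$ follows at once, since on the curve $X_1/S$ the Serre dual of $B$ is by definition $B^{\vee}\otimes\Omega^{1}_{X_1/S}$.

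The only technical input required is the list of standard properties of the Cartier operator $c$ (vanishing on exact forms and $p$-linearity via Frobenius); once these are granted, no serious obstacle remains. The pointwise verification reduces the proof to the observation that the integers $1,2,\ldots,p-1$ are invertible modulo $p$, which is precisely what makes the anti-diagonal Gram matrix non-singular.
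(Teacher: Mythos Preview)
Your argument is correct. The paper itself does not supply a proof of this proposition; it simply cites Raynaud's original article \cite{R1}, \S 4.1, where the result first appears. Your local computation---choosing a coordinate $x$ with $F^{\sharp}(y)=x^{p}$, exhibiting $\bar x,\dots,\bar x^{\,p-1}$ as an $\OO_{X_1}$-basis of $B$, and observing that the Gram matrix is anti-diagonal with entries $p-1,p-2,\dots,1$ (units in characteristic $p$)---is precisely the standard verification, and is essentially what one finds in the cited reference.

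One cosmetic remark: when you write $c(s\,dg_{0})=\bar s\cdot c(dg_{0})$, the bar on $s$ is superfluous, since $s$ is already a section of $\OO_{X_1}$ and the Cartier operator is $\OO_{X_1}$-linear on $F_{\ast}\Omega^{1}_{X/S}$; but this does not affect the argument.
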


\subsubsection{Le cas où $p=2$.}\label{cas p=2}

Supposons dans ce numéro que $S=\mathrm{Spec}(k)$ avec $k$ un
corps algébriquement clos de caractéristique $p=2$. Comme $p=2$,
le faisceau $B$ des formes différentielles localement exactes est
inversible sur $X_1$. Par l'auto-dualité de $B$ (\ref{autodual}),
$B$ est une racine carrée canonique de $\Omega^{1}_{X_1/k}$
(c'est-à-dire, une thêta caractéristique canonique de $X_1/k$). Le
diviseur $\Theta$ est donc un diviseur thêta classique symétrique
de $J_1$. Le résultat suivant découle de la théorie classique des
courbes algébriques.

\begin{theorem}[\cite{Laszlo}]Gardons les notations ci-dessus. Alors (1)
le diviseur $\Theta$ est irréductible et normal; (2) pour tout
$x\in \Theta\subset J_1$, la multiplicité de $\Theta$ en $x$,
notée $\mathrm{mult}_{x}(\Theta)$, est égale à
$h^{0}(X_1,B\otimes_{\OO_{X_1}}L_x)$, où $L_x$ est le faisceau
inversible de degré $0$ correspondant à $x\in J_1$.
\end{theorem}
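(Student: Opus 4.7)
\bigskip\noindent\textbf{Plan of proof.} The strategy is to reduce both assertions to the classical theory of the Riemann theta divisor, using the identification of $B$ as a canonical theta characteristic on $X_1$. Since $B^{\otimes 2}\simeq\Omega^1_{X_1/k}$ (by the autoduality \ref{autodual}), tensoring with $B$ yields an isomorphism of torsors
$$
\tau_B:J_1\longrightarrow\mathrm{Pic}^{g-1}(X_1),\qquad L\longmapsto B\otimes L.
$$
Set-theoretically, $\tau_B$ carries the support of $\Theta$ onto the Brill-Noether locus $W_{g-1}=\{M\in\mathrm{Pic}^{g-1}(X_1):H^0(X_1,M)\neq 0\}$.

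The first and only slightly technical step is to upgrade this to an identification of effective Cartier divisors. By construction, $\Theta$ is cut out by the $0$-th Fitting ideal of $\QQ=\mathrm{R}^1pr_{J_1,\ast}(pr_{X_1}^{\ast}B\otimes\mathcal{P}_{\varepsilon,1})$, itself obtained locally as the determinant of a map $u:\mathcal{M}^0\to\mathcal{M}^1$ between free modules of the same rank. After applying $\tau_B$, this is exactly the determinant complex computing the classical theta divisor on $\mathrm{Pic}^{g-1}(X_1)$ from the universal degree $g-1$ line bundle; so $\tau_B^{\ast}\Theta_{\mathrm{class}}=\Theta$ as Cartier divisors on $J_1$.

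Assertion (2) is then the Riemann singularity theorem: for any $M\in W_{g-1}$ one has $\mathrm{mult}_M(W_{g-1})=h^0(X_1,M)$. Applied to $M=B\otimes L_x=\tau_B(L_x)$ this gives $\mathrm{mult}_x(\Theta)=h^0(X_1,B\otimes L_x)$, as desired.

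For assertion (1), irreducibility of $\Theta$ is classical: $W_{g-1}$ is the scheme-theoretic image of the Abel-Jacobi map $X_1^{(g-1)}\to\mathrm{Pic}^{g-1}(X_1)$, whose source is irreducible. Normality is checked via Serre's criterion $R_1+S_2$. Condition $S_2$ is automatic because $\Theta$ is an effective Cartier divisor in the smooth variety $J_1$, hence Cohen-Macaulay. For $R_1$, by (2) the singular locus of $\Theta$ corresponds to the Brill-Noether locus $W^1_{g-1}$ of line bundles with $h^0\geq 2$; by the theorem of Martens this has dimension at most $g-3$, i.e. codimension $\geq 2$ in $\Theta$ once $g\geq 2$, while the case $g=1$ is trivial (then $\Theta$ is a single reduced point). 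The main obstacle, modest as it is, is the scheme-theoretic matching of Fitting-ideal and classical theta in the second paragraph; once that is in place, the remainder is pure Brill-Noether theory.
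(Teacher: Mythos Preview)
Your argument is correct and is precisely the intended one: the paper does not give a proof here, it simply records that for $p=2$ the bundle $B$ is a theta characteristic and that the statement ``d\'ecoule de la th\'eorie classique des courbes alg\'ebriques'', with the citation to Laszlo covering the multiplicity assertion (which in rank one is just Riemann's singularity theorem). Your write-up supplies exactly those classical details --- the translation $\tau_B$ identifying $\Theta$ with $W_{g-1}$ as Cartier divisors via matching Fitting ideals, Riemann's theorem for (2), and irreducibility plus Serre's criterion with Martens' bound for (1).

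One small remark on precision: Martens' theorem is usually stated for $g\geq 3$; for $g=2$ you should note separately that $W^1_1=\emptyset$ (a degree-$1$ line bundle has at most one section), so $\Theta\simeq X_1$ is in fact smooth. With that adjustment the argument is complete.
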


\subsubsection{La classe d'équivalence linéaire de $\Theta$ (cas
$p\geq 3$).}

 Supposons $p\geq 3$ dans ce numéro. Pour $d\geq
1$ un entier, notons $X_{1}^{(d)}$ le produit symétrique de
$X_1/S$, comme $X_1/S$ est lisse, $X_{1}^{(d)}$ l'est aussi. Pour
$m$ un entier, notons $J_{1}^{[m]}$ le schéma qui classifie les
faisceaux inversibles de degré $m$ sur $X_1/S$.

 Considérons l'application
$$
X_{1}^{(g-1)}\rightarrow J_{1}^{[g-1]}
$$
définie par $(x_1,\cdots,x_{g-1})\mapsto
\OO_{X}(x_1+\cdots+x_{g-1})$. Son image schématique est un
diviseur $\Delta$ de $J^{[g-1]}$, réalisation canonique du
diviseur thêta classique $\Theta_{\mathrm{class}}$. Comme $p\geq
3$, localement sur $S$ pour la topologie étale, la courbe $X_1/S$
admet une thêta caractéristique, donc un diviseur thêta classique
symétrique.

\begin{prop}[\cite{R2} proposition 1.1.4]Si $p\geq 3$,
$\Theta$ est un diviseur symétrique de $J_1$. Si de plus, $X_1/S$
admet une thêta caractéristique $L$, alors $\Theta$ est
rationnellement équivalent au translaté de $(p-1)\Delta$ par
$L^{-1}$.
\end{prop}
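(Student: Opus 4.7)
The proof has two independent parts. For the symmetry, I apply relative Serre-Grothendieck duality to the smooth morphism $pr_{J_1}\colon X_1 \times_S J_1 \to J_1$ (of relative dimension $1$), applied to the sheaf $F = pr_{X_1}^*B \otimes \mathcal{P}_{\varepsilon,1}$. Proposition~\ref{autodual} gives the autoduality $B^{\vee} \otimes \Omega^{1}_{X_1/S} \simeq B$, and the Poincar\'e bundle satisfies $\mathcal{P}_{\varepsilon,1}^{-1} \simeq (\mathrm{id}_{X_1} \times [-1]_{J_1})^{\ast}\mathcal{P}_{\varepsilon,1}$. Combining Serre duality with flat base change and taking cohomology in degree $1$, one obtains $[-1]_{J_1}^{\ast}\QQ \simeq \mathcal{E}xt^{1}_{\OO_{J_1}}(\QQ, \OO_{J_1})$. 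A two-term locally free resolution $0 \to \mathcal{M}^{0} \xrightarrow{u} \mathcal{M}^{1} \to \QQ \to 0$ dualizes to a resolution of $\mathcal{E}xt^{1}(\QQ, \OO_{J_1})$ with transposed map $u^{\vee}$; their zeroth Fitting ideals are generated by $\det u = \det u^{\vee}$, so $[-1]_{J_1}^{\ast}\Theta = \Theta$.

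For the class formula (assuming $X_1/S$ admits a theta characteristic $L$), the strategy is to identify $\OO_{J_1}(\Theta)$ via the factorization of the determinant of cohomology through $K$-theory. First, $\det B \simeq L^{\otimes (p-1)}$: from the autoduality one gets $(\det B)^{\otimes 2} \simeq (\Omega^{1}_{X_1/S})^{\otimes (p-1)}$, and the Pfaffian of the alternating form $B \otimes B \to \Omega^{1}_{X_1/S}$ (which exists because $p-1$ is even) provides a canonical square root, namely $\det B \simeq (\Omega^{1}_{X_1/S})^{\otimes (p-1)/2} \simeq L^{\otimes (p-1)}$. Since $K_0(X_1)$ of a smooth curve is determined by rank and determinant, $[B] = [L^{\oplus (p-1)}]$ in $K_0(X_1)$. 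Now the determinant of cohomology $F \mapsto \det \mathrm{R}pr_{J_1,\ast}(pr_{X_1}^{\ast}F \otimes \mathcal{P}_{\varepsilon,1})$ is additive on short exact sequences, hence factors through $K_0$; this yields
\[
\OO_{J_1}(-\Theta_B) \simeq \det \mathrm{R}pr_{J_1,\ast}(pr_{X_1}^{\ast}L \otimes \mathcal{P}_{\varepsilon,1})^{\otimes (p-1)} = \OO_{J_1}(-\Theta_L)^{\otimes (p-1)},
\]
where $\Theta_L$ is the rank-$1$ theta divisor attached to $L$. Since $L$ has degree $g-1$, $\Theta_L = \{M \in J_1 : h^{0}(L \otimes M) > 0\}$ is the translate of $\Delta$ by $L^{-1}$ as a reduced Cartier divisor, and we conclude that $\Theta_B$ is rationally equivalent to $(p-1)\Theta_L$, i.e.\ to the translate of $(p-1)\Delta$ by $L^{-1}$.

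The main technical input is the factorization of the determinant of cohomology through $K_0$ (\`a la Knudsen-Mumford); once this is granted, the class computation reduces to the elementary rank-$1$ case. The delicate step is the canonical isomorphism $\det B \simeq L^{\otimes (p-1)}$: a priori the autoduality only yields $\det B$ up to a $2$-torsion line bundle on $X_1$, and the Pfaffian of an alternating form in even rank is precisely what removes this ambiguity. Once these two ingredients are in hand, no residual $2$-torsion in $\Pic^{0}(J_1)$ needs to be killed, and the rational equivalence follows directly from the computation above.
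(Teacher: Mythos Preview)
The paper does not supply its own proof of this proposition: it is stated with a citation to Raynaud \cite{R2}, proposition~1.1.4, and no argument is given. That said, your symmetry argument via Serre--Grothendieck duality and the autoduality of $B$ (Proposition~\ref{autodual}) is essentially the computation the paper carries out later in \S\ref{accouplement}, where the identification $(-1)^{\ast}\QQ \simeq \mathcal{E}xt^{1}_{\OO_{J_1}}(\QQ,\OO_{J_1})$ is derived in order to construct the pairing on $\QQ$. Your observation that passing to the transposed two-term presentation leaves the zeroth Fitting ideal unchanged is the correct way to read off $[-1]^{\ast}\Theta=\Theta$ from that identification.

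For the class formula, your route through the Knudsen--Mumford determinant of cohomology and the equality $[B]=(p-1)[L]$ in $K_0$ is correct and is the standard argument. Two small caveats, neither fatal. First, the paper works over an arbitrary locally noetherian base $S$, so $X_1$ is a relative curve and the slogan ``$K_0$ of a smooth curve is rank plus determinant'' is not literally available; what you actually need is that the determinant-of-cohomology line bundle on $J_1$, as a function of a coherent sheaf on $X_1$, depends only on rank and determinant. This holds in families (reduce to geometric fibres of $S$ and apply seesaw on the abelian scheme $J_1/S$), so the argument goes through. Second, your use of the Pfaffian of the alternating form $B\otimes B\to\Omega^{1}_{X_1/S}$ to obtain a \emph{canonical} isomorphism $\det B\simeq(\Omega^{1}_{X_1/S})^{(p-1)/2}\simeq L^{\otimes(p-1)}$, rather than an equality up to $2$-torsion, is exactly the refinement that distinguishes rational equivalence from mere algebraic equivalence (the latter being the content of the corollary immediately following the proposition in the paper). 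With these two points made precise, your proof is complete.
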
\label{theta est sym}

\begin{corollary} Le diviseur $\Theta$ est algébriquement
équivalent à $(p-1)\Theta_{\mathrm{class}}$.
\end{corollary}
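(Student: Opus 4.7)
Le plan est d'exploiter directement la proposition pr\'ec\'edente, combin\'ee aux compatibilit\'es standard entre \'equivalence rationnelle, \'equivalence alg\'ebrique et translation sur une vari\'et\'e ab\'elienne.

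D'abord, puisque la formation du diviseur $\Theta$ et la relation d'\'equivalence alg\'ebrique sont toutes deux compatibles \`a la descente \'etale, on peut remplacer $S$ par un rev\^etement \'etale convenable. Or, comme rappel\'e juste avant la proposition pr\'ec\'edente, localement sur $S$ pour la topologie \'etale, la courbe $X_1/S$ admet une th\'eta caract\'eristique $L$. La proposition fournit alors une \'equivalence rationnelle
$$
\Theta \sim T_{L^{-1}}^{\ast}\bigl((p-1)\Delta\bigr),
$$
o\`u $T_{L^{-1}}$ d\'esigne la translation par $L^{-1}$ sur $J_1$.

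On conclut en invoquant deux faits classiques. D'une part, l'\'equivalence rationnelle entra\^ine l'\'equivalence alg\'ebrique. D'autre part, sur un sch\'ema ab\'elien, la translation par une section pr\'eserve la classe d'\'equivalence alg\'ebrique d'un diviseur, puisque toute section peut \^etre reli\'ee \`a la section nulle par une famille alg\'ebrique de sections param\'etr\'ee par le sch\'ema ab\'elien lui-m\^eme. Il en r\'esulte que $\Theta$ est alg\'ebriquement \'equivalent \`a $(p-1)\Delta$. Comme $\Delta$ est par construction une r\'ealisation canonique du diviseur th\'eta classique $\Theta_{\mathrm{class}}$, on obtient l'\'equivalence alg\'ebrique voulue, qui redescend \`a $S$ par descente \'etale.

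On n'anticipe aucune difficult\'e s\'erieuse ici: le corollaire est une cons\'equence formelle de la proposition pr\'ec\'edente. Le seul point technique est la r\'eduction au cas o\`u une th\'eta caract\'eristique existe, qui est imm\'ediate par descente \'etale puisque $p \geq 3$.
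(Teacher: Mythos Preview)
Ta d\'emonstration est correcte et co\"incide avec l'approche implicite de l'article : le corollaire y est \'enonc\'e sans preuve, pr\'ecis\'ement parce qu'il d\'ecoule formellement de la proposition pr\'ec\'edente via les deux faits standards que tu invoques (l'\'equivalence rationnelle implique l'\'equivalence alg\'ebrique, et la translation sur un sch\'ema ab\'elien pr\'eserve la classe d'\'equivalence alg\'ebrique). Ta r\'eduction par descente \'etale au cas o\`u une th\^eta caract\'eristique existe est exactement la bonne mani\`ere d'expliciter l'argument.
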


 Supposons $p\geq 3$. Notons
$\mathrm{Kum}=\mathrm{Kum}_{J_1/S}$ le $S$-schéma, variété de
Kummer, quotient de $J_1$ par $\{\pm 1\}$. Alors $\mathrm{Kum}$
est un $S$-schéma propre et plat, à fibres géométriques normales,
dont la formation commute aux changements de base $S'\rightarrow
S$. Soit $U$ l'ouvert de $J_1$ complément des points d'ordre
divisant $2$, et $V$ son image dans $\mathrm{Kum}$. Alors le
morphisme de quotient $U\rightarrow V$ est fini étale de degré
$2$.


\begin{prop}\label{theta est fortement symetrique}
Gardons les notations ci-dessus, et supposons $p\geq 3$. Alors
$\Theta$ se descend canoniquement sur $\mathrm{Kum}_{J_1/S}$ en
$\widetilde{\Theta}$ qui est un diviseur de Cartier positif.
\end{prop}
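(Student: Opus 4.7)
Let $\pi: J_1 \to \mathrm{Kum}_{J_1/S}$ denote the quotient by $\{\pm 1\}$, and let $U, V$ be as in the excerpt, so $\pi|_U : U \to V$ is finite étale of degree $2$. The symmetry of $\Theta$ (Proposition \ref{theta est sym}) makes the ideal sheaf $\mathcal{I}_{\Theta}$ a $\{\pm 1\}$-invariant subsheaf of $\OO_{J_1}$, and étale descent applied to $\pi|_U$ yields an effective Cartier divisor $\widetilde{\Theta}_V$ on $V$ with $(\pi|_U)^{\ast}\widetilde{\Theta}_V = \Theta|_U$. The real content of the proposition is thus the extension of $\widetilde{\Theta}_V$ across $\mathrm{Kum}\setminus V$, the image of the $2$-torsion locus $J_1[2]$.

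After an étale base change on $S$ one may pick a $2$-torsion section $x$ of $J_1$. Since $p \neq 2$, the involution $[-1]$ is diagonalisable on the completion $\widehat{\OO}_{J_1, x}$, so any local equation $f$ of $\Theta$ at $x$ satisfies $[-1]^{\ast} f = \varepsilon_x f$ with $\varepsilon_x \in \{\pm 1\}$. The Cartier divisor $\Theta$ descends in a formal neighbourhood of $\pi(x)$ precisely when $\varepsilon_x = +1$; if $\varepsilon_x = -1$ only the divisor $2\Theta$ would descend, and $\widetilde{\Theta}_V$ would not extend to a Cartier divisor at $\pi(x)$.

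To force $\varepsilon_x = +1$ at every $2$-torsion point, I would appeal to the totally symmetric character of $\Theta$ in Mumford's sense \cite{Eq}, already announced in the introduction. After an auxiliary étale base change, $X_1/S$ admits a theta characteristic $L$, and Proposition \ref{theta est sym} identifies $\OO_{J_1}(\Theta)$, up to translation, with $\OO_{J_1}((p-1)\Delta)\otimes L^{-1}$. Since $p-1$ is even, this is, up to translation, a square of a symmetric line bundle, and by \cite{Eq} such a line bundle admits a canonical totally symmetric linearisation; this is exactly the statement $\varepsilon_x = +1$ for every $x \in J_1[2]$. A local equation $f$ at $x$ is then $[-1]$-invariant and descends to $\widehat{\OO}_{\mathrm{Kum}, \pi(x)}$, defining $\widetilde{\Theta}$ locally. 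By unicity on the overlap with $V$, these local pieces glue with $\widetilde{\Theta}_V$ into a single effective Cartier divisor $\widetilde{\Theta}$ on $\mathrm{Kum}$ with $\pi^{\ast}\widetilde{\Theta} = \Theta$; compatibility with base change $S'\to S$ follows from the analogous property of $\Theta$ established in \ref{construction, cas lisse}.

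The main obstacle is the third step: identifying the $\{\pm 1\}$-linearisation of $\OO_{J_1}(\Theta)$ produced naturally by the definition of $\Theta$ via $\QQ$ (and the Serre–Grothendieck self-duality $\QQ \simeq [-1]^{\ast}\QQ^{\vee}$ recalled in the introduction) with the totally symmetric linearisation predicted formally by the evenness of $p-1$. A priori these linearisations could differ by a character of $\{\pm 1\}$ at some $2$-torsion point, and it is precisely the oddness of $p$ — through the availability of a square-root decomposition of $(p-1)\Delta$ after étale localisation on $S$ — that rules this out and makes the descent canonical.
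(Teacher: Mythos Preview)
Your reduction to checking a sign $\varepsilon_x\in\{\pm 1\}$ at each $2$-torsion point $x$ is correct, and a computation in formal coordinates $t_i$ with $[-1]^{\ast}t_i=-t_i$ shows $\varepsilon_x=(-1)^{\mathrm{mult}_x(\Theta)}$. The gap is in the step ``$\OO_{J_1}(\Theta)$ is a square, hence totally symmetric, hence $\varepsilon_x=+1$''. Mumford's total symmetry of the \emph{line bundle} says only that $e_{\ast}^{\OO(\Theta)}(x)=(-1)^{\mathrm{mult}_x(\Theta)-\mathrm{mult}_0(\Theta)}=1$, i.e.\ $\mathrm{mult}_x(\Theta)\equiv\mathrm{mult}_0(\Theta)\pmod 2$. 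What you still need is $\mathrm{mult}_0(\Theta)$ even --- equivalently, that the divisorial linearisation (the one fixing the canonical section $1_\Theta$) agrees with Mumford's normalized one; these differ by the global sign $(-1)^{\mathrm{mult}_0(\Theta)}$. You flag this as the ``main obstacle'', but the resolution you propose (``the oddness of $p$ \dots'') is not an argument: the evenness of $p-1$ was already spent in writing $\OO(\Theta)$ as a square and gives nothing further. Indeed, in the paper the evenness of $\mathrm{mult}_x(\Theta)$ at $2$-torsion points is not an input but a \emph{corollary} of this Proposition, so your logic is inverted relative to the paper's.

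The paper's proof sidesteps the sign computation entirely. After descending $\Theta|_U$ to $\widetilde\Theta_V$ on $V$, one notes (for $g\geq 2$; the case $g=1$ is explicit via \ref{thetaforell}) that points of $\mathrm{Kum}\setminus V$ have depth $\geq 2$, so the only candidate ideal for $\widetilde\Theta$ is $i_{\ast}\bigl(\OO_V(-\widetilde\Theta_V)\bigr)\subset\OO_{\mathrm{Kum}}$, and the sole issue is its invertibility. To settle this, pass \'etale-locally on $S$ so that $X_1$ carries a theta characteristic and hence a symmetric classical theta $\Theta_{\mathrm{class,sym}}$; its norm $\widetilde\Theta_1:=\mathrm{Nm}_{J_1/\mathrm{Kum}}(\Theta_{\mathrm{class,sym}})$ is an honest Cartier divisor on $\mathrm{Kum}$ pulling back to $2\Theta_{\mathrm{class,sym}}$. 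Since $\Theta\sim(p-1)\Theta_{\mathrm{class,sym}}$ on $J_1$, one gets $\widetilde\Theta_V\sim\tfrac{p-1}{2}\widetilde\Theta_1|_V$ on $V$, and the depth condition forces $i_{\ast}\bigl(\OO_V(-\widetilde\Theta_V)\bigr)\simeq\OO_{\mathrm{Kum}}\bigl(-\tfrac{p-1}{2}\widetilde\Theta_1\bigr)$, which is invertible.
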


\begin{corollary} Supposons $p\geq 3$, alors $\Theta$ est
totalement symétrique au sens de Mumford (\cite{Eq}, Page 305
definition). Pour un point $x\in J_1$ d'ordre divisant $2$, notons
$\mathrm{mult}_x(\Theta)$ la multiplicité de $\Theta$ en $x$,
alors $\mathrm{mult}_x(\Theta)\equiv 0(\mathrm{mod}~2)$.
\end{corollary}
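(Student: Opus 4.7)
Le plan est de d�duire les deux assertions directement de la proposition pr�c�dente, qui �tablit la descente canonique de $\Theta$ en un diviseur de Cartier positif $\widetilde{\Theta}$ sur la vari�t� de Kummer $\mathrm{Kum}_{J_1/S}$.

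Pour la premi�re assertion, on rappellera que la d�finition de Mumford (\cite{Eq}, p.~305) caract�rise pr�cis�ment les diviseurs totalement sym�triques sur une vari�t� ab�lienne comme ceux qui se descendent � la vari�t� de Kummer (de mani�re �quivalente, ceux dont le fibr� inversible associ� admet une descente � la vari�t� de Kummer); l'assertion sera alors une cons�quence formelle imm�diate de la proposition.

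Pour la deuxi�me assertion, on travaillera localement au voisinage formel d'un point g�om�trique $x\in J_1$ d'ordre divisant $2$. Le point cl� sera de lin�ariser l'action de l'involution $[-1]$ sur l'anneau local compl�t� $\widehat{\OO}_{J_1,x}$: comme $p$ est impair et que la diff�rentielle de $[-1]$ en $x$ vaut $-\mathrm{id}$ sur l'espace cotangent, on peut trouver des param�tres r�guliers $t_1,\ldots,t_g$ de $\widehat{\OO}_{J_1,x}\simeq k(x)[[t_1,\ldots,t_g]]$ tels que $[-1]^{\ast}(t_i)=-t_i$ pour tout $i$. Le sous-anneau des invariants $\widehat{\OO}_{J_1,x}^{\{\pm 1\}}$ est alors form� des s�ries formelles ne contenant que des mon�mes de degr� total pair, de sorte que l'ordre de tout �l�ment invariant non nul est pair.

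Il restera � observer que la descente de $\Theta$ comme diviseur de Cartier permet de choisir une �quation locale de $\Theta$ en $x$ comme image r�ciproque d'une �quation locale de $\widetilde{\Theta}$ en $[x]$, qui est par construction $[-1]$-invariante. Son ordre dans $\widehat{\OO}_{J_1,x}$, �gal par d�finition � $\mathrm{mult}_x(\Theta)$, sera par cons�quent pair. L'argument ne pr�sente pas d'obstacle s�rieux: le contenu non trivial est d�j� fourni par la proposition pr�c�dente, et il reste seulement � exploiter la lin�arisation de $[-1]$ au voisinage d'un point de $2$-torsion, o� l'hypoth�se $p$ impair intervient de mani�re essentielle.
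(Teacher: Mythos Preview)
Your proof is correct and follows the natural approach. The paper does not give an explicit proof of this corollary, treating it as an immediate consequence of the proposition; your argument fills in precisely the details one would expect.

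One small point of precision: Mumford's definition of ``totally symmetric'' in \cite{Eq} is stated for invertible sheaves rather than for divisors, so strictly speaking what the proposition gives (descent of the Cartier divisor $\Theta$ to $\mathrm{Kum}$) is that $\OO_{J_1}(\Theta)\simeq\pi^{\ast}\OO_{\mathrm{Kum}}(\widetilde{\Theta})$, and it is this pullback structure that yields the normalized isomorphism $\OO_{J_1}(\Theta)\to[-1]^{\ast}\OO_{J_1}(\Theta)$ which is the identity on fibres at $2$-torsion points. Your formulation is essentially this, but it is worth distinguishing descent of the divisor from descent of the line bundle, since the former is genuinely stronger and is exactly what you use in the second part to obtain an \emph{invariant} local equation (not merely an equation well-defined up to an invariant unit). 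The linearisation of $[-1]$ at a $2$-torsion point via averaging (valid since $2$ is invertible in $k$) and the resulting parity argument are entirely correct.
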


\begin{proof}[Démonstration de la proposition \ref{theta est
fortement symetrique}] Pour le cas $g=1$, la conclusion résulte de
la description explicite de $\Theta$ pour les courbes elliptiques
(corollaire \ref{thetaforell}). On peut donc supposer $g\geq 2$.
Comme $\Theta$ est symétrique, $\Theta|_{U}$ se descend en
$\widetilde{\Theta}_V$ diviseur de Cartier relatif sur $V$, par
descente étale. Aux points de $x\in \mathrm{Kum}-V$, on a
$\mathrm{prof}_{x}(\OO_{\mathrm{Kum}})\geq 2$. Donc si le diviseur
de Cartier $\widetilde{\Theta}$ existe, son idéal de définition
sera égal à
$i_{\ast}\left(\OO_{V}(-\widetilde{\Theta}_{V})\right)$ ($\subset
i_{\ast}\OO_{V}\simeq \OO_{\mathrm{Kum}}$, d'après SGA2), où
$i:V\rightarrow \mathrm{Kum}$ est l'inclusion. Pour montrer que ce
faisceau d'idéaux est inversible, on peut faire une extension
étale de $S$, et donc supposer que $X_1$ possède une thêta
caractéristique, et donc un diviseur thêta classique symétrique
$\Theta_{\mathrm{class,sym}}$. Alors la norme
$\mathrm{Nm}_{J_1/\mathrm{Kum}}(\Theta_{\mathrm{class,sym}})=\widetilde{\Theta}_1$
est un diviseur de Cartier sur $\mathrm{Kum}$ qui descend
$2\Theta_{\mathrm{class,sym}}$. On a vu (proposition \ref{theta
est sym}) que $\Theta$ et $(p-1)\Theta_{\mathrm{class,sym}}$ sont
linéairement équivalents. Alors
$\left(\frac{p-1}{2}\right)\widetilde{\Theta}_1$ est un diviseur
de Cartier sur $\mathrm{Kum}$, qui sur $V$ est linéairement
équivalent à $\widetilde{\Theta}_V$. Finalement,
$i_{\ast}\left(\OO_{V}(-\widetilde{\Theta}_{V})\right)\simeq
\OO_{\mathrm{Kum}}\left(-\left(\frac{p-1}{2}\widetilde{\Theta}_1\right)\right)$
est inversible.
\end{proof}

\subsubsection{Le faisceau $\QQ$ et la transformation de
Fourier-Mukai.}\label{accouplement}

Pour un schéma abélien $A/S$, notons
$\mathcal{F}:\mathrm{D}^{b}_{\mathrm{c}}(A)\rightarrow
\mathrm{D}^{b}_{\mathrm{c}}(A^{\vee})$ le foncteur de
Fourier-Mukai pour $A$ défini par
$$
\mathcal{F}(\cdot)=\mathrm{R}\pi_{2,\ast}(\mathcal{L}
\otimes^{\mathrm{L}}\mathrm{L}\pi_{1}^{\ast}(\cdot)),
$$
avec $\mathcal{L}\in \mathrm{Pic}(A\times_S A^{\vee})$ le faisceau
de Poincaré normalisé et $\pi_1:A\times_S A^{\vee}\rightarrow A$,
$\pi_2:A\times_S A^{\vee}\rightarrow A^{\vee}$ les deux
projections canoniques. Pour un entier $i$, notons
$\mathcal{F}^{i}(L)$ le $i$-ième faisceau de cohomologie de
$\mathcal{F}(L)$ pour un objet $L\in
\mathrm{D}^{b}_{\mathrm{c}}(A)$. On renvoie le lecteur à
\cite{Mukai} pour les propriétés fondamentales de la
transformation de Fourier-Mukai, ou à \cite{Laumon} $\S$ 1 pour un
résumé.

Supposons dans ce numéro que $X/S$ admette une section
$\varepsilon\in X(S)$. D'où une section $\varepsilon_1\in X_1(S)$,
et le faisceau de Poincaré rigidifié
$\mathcal{P}_1=\mathcal{P}_{\varepsilon,1}$ associé sur
$X_1\times_S J_1$ à $\varepsilon_1$. Par conséquent, $X_1/S$ admet
un faisceau de Poincaré rigidifié
$\mathcal{P}_1=\mathcal{P}_{1,\varepsilon_1}$ pour la section
$\varepsilon_{1, J_1}:J_1\rightarrow X_1\times_S J_1$. Soit
$i_1:X_1\rightarrow \mathrm{Alb}(X_1/S)=J_{1}^{\vee}$ le
plongement d'Albanese tel que $i_1(\varepsilon_1)=0\in
J_{1}^{\vee}$. Comme d'habitude, notons
$pr_{J_1}:X_1\times_{S}J_1\rightarrow J_1$ et
$pr_{X_1}:X_1\times_S J_1\rightarrow X_1$ les deux projections
canoniques.

\subsubsection*{Le faisceau $\mathcal{Q}$.}

Rappelons d'abord que, par définition,
$\QQ=\mathrm{R}^{1}pr_{J_1,\ast}(pr_{X_1}^{\ast}B\otimes\mathcal{P}_1)$
($\S$ 1.1.1).

\begin{lemma}\label{lemme trivial} (1) Le faisceau $\QQ$ est en fait un
$\OO_{\Theta}$-module.

(2) Soit $x\in \Theta$ un point et notons
$\QQ(x):=\QQ\otimes_{\OO_{\Theta,x}}k(x)$. Alors
$\mathrm{dim}_{k(x)}(\QQ(x))=1$ si et seulement si $\QQ$ est
inversible comme $\OO_{\Theta}$-module dans un voisinage de $x\in
\Theta$.

(3) Le faisceau $\QQ$ est un $\OO_{J_1}$-module de Cohen-Macaulay
dès que $S$ est régulier.
\end{lemma}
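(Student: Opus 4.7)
The plan is to exploit the local presentation $\mathcal{M}^0 \xrightarrow{u} \mathcal{M}^1$ of $\QQ$ as the cokernel of a square map between locally free sheaves of the same rank $r$ (described in \S\ref{construction, cas lisse}), combined with the fact that $\det(u)$, which cuts out $\Theta$ as an effective Cartier divisor, is a non-zerodivisor. Each of the three statements is then essentially a formal consequence of standard commutative algebra applied to this resolution; accordingly I expect no serious obstacle, consistent with the name ``lemme trivial''.

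For (1), I would invoke the general fact that the zeroth Fitting ideal of a finitely presented module is contained in its annihilator (cf.~\cite{Eisenbud}, discussion after Definition 20.4). Applied locally to $\QQ = \mathrm{coker}(u)$, this says that $(\det(u))$ annihilates $\QQ$; sheafifying, $\QQ$ is killed by the ideal sheaf of $\Theta$ and thus is canonically an $\OO_{\Theta}$-module.

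For (2), I argue locally at $x$. The condition $\dim_{k(x)}\QQ(x) = 1$ amounts to saying that the reduction $u\otimes k(x)$ has corank exactly one, i.e.\ that some $(r-1)\times(r-1)$ minor of the matrix of $u$ is a unit in $\OO_{J_1,x}$. Using this minor to perform elementary row and column operations over the local ring, one reduces $u$ to the form $\mathrm{diag}(1,\ldots,1,\det(u))$ (up to multiplication by units) in a neighborhood of $x$; the cokernel is then locally isomorphic to $\OO_{J_1}/(\det(u)) = \OO_{\Theta}$, which is invertible of rank one. The converse is immediate. This matrix reduction is the most delicate point, but entirely routine.

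For (3), regularity of $S$ implies smoothness of $J_1/S$, hence regularity of $J_1$. Since $\det(u)$ is a non-zerodivisor and $\mathcal{M}^0$ is torsion-free, the map $u$ is injective, so $0 \to \mathcal{M}^0 \xrightarrow{u} \mathcal{M}^1 \to \QQ \to 0$ is a length-one locally free resolution of $\QQ$. At any $x\in\Theta$ the projective dimension is exactly $1$, and the Auslander--Buchsbaum formula then yields $\mathrm{depth}(\QQ_x) = \dim\OO_{J_1,x} - 1 = \dim\OO_{\Theta,x}$. Since the support of $\QQ$ is set-theoretically equal to $\Theta$ (as $u$ is surjective exactly on the complement of the vanishing locus of $\det(u)$), this common value coincides with $\dim \QQ_x$, which is precisely the Cohen--Macaulay condition.
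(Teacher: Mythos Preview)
Your proof is correct and follows essentially the same route as the paper, which simply declares (1) and (2) ``imm\'ediates'' and for (3) points to the local length-one free resolution $0\to\mathcal{M}^0\to\mathcal{M}^1\to\QQ\to 0$ established in \S\ref{construction, cas lisse}. You have just unpacked the details: the Fitting-ideal annihilation for (1), the Smith-normal-form reduction for (2), and Auslander--Buchsbaum for (3). One minor remark: in (3) the injectivity of $u$ is already known from the fact that the cohomology of the complex is concentrated in degree $1$ (Th\'eor\`eme~\ref{existence}), so you need not rederive it from $\det(u)$ being a non-zerodivisor, though your argument for that is also fine.
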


\begin{proof} Les assertions (1) et (2) sont immédiates. Pour (3),
il suffit de remarquer que $\QQ$ admet, localement pour la
topologie de Zariski, une résolution libre de longueur $1$ sur
$J_1$ (on renvoie à $\S$ 1.1.1 pour l'existence d'une telle
résolution).
\end{proof}

Formons le diagramme commutatif suivant
$$
\xymatrix{X_1\times_S J_1\ar[r]^{i_1\times
1_{J_1}}\ar[d]^{pr_{X_1}} &
J_{1}^{\vee}\times_S J_1\ar[r]^{\pi_2}\ar[d]^{\pi_1} & J_1\ar[d]\\
X_1\ar[r]^{i_1} & J_{1}^{\vee} \ar[r]&S  }
$$
dont les carrés sont cartésiens. Soit $\mathcal{L}_1\in
\mathrm{Pic}(J_1\times_S J_{1}^{\vee})$ le faisceau de Poincaré
normalisé. Considérons la transformation de Fourier-Mukai du
faisceau $i_{1,\ast}B$ sur $J_{1}^{\vee}$, alors on a les
isomorphismes canoniques suivants (rappelons que l'on a une
identification canonique $(J_{1}^{\vee})^{\vee}\simeq J_1$):
\begin{eqnarray*} \mathcal{F}(i_{1,\ast}B)&=& \mathrm{R}\pi_{2,\ast}(\pi_{1}^{\ast}i_{1,\ast}B\otimes
\mathcal{L}_1)\\&\simeq & \mathrm{R}\pi_{2,\ast}((i_1\times
1_{J_1})_{\ast}pr_{X_1}^{\ast}B\otimes \mathcal{L}_1) \\ &\simeq &
\mathrm{R}pr_{J_1,\ast}(pr_{X_1}^{\ast}B\otimes\mathcal{L}_1).
\end{eqnarray*}
D'après le théorème \ref{existence}, $B$ admet un diviseur thêta,
on en déduit que $\mathcal{F}(i_{1,\ast}B)\simeq
\mathrm{R}^{1}pr_{J_1,\ast}(pr_{X_1}^{\ast}B\otimes
\mathcal{P}_1)[-1]=\QQ[-1]$ dans $\mathrm{D}^{b}_{c}(J_1)$.

\subsubsection*{Un accouplement sur $\QQ$.}

On traduit ci-après l'auto-dualité de $B$ (proposition
\ref{autodual}) pour la dualité de Serre en un accouplement entre
$(-1)^{\ast}\QQ$ et $\QQ$.

Fixons un isomorphisme $\alpha$ entre $B$ et
$\mathcal{H}om_{\OO_{X_1}}(B,\Omega^{1}_{X_1/S})$ via
l'accouplement anti-symétrique sur $B$. Par exemple, on peut
prendre
$$
\alpha:~~B\rightarrow
\mathcal{H}om_{\OO_{X_1}}(B,\Omega^{1}_{X_1/S})
$$
donné par $b\in B\mapsto (b,\cdot) ~\in
\mathcal{H}om_{\OO_{X_1}}(B,\Omega^{1}_{X_1/S})$. D'après le
théorème de dualité de Serre-Grothendieck (\cite{RD}), on a les
isomorphismes suivants:
\begin{eqnarray*} \mathrm{R}\mathcal{H}om(\QQ[-1],\OO_{J_1}) &\simeq &
\mathrm{R}\mathcal{H}om(\mathrm{R}pr_{J_1,\ast}(pr_{X_1}^{\ast}B\otimes \mathcal{P}_1),\mathcal{O}_{J_1}) \\
&\simeq &
\mathrm{R}pr_{J_1,\ast}\mathrm{R}\mathcal{H}om(pr_{X_1}^{\ast}B\otimes
\mathcal{P}_1, pr_{J_1}^{!}\mathcal{O}_{J_1})
\\ &\simeq &\mathrm{R}pr_{J_1,\ast}(\mathcal{H}om(pr_{X_1}^{\ast}B\otimes
\mathcal{P}_1, pr_{X_1}^{\ast}\Omega^{1}_{X_1/S}[1])
\\ &\simeq &
\mathrm{R}pr_{J_1,\ast}(pr_{X_1}^{\ast}\mathcal{H}om(B,\Omega^{1}_{X_1/S})\otimes
\mathcal{P}_{1}^{-1})[1] \\ & \simeq &
\mathrm{R}pr_{J_1,\ast}(pr_{X_1}^{\ast}B\otimes
\mathcal{P}_1^{-1})[1] \\
&\simeq &
(-1)^{\ast}\mathrm{R}pr_{J_1,\ast}(pr_{X_1}^{\ast}B\otimes
\mathcal{P}_1)[1]\\ &\simeq & (-1_{J_1})^{\ast}\QQ,
\end{eqnarray*}
où le cinquième isomorphisme est induit par $\alpha$. Notons
$\omega_{\Theta}=\OO_{J_1}(\Theta)|_{\Theta}$ le faisceau
canonique de $\Theta$. On a donc un isomorphisme
$\mathcal{H}om_{\OO_{\Theta}}(\QQ,\omega_{\Theta})\simeq
\mathcal{E}xt^{1}_{\OO_{J_1}}(\QQ,\OO_{J_1})\simeq
(-1)^{\ast}\QQ$, et un accouplement de $\OO_{\Theta}$-modules:
$$
\phi:~~~(-1_{\Theta})^{\ast}\QQ\otimes_{\OO_{\Theta}}\QQ\rightarrow
\omega_{\Theta}.
$$

\begin{remark} Cet accouplement dépend du choix de l'isomorphisme $\alpha:B\rightarrow
\mathcal{H}om_{\OO_{X_1}}(B,\Omega^{1}_{X_1/S})$. En fait, soit
$\alpha'$ l'isomorphisme entre $B$ et
$\mathcal{H}om_{\OO_{X_1}}(B,\Omega^{1}_{X_1/S})$ donné par $b\in
B\mapsto (\cdot,b)\in
\mathcal{H}om_{\OO_{X_1}}(B,\Omega^{1}_{X_1/S})$. L'accouplement
$\phi':(-1_{\Theta})^{\ast}\QQ\otimes_{\OO_{\Theta}}\QQ\rightarrow
\omega_{\Theta}$ défini à partir de $\alpha'$, est alors égal à
$-\phi$.
\end{remark}

\subsubsection{Le schéma de {\textquotedblleft Hilbert\textquotedblright} $\mathcal{H}$.}\label{le
schema H}

Dans ce numéro, supposons que la courbe propre lisse $X/S$ est
\emph{projective}, ce qui est automatique lorsque $X/S$ admet une
section ou lorsque $g\geq 2$. Fixons un faisceau inversible
relativement très ample $\OO_{X_1}(1)$ sur $X$. Supposons $S$
\textit{connexe}, et notons $d'$ le degré de
$\mathcal{O}_{X_1}(1)$ sur une fibre géométrique. De plus, pour
$m$ un entier, rappelons que $J^{[m]}$ est le schéma qui paramètre
les faisceaux inversibles de degré $m$ sur $X/S$, en particulier,
$J^{[m]}$ est un torseur sous $J$.

\subsubsection*{Définition du schéma $\mathcal{H}$.}

Soit $d$ un entier, posons $h_d(x)=(p-2)d'x+g-1-d\in
\mathbf{Z}[x]$. Considérons le foncteur
$$
\mathcal{Q}uot_{X_{1}/S,B}^{h_d(x)}: (\mathbf{Sch}/S)\rightarrow
\mathfrak{Ens}
$$
défini de la manière suivante: soit $T$ un $S$-schéma,
$\mathcal{Q}uot_{X_1/S,B}^{h_d(x)}(T)$ est alors l'ensemble des
classes d'isomorphisme de faisceaux quotients cohérents de
$B\otimes_{\OO_S}\OO_T$ plat sur $T$, ayant $h_d(x)$ comme
polynôme de Hilbert pour le faisceau relativement très ample
$\OO_{X_1}(1)$. D'après Grothendieck \cite{Quot}, ce foncteur est
représentable par un schéma propre
$\mathcal{H}_d=\mathrm{Quot}_{X_1/S,B}^{h_d(x)}$ sur $S$. Soient
$p_1:X_1\times_S\mathcal{H}_d\rightarrow X_1$,
$p_2:X_1\times_S\mathcal{H}_d\rightarrow \mathcal{H}_d$ les deux
projections naturelles. Soient $\mathcal{G}_d$ le faisceau
quotient cohérent universel de $p_{1}^{\ast}B$, $\mathcal{L}_d$ le
faisceau défini par la suite exacte suivante:
$$
0\rightarrow \mathcal{L}_d\rightarrow p_{1}^{\ast}B\rightarrow
\mathcal{G}_d\rightarrow 0.
$$
Comme $\mathcal{G}_d$ est plat sur le schéma $\mathcal{H}_d$,
$\mathcal{L}_d$ est plat sur $\mathcal{H}_d$, et sa formation
commute aux changements de base $T\rightarrow \mathcal{H}_d$.
Alors $\mathcal{L}_d$ est un faisceau inversible sur
$X_1\times_S\mathcal{H}_d$, de degré $d$ sur les fibres de
$X_1\times_S\mathcal{H}_d/\mathcal{H}_d$. D'où un morphisme
$\alpha_d:\mathcal{H}_d\rightarrow J_{1}^{[-d]}$ défini par
$\mathcal{L}_d$. On notera $\mathcal{H}=\mathcal{H}_{0}$
$\mathcal{G}=\mathcal{G}_0$, $\alpha=\alpha_0$ et
$\mathcal{L}=\mathcal{L}_0$ dans la suite.

\begin{remark} \label{fibre de H}Pour $x\in J_{1}^{[-d]}$
correspondant à un faisceau inversible $L$ de degré $-d$, la fibre
$\mathcal{H}_{d,x}$ de $\mathcal{H}_d$ au-dessus de $x$
s'identifie canoniquement à l'espace projectif des droites de
$H^{0}(X_1,B\otimes L)=\mathrm{Hom}(L^{-1},B)$.
\end{remark}

\subsubsection*{Lien entre $\mathcal{H}$ et le faisceau $\QQ$.}

Rappelons le point de vue fonctoriel introduit par Grothendieck.
Soient $S$ un schéma localement noethérien, $\mathcal{G}$ un
faisceau cohérent. On peut considérer le foncteur
$V_{\mathcal{G}}$ de la catégorie de $S$-schémas vers la
catégories d'ensembles, défini par $S'\in (\mathbf{Sch}/S)\mapsto
\mathrm{Hom}_{\OO_{S'}}(\mathcal{G}\otimes_{\OO_S}\OO_{S'},\OO_{S'})$.
Il est représenté par le $S$-schéma affine, spectre de l'algèbre
symétrique de $\mathcal{G}$. Nous l'appelons le {\textquotedblleft
fibré vectoriel\textquotedblright} associé à $\mathcal{G}$, bien
que $\mathcal{G}$ ne soit pas nécessairement localement libre.

Rappelons le résultat suivant de Grothendieck.

\begin{prop}[EGA $\mathrm{III}_{\mathrm{2}}$ 7.7.6] Soient $S$
un schéma localement noethérien, $f:X\rightarrow S$ un $S$-schéma
propre. Soit $\mathcal{F}$ un faisceau cohérent sur $X$ qui est
plat sur $S$. Considérons le foncteur $T$ de la catégorie de
$S$-schémas $Sch/S$ vers la catégorie des ensembles défini par
$T(S')=\Gamma(X\times_S S',\mathcal{F}\otimes_{\OO_S}\OO_{S'})$.

(1) Ce foncteur $T$ est représentable par un fibré vectoriel
$V_{\mathcal{R}}$ associé à un $\OO_S$-module cohérent
$\mathcal{R}$ sur $S$.

(2) Plus précisément, soit
$$
\xymatrix{L^{\bullet}: \cdots\ar[r]&  0\ar[r]& L^{0}\ar[r]^{u}&
L^{1}\ar[r]& L^{2}\ar[r] & \cdots}
$$
un complexe parfait de $\OO_S$-modules localement libres de rang
fini qui calcule universellement $\mathrm{R}f_{\ast}\mathcal{F}$,
et soit $\breve{u}:L^{1,\vee}\rightarrow L^{0,\vee}$ le dual de
$u$, alors on peut prendre
$\mathcal{R}=\mathrm{coker}(\breve{u})$. En particulier, si
$f:X\rightarrow S$ est une courbe plate et propre,
$\mathcal{R}=\mathcal{E}xt^{1}(\mathrm{R}^{1}f_{\ast}(\mathcal{F}),\OO_S)$.
\end{prop}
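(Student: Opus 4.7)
L'id�e directrice est de se ramener, via l'existence d'un complexe parfait universel, � un simple calcul d'alg�bre lin�aire. Je rappellerais d'abord que, par \cite{AV} (chap.~2, \S 5, Theorem) --- r�sultat d�j� invoqu� au \S \ref{construction, cas lisse} --- la propret� de $f$ et la platitude de $\mathcal{F}$ sur $S$ assurent l'existence, localement sur $S$, d'un complexe parfait $L^\bullet$ de $\OO_S$-modules localement libres de rang fini, concentr� en degr�s $\geq 0$, qui calcule \emph{universellement} $\mathrm{R}f_\ast\mathcal{F}$: pour tout changement de base $g:S'\to S$, on a $g^\ast L^\bullet\simeq \mathrm{R}f_{S',\ast}\mathcal{F}_{S'}$ dans $\mathrm{D}^{b}_{c}(S')$.

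Par d�finition de $T$ on a alors, fonctoriellement en $S'$, l'�galit�
$$
T(S')=\Gamma(X\times_S S',\mathcal{F}_{S'})=H^0(g^\ast L^\bullet)=\mathrm{ker}\bigl(u\otimes 1:L^0\otimes\OO_{S'}\to L^1\otimes\OO_{S'}\bigr).
$$
Je poserais ensuite $\mathcal{R}=\mathrm{coker}(\breve{u}:L^{1,\vee}\to L^{0,\vee})$. La suite exacte $L^{1,\vee}\to L^{0,\vee}\to \mathcal{R}\to 0$ reste exacte apr�s tensorisation par $\OO_{S'}$, et en lui appliquant $\Hom_{\OO_{S'}}(-,\OO_{S'})$, la r�flexivit� de $L^0$ et $L^1$ fournit l'isomorphisme canonique
$$
V_{\mathcal{R}}(S')=\Hom_{\OO_{S'}}(\mathcal{R}\otimes\OO_{S'},\OO_{S'})\simeq\mathrm{ker}(u\otimes 1)=T(S'),
$$
d'o� (1) et la description de (2) localement sur $S$. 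La canonicit� de $\mathcal{R}$ (ind�pendance, � quasi-isomorphisme pr�s, du choix de $L^\bullet$) permet de recoller et de produire un $\OO_S$-module coh�rent global.

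Pour la derni�re assertion, la dimension cohomologique $\leq 1$ des fibres d'une courbe autorise � prendre $L^\bullet=[L^0\xrightarrow{u} L^1]$, de sorte que $\mathrm{R}^1f_\ast\mathcal{F}\simeq\mathrm{coker}(u)$. En appliquant $\mathcal{H}om(-,\OO_S)$ � la pr�sentation $L^0\to L^1\to \mathrm{R}^1f_\ast\mathcal{F}\to 0$, on obtient la suite exacte � quatre termes
$$
0\to\mathcal{H}om(\mathrm{R}^1f_\ast\mathcal{F},\OO_S)\to L^{1,\vee}\xrightarrow{\breve{u}}L^{0,\vee}\to\mathcal{E}xt^1(\mathrm{R}^1f_\ast\mathcal{F},\OO_S)\to 0,
$$
et donc $\mathcal{R}\simeq\mathcal{E}xt^1(\mathrm{R}^1f_\ast\mathcal{F},\OO_S)$. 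Le v�ritable contenu de l'�nonc� r�side donc enti�rement dans l'existence du complexe parfait universel, la proposition ne pr�sentant, une fois ce point acquis, que des manipulations d'alg�bre homologique formelle sans obstacle s�rieux.
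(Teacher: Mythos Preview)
L'article ne d\'emontre pas cette proposition: il la cite d'EGA $\mathrm{III}_2$ 7.7.6 et l'utilise telle quelle. Il n'y a donc pas de preuve de l'article \`a laquelle comparer la tienne.

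Ton argument pour (1) et (2) est correct et standard: existence du complexe parfait universel, puis identification $\mathrm{Hom}_{\OO_{S'}}(\mathcal{R}\otimes\OO_{S'},\OO_{S'})\simeq\ker(u\otimes 1)$ par dualit\'e des $L^i$ et exactitude \`a gauche de $\mathrm{Hom}$.

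En revanche, la derni\`ere assertion pr\'esente une lacune. Ta ``suite exacte \`a quatre termes'' n'est exacte au terme $L^{0,\vee}$ que si $L^0\to L^1\to \mathrm{R}^1f_\ast\mathcal{F}\to 0$ est le d\'ebut d'une r\'esolution localement libre, c'est-\`a-dire si $u$ est injectif, autrement dit si $\mathrm{R}^0f_\ast\mathcal{F}=0$. Sans cette hypoth\`ese, on a seulement une inclusion $\mathcal{E}xt^1(\mathrm{R}^1f_\ast\mathcal{F},\OO_S)\hookrightarrow\mathrm{coker}(\breve u)$, et l'\'egalit\'e peut \'echouer (exemple: $S=\mathrm{Spec}(k)$, $L^0=L^1=k$, $u=0$, qui se r\'ealise pour $\mathcal{F}=\OO_X$ sur une courbe elliptique; alors $\mathrm{coker}(\breve u)=k$ mais $\mathcal{E}xt^1_k(k,k)=0$). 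L'\'enonc\'e de l'article est donc lui-m\^eme impr\'ecis sur ce point; il est toutefois correct dans le seul contexte o\`u l'article l'applique (proposition \ref{H est fibre}), puisque l\`a $\mathrm{R}^0 pr_{J_1,\ast}(B\otimes\mathcal{P})=0$ par le th\'eor\`eme \ref{existence} de Raynaud, ce qui rend $u$ injectif et valide ton argument.
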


On en déduit l'interprétation du fibré projectif (au sens de
Grothendieck, \cite{EGA} EGA II définition 4.1.1)
$\mathbf{P}(\mathcal{R})$ associé à $\mathcal{R}$.

\begin{corollary} Avec les notations précédentes. Le fibré
projectif sur $S$ associé (au-sens de Grothendieck) au faisceau
cohérent $\mathcal{R}$ représente le foncteur
$T':(\mathbf{Sch}/S)\rightarrow \mathfrak{Ens}$, qui associe à un
$S$-schéma $S'$ l'ensemble des classes d'équivalence de couples
$(\mathcal{L}',i')$, où (i) $\mathcal{L}'$ est un faisceau
inversible sur $S'$, (ii) $i':f'^{\ast}\mathcal{L}'\rightarrow
\mathcal{F}'$ est universellement injectif (où $f':X\times_S
S'\rightarrow S'$ le morphisme naturel); (iii) On dit que
$(\mathcal{L}_1', i_1')$ et $(\mathcal{L}_2',i_2')$ sont
équivalents s'il existe un $\OO_{S'}$-morphisme
$u:\mathcal{L}_1'\rightarrow \mathcal{L}_2'$ rendant le diagramme
suivant commutatif
$$
\xymatrix{f'^{\ast}\mathcal{L}_1'\ar[d]_{f'^{\ast}u}\ar[r]^{i_1'}&
\mathcal{F}\otimes_{\OO_S}\OO_{S'}\\
f'^{\ast}\mathcal{L}_2'\ar[ru]_{i_2'}&}.
$$
\end{corollary}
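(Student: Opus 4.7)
The plan is to identify both functors explicitly and match them. By Grothendieck's definition (EGA II 4.2.3), $\mathbf{P}(\mathcal{R}) = \mathrm{Proj}(\mathrm{Sym}\,\mathcal{R})$ represents the functor sending $S'$ to equivalence classes of pairs $(\mathcal{L}',\phi)$, where $\mathcal{L}'$ is an invertible sheaf on $S'$ and $\phi\colon\mathcal{R}_{S'}\twoheadrightarrow\mathcal{L}'$ is a surjection, two such pairs being equivalent when they differ by an isomorphism $\mathcal{L}'_1\simeq\mathcal{L}'_2$ compatible with the $\phi$'s. So it is enough to exhibit, functorially in $S'$, a bijection between such classes $(\mathcal{L}',\phi)$ and the equivalence classes $(\mathcal{L}',i')$ defined in the statement.

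The first step is to upgrade the identification of the previous proposition so as to allow a line bundle twist. Working Zariski-locally on $S'$ where $\mathcal{L}'$ trivialises (and using the projection formula $f'_{*}(\mathcal{F}_{S'}\otimes f'^{*}\mathcal{L}'^{-1})\simeq f'_{*}\mathcal{F}_{S'}\otimes\mathcal{L}'^{-1}$ for the gluing), I obtain a natural isomorphism
\[
\mathrm{Hom}_{\OO_{S'}}(\mathcal{R}_{S'},\mathcal{L}')\;\simeq\;\Gamma(X\times_{S}S',\,\mathcal{F}_{S'}\otimes f'^{*}\mathcal{L}'^{-1})\;\simeq\;\mathrm{Hom}_{\OO_{X\times_{S}S'}}(f'^{*}\mathcal{L}',\,\mathcal{F}_{S'}),
\]
hence a functorial bijection $\phi\leftrightarrow i'$. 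By construction, an isomorphism $\mathcal{L}'_1\simeq\mathcal{L}'_2$ making the $\phi$-diagram commute is exactly one making the $i'$-diagram commute, so the two equivalence relations on both sides correspond.

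The heart of the proof is to verify that, under this bijection, surjectivity of $\phi$ corresponds to universal injectivity of $i'$. By Nakayama, $\phi$ is surjective iff $\phi\otimes k(s')$ is nonzero at every $s'\in S'$, which the bijection translates to $i'$ being nonzero on each fibre $X_{s'}$. Now $f'^{*}\mathcal{L}'$ and $\mathcal{F}_{S'}$ are both flat over $S'$, so universal injectivity of $i'$ is equivalent to $S'$-flatness of $\mathrm{coker}(i')$, which by the local flatness criterion reduces in turn to fibrewise injectivity of $i'$. In the applications of the corollary in the paper, $\mathcal{F}$ is a vector bundle on a smooth curve (typically $\mathcal{F}=B$), so on each fibre the target is torsion-free and any nonzero morphism from a line bundle is automatically injective. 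This last equivalence is the principal obstacle; the remainder of the argument---checking that the bijection of the first step is functorial and respects the equivalence relations---is then formal.
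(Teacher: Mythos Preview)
Your approach is the natural one, and it is essentially what the paper intends: the corollary is stated without proof, introduced only by the sentence ``On en d\'eduit l'interpr\'etation du fibr\'e projectif\dots'' following the representability of $T$ by $V_{\mathcal R}$. Your identification of $\mathrm{Hom}_{\OO_{S'}}(\mathcal R_{S'},\mathcal L')$ with $\mathrm{Hom}_{\OO_{X_{S'}}}(f'^{*}\mathcal L',\mathcal F_{S'})$, obtained by twisting the previous proposition and gluing via the projection formula, together with the matching of the two equivalence relations, is exactly the intended deduction.

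You are also right to isolate the one nonformal step: that surjectivity of $\phi\colon\mathcal R_{S'}\twoheadrightarrow\mathcal L'$ corresponds to universal injectivity of $i'$. In the literal generality of the statement (arbitrary proper $f$, arbitrary $S$-flat coherent $\mathcal F$) this can fail, since a nonzero section of $\mathcal F_{s'}$ need not define an injective map $\OO_{X_{s'}}\hookrightarrow\mathcal F_{s'}$ when $\mathcal F_{s'}$ has torsion. Your remark that the equivalence holds as soon as the fibres $\mathcal F_{s'}$ are torsion-free over integral $X_{s'}$---hence in particular for $\mathcal F=pr_{X_1}^{*}B\otimes\mathcal P$ on a smooth relative curve, which is the only situation in which the corollary is invoked (Proposition~\ref{H est fibre})---is correct and is the implicit hypothesis the paper has in mind. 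So your argument is complete for the paper's purposes; one should just note that the corollary, read in full generality, tacitly assumes this torsion-freeness.
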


Revenons au schéma $\mathcal{H}$ et supposons que $X/S$ admette
une section $\varepsilon\in X(S)$. Appliquons ces considérations
avec $S=J_1$, $X=X_1\times_S J_1$ et $\mathcal{F}=B\times
\mathcal{P}$ où $\mathcal{P}$ est un faisceau de Poincaré sur
$X_1\times_S J_1$, on trouve

\begin{prop} \label{H est fibre}Gardons les notations ci-dessus. Supposons que
$X/S$ admette une section $\varepsilon\in X(S)$.

(1) Le foncteur $T\in (\mathbf{Sch}/J_1)\mapsto
\Gamma(T\times_{J_1}(X_1\times_S J_1),B\otimes \mathcal{P}\otimes
\OO_{T})$ est représenté par le fibré vectoriel sur $J_1$ associé
au faisceau $\mathcal{R}=\mathcal{E}xt^{1}(\QQ,\OO_{J_1})\simeq
\mathcal{H}om(\QQ,\omega_{\Theta})$ ($\S$ \ref{accouplement}).

(2) Le foncteur $\mathcal{H}$, vu comme $J_1$-foncteur via le
morphisme $\alpha:\mathcal{H}\rightarrow J_1$, est représentable
par le fibré projectif associé à $\mathcal{R}$.
\end{prop}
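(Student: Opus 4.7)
Le plan consiste à appliquer la proposition (EGA $\mathrm{III}_{\mathrm{2}}$ 7.7.6) et son corollaire ci-dessus au couple $(X_1\times_S J_1,\ pr_{X_1}^{\ast}B\otimes \mathcal{P})$ vu au-dessus de $J_1$ via $pr_{J_1}$. Je commencerais par vérifier les hypothèses: le morphisme $pr_{J_1}$ est propre puisqu'obtenu par changement de base à partir de $X_1/S$, et le faisceau $\mathcal{F}:=pr_{X_1}^{\ast}B\otimes \mathcal{P}$ est cohérent et plat sur $J_1$ (en fait localement libre sur $X_1\times_S J_1$, car $B$ est localement libre sur $X_1$ et $\mathcal{P}$ est inversible). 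Le théorème \ref{existence} (déjà utilisé pour définir $\QQ$) garantit que $\mathrm{R}pr_{J_1,\ast}\mathcal{F}\simeq \QQ[-1]$ est concentré en degré $1$, de sorte que la proposition fournit immédiatement la représentabilité du foncteur de (1) par le fibré vectoriel $V_{\mathcal{R}}$ avec $\mathcal{R}=\mathcal{E}xt^{1}(\QQ,\OO_{J_1})$. L'isomorphisme $\mathcal{R}\simeq \mathcal{H}om(\QQ,\omega_{\Theta})$ a déjà été établi au $\S$\ref{accouplement} via la dualité de Serre-Grothendieck et le fait que $\QQ$ est de dimension projective $1$ sur $J_1$, ce qui achève (1).

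Pour (2), j'invoquerais directement le corollaire précédent: $\mathbf{P}(\mathcal{R})$ représente, au-dessus de $J_1$, le foncteur qui à $T\rightarrow J_1$ associe l'ensemble des classes d'équivalence de couples $(\mathcal{L}',i')$ avec $\mathcal{L}'$ inversible sur $T$ et $i':pr_T^{\ast}\mathcal{L}'\hookrightarrow pr_{X_1}^{\ast}B\otimes \mathcal{P}_T$ universellement injectif. L'étape clé est d'identifier ce foncteur avec celui représenté par $\mathcal{H}=\mathcal{H}_0$ vu au-dessus de $J_1$ via $\alpha$. Dans un sens, à un couple $(\mathcal{L}',i')$ on associe le sous-faisceau $\mathcal{L}'':=pr_T^{\ast}\mathcal{L}'\otimes \mathcal{P}_T^{-1}\hookrightarrow pr_{X_1}^{\ast}B$ obtenu en tensorisant $i'$ par $\mathcal{P}_T^{-1}$; il est inversible sur $X_{1,T}$ et de degré $0$ sur les fibres de $X_{1,T}/T$ (puisque $\mathcal{P}_T$ l'est), et son conoyau, plat sur $T$, a le polynôme de Hilbert $h_0(x)=(p-2)d'x+g-1$ imposé par la définition de $\mathcal{H}$. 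Réciproquement, partant d'un $T$-point $\mathcal{L}''\hookrightarrow p_1^{\ast}B$ de $\mathcal{H}$ au-dessus d'un $\beta:T\rightarrow J_1$, la propriété universelle du faisceau de Poincaré rigidifié permet d'écrire $\mathcal{L}''\otimes\mathcal{P}_T\simeq pr_T^{\ast}\mathcal{L}'$ avec $\mathcal{L}':=\varepsilon_{1,T}^{\ast}(\mathcal{L}''\otimes\mathcal{P}_T)$ inversible sur $T$, d'où le couple $(\mathcal{L}',i')$ attendu.

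L'obstacle principal sera la compatibilité fine des conventions de rigidification et de signe: il s'agit de s'assurer que le morphisme structural $\mathbf{P}(\mathcal{R})\rightarrow J_1$ ainsi obtenu coïncide bien avec $\alpha$ et non avec sa composition par $[-1]_{J_1}$ (ce qui se règle le cas échéant en remplaçant $\mathcal{P}$ par $\mathcal{P}^{-1}$ dans la définition de $\QQ$, sans effet sur $\Theta$ grâce à sa symétrie), et que la relation d'équivalence décrite dans le corollaire corresponde exactement à l'égalité des sous-faisceaux $\mathcal{L}''$ de $p_1^{\ast}B$, c'est-à-dire à l'égalité des quotients paramétrisés par $\mathcal{H}$. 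Ces vérifications étant essentiellement formelles une fois les conventions fixées, l'énoncé (2) résulte alors de (1) et du corollaire précédent par simple transport de foncteur.
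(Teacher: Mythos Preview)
Your proposal is correct and follows precisely the approach indicated in the paper, which merely says to apply the preceding general proposition and its corollary with $S=J_1$, $X=X_1\times_S J_1$, and $\mathcal{F}=pr_{X_1}^{\ast}B\otimes\mathcal{P}$; you have simply supplied the details the paper omits, in particular the explicit translation between the functor of the corollary and the Quot functor via tensoring by $\mathcal{P}_T^{\pm 1}$. Your worry about a possible $[-1]_{J_1}$-twist is unfounded with the paper's conventions: since $\alpha$ sends a sub-sheaf $L^{-1}\hookrightarrow B$ to the point $[L]\in J_1$ (Remark~\ref{fibre de H}), and $\QQ$ has fibre $H^{1}(X_1,B\otimes L)$ at $[L]$, both $\mathbf{P}(\mathcal{R})$ and $\mathcal{H}$ parametrize lines in $H^{0}(X_1,B\otimes L)$ over $[L]$, so the structural morphisms agree on the nose.
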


\begin{corollary} Le morphisme $\alpha:\mathcal{H}\rightarrow J_1$
se factorise à travers $\Theta$.
\end{corollary}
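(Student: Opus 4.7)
Le plan est de combiner le résultat de représentabilité de la proposition \ref{H est fibre}, qui identifie $\mathcal{H}$ au fibré projectif $\mathbf{P}(\mathcal{R})$ associé à $\mathcal{R}\simeq \mathcal{H}om(\QQ,\omega_\Theta)$, avec l'observation que $\mathcal{R}$ est canoniquement un $\OO_\Theta$-module. Sous cette identification, $\alpha$ coïncide avec le morphisme structurel $\mathbf{P}(\mathcal{R})\to J_1$, et il suffira donc de montrer que ce dernier se factorise à travers la sous-variété fermée $\Theta\hookrightarrow J_1$.

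Pour vérifier que $\mathcal{R}$ est un $\OO_\Theta$-module, on peut utiliser l'une ou l'autre de deux observations équivalentes. D'une part, d'après le lemme \ref{lemme trivial} (1), $\QQ$ est un $\OO_\Theta$-module, et $\omega_\Theta=\OO_{J_1}(\Theta)|_\Theta$ est par construction un faisceau sur $\Theta$; donc $\mathcal{H}om(\QQ,\omega_\Theta)$ est automatiquement un $\OO_\Theta$-module. D'autre part, en partant d'une résolution libre locale $\mathcal{M}^0\xrightarrow{u}\mathcal{M}^1\to\QQ\to 0$ avec les deux termes de même rang (dont l'existence est rappelée au \S \ref{construction, cas lisse}), l'idéal $\mathcal{I}_\Theta=(\det u)$ annule $\QQ$ par la règle de Cramer, et le même argument appliqué à la présentation duale $\mathcal{M}^{1,\vee}\xrightarrow{u^{\vee}}\mathcal{M}^{0,\vee}\to\mathcal{R}\to 0$ montre qu'il annule aussi $\mathcal{R}$.

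La conclusion est alors purement formelle. Puisque $\mathcal{I}_\Theta$ annule $\mathcal{R}$, il annule également toutes les puissances symétriques $\mathrm{Sym}^n_{\OO_{J_1}}(\mathcal{R})$ pour $n\geq 1$, et donc $\mathrm{Sym}_{\OO_{J_1}}(\mathcal{R})$ coïncide en degrés strictement positifs avec $\mathrm{Sym}_{\OO_\Theta}(\mathcal{R})$. Comme la formation du $\mathrm{Proj}$ ne dépend que de la partie de degrés strictement positifs, on obtient naturellement $\mathbf{P}(\mathcal{R})=\mathrm{Proj}(\mathrm{Sym}_{\OO_\Theta}(\mathcal{R}))$ comme $\Theta$-schéma, d'où la factorisation cherchée $\alpha:\mathcal{H}\to\Theta\hookrightarrow J_1$. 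Il n'y a pas d'obstacle sérieux ici: l'énoncé est essentiellement une conséquence directe de la proposition \ref{H est fibre}, le seul point réellement à vérifier étant la $\OO_\Theta$-linéarité de $\mathcal{R}$, qui découle formellement de l'annulation de $\QQ$ par $\mathcal{I}_\Theta$.
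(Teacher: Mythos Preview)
Your argument is correct and follows essentially the same route as the paper: identify $\mathcal{H}$ with the projective bundle on a sheaf supported in $\Theta$ (the paper writes $\QQ$, you write $\mathcal{R}$, but both are $\OO_\Theta$-modules), and conclude that the structural morphism factors through $\Theta$. The only point you skip is the preliminary reduction by \'etale descent to the case where $X/S$ has a section, needed because Proposition~\ref{H est fibre} and the very construction of $\QQ$ assume such a section; since $\Theta$ is defined globally and the factorization can be checked \'etale-locally on $S$, this reduction is harmless.
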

\begin{proof} Par descente étale, on peut supposer que $X/S$
admette une section. On dispose donc du faisceau $\QQ$, et
$\mathcal{H}$ est le fibré projectif associé à $\QQ$. Comme $\QQ$
est à support dans $\Theta$, le morphisme
$\alpha:\mathcal{H}\rightarrow J_1$ se factorise à travers
$\Theta\hookrightarrow J_1$. D'où le résultat.
\end{proof}

\begin{remark} Lorsque $X/S$ n'a pas de section, le foncteur
$\mathcal{Q}uot_{X_{1}/S,B}^{h_d(x)}$ est toujours représentable
par un $J_1$-schéma, mais au lieu de trouver un fibré projectif,
on trouve un fibré de {\textquotedblleft
Severi-Brauer\textquotedblright}.
\end{remark}

\subsubsection{Majoration de la multiplicité des composantes de
$\Theta$.} ~\newline

Le résultat suivant est bien connu.

\begin{lemma}\label{lemme sur effectif} Soient $A$ une variété abélienne sur un corps $k$
algébriquement clos, $\mathcal{L}$ un faisceau inversible sur $A$.
Supposons qu'il existe un entier $n>0$ tel que $\mathcal{L}^{n}$
soit algébriquement équivalent à un faisceau inversible
$\OO_{A}(\Sigma)$ avec $\Sigma$ un diviseur effectif ou nul de
$A$, alors $\mathcal{L}$ est lui-même algébriquement équivalent à
un faisceau inversible $\OO_{A}(\Sigma')$ avec $\Sigma'$ effectif
ou nul.
\end{lemma}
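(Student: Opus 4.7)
Le plan est de ramener l'\'enonc\'e au fait que $\mathcal{L}$ soit nef, puis d'invoquer la th\'eorie des fibr\'es en droites sur les vari\'et\'es ab\'eliennes pour exhiber un repr\'esentant effectif de sa classe d'\'equivalence alg\'ebrique.

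Comme $\mathrm{NS}(A)$ est sans torsion, le cas $\Sigma=0$ entra\^{\i}ne imm\'ediatement $[\mathcal{L}]=0$, et on peut alors prendre $\Sigma'=0$. Supposons d\'esormais $\Sigma$ non nul. Pour toute courbe irr\'eductible $C\subset A$, un translat\'e g\'en\'erique $t_a^{\ast}\Sigma$ ne contient pas $C$ (le cas $\dim A=1$ \'etant trivial puisque $\Sigma$ est alors un $0$-cycle positif), de sorte que
$$
n\,(\mathcal{L}\cdot C)=([\Sigma]\cdot C)=(t_a^{\ast}\Sigma\cdot C)\geq 0,
$$
d'o\`u $(\mathcal{L}\cdot C)\geq 0$. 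Ainsi $\mathcal{L}$ est nef.

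Il reste \`a \'etablir qu'un fibr\'e en droites nef $\mathcal{L}$ sur une vari\'et\'e ab\'elienne est alg\'ebriquement \'equivalent \`a un diviseur effectif. On distingue deux cas selon la caract\'eristique d'Euler-Poincar\'e $\chi(\mathcal{L})=\mathcal{L}^{g}/g!\geq 0$. Si $\chi(\mathcal{L})>0$, alors $\phi_{\mathcal{L}}:A\to A^{\vee}$ est une isog\'enie (non-d\'eg\'en\'erescence); combin\'e au fait que $\mathcal{L}$ est nef, le th\'eor\`eme de Mumford sur l'indice d'un fibr\'e non d\'eg\'en\'er\'e donne que $\mathcal{L}$ est ample, d'o\`u $h^{0}(\mathcal{L})=\chi(\mathcal{L})>0$, et $\mathcal{L}$ est d\'ej\`a effectif. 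Si $\chi(\mathcal{L})=0$, soit $K\subset A$ la composante neutre de $\ker\phi_{\mathcal{L}}$, sous-vari\'et\'e ab\'elienne de dimension positive: $\phi_{\mathcal{L}}$ se factorise alors \`a travers la projection $\pi:A\to B:=A/K$, et $\mathcal{L}$ est alg\'ebriquement \'equivalent \`a $\pi^{\ast}\bar{\mathcal{L}}$ pour un fibr\'e en droites $\bar{\mathcal{L}}$ sur $B$ dont la polarisation $\phi_{\bar{\mathcal{L}}}$ est une isog\'enie. Le fibr\'e $\bar{\mathcal{L}}$ est encore nef (par formule de projection), et le premier cas appliqu\'e sur $B$ livre un diviseur effectif $\bar{\Sigma}$ repr\'esentant $[\bar{\mathcal{L}}]$; alors $\pi^{\ast}\bar{\Sigma}$ est un diviseur effectif de la classe de $\mathcal{L}$.

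Le point d\'elicat est cette derni\`ere dichotomie, qui rel\`eve du th\'eor\`eme de Mumford identifiant l'unique groupe de cohomologie non nul d'un fibr\'e non d\'eg\'en\'er\'e, ainsi que de la descente au quotient $A/K$; la r\'eduction \`a la nef dans les premi\`eres \'etapes est purement formelle.
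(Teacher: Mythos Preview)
Your proof is correct and follows a route that is structurally parallel to the paper's but with a genuinely different technical core. Both arguments eventually pass to the quotient $B=A/K$ by $K=(\ker\phi_{\mathcal{L}})^{\circ}$ and invoke Mumford's index theory on the resulting non-degenerate bundle. The difference lies in how the index is determined. The paper arranges $\mathcal{L}^{n}\simeq\OO_A(\Sigma)$ on the nose, appeals to Kempf's appendix (lemma~1 and theorem~1 of \cite{Kempf}) to get $\mathcal{L}^{n}|_{K}\simeq\OO_K$ and to transfer cohomology along $\pi$, and then uses that $\mathcal{M}$ and $\mathcal{M}^{n}$ share the same index; since $\mathcal{L}^{n}$ visibly has a section, so does $\mathcal{L}$. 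You instead extract nefness of $\mathcal{L}$ from the effectivity of $\Sigma$ via translation, and use the principle that on an abelian variety a nef non-degenerate bundle is ample (equivalently, nef and big implies ample, e.g.\ by Kodaira's lemma plus the fact that effective divisors on abelian varieties are nef). Your approach is arguably more elementary in that it avoids Kempf's results entirely; the paper's approach has the advantage of producing $H^{0}(A,\mathcal{L})\neq 0$ directly rather than for an algebraically equivalent twist. One small remark: the implication ``nef and non-degenerate $\Rightarrow$ ample'' is not quite the literal statement of Mumford's index theorem and deserves a sentence of justification (as above, or via the isogeny splitting $A\sim A_{+}\times A_{-}$ with $\mathcal{L}$ anti-ample on $A_{-}$, which forces $\dim A_{-}=0$ when $\mathcal{L}$ is nef).
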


\begin{proof} Si $\mathcal{L}^{n}$ est algébriquement équivalent à $\OO_{A}$,
$\mathcal{L}$ l'est aussi. On peut supposer que $\Sigma$ est un
diviseur effectif non nul. Comme $\mathrm{Pic}^{\circ}_{A/k}$ est
$n$-divisible, quitte à remplacer $\mathcal{L}$ par un faisceau
inversible algébriquement équivalent à $\mathcal{L}$, on peut
supposer que $\mathcal{L}^{n}\simeq \OO_A(\Sigma)$. Soit $B=\{x\in
A| T_{x}^{\ast}\mathcal{L}\simeq
\mathcal{L}\}^{\circ}_{\mathrm{red}}$, qui est une sous-variété
abélienne de $A$. D'après lemme 1 de \cite{Kempf} (et la preuve du
théorème 1 de \cite{Kempf}), on sait que
$\mathcal{L}^{n}|_{B}\simeq \OO_B$. Comme $B$ est une sous-variété
abélienne de $A$, le morphisme $\Pic^{\circ}_{A/k}\rightarrow
\Pic^{\circ}_{B/k}$ est surjectif. Quitte à tensoriser
$\mathcal{L}$ par un faisceau inversible d'ordre $n$ de $A$, on
peut supposer que $\mathcal{L}|_{B}\simeq \OO_B$. Il y a donc un
faisceau inversible non dégénéré $\mathcal{M}$ sur $C:=A/B$ tel
que $\mathcal{L}=\pi^{\ast}\mathcal{M}$ (ici, $\pi:A\rightarrow C$
est le morphisme canonique). Or $\mathcal{M}$ et $\mathcal{M}^{n}$
ont le même indice (\cite{AV} page $159$, corollaire), on déduit
du théorème 1 de \cite{Kempf} que $H^{i}(A,\mathcal{L})\neq 0$ si
et seulement si $H^{i}(A,\mathcal{L}^{n})\neq 0$. En particulier,
$H^{0}(A,\mathcal{L})\neq 0$. D'où le résultat.

\end{proof}

\begin{prop}\label{polarisation} Gardons les notations dans
la section précédente. Soit $\alpha$ (resp. $\beta$) la classe
dans $\mathrm{NS}(J_1)$ d'un diviseur effectif non trivial de
$J_1$. Notons $\mathrm{cl}(\Theta)$ la classe de $\Theta$ dans
$\mathrm{NS}(J_1)$. Supposons $\mathrm{cl}(\Theta)=a\alpha+b\beta$
avec $a,~b$ deux entiers positifs. Alors (1) $a\leq p-1$ et $b\leq
p-1$; (2) Si $a=p-1$ (resp. si $b=p-1$), alors
$\alpha=\mathrm{cl}(\Theta_{\mathrm{class}})$ (resp.
$\beta=\mathrm{cl}(\Theta_{\mathrm{class}})$) et $b=0$ (resp.
$a=0$), où $\mathrm{cl}(\Theta_{\mathrm{class}})$ désigne la
classe du diviseur thêta classique $\Theta_{\mathrm{class}}$ dans
$\mathrm{NS}(J_1)$.
\end{prop}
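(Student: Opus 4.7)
Le point de d\'epart est la relation $\mathrm{cl}(\Theta)=(p-1)\theta$ dans $\mathrm{NS}(J_1)$, o\`u $\theta:=\mathrm{cl}(\Theta_{\mathrm{class}})$, d\'ej\`a \'etablie par le corollaire qui suit la proposition \ref{theta est sym}. L'hypoth\`ese se r\'e\'ecrit donc
$$
(p-1)\theta=a\alpha+b\beta\quad\text{dans}\ \mathrm{NS}(J_1),
$$
et l'on cherchera \`a exploiter l'effectivit\'e de $\alpha$, $\beta$ et $\theta$. Quitte \`a remplacer $k$ par $\bar k$ --- ce qui ne change rien au calcul dans $\mathrm{NS}$ ---, on dispose d'une caract\'eristique th\'eta, donc d'un diviseur effectif $\Theta_{\mathrm{class}}$ de classe $\theta$. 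Les deux ingr\'edients classiques qui interviendront sont: (i) $\Theta_{\mathrm{class}}$ est g\'eom\'etriquement int\`egre (c'est la r\'ealisation $W_{g-1}$); (ii) $h^{0}(J_1,\OO(\Theta_{\mathrm{class}}))=1$ puisqu'il s'agit d'une polarisation principale. On utilisera aussi que $\mathrm{NS}(J_1)$ est sans torsion et qu'un diviseur effectif non nul n'est jamais alg\'ebriquement \'equivalent \`a z\'ero sur une vari\'et\'e ab\'elienne.

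Le c\oe ur de l'argument consistera \`a montrer que l'hypoth\`ese $a\geq p-1$ force $\alpha=\theta$. On partira de l'identit\'e cl\'e
$$
(p-1)(\theta-\alpha)=(a-p+1)\alpha+b\beta.
$$
Si $(a-p+1,b)\neq (0,0)$, le membre de droite est alg\'ebriquement \'equivalent \`a un diviseur effectif non nul, et le lemme \ref{lemme sur effectif} entra\^\i ne alors que $\theta-\alpha$ est lui-m\^eme alg\'ebriquement \'equivalent \`a un diviseur effectif $D'$. Il existera donc $y\in J_1$ tel que l'on ait une \'equivalence lin\'eaire $\Theta_{\mathrm{class}}-y\sim D_\alpha+D'$ entre diviseurs effectifs (o\`u $D_\alpha$ est un repr\'esentant effectif de $\alpha$). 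Puisque $h^0(\OO(\Theta_{\mathrm{class}}-y))=1$, le translat\'e $\Theta_{\mathrm{class}}-y$ est l'unique diviseur effectif dans sa classe lin\'eaire; son int\'egrit\'e imposera alors $D_\alpha=0$ ou $D'=0$. Le premier cas est exclu par la non-trivialit\'e de $\alpha$, donc $D'=0$ et $\alpha=\theta$ dans $\mathrm{NS}(J_1)$. Dans le cas r\'esiduel $(a-p+1,b)=(0,0)$, on a $(p-1)(\theta-\alpha)=0$ dans $\mathrm{NS}$, qui est sans torsion, d'o\`u directement $\alpha=\theta$.

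Pour conclure, on substituera $\alpha=\theta$ dans la d\'ecomposition initiale pour obtenir $b\beta=(p-1-a)\theta$. Si $a>p-1$, l'intersection avec $\theta^{g-1}$ donne \`a droite $(p-1-a)\,g!<0$ alors qu'\`a gauche $b(\beta\cdot\theta^{g-1})\geq 0$ par effectivit\'e de $\beta$ et amplitude de $\theta$; cette contradiction \'etablira $a\leq p-1$. Lorsque $a=p-1$, on trouvera $b\beta=0$ dans $\mathrm{NS}$, d'o\`u $b=0$ puisque $\beta\neq 0$. L'argument sym\'etrique (\'echange de $\alpha$ et $\beta$) fournira les bornes et cons\'equences analogues pour $b$. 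L'obstacle attendu, en fait tr\`es mineur, sera d'isoler proprement le cas d\'eg\'en\'er\'e $(a-p+1,b)=(0,0)$ pour pouvoir invoquer le lemme \ref{lemme sur effectif} sur un multiple \emph{non nul}; tout le reste consiste en manipulations routini\`eres dans $\mathrm{NS}(J_1)$.
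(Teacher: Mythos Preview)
Your proposal is correct and follows essentially the same route as the paper's proof: both hinge on the identity $(p-1)(\theta-\alpha)=(a-p+1)\alpha+b\beta$, invoke Lemma~\ref{lemme sur effectif} to make $\theta-\alpha$ algebraically equivalent to an effective (or zero) divisor, and then use that a translate of $\Theta_{\mathrm{class}}$ is integral with $h^{0}=1$ to force a contradiction. The only organizational difference is that the paper argues directly by contradiction from $a\geq p$ (and then says ``the same reasoning'' handles $a=p-1$), whereas you first extract the uniform consequence $\alpha=\theta$ under $a\geq p-1$ and only afterwards separate the cases via the intersection with $\theta^{g-1}$; this extra intersection-number step is not needed in the paper's version but is harmless. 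One cosmetic point: in your Case~1 the lemma only yields $D'$ effective \emph{or zero}, and in fact $D'\neq 0$ there (since the right-hand side has nonzero class in $\mathrm{NS}$), so that case is really a direct contradiction rather than a derivation of $\alpha=\theta$---but your downstream logic still goes through either way.
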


\begin{proof} Supposons $a\geq p$, alors $a-p+1\geq 1$.
Comme
$\mathrm{cl}(\Theta)=(p-1)\cdot\mathrm{cl}(\Theta_{\mathrm{class}})$
dans $\mathrm{NS}(J_1)$, on a
$$
(p-1)(\mathrm{cl}(\Theta_{\mathrm{class}})-\alpha)=(a-p+1)\alpha+b\beta.
$$
Comme $a-p+1\geq 1$, $(a-p+1)\alpha+b\beta$ est la classe d'un
diviseur effectif non trivial dans $\mathrm{NS}(J_1)$. Donc
$\mathrm{cl}(\Theta_{\mathrm{class}})-\alpha$ peut être représenté
par $\OO_{J_1}(D)$ avec $D$ un diviseur effectif non trivial de
$J_1$. On en déduit que $\Theta_{\mathrm{class}}$ est
algébriquement équivalent à $D+D'$ avec $D$ et $D'$ deux diviseurs
effectifs non triviaux de $J_1$. Donc un translaté de
$\Theta_{\mathrm{class}}$ s'écirt comme une somme de deux
diviseurs effectifs non triviaux. D'où une contradiction avec le
fait que la polarisation principale $\Theta_{\mathrm{class}}$ pour
une courbe lisse propre et connexe est irréductible (en fait, il
existe un morphisme birationnel
$X^{(g-1)}\rightarrow\Theta_{\mathrm{class}}$, où $X^{(g-1)}$
désigne le $(g-1)$-ieme produit symétrique de $X/k$. Par suite,
$\Theta_{\mathrm{class}}$ est intègre, même normal). Ceci montre
que $a\leq p-1$. De même, on a $b\leq p-1$. Si on suppose $a=p-1$,
le même raisonnement nous montre que $b=0$, la deuxième assertion
s'en déduit.
\end{proof}

\begin{remark}\label{cas p=3 facile} Supposons $p=3$. Alors on déduit de proposition
\ref{polarisation} que ou bien $\Theta$ est réduit, ou bien
$\Theta$ est le double d'un diviseur thêta classique. On verra
plus loin ($\S$ \ref{Theta est reduit}) que pour $g\geq 2$, ce
dernier cas est exclu.
\end{remark}

\subsubsection{{\textquotedblleft La propriété de Dirac\textquotedblright}.}\label{propriete de Dirac}

Dans cette section, $k$ est un corps de caractéristique $p>0$.

\subsubsection*{Généralités.}

Soit $R$ une $k$-algèbre locale de longueur finie. Le \emph{socle}
de $R$ est le plus grand idéal de $A$ annulé par l'idéal maximal
de $R$. L'algèbre $R$ est de Gorenstein si et seulement si son
socle est engendré par un élément (\cite{Eisenbud} proposition
21.5).

Soient $G$ un schéma en groupes fini commutatif sur un corps $k$,
et $R$ son algèbre de fonctions. Alors $R$ est une intersection
complète, et donc est de Gorenstein (\cite{Eisenbud} corollary
21.19). Par exemple, si $k$ est parfait et si $G$ est
infinitésimal, il existe des entiers $n_1,\cdots,n_r\geq 1$ tels
que $R\simeq
k[X_1,\cdots,X_r]/(X_{1}^{p^{n_1}},\cdots,X_{r}^{p^{n_r}})$
(\cite{Demazure-Gabriel} III $\S$ 3, $n^{\circ}$ 6, corollaire
6.3). Le socle de $R$ est alors l'idéal principal engendré par
$\prod_{i=1}^{r}x_{i}^{p^{n_i}-1}\in R$, où $x_i$ est l'image de
$X_i$ dans $R$.

\begin{definition}[\cite{R2}, définition 1.1.1] Une
\emph{{\textquotedblleft fonction de Dirac\textquotedblright}} sur $G$ est un
élément $\lambda\in R$ tel que (i) l'image de $\lambda$ dans
$R_{x}$ soit nulle pour $x\in G(k)-\{0\}$ et (ii) $\lambda$ induit
un élément non nul du socle de $G$ à l'origine.
\end{definition}

\begin{remark} Si $f$ est une fonction de Dirac sur $G$, et si $k'$ est une
extension de $k$, posons $G'=G\times_k k'$, alors $f$ est aussi
une fonction de Dirac sur $G'$.
\end{remark}

Soit $f$ une telle fonction, elle peut être vue comme un morphisme
de schémas $f:G\rightarrow \mathbf{A}^{1}_{k}$. Alors
$Z=V(f)\subset G$ est le plus grand sous-schéma fermé de $G$ où
$f$ s'annule. De plus, il est clair que $Z-\{0\}=G-\{0\}$, et si
$W$ est un sous-schéma fermé de $G$ tel que $Z\subset W\subset G$,
alors ou bien $Z=W$, ou bien $W=G$, c'est-à-dire, $Z$ est un
sous-schéma fermé maximal de $G$ distinct de $G$. De plus, ces
deux propriétés caractérisent $f$ à une multiplication par un
$k-$scalaire près. Si $G=\mathrm{Spec}(R)$ avec $R$ une
$k$-algèbre finie, on a
$\mathrm{dim}_k(R/(f))=\mathrm{dim}_k(R)-1$.

\begin{lemma}\label{critere de Dirac} Soient $G$ un schéma en groupes fini commutatif sur $k$, $f$ une
fonction de Dirac sur $G$, alors le plus grand sous-schéma en
groupes qui laisse invariant $f$ est trivial.
\end{lemma}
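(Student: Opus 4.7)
Le plan consiste � ramener la condition d'invariance de $f$ sous $H$ � une condition alg�brique d'appartenance, puis � comparer les dimensions. Notons $R=\OO(G)$ l'alg�bre de $G$ et $H\subset G$ un sous-sch�ma en groupes qui laisse $f$ invariant; on veut montrer que $\dim_k\OO(H)=1$.

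La premi�re �tape, essentiellement formelle, est de traduire la condition ``$H$ laisse $f$ invariant'' par $f\in R^H$, o� $R^H$ d�signe le sous-anneau des invariants sous l'action de $H$ par translation (i.e.\ $m^{\ast}(f)=\mathrm{pr}_2^{\ast}(f)$ dans $\OO(H)\otimes R$, o� $m:H\times G\to G$ est la multiplication). Par la th�orie de Demazure--Gabriel, on a $R^H=\OO(G/H)$ et le morphisme $\pi:G\to G/H$ est un $H$-torseur fpqc; par cons�quent $R$ est libre de rang $n:=\dim_k\OO(H)$ sur $R^H$ (plat = libre pour des alg�bres artiniennes).

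La deuxi�me �tape est la cl�: puisque $f\in R^H$, l'id�al principal $(f)_R\subset R$ s'obtient par extension des scalaires de $(f)_{R^H}\subset R^H$ � $R$, soit $(f)_R\simeq R\otimes_{R^H}(f)_{R^H}$ par platitude, d'o�
\[
\dim_k(f)_R\;=\;n\cdot\dim_k(f)_{R^H}.
\]
D'autre part, la propri�t� de Dirac ($\dim_k R/(f)=\dim_k R-1$, rappel�e juste avant l'�nonc�) entra�ne $\dim_k(f)_R=1$. Comme $n$ et $\dim_k(f)_{R^H}$ sont des entiers strictement positifs de produit $1$, on a n�cessairement $n=1$, c'est-�-dire $H$ trivial.

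L'unique point non imm�diat du plan est la libert� de $R$ sur $R^H$, qui repose sur le fait classique que pour tout sous-sch�ma en groupes ferm� $H$ d'un sch�ma en groupes fini commutatif $G$, le quotient $G\to G/H$ est un torseur fid�lement plat; tout le reste se r�duit � un calcul �l�mentaire de longueurs, rendu possible par la trivialit� $\dim_k(f)_R=1$ propre aux fonctions de Dirac.
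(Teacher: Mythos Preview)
Your proof is correct and follows essentially the same approach as the paper. Both arguments use the freeness of $R$ over $R^H=\OO(G/H)$ to deduce a divisibility constraint from the numerical fact $\dim_k R/(f)=\dim_k R-1$; the paper phrases this as ``$\dim_k R/(f)$ and $\dim_k R$ are both multiples of $n$, hence so is their difference $1$'', while you phrase it equivalently as ``$\dim_k(f)_R=1$ factors as $n\cdot\dim_k(f)_{R^H}$''.
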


\begin{proof} Soit $N=\mathrm{Spec}(R')$ le plus grand sous-schéma
en groupes de $G$ qui laisse invariant $f$. Alors $Z=V(f)\subset
G$ est invariant par $N$. Donc $\mathrm{dim}_k R/(f)$ est un
multiple de $\mathrm{dim}_k N$. Or
$\mathrm{dim}_k(R/(f))=\mathrm{dim}_k(R)-1$, et
$\mathrm{dim}_k(R)$ et un multiple de $\mathrm{dim}_k R'$. Ce qui
implique que $\mathrm{dim}_k(R')=1$ sauf dans le cas où
$\mathrm{dim}_k(R)=1$, mais la conclusion est triviale dans ce
cas.
\end{proof}

\begin{definition}\label{def de dirac} Soit $A$ une variété semi-abélienne sur un corps $k$
parfait de caractéristique $p>0$. Soit $D$ un diviseur effectif de
$A$. On dit que $D$ satisfait à \emph{la {\textquotedblleft
propriété de Dirac\textquotedblright}} si toute équation locale de
$D|_{\ker(V)}$ est une fonction de Dirac sur $\ker(V)$, noyau du
Verschiebung $V:A\rightarrow A_{-1}$ (où $A_{-1}$ est l'image
réciproque de $A$ par l'inverse de Frobenius absolu sur
$\mathrm{Spec}(k)$).
\end{definition}

\begin{prop} \label{Dirac implique ample}Soient $A$ une variété abélienne sur un corps
$k$ parfait de caractéristique $p>0$, et $D$ un diviseur effectif
de $A$ satisfaisant à la propriété de Dirac, alors $D$ est ample.
\end{prop}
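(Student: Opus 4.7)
Je raisonnerai par l'absurde : supposons $L := \OO_A(D)$ non ample. Comme $D$ est effectif on a $H^0(A,L) \neq 0$, et la non-amplitude �quivaut � $\dim K(L) \geq 1$, o� $K(L) := \ker(\phi_L)$ ; je poserai $B := K(L)^{\circ}_{\mathrm{red}}$, sous-vari�t� ab�lienne non nulle de $A$. Le plan consiste � montrer que l'existence d'un tel $B$ contredit la propri�t� de Dirac, via le lemme \ref{critere de Dirac}.

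L'�tape principale, et seul point vraiment d�licat, sera de v�rifier que $B$ est contenue dans le stabilisateur $H := \{x \in A : T_x^{\ast} D = D\}$ de $D$ lui-m�me (et pas seulement dans $K(L)$). Pour cela, en notant $\alpha : A \times B \to A$ le morphisme d'addition et $p_1, p_2$ les projections canoniques, le fait que $B \subset K(L)$ entra�ne par bascule que $\alpha^{\ast} L \simeq p_1^{\ast} L \otimes p_2^{\ast}(L|_B)$. Si $s \in H^0(A,L)$ est une section d�finissant $D$, alors $\alpha^{\ast} s$ est une section non nulle du membre de droite ; la formule de K�nneth donne alors $H^0(B, L|_B) \neq 0$, et comme $L|_B \in \mathrm{Pic}^{\circ}(B)$, ceci force $L|_B \simeq \OO_B$. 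Le morphisme $\mu : B \to |L|$, $b \mapsto T_b^{\ast} D$, est donc d�crit par des sections globales du fibr� trivial sur $B$, donc constantes ; $\mu$ est alors constant, et l'on en tire $T_b^{\ast} D = D$ pour tout $b \in B$, c'est-�-dire $B \subset H$.

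La conclusion est rapide. Comme $\dim B \geq 1$, $\ker(V_B) = \ker(V_A) \cap B$ est un sous-sch�ma en groupes infinit�simal non trivial (d'ordre $p^{\dim B} \geq p$) contenu dans $\ker(V_A) \cap H$. Par la propri�t� de Dirac, une �quation $f$ de $D|_{\ker(V_A)}$ est une fonction de Dirac sur $\ker(V_A)$ ; or tout $x \in \ker(V_A) \cap H$ op�re par translation sur $\ker(V_A)$ en pr�servant � la fois $\ker(V_A)$ (puisque $x \in \ker(V_A)$) et $D$ (puisque $x \in H$), donc pr�serve $V(f) = D \cap \ker(V_A)$ et laisse $f$ invariante. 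Le lemme \ref{critere de Dirac} impose alors $\ker(V_A) \cap H = 0$, ce qui contredit la non-trivialit� de $\ker(V_B)$ et prouve l'amplitude de $D$.
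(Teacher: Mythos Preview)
Ta preuve est correcte et suit essentiellement la m�me strat�gie que celle de l'article : montrer qu'une sous-vari�t� ab�lienne non triviale $B$ stabilisant $D$ fournirait, via $\ker(V_B) \subset \ker(V_A)$, un sous-groupe non trivial laissant stable $D|_{\ker(V_A)}$, ce qui contredit le lemme~\ref{critere de Dirac}. La seule diff�rence est que l'article invoque directement le fait classique qu'un diviseur effectif $D$ sur une vari�t� ab�lienne est ample si et seulement si la plus grande sous-vari�t� ab�lienne le stabilisant est triviale, tandis que tu red�montres ce point (l'inclusion $K(L)^{\circ}_{\mathrm{red}} \subset H$) par un argument de bascule et K�nneth --- ce qui rend ta preuve plus autonome mais pas fondamentalement diff�rente.

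Une remarque de formulation : � la fin tu dis que la translation par $x \in \ker(V_A) \cap H$ \og laisse $f$ invariante\fg. Ce qui est imm�diat est qu'elle laisse $V(f) = D \cap \ker(V_A)$ invariant (comme sous-sch�ma) ; c'est exactement ce qu'utilise la preuve du lemme~\ref{critere de Dirac}, et c'est aussi ce que fait l'article dans les corollaires voisins, donc ton argument est bien complet.
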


\begin{proof} Soit $A'$ la plus grande sous-variété abélienne de
$A$ qui laisse stable $D$. Il suffit de montrer que $A'$ est
triviale. Comme $\ker(V|_{A'})$ laisse stable $D|_{\ker(V)}$,
$\ker(V|_{A'})$ est trivial, d'après le lemme \ref{critere de
Dirac}.
\end{proof}

\begin{corollary}\label{Dirac implique ordinaire} Soient $A$
une variété abélienne sur un corps parfait $k$, $D$ un diviseur
effectif de $A$ satisfaisant à la propriété de Dirac. Notons $D_0$
la somme des composantes de $D$ passant par $0\in A$. Supposons
que $D_0$ n'est pas trivial. Soit $A_0$ la plus grande
sous-variété laissant stable $D_0$, alors $A_0$ est ordinaire.
\end{corollary}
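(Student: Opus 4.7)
The plan is to reduce to showing that $H := \ker(V|_{A_0})^{\circ}$, the connected component at the origin of the kernel of the Verschiebung of $A_0$, is trivial: since a finite commutative group scheme is étale iff its identity component vanishes, and $A_0$ is ordinary iff $\ker(V|_{A_0})$ is étale, this is exactly what we need. The strategy parallels the proof of Proposition~\ref{Dirac implique ample}, but is now carried out inside the infinitesimal group scheme $G := \ker(V)^{\circ}$, with a Dirac function extracted from the local equation of $D_0$ at the origin rather than from $D$ globally.

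First I would note that $A_0 \subset D_0$ set-theoretically: since $A_0$ stabilizes $D_0$ by translation and $0 \in D_0$, one has $A_0 = A_0 + 0 \subset A_0 + D_0 = D_0$. Next, let $f$ be a local equation of $D$ at $0 \in A$. Writing $D = D_0 + D_1$ where no component of $D_1$ passes through $0$, the divisor $D_1$ is empty in a neighbourhood of $0$, so $f$ is also (up to unit) a local equation of $D_0$ at $0$. By the Dirac property of $D$, the restriction $\bar f$ of $f$ to $\ker(V)$ is a Dirac function on $\ker(V)$; hence its further restriction $\bar f|_G$ is non-zero in the socle of $\mathcal{O}_{G,0}$. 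Since $G$ is infinitesimal, condition (i) in the definition of a Dirac function is vacuous, and $\bar f|_G$ is itself a Dirac function on $G$.

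Finally, the group scheme $H$ is contained both in $A_0$ and in $G$. Since $H \subset A_0$, translation by $H$ preserves $D_0$, hence also the restricted divisor $V(\bar f|_G) = D_0|_G$; since $H \subset G$, translation by $H$ preserves $G$, so $H$ is a sub-group scheme of $G$ leaving invariant the divisor attached to the Dirac function $\bar f|_G$. Applying Lemma~\ref{critere de Dirac} to $G$ and $\bar f|_G$ forces $H$ to be trivial, and therefore $A_0$ is ordinary.

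The main subtlety will be the passage from the Dirac function on $\ker(V)$ to one on its identity component $\ker(V)^{\circ}$: this is where the hypothesis $D_0 \neq 0$ is used, via the identification of the local equations of $D$ and $D_0$ at the origin, guaranteed by the definition of $D_0$ as the sum of the components of $D$ passing through $0$.
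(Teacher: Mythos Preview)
Your proof is correct and follows essentially the same route as the paper: both show that $(\ker(V|_{A_0}))^{\circ}$ is a subgroup of $(\ker V)^{\circ}$ stabilizing $D|_{(\ker V)^{\circ}} = D_0|_{(\ker V)^{\circ}}$, then invoke Lemma~\ref{critere de Dirac} to conclude it is trivial. You spell out more of the details the paper leaves implicit---in particular the verification that the restriction of the Dirac function to the infinitesimal component is again a Dirac function, and the identification of the local equations of $D$ and $D_0$ at the origin---while your observation that $A_0 \subset D_0$ set-theoretically is correct but not actually needed for the argument.
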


\begin{proof}
En fait, $(\ker(V|_{A_0}))^{\circ}$ est un sous-groupe de $(\ker
V)^{\circ}$ qui laisse stable $D|_{\left(\ker
(V)\right)^{\circ}}=D_0|_{\left(\ker(V)\right)^{\circ}}$, d'après
le lemme précédent, on en déduit que $(\ker
(V|_{A_0}))^{\circ}=0$, c'est-à-dire que $\ker( V|_{A_{0}})$ est
étale, donc $A_0$ est ordinaire.
\end{proof}

Revenons au diviseur $\Theta$,

\begin{theorem}[Propriété de Dirac pour $\Theta$, \cite{R2}
$\S$ 1.1] \label{Dirac}Soient $k$ un corps algébriquement clos de
caractéristique $p>0$, $X/k$ une courbe propre semi-stable. Le
diviseur thêta $\Theta\subset J_1$ satisfait à la propriété de
Dirac.
\end{theorem}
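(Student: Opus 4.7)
The plan is to verify directly the two conditions defining a Dirac function on $G:=\ker V$, applied to a local equation $f$ of the Cartier divisor $\Theta|_G$: (i) $f$ vanishes at every non-origin $k$-point of $G$, i.e.\ $G\setminus\{0\}$ is contained set-theoretically in $\Theta$; and (ii) at the origin, $f$ generates a non-zero element of the socle of $\OO_{G,0}$. Since $G$ is a finite Gorenstein group scheme, these conditions combine into the scheme-theoretic statement that $\Theta\cap G$ is an effective Cartier divisor on $G$ of co-length exactly one.

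First I would reduce to the smooth case. For $X/k$ semi-stable with normalization $\pi:\widetilde{X}\to X$, Remark~\ref{theta pour les courbes stables} gives $\Theta=\nu^{*}\widetilde{\Theta}$, where $\nu:J_1\twoheadrightarrow\widetilde{J}_1$ has toric kernel $T$. The Verschiebung is compatible with the extension $0\to T\to J_1\to\widetilde{J}_1\to 0$, and a direct analysis of its restriction to $T$ (where $\ker V_T$ is an infinitesimal diagonalizable subgroup isomorphic to $\mu_p^{c}$) shows that the Dirac property for $\widetilde{X}$ implies the same for $X$. So I would assume $X$ smooth in what follows.

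For $X$ smooth the crucial observation is that $G=\ker V$ parameterizes line bundles $L$ on $X_1$ with $F^{*}L\simeq\OO_X$ (where $F:X\to X_1$ is the relative Frobenius), so the restricted Poincar\'e bundle $\mathcal{P}_G:=\mathcal{P}|_{X_1\times G}$ has canonically trivial pullback by $F\times 1_G$. Tensoring the fundamental sequence $0\to\OO_{X_1}\to F_{*}\OO_X\to B\to 0$ on $X_1\times G$ with $\mathcal{P}_G$ and applying the projection formula yields
$$0\to \mathcal{P}_G\to (F\times 1_G)_{*}\OO_{X\times G}\to pr_{X_1}^{*}B\otimes \mathcal{P}_G\to 0.$$
Applying $\mathrm{R}pr_{G,*}$ and using the K\"unneth identification $\mathrm{R}^{i}pr_{G,*}(F\times 1_G)_{*}\OO_{X\times G}\simeq H^{i}(X,\OO_X)\otimes_k\OO_G$ gives an explicit presentation of $\QQ|_G=\mathrm{R}^{1}pr_{G,*}(pr_{X_1}^{*}B\otimes\mathcal{P}_G)$ as a cokernel involving $H^{1}(X,\OO_X)\otimes\OO_G$ and the direct image of $\mathcal{P}_G$.

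Condition (i) then follows from a simple cohomological count: for any non-origin geometric point $L\in G$ one has $h^{0}(X_1,L)=0$, which forces the map $k=H^{0}(F_{*}\OO_X)\to H^{0}(B\otimes L)$ in the above sequence to be injective, hence $H^{0}(B\otimes L)\neq 0$ and $L\in\Theta$; the same argument applies scheme-theoretically over each non-origin local component of $G$. The main obstacle will be condition (ii): extracting from the presentation of $\QQ|_G$ the zeroth Fitting ideal at the origin and proving that its co-length equals one. I expect this to rely on combining the self-duality of $B$ (Proposition~\ref{autodual}) with the Serre-Grothendieck duality pairing between $\QQ$ and $(-1)^{*}\QQ$ from \S~\ref{accouplement}, reducing ultimately to a Dieudonn\'e-module computation for $J_1[p]$ via the Cartier operator, following Raynaud in \cite{R2}.
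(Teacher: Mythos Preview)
The paper does not itself prove this theorem. It is stated with attribution to \cite{R2}, \S 1.1, and is followed only by the remark that the Dirac property is precisely what underlies the existence of $\Theta$ (Th\'eor\`eme~\ref{existence}, from \cite{R1}). There is therefore no proof in the present text to compare your proposal against.

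Your plan is nonetheless along Raynaud's lines, with one slip in the semi-stable reduction: for a torus the Verschiebung is an \emph{isomorphism} (on $\mathbf{G}_m$ one has $F=[p]$, hence $V=\mathrm{id}$), so $\ker V_T=0$ rather than $\mu_p^{c}$. This makes the reduction cleaner than you suggest: the projection $\nu:J_1\to\widetilde{J}_1$ restricts to an isomorphism $\ker V_{J_1}\xrightarrow{\sim}\ker V_{\widetilde{J}_1}$, and since $\Theta=\nu^{*}\widetilde{\Theta}$ one gets $\Theta|_{\ker V_{J_1}}\simeq\widetilde{\Theta}|_{\ker V_{\widetilde{J}_1}}$ directly. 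In the smooth case your short exact sequence over $G=\ker V$ and the triviality of $(F\times 1_G)^{*}\mathcal{P}_G$ are exactly what \cite{R1}, \S 4 exploits; your argument for condition~(i) is correct, including scheme-theoretically. For condition~(ii), Raynaud's route is a direct computation of $\mathrm{R}pr_{G,*}(\mathcal{P}_G)$ at the origin rather than an appeal to the duality pairing on $\QQ$ or to Dieudonn\'e theory; the path you anticipate would likely be longer than necessary.
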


\begin{remark} Rappelons que c'est en quelque sorte cette  {\textquotedblleft propriété de
Dirac\textquotedblright} qui permet de prouver l'existence de
$\Theta$.
\end{remark}

\subsubsection*{Application au cas de genre $1$.}

Pour les courbes elliptiques, on a les résultats suivants dus à
Tango:

\begin{theorem}[\cite{Tango} corollary 8] Soit $E$ une courbe elliptique sur $k$ algébriquement
clos de caractéristique $p>0$.

(1) Si $E$ est ordinaire, on a un isomorphisme $B\simeq
\oplus_{1}^{p-1}\mathcal{L}_i$, où $\{\mathcal{L}_{i}| 1\leq i\leq
p-1\}\cup \{\mathcal{O}_{E_1}\}=J_{E_1}[p](k)$.

(2) Si $E$ n'est pas ordinaire, $B\simeq F_{p-1}$, où $F_{p-1}$
est le seul fibré semi-stable de rang $p-1$ et de pente $0$ de
$E_1$ admettant une section non-triviale (qui est obtenu comme
extension successive de $\OO_{E_1}$).
\end{theorem}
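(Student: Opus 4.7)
Le plan est de combiner la classification d'Atiyah des fibrés semi-stables de pente $0$ sur une courbe elliptique avec un calcul cohomologique distinguant les cas ordinaire et supersingulier. D'abord, comme $B$ est de rang $p-1$, de pente $0$ et auto-dual d'après la proposition \ref{autodual}, on a $\mu_{\max}(B) = -\mu_{\min}(B^{\vee}) = -\mu_{\min}(B)$, ce qui, combiné à $\mu_{\min}(B)\leq 0\leq \mu_{\max}(B)$, force $B$ à être semi-stable. La classification d'Atiyah fournit donc une décomposition $B = \bigoplus_j F_{r_j}\otimes M_j$ avec $M_j\in \mathrm{Pic}^{0}(E_1)$ et $\sum_j r_j = p-1$. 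Un premier ingrédient est le calcul de $h^{0}(B)$ via la suite exacte longue associée à $0\to \OO_{E_1}\to F_{\ast}\OO_E\to B\to 0$ : la flèche $H^{1}(\OO_{E_1})\to H^{1}(\OO_E)$ est la Frobenius semi-linéaire sur $H^{1}$, inversible si $E$ est ordinaire et nulle si $E$ est supersinguli\`ere, ce qui donne $h^{0}(B)=0$ dans le premier cas et $h^{0}(B)=1$ dans le second.

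Pour (1), j'utiliserais la formule de projection $F_{\ast}\OO_E\otimes L\simeq F_{\ast}(F^{\ast}L)$ et l'identification de $F^{\ast}:\mathrm{Pic}^{0}(E_1)\to \mathrm{Pic}^{0}(E)$ avec le Verschiebung $V:E_1\to E$ (dualité des isogénies jointe à l'autodualité des courbes elliptiques). Pour $E$ ordinaire, $\ker V$ est étale d'ordre $p$, donc $\ker V(k) = J_{E_1}[p](k)$ ; il s'ensuit que $F^{\ast}L\simeq \OO_E$ pour tout $L\in J_{E_1}[p](k)$, et donc que la classe d'isomorphisme de $F_{\ast}\OO_E$ est stable par tensorisation par $J_{E_1}[p](k)$. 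L'action libre de ce groupe cyclique d'ordre $p$ sur $\mathrm{Pic}^{0}(E_1)$ contraint alors la décomposition d'Atiyah de $F_{\ast}\OO_E$ (de rang $p$) à \^etre $\bigoplus_{M\in J_{E_1}[p](k)} M$, tous les indécomposables étant nécessairement de rang $1$. Le quotient par le facteur $\OO_{E_1}$ livre (1).

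Pour (2), le m\^eme calcul fournit $h^{0}(B\otimes L) = h^{0}(F^{\ast}L)$ pour $L\in J_1(k)$ non trivial : ceci vaut $1$ si $L\in \ker V(k)$ et $0$ sinon. Or $\ker V\simeq \alpha_p$ est infinitésimal pour $E$ supersingulière, donc $\ker V(k)=\{0\}$, et le support de $\Theta_B$ se réduit à $\{0\}$ ; comme $\mathrm{cl}(\Theta_B) = (p-1)\mathrm{cl}(\Theta_{\mathrm{class}})$ et $\Theta_{\mathrm{class}}$ est un point sur $E_1$, on a $\deg \Theta_B = p-1$, d'où $\Theta_B = (p-1)[0]$. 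Dans la décomposition d'Atiyah, chaque facteur contribue $r_j [M_j^{-1}]$ à $\Theta_B$ par additivité du déterminant sous les sommes directes, ce qui force $M_j\simeq \OO_{E_1}$ pour tout $j$. Enfin, $h^{0}(F_{r_j}) = 1$ pour chaque $j$, donc $h^{0}(B)$ est égal au nombre de facteurs, nécessairement $1$ ; ainsi $B\simeq F_{p-1}$.

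Le point le plus délicat sera l'identification $F^{\ast}\simeq V$ sur $\mathrm{Pic}^{0}$ et la détermination du type (étale ou infinitésimal) de $\ker V$ selon $E$ ; une fois ces ingrédients en main, la classification d'Atiyah et les calculs cohomologiques livrent rapidement la conclusion dans les deux cas.
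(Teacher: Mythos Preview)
The paper does not give its own proof of this statement: it is quoted verbatim from Tango (\cite{Tango}, corollary 8), and the paper's only contribution in this paragraph is the direct proof of the \emph{corollary} on $\Theta$ for elliptic curves via the Dirac property, bypassing Tango's theorem entirely. So there is no argument in the paper to compare yours against.

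That said, your proof is correct and is essentially the natural one via Atiyah's classification. Two small points are worth tightening. In (1), once you know the indecomposable summands of $F_{\ast}\OO_E$ are permuted freely by $J_{E_1}[p](k)$ and hence are $p$ line bundles forming a single orbit, you should say why that orbit is $J_{E_1}[p](k)$ itself: the inclusion $\OO_{E_1}\hookrightarrow F_{\ast}\OO_E$ is a nonzero map between degree-$0$ bundles, and any nonzero map from $\OO_{E_1}$ to a degree-$0$ line bundle on $E_1$ is an isomorphism, so $\OO_{E_1}$ is one of the summands. In (2), your passage through $\Theta_B$ works but is a detour: from $h^{0}(B\otimes L)=0$ for every nontrivial $L\in\mathrm{Pic}^{0}(E_1)$ and $h^{0}(B)=1$, the Atiyah decomposition $B\simeq\bigoplus_j F_{r_j}\otimes M_j$ already forces $M_j\simeq\OO_{E_1}$ for all $j$ (otherwise $h^{0}(B\otimes M_j^{-1})\geq 1$), and then the number of summands is $h^{0}(B)=1$, giving $B\simeq F_{p-1}$ directly.
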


Comme corollaire, on obtient

\begin{corollary}\label{thetaforell} Soit $E/k$ une courbe elliptique sur $k$
algébriquement clos de caractéristique $p>0$, alors

(1) Si $E$ est ordinaire, $\Theta$ est un schéma réduit concentré
aux points d'ordre $p$ de $\mathrm{Pic}^{0}_{E_1/k}$.

(2) Si $E$ est supersingulière, $\Theta$ est un schéma concentré
au seul point $0\in \mathrm{Pic}^{0}_{E_1/k}$ avec une
multiplicité $p-1$.
\end{corollary}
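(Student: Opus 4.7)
Dans chaque cas, la strat�gie consiste � injecter dans la d�finition de $\Theta$ la forme explicite de $B$ donn�e par le th�or�me de Tango, puis � lire le support et les multiplicit�s directement sur le faisceau $\QQ=\mathrm{R}^{1}pr_{J_1,\ast}(pr_{X_1}^{\ast}B\otimes \mathcal{P}_1)$, dont $\Theta$ est le $0$-i�me id�al de Fitting.

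Dans le cas ordinaire, la d�composition $B\simeq \bigoplus_{i=1}^{p-1}\mathcal{L}_i$ entra�ne $\QQ\simeq \bigoplus_{i=1}^{p-1}\QQ_i$ o� $\QQ_i=\mathrm{R}^{1}pr_{J_1,\ast}(pr_{X_1}^{\ast}\mathcal{L}_i\otimes \mathcal{P}_1)$. Par Riemann--Roch (et dualit� de Serre) sur la courbe elliptique $E_1$, la fibre $H^{1}(E_1,\mathcal{L}_i\otimes L)$ de $\QQ_i$ en $L\in J_1(k)$ est de dimension $1$ si $L\simeq \mathcal{L}_i^{-1}$ et nulle sinon. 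Chaque $\QQ_i$ est donc un gratte-ciel de longueur $1$ au point $\mathcal{L}_i^{-1}\in J_1$, et les $\mathcal{L}_i^{-1}$ parcourent exactement les points non triviaux de $J_1[p](k)$. Comme $J_1$ est lisse de dimension $1$, dans la pr�sentation locale $\mathcal{M}^{0}\xrightarrow{u}\mathcal{M}^{1}$ de $\QQ$ le d�terminant $\det(u)$ s'annule � l'ordre $1$ en chacun de ces points, ce qui donne (1).

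Dans le cas supersingulier, on a $B\simeq F_{p-1}$. On d�montrera par r�currence sur $r$, via la suite exacte d'Atiyah $0\to \OO_{E_1}\to F_r\to F_{r-1}\to 0$ (avec $F_1=\OO_{E_1}$), que $h^{0}(E_1,F_r\otimes L)=0$ pour tout faisceau inversible $L$ de degr� $0$ non trivial : en tensorisant par $L$, la suite exacte longue donne $h^{0}(F_r\otimes L)\leq h^{0}(L)+h^{0}(F_{r-1}\otimes L)=0$. Puisque $\chi(F_r\otimes L)=0$ sur une courbe elliptique, on d�duit aussi $h^{1}(F_r\otimes L)=0$, si bien que $\QQ$ est ensemblistement concentr� en $0\in J_1$. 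D'apr�s le corollaire suivant la proposition \ref{theta est sym}, $\Theta$ est alg�briquement �quivalent � $(p-1)\Theta_{\mathrm{class}}$; et $\Theta_{\mathrm{class}}$ �tant un point r�duit sur la courbe elliptique $J_1$, il vient $\deg\Theta=p-1$, puis $\Theta=(p-1)\cdot[0]$.

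La difficult� principale r�side dans l'annulation $h^{0}(E_1,F_r\otimes L)=0$ pour $L$ non trivial; la r�currence se conclut imm�diatement � partir de la suite d'Atiyah, et le reste de la preuve se r�duit � du calcul standard avec id�aux de Fitting, Riemann--Roch et l'identification classique $\Theta_{\mathrm{class}}=[0]$ sur la courbe elliptique $J_1$.
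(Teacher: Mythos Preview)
Your proof is correct but follows a different route from the paper's. You exploit Tango's explicit description of $B$ directly, computing $\QQ$ (and hence its Fitting ideal) from the decomposition $B\simeq\bigoplus_i\mathcal{L}_i$ in the ordinary case and from Atiyah's filtration of $F_{p-1}$ in the supersingular case. The paper instead gives a proof that bypasses Tango's theorem entirely and relies only on the Dirac property (th\'eor\`eme~\ref{Dirac}): knowing that $\Theta$ has degree $p-1$ and that any local equation of $\Theta$ restricts to a Dirac function on $\ker(V)$, one reads off both statements from the structure of $\ker(V)$ (\'etale of rank $p$ in the ordinary case, $\mathrm{Spec}\,k[t]/(t^p)$ in the supersingular case, where the socle is generated by $t^{p-1}$). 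Your approach is the natural one from the classification viewpoint and is essentially what the phrase ``comme corollaire, on obtient'' invites; the paper's approach is chosen deliberately to illustrate that the Dirac property alone already forces the answer, a theme it exploits repeatedly afterwards.

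One small point: in the ordinary case, the assertion that each $\QQ_i$ is a skyscraper of length~$1$ does not follow from the fibre dimension $h^1(E_1,\mathcal{L}_i\otimes L)$ alone. You can fill this either by noting that the theta divisor of a single degree-$0$ line bundle on an elliptic curve is a translate of $\Theta_{\mathrm{class}}$, hence a reduced point, or simply by the same degree count you use in~(2): $\deg\Theta=p-1$ and the support already consists of $p-1$ distinct points.
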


On donne une démonstration directe suivante, en utilisant la
propriété de Dirac pour $\Theta$.\newline

\begin{proof}[Démonstration de \ref{thetaforell}] On sait que $\Theta$
est un diviseur effectif de degré $p-1$. (1) Si $E$ est ordinaire,
$\ker(V:E_1\rightarrow E)$ est un schéma en groupes fini étale de
degré $p$. Donc $\Theta=\ker(V)-\{0\}$. (2) Si $E$ est
supersingulière, on sait que $\Theta$ passe par l'origine. De
plus, soit $f\in \OO_{J_{E_1},0}$ une fonction locale de $\Theta$
en origine $0\in J_{E_1}$. Sa restriction à $\OO_{\ker(V)}\simeq
k[t]/(t^{p})$ engendre le socle de $k[t]/(t^{p})$ (où $t$ est une
uniformisante de $\OO_{J_{E_1},0}$). Donc $f\equiv t^{p-1}
(\mathrm{mod}~t^{p})$. En particulier, $\Theta$ est de
multiplicité $p-1$ en $0\in J_{E_1}$. Donc $\Theta$ est un schéma
à support le seul point $0\in J_{E_1}$ avec une multiplicité
$p-1$.
\end{proof}

\begin{remark}
On peut décrire la variation de $\Theta$ au voisinage d'un point
supersingulier dans l'espace de modules des courbes elliptiques.
Soit $E$ une $k$-courbe elliptique supersingulière,
$\mathcal{E}/k[[t]]$ une déformation verselle de $E/k$ sur
$k[[t]]$. D'après Igusa (\cite{KatzMazur} cor. 12.4.4), l'algèbre
de Lie de $\mathcal{E}/k[[t]]$ admet une base $e$ telle que
$e^{(p)}=te$. Soit $G=\ker(F:\mathcal{E}\rightarrow
\mathcal{E}_1)$, $G^{\vee}$ son dual de Cartier, alors $G^{\vee}$
admet pour équation $x^{p}-tx=0$. Et on a une suite exacte
$$
0\rightarrow G\rightarrow \mathcal{E}[p]\rightarrow
G^{\vee}\rightarrow 0.
$$
Alors $G^{\vee}\simeq \ker(V)\subset \mathcal{E}_1$ admet pour
équation $x^{p}-tx=0$. Par suite $\Theta$ a pour équation
$x^{p-1}-t=0$.
\end{remark}

\subsubsection{Un énoncé de pureté.}

Le résultat suivant est bien connu pour $\Theta_{\mathrm{class}}$,
et s'étend aux diviseurs thêta de fibrés vectoriels.

\begin{prop} Considérons $X/S$ une courbe lisse admettant
une section $\varepsilon\in X(S)$, avec $S$ un schéma local
noethérien régulier de point fermé $s$. Soit $\xi$ un point
générique du fermé $W$ de $\Theta$ où $\QQ$ n'est pas inversible
(on renvoie à $\S$ \ref{accouplement} pour la définition de
$\QQ$). Alors on a

(1)
$\mathrm{Codim}_{\xi}(W,\Theta)=\mathrm{dim}(\OO_{\Theta,\xi})\leq
3$;

(2) Si de plus $\xi\in \Theta_s$, et $\Theta_s$ est
géométriquement réduit en $\xi$, on a
$\mathrm{dim}(\OO_{\Theta,\xi})\geq 1$;

(3) Si de plus $\xi\in \Theta_s$, et $\Theta_s$ est
géométriquement normal en $\xi$, on a
$\mathrm{dim}(\OO_{\Theta,\xi})\geq 2$.
\end{prop}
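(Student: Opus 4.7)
The plan is to exploit the local presentation of $\QQ$: locally on $J_1$, the complex computing $\QQ$ is represented by a square map $u:\OO_{J_1}^{n}\to\OO_{J_1}^{n}$ between free modules of equal rank, so that $\QQ\simeq\mathrm{coker}(u)$, $\Theta=V(\det u)$, and the non-invertibility locus $W$ equals $V(\mathrm{Fitt}_{1}(\QQ))=V(I_{n-1}(u))$, cut out by the $(n-1)\times(n-1)$ minors of $u$.

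For (1), Macaulay's codimension bound for determinantal ideals applied in the regular scheme $J_1$ yields $\mathrm{ht}(I_{n-1}(u))\leq (n-(n-1)+1)^{2}=4$, hence $\mathrm{codim}_{\Theta}(W)\leq 3$. The identity $\mathrm{Codim}_{\xi}(W,\Theta)=\dim\OO_{\Theta,\xi}$ follows from the Cohen--Macaulay (hence equidimensional and catenary) character of $\OO_{\Theta}$, which is a Cartier divisor in the regular $J_1$.

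For (2) and (3), the first reduction is that $\Theta/S$, being a relative Cartier divisor, is $S$-flat, so for $\xi\in\Theta_{s}$ one has $\dim\OO_{\Theta,\xi}=\dim\OO_{\Theta_{s},\xi}+\dim\OO_{S,s}$. This already settles (2) when $\dim S\geq 1$ and (3) when $\dim S\geq 2$. In the remaining cases I argue by contradiction, based on the key observation that as soon as $\OO_{\Theta,\xi}$ is a field or a DVR, $\QQ$ is automatically invertible at $\xi$. Indeed, by Lemme \ref{lemme trivial}(3), $\QQ$ admits a length-$1$ free resolution over $\OO_{J_1}$, so Auslander--Buchsbaum makes $\QQ_{\xi}$ a maximal Cohen--Macaulay $\OO_{\Theta,\xi}$-module: over a field it is a vector space whose dimension equals the multiplicity of $\Theta$ at $\xi$ (which is $1$ by reducedness), and over a DVR it is free of rank equal to the generic rank of $\QQ$ along the component through $\xi$, which is again $1$ since $\OO_{\Theta,\xi}$ being a domain forces $\Theta$ to be reduced at its generic point (apply Smith normal form to $u$ over the DVR $\OO_{J_1,\eta}$).

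It remains to verify that $\OO_{\Theta,\xi}$ is indeed a field or a DVR in the outstanding sub-cases. In (2) with $\dim S=0$ and $\dim\OO_{\Theta,\xi}=0$, reducedness of $\Theta=\Theta_{s}$ at the generic point $\xi$ makes $\OO_{\Theta,\xi}$ a field. In (3) with $\dim\OO_{\Theta,\xi}=1$: for $\dim S=0$, the $R_{1}$ condition on the normal $\OO_{\Theta,\xi}$ produces a DVR directly; for $\dim S=1$, geometric normality of $\Theta_{s}$ at the $0$-dimensional point $\xi$ forces $\OO_{\Theta_{s},\xi}$ to be a field, and flatness of $\OO_{\Theta,\xi}$ over the DVR $\OO_{S,s}$ then implies that the maximal ideal of $\OO_{\Theta,\xi}$ is generated by a uniformizer of $\OO_{S,s}$, making $\OO_{\Theta,\xi}$ a one-dimensional Noetherian local ring with principal maximal ideal, hence a DVR. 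This last step, the mixed-characteristic case of (3), is the main obstacle: one must combine flatness of $\Theta/S$ with the normality hypothesis on the fibre to upgrade purely fibre-theoretic information into a structural statement on the total space before the $\QQ$-invertibility argument kicks in.
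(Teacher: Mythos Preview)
Your proof is correct. The main divergence from the paper is in part~(1): the paper argues by contradiction via Grothendieck's parafactoriality theorem (SGA2, Expos\'e~XI, th\'eor\`eme~3.13) --- if $\dim\OO_{\Theta,\xi}\geq 4$ then the complete-intersection punctured spectrum has trivial Picard group, so the line bundle $\QQ|_{\mathrm{Spec}(\OO_{\Theta,\xi})\setminus\{\xi\}}$ extends (using $\mathrm{depth}\,\QQ_\xi\geq 2$), forcing $\QQ$ invertible at $\xi$. Your route through the Eagon--Northcott height bound $\mathrm{ht}(I_{n-1}(u))\leq 4$ in the regular local ring $\OO_{J_1,\xi}$ is more elementary and avoids the SGA2 machinery entirely; it works because after localising at the minimal prime $\xi$ of $W$, the determinantal ideal becomes $\mathfrak{m}_\xi$-primary, so its height coincides with $\dim\OO_{J_1,\xi}$. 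For (2) and~(3) the two proofs are close in spirit: the paper skips your case analysis on $\dim S$ and argues uniformly on the special fibre, invoking the crit\`ere de lissit\'e (Corollaire~\ref{critere de lissite}, used in the trivial direction) to get $\dim_{k(\xi)}\QQ(\xi)=1$ directly once $\Theta_{\bar s}$ is smooth at $\bar\xi$; your hands-on analysis via Smith normal form over the DVR $\OO_{J_1,\eta}$ yields the same conclusion without forward-referencing chapter~2. (A terminological quibble: your ``mixed-characteristic case'' is not mixed characteristic --- everything is in characteristic $p$ --- but the argument you give there is fine.)
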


\begin{proof} (1) On raisonne par l'absurde. Supposons que
$\mathrm{dim}(\OO_{\Theta,\xi})\geq 4$. Comme $S$ est régulier,
$\QQ$ est de Cohen-Macaulay (lemme \ref{lemme trivial}). Donc
$\mathrm{prof}_{\xi}(\QQ)\geq 4\geq 2$. De plus, comme $S$ est
régulier, $\OO_{\Theta,\xi}$ est un anneau local d'intersection
complète de dimension $\geq 4$. D'après le théorème
d'Auslander-Buchsbaum (SGA $2$, exposé XI, théorème $3.13(2)$), le
couple $(\mathrm{Spec}(\OO_{\Theta,\xi}),\xi)$ est parafactoriel.
Donc le faisceau inversible
$\QQ|_{\mathrm{Spec}(\OO_{\Theta,\xi})-\{\xi\}}$ est trivial.
Compte tenu de la propriété de profondeur de $\QQ$ en $\xi$, on en
déduit que $\QQ$ est inversible en $\xi$, d'où une contradiction.
Montrons (2), supposons que $\mathrm{dim}(\OO_{\Theta,\xi})<1$,
alors $\mathrm{dim}(\OO_{\Theta_s,\xi})\leq
\mathrm{dim}(\OO_{\Theta,\xi})=0$. Donc $\xi$ est un point
générique de $\Theta_s$. Or $\Theta_s$ est géométriquement réduit,
$\QQ_s$ est génériquement inversible sur $\Theta_s$ en $\xi$ en
vertu du critère de lissité (lemme \ref{critere de lissite},
utilisé dans le sens trivial) dans chapitre $2$, donc $\QQ$ est
inversible sur $\Theta$ en $\xi$ (\ref{lemme trivial}), d'où une
contradiction. Un raisonnement similaire nous montre (3). Ceci
finit la preuve.
\end{proof}

\begin{prop}\label{purete} Soit $X/k$ une courbe lisse de genre $g\geq 4$.
Alors $\QQ$ n'est pas inversible. En particulier, $\Theta$ est
singulier, et le lieu singulier est de codimension $\leq 3$ dans
$\Theta$.
\end{prop}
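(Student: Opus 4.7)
L'id\'ee est de d\'eduire l'\'enonc\'e de la non-vacuit\'e du lieu $W=\{x\in\Theta\mid \QQ_x\text{ n'est pas inversible}\}$. En effet, \'ecrivant localement une pr\'esentation $\mathcal{M}^0\xrightarrow{u}\mathcal{M}^1\to\QQ\to 0$ avec $u$ carr\'ee de taille $n$ (\S~\ref{construction, cas lisse}), $\Theta$ est d\'efini par $\det u$; si le rang de $u(x)$ chute d'au moins $2$ en un point $x\in\Theta$, tous les $(n-1)\times(n-1)$-mineurs de $u$ s'annulent en $x$, donc $\det u\in\mathfrak{m}_x^2$ et $\Theta$ est singulier en $x$. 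Ainsi $W\subseteq\mathrm{Sing}(\Theta)$, et combin\'ee avec la proposition de puret\'e ci-dessus, qui garantit $\mathrm{codim}_{\Theta}W\leq 3$, la non-vacuit\'e de $W$ donnera \`a la fois la singularit\'e de $\Theta$ et la borne annonc\'ee sur la codimension du lieu singulier.

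Pour produire un point de $W$, je traduis la question en termes du sch\'ema de Hilbert $\mathcal{H}=\mathcal{H}_0$. Par la Proposition~\ref{H est fibre}, $\mathcal{H}=\mathbf{P}(\mathcal{R})$ est un fibr\'e projectif au-dessus de $\Theta$ avec $\mathcal{R}=\mathcal{H}om(\QQ,\omega_\Theta)\simeq(-1)^\ast\QQ$, de fibre $\mathbf{P}(H^0(X_1,B\otimes L))$ au-dessus de $[L]\in J_1$. Via changement de base, dualit\'e de Serre et auto-dualit\'e de $B$ (Proposition~\ref{autodual}), $\QQ(x)\simeq H^0(X_1,B\otimes L_x^{-1})^\vee$, de sorte que $\QQ$ est inversible partout sur $\Theta$ si et seulement si $\mathcal{H}\to\Theta$ est un isomorphisme, autrement dit $\dim\mathcal{H}=g-1$. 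Prouver $W\neq\emptyset$ revient donc \`a exhiber un $L\in J_1$ avec $h^0(X_1,B\otimes L)\geq 2$, ou encore \`a \'etablir $\dim\mathcal{H}\geq g$.

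Pour ce faire, j'interpr\`ete $W_2:=\{L\in J_1\mid h^0(B\otimes L)\geq 2\}$ comme un lieu de d\'eg\'en\'erescence: c'est le z\'ero du premier id\'eal de Fitting de $\QQ$, i.e.\ le lieu o\`u $u$ chute en rang d'au moins $2$, de codimension attendue $4$ dans $J_1$. Sa classe dans l'anneau de Chow est donn\'ee par la formule d\'eterminantale de Thom--Porteous en les classes de Chern de $\mathcal{M}^1-\mathcal{M}^0$, lesquelles se calculent via Grothendieck--Riemann--Roch \`a partir du caract\`ere de Chern de $B$ et du fibr\'e de Poincar\'e normalis\'e. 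Pour $g\geq 4$, la dimension attendue $g-4\geq 0$ est positive ou nulle, et la non-nullit\'e de cette classe se v\'erifie en utilisant l'\'equivalence alg\'ebrique $\mathrm{cl}(\Theta)=(p-1)\mathrm{cl}(\Theta_{\mathrm{class}})$ (Proposition~\ref{theta est sym}), qui ram\`ene la question \`a une combinaison non triviale de puissances de la polarisation principale de $J_1$, non nulle par le th\'eor\`eme de Riemann--Roch sur une vari\'et\'e ab\'elienne principalement polaris\'ee.

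La v\'erification effective de cette non-nullit\'e constitue le point d\'elicat. Dans le cas $p=2$, o\`u $B$ est une th\^eta-caract\'eristique et $\mathcal{H}_0$ s'identifie via Abel--Jacobi au produit sym\'etrique $X_1^{(g-1)}$, on s'en dispense en invoquant le th\'eor\`eme classique de Riemann sur l'existence d'un $g^1_{g-1}$ sur une courbe de genre $\geq 4$; en caract\'eristique $p>2$, on proc\`ede par l'argument intersectionnel esquiss\'e ci-dessus, qui constitue la difficult\'e technique principale.
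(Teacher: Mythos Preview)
Votre r\'eduction \`a $W\neq\emptyset$ est correcte et rejoint celle de l'article. En revanche, la preuve de $W\neq\emptyset$ que vous esquissez est d'une nature diff\'erente. L'article passe d'abord au cas g\'en\'erique (par sp\'ecialisation), puis raisonne par l'absurde~: si $\QQ$ \'etait inversible sur $\Theta$, le th\'eor\`eme de Lefschetz de SGA2 (applicable car $g\geq 4$) donnerait que $\QQ$ provient d'un faisceau inversible $\mathcal{L}$ sur $J_1$; comme $\mathrm{NS}(J_1)=\mathbf{Z}\cdot[\Theta_{\mathrm{class}}]$ pour la courbe g\'en\'erique (Mori), l'identit\'e $(-1)^\ast\QQ\otimes\QQ\simeq\omega_\Theta$ force $\mathcal{L}$ \`a \^etre alg\'ebriquement \'equivalent \`a $\OO_{J_1}(\tfrac{p-1}{2}\Theta_{\mathrm{class}})$; un calcul de Fourier--Mukai ($\mathcal{F}(\QQ)\simeq (-1)^\ast B[1-g]$, donc $\mathcal{F}^0(\QQ)=0$) fournit alors la contradiction. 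Votre approche par Thom--Porteous est plus directe et vaut pour toute courbe, sans r\'eduction au g\'en\'erique ni recours \`a SGA2 ou \`a Mori.

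Elle comporte toutefois une lacune~: vous d\'esignez vous-m\^eme la non-nullit\'e de la classe de Porteous comme le point d\'elicat et la difficult\'e technique principale, mais vous ne l'\'etablissez pas. Or c'est pr\'ecis\'ement ce calcul qui ferme l'argument, et il est en fait abordable. Par Grothendieck--Riemann--Roch appliqu\'e \`a $pr_{J_1}$, on obtient $\mathrm{ch}(\mathcal{M}^1-\mathcal{M}^0)=(p-1)\theta$ o\`u $\theta=c_1(\Theta_{\mathrm{class}})$, d'o\`u $c_t(\mathcal{M}^1-\mathcal{M}^0)=\exp\bigl((p-1)\theta\, t\bigr)$ et donc $c_i=(p-1)^i\theta^i/i!$ dans $\mathrm{CH}^*(J_1)_{\mathbf{Q}}$. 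La classe de Porteous du lieu $\{h^0\geq 2\}$ vaut alors
\[
c_2^2-c_1c_3=\frac{(p-1)^4}{12}\,\theta^4,
\]
non nulle dans $\mathrm{CH}^4(J_1)_{\mathbf{Q}}$ puisque $\theta^4\cdot\theta^{g-4}=\theta^g$ est de degr\'e $g!\neq 0$ (c'est exactement ici qu'intervient $g\geq 4$). La th\'eorie des classes de d\'eg\'en\'erescence de Fulton permet alors de conclure que $W_2\neq\emptyset$. Sans ce calcul explicite, votre argument reste \`a l'\'etat d'esquisse; votre remarque pour $p=2$ via l'existence d'un $g^1_{g-1}$ est en revanche compl\`ete et correcte.
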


\begin{proof} D'après la proposition précédente et le critère de lissité (\ref{critere de lissite}),
il suffit de montrer que $W\neq \emptyset$, et il suffit donc de
traiter le cas où $X/k$ est générique. On raisonne par l'absurde.
Sinon $\QQ$ est inversible sur $\Theta$. Comme $g\geq 4$, le
morphisme naturel $\mathrm{Pic}(J_1)\rightarrow
\mathrm{Pic}(\Theta)$ est un isomorphisme (\cite{SGA2} Exposé XII,
corollaire 3.16). Donc $\QQ$ provient d'un faisceau inversible
$\mathcal{L}$ sur $J_1$. Comme $(-1_{\Theta})^{\ast}\QQ\otimes
\QQ\simeq \omega_{\Theta}$, on obtient que
$(-1)^{\ast}\mathcal{L}\otimes \mathcal{L}\simeq
\OO_{J_1}(\Theta)$. Par ailleurs, on a la suite exacte suivante:
$$
0\rightarrow \OO_{J_1}(-\Theta)\rightarrow \OO_{J_1}\rightarrow
\OO_{\Theta}\rightarrow 0,
$$
d'où la suite exacte
$$
0\rightarrow \OO_{J_1}(-\Theta)\otimes \mathcal{L}\rightarrow
\mathcal{L}\rightarrow \mathcal{Q}\rightarrow 0. ~~~~~~~~(1)
$$
Comme $X$ est une courbe générique, le groupe de Néron-Severi
$\mathrm{NS}(J_1)\simeq \mathbf{Z}$ avec la classe du diviseur
thêta classique $\Theta_{\mathrm{class}}$ comme générateur
(\cite{Mori}). Supposons que $\mathcal{L}$ est algébriquement
équivalent à $\OO_{J_1}(r\cdot \Theta_{\mathrm{class}})$ pour un
entier $r\in \mathbf{Z}$, alors $(-1)^{\ast}\mathcal{L}$ est
algébriquement équivalent à $\mathcal{O}_{J_1}(r\cdot
\Theta_{\mathrm{class}})$, donc $2r=p-1$, d'où une contradiction
dans le cas où $p=2$. Supposons désormais $p\geq 3$, alors
$r=(p-1)/2$. Donc $(\OO_{J_1}(-\Theta)\otimes \mathcal{L})^{-1}$
et $\mathcal{L}$ sont amples. Appliquant la transformation de
Fourier-Mukai à la suite exacte (1) ci-dessus, et puisque $J_1$
est de dimension $g\geq 4$,  on obtient une suite exacte:
$$
0=\mathcal{F}^{0}(\OO_{J_1}(-\Theta)\otimes
\mathcal{L})\rightarrow \mathcal{F}^{0} \mathcal{L}\rightarrow
\mathcal{F}^0 \QQ\rightarrow
\mathcal{F}^{1}(\OO_{J_1}(-\Theta)\otimes \mathcal{L})=0.
$$
En particulier, $\mathcal{F}^{0}(\QQ)\neq 0$. Or par la propriété
de la transformation de Fourier-Mukai (\cite{Mukai} theorem 2.2),
$\mathcal{F}\circ \mathcal{F} (B)=\mathcal{F}(\QQ)[-1]\simeq
(-1)^{\ast} B[-g]$. Donc $\mathcal{F}(\QQ)=(-1)^{\ast}B[1-g]$.
Puisque $g\geq 4$, $\mathcal{F}^{0}(\QQ)=0$, d'où une
contradiction.
\end{proof}

\subsubsection{Les sous-fibrés de $B$.}

Cette section est consacrée à l'étude des sous-fibrés de $B$, en
particulier, on donne une nouvelle preuve du fait que $B$ est un
fibré vectoriel stable lorsque $g\geq 2$, qui est plus directe que
celle de Joshi (\cite{Joshi}).

Dans cette section, $k$ désigne un corps algébriquement clos de
caractéristique $p>0$.

\subsubsection*{La stabilité de $B$ pour $g\geq 2$.}

L'existence du diviseur thêta entraîne déjà la semi-stabilité du
fibré vectoriel $B$ pour une courbe $X/k$ lisse de genre $g\geq
1$. De plus, si le diviseur thêta $\Theta$ (pour $B$) est
géométriquement intègre, le fibré $B$ est alors stable. Mais, on
ignore si $\Theta$ est intègre en toute généralité. K. Joshi a
donné une preuve directe de la stabilité de $B$ dans \cite{Joshi}.
On donne ci-après une nouvelle preuve, qui fournit aussi une borne
naturelle sur la pente des sous-fibrés de $B$.

\begin{lemma} Soient $K$ un corps de caractéristique $p$,
$K'$ une extension radicielle de degré $p$. Posons $E=K'/K$, qui
est donc un $K$-espace vectoriel de dimension $p-1$. Soit $I$ le
noyau du morphisme $K'\otimes_K K'\rightarrow K'$ défini par
$a\otimes b\mapsto ab$.

(1): L'application $K$-linéaire composée $\phi:I\rightarrow
K'\otimes_K K'\rightarrow E\otimes_K K'$ est bijective (ici, pour
$\alpha=a\otimes b\in K'\otimes_K K'$, et $\lambda\in K'$,
$\alpha\cdot \lambda:=a\otimes \lambda b$).

(2): Notons $\mathrm{Fil}_{i}=\phi(I^{i+1})$ pour
$i=0,\cdots,p-1$, les $\mathrm{Fil}_{i}$ forment une filtration
décroissante de $E\otimes_K K'$:
$$
0=\mathrm{Fil}_{p-1}\subset \mathrm{Fil}_{p-2}\subset\cdots\subset
\mathrm{Fil}_1\subset \mathrm{Fil}_0=E\otimes_K K'.
$$
Soit $V\subset E$ un $K$-sous-espace de $E$ de dimension $n$,
alors le morphisme naturel $V\otimes_K K'\rightarrow E\otimes_K
K'\rightarrow  (E\otimes_{K}K')/\mathrm{Fil}_{n}$ est injectif.
\end{lemma}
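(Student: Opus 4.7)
The plan is to work in an explicit model. Fix $K' = K(\alpha)$ with $\alpha^p = a \in K$, set $\beta = \alpha \otimes 1 - 1 \otimes \alpha$, and observe that $\beta^p = \alpha^p \otimes 1 - 1 \otimes \alpha^p = 0$ by Frobenius. Viewing $K' \otimes_K K'$ as a $K'$-algebra via the right factor $1 \otimes K'$, the element $\alpha \otimes 1 = \alpha + \beta$ generates it, so $K' \otimes_K K' \simeq K'[\beta]/(\beta^p)$ as $K'$-algebras. Under this identification $I = (\beta)$ is a free $K'$-module with basis $\beta, \beta^2, \ldots, \beta^{p-1}$, and the decomposition $K' \otimes_K K' = (1 \otimes K') \oplus I$ immediately gives (1): the projection to $E \otimes_K K' = (K' \otimes_K K')/(1 \otimes K')$ restricts to an isomorphism on $I$. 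Moreover $\mathrm{Fil}_i = \phi(I^{i+1})$ corresponds to $(\beta^{i+1})$, free of $K'$-rank $p-1-i$, so the filtration is strictly decreasing with $\mathrm{Fil}_{p-1} = 0$.

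Turning to the injectivity in (2), both source and target have $K'$-dimension $n$, so injectivity is equivalent to bijectivity. Choose a $K$-basis $a_1, \ldots, a_n$ of a lift $\tilde V \subset K'$ of $V$ and write $a_i = P_i(\alpha)$ with $P_i \in K[X]_{<p}$. The Hasse--Taylor expansion in $K'[\beta]/(\beta^p)$ yields
$$ \phi^{-1}(\bar a_i \otimes 1) = P_i(\alpha + \beta) - P_i(\alpha) = \sum_{j=1}^{p-1} \frac{\partial^j a_i}{j!}\, \beta^j, $$
where $\partial: K' \to K'$ is the unique $K$-derivation with $\partial \alpha = 1$, well-defined since $\partial(\alpha^p) = 0$ in characteristic $p$. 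Reducing modulo $\mathrm{Fil}_n$ and using the $K'$-basis $\beta, \ldots, \beta^n$ on the target, the map is represented by the $n \times n$ matrix $M = (\partial^j a_i / j!)_{1 \leq i,j \leq n}$, and it remains to show $\det M \neq 0$ in $K'$.

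Factoring $1/j!$ from column $j$ (valid because $j! \in K^\times$ for $j \leq n \leq p-1$), we obtain $\det M = (\prod_{j=1}^n j!)^{-1}\, W(b_1, \ldots, b_n)$, where $b_i := \partial a_i$ and $W$ is the Wronskian $\det(\partial^{j-1} b_i)$ relative to $\partial$. By the classical Wronskian theorem for fields with derivation, $W \neq 0$ if and only if the $b_i$ are linearly independent over the field of constants $\ker \partial = K$. To check this independence: if $\sum c_i b_i = 0$ with $c_i \in K$, then $\partial(\sum c_i a_i) = 0$, so $\sum c_i a_i \in K$, whence $\sum c_i \bar a_i = 0$ in $V$, forcing $c_i = 0$ since the $\bar a_i$ form a $K$-basis of $V$. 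The main subtle point is thus the Wronskian theorem in characteristic $p$; it goes through in our regime $n \leq p-1$ by the standard induction on $n$, using that a linear ODE of order $n-1$ admits at most $n-1$ solutions in $K'$ linearly independent over the constants.
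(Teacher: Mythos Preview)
Your proof is correct. Both arguments share the same skeleton: pick a generator $\alpha$ of $K'$, identify $K'\otimes_K K'$ with $K'[\beta]/(\beta^p)$ so that $I=(\beta)$ and $\mathrm{Fil}_i\leftrightarrow(\beta^{i+1})$, write elements of $V$ as polynomials in $\alpha$, Taylor-expand, and reduce via the derivation $\partial=d/d\alpha$. The difference is in how the final step is organized. The paper runs a direct induction on $n$: it arranges the lifts $\tilde e_i=P_i(\alpha)$ to have pairwise distinct degrees, applies $\partial\otimes K'$ to drop from $I^{n+1}$ to $I^{n}$, and observes that the $\overline{P_i'(\alpha)}$ again span an $(n-1)$-dimensional subspace, so the induction hypothesis applies. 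You instead read off the matrix $(\partial^{j}a_i/j!)_{i,j}$ of the map in the $\beta$-basis and recognize its determinant as (a unit times) the Wronskian $W(\partial a_1,\dots,\partial a_n)$; the injectivity then follows from the classical Wronskian criterion over the differential field $(K',\partial)$, whose constants are exactly $K$.

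Your packaging is a bit cleaner and makes the linear-algebra content transparent. One remark: your caveat about characteristic $p$ is harmless but slightly misplaced. The usual proof of the Wronskian criterion (induct on $n$; if $W(b_1,\dots,b_{n-1})\neq 0$ solve uniquely for $c_i\in K'$ with $\partial^{j}b_n=\sum c_i\partial^{j}b_i$, differentiate to get $\partial c_i=0$) is characteristic-free for any differential field; the only thing that matters is that $\ker\partial=K$, which you checked. The restriction $n\le p-1$ is needed for the factorials $j!$ to be invertible, not for the Wronskian step itself. Phrasing the justification via ``an ODE of order $n-1$ has at most $n-1$ independent solutions'' risks circularity, since that fact is usually proved \emph{from} the Wronskian criterion; the inductive argument above avoids this.
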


\begin{proof} (1): Soient $e\in E=K'/K$ un élément non nul de
$E$, $\widetilde{e}$ un relèvement de $e$ dans $K'$. Considérons
$\widetilde{e}\otimes 1-1\otimes \widetilde{e}\in I$, alors
$\phi(\widetilde{e}\otimes 1-1\otimes \widetilde{e})=e\otimes 1$.
Donc $\phi$ est surjectif. Or, vu comme $K'$-espace vectoriel, $I$
et $E\otimes_K K'$ sont de dimension $p-1$, ce qui implique que le
morphisme surjectif $\phi$ est en fait bijectif.

(2) Clairement, les $\mathrm{Fil}_{i}$ ($1\leq i\leq p$) forment
une filtration décroissante de $E\otimes_K K'$. Il reste à
vérifier la deuxième assertion. On raisonne par récurrence sur
$n$. Commençons avec le cas où $n=1$. Soient $e\in E=K'/K$ un
élément non nul, $\tilde{e}\in K'$ un relèvement de $e$ dans $K'$.
Alors il engendre $K'$ comme $K$-algèbre. Comme
$\phi^{-1}(\tilde{e}\otimes 1)=\tilde{e}\otimes 1-1\otimes
\tilde{e}$, il s'agit de montrer $\tilde{e}\notin I^{2} $. Puisque
$\tilde{e}$ est un générateur de $K'$ comme $K$-algèbre, les
éléments $\tilde{e}^{i}\otimes 1-1\otimes \tilde{e}^{i} ~(1\leq
i\leq p-1)$ forment une $K'$ base de $I$. Si $\tilde{e}\otimes
1-1\otimes \tilde{e}\in I^{2}$, ceci implique que
$\tilde{e}^{i}\otimes -1\otimes \tilde{e}^{i}\in (\tilde{e}\otimes
1-1\otimes \tilde{e})(K'\otimes_K K')\subset I^{2}$. On obtient
donc $I^{2}=I$, ce qui implique que $I=0$, d'où une contradiction.
Supposons ensuite que l'assertion est vérifiée pour $0\leq n\leq
r$ avec $r\geq 0$ un entier. Soit $\{e_1,\cdots,e_{r+1}\}$ une
$K$-base de $E$. Soit $\widetilde{e_i}$ un relèvement de $e_i$
dans $K'$. Comme $K'$ est une extension de degré $p$,
$\widetilde{e_{1}}$ est un générateur de $K'$ comme $K$-algèbre.
Posons $t=\widetilde{e_1}$. Les $\widetilde{e_i}$ pour $i>1$
peuvent donc s'écrire sous la forme $\widetilde{e_i}=P_i(t)$ avec
$P_{i}(T)\in K[T]$ un polynôme de degré $2\leq d_i \leq p-1$.
Quitte à changer la base $\{e_i\}$ de $V$, on peut supposer que
les $d_i$ sont deux à deux distincts. Soit
$e=\sum_{i=1}^{r+1}e_i\lambda_i$ un élément de $E\otimes_K K'$ qui
est dans le noyau du morphisme naturel $V\otimes_K K'\rightarrow
(E\otimes_K K')/\mathrm{Fil}_{r+1}$. Par hypothèse de récurrence,
on peut supposer que $\lambda_1\neq 0$.
$\phi^{-1}(e)=\sum_{i=1}^{r+1}(e_i\otimes 1-1\otimes
e_i)\lambda_i$ peut donc s'écrire sous la forme suivante:
$$
\phi^{-1}(e)=\sum_{j=1}^{m}\left(\prod_{k=1}^{r+2}(a_k\otimes
1-1\otimes a_k)\right)\cdot \mu_j
$$
avec des $a_k\in K'$ Soit $\partial:K'\rightarrow K'$ la
$K$-dérivation telle que $\partial(t)=1$. Alors $\partial$ s'étend
par extension des scalaires en une $K'$-dérivation
$\partial\otimes K': K'\otimes_K K'\rightarrow K'\otimes_K K'$. En
appliquant $\partial\otimes K'$, on obtient que $(\partial\otimes
K')(e)\in I^{r+1}$. D'ailleurs, $(\partial\otimes
K')(e)=\lambda_1+\sum_{i=2}^{r+1}P'_{i}(t)\otimes \lambda_i$. Donc
$\phi((\partial\otimes
K')(e))=\sum_{i=2}^{r+1}\overline{P'_{i}(t)}\otimes \lambda_i$ (où
$\overline{P_{i}'(t)}$ désigne la réduction de $P_{i}'(t)$ modulo
$K$). Comme les $P'_{i}[T]$ sont des polynômes de degré $1\leq
d_i'\leq p-1$, et les $d_i'$ sont deux à deux distincts, les
$\overline{P_{i}'[t]}$ (pour $2\leq i\leq r+1$) engendrent un
$K$-sous-espace de dimension $r$. On a donc
$\lambda_2=\cdots=\lambda_{r+1}=0$ en vertu de l'hypothèse de
récurrence. Finalement $e_1\in \mathrm{Fil}_{r+1}\subset
\mathrm{Fil}_1$, d'où une contradiction.
\end{proof}
\vspace{2mm}

Soient $k$ un corps algébriquement clos de caractéristique $p>0$,
$X$ une courbe propre lisse connexe sur $k$. Comme d'habitude,
soit $X_1$ l'image réciproque de $X$ par le Frobenius absolu de
$\mathrm{Spec}(k)$, $F:X\rightarrow X_1$ le Frobenius relatif.
Alors $F^{\ast}B$ admet une filtration canonique décroissante.
Rappelons la construction donnée dans \cite{R1}. Considérons le
diagramme cartésien:
$$
\xymatrix{X\times_{X_1}X\ar[r]^>>>>>{p_1}\ar[d]^{p_2} & X\ar[d]^{F}\\
X\ar[r]^{F} & X_1}.
$$
L'image réciproque de la suite exacte
$$
0\rightarrow \OO_{X_1}\rightarrow F_{\ast}\OO_{X}\rightarrow
B\rightarrow 0
$$
par $F$ donne la suite exacte suivante:
$$
0\rightarrow \OO_{X}\rightarrow
p_{1,\ast}(\OO_{X\times_{X_1}X})\rightarrow F^{\ast}B\rightarrow
0.
$$
Mais $X\times_{X_1}X$ est le voisinage infinitésimal d'ordre $p-1$
de la diagonale de $X\times_k X$ et $F^{\ast}(B)$ est l'idéal
augmentation définissant la diagonale $X$. Par suite, $F^{\ast}B$
admet une filtration canonique par des sous-fibrés sur $X$:
$$
0=B_p\subset B_{p-1}\subset \cdots\subset B_1\subset B_0=B,
$$
de quotients successifs $B_i/B_{i+1}\simeq
\Omega^{\otimes^i}_{X_1/k}$. En particulier, si $g\geq 2$,
$F^{\ast}B$ n'est pas semi-stable.

\begin{theorem} Soit $V\subset B$ un sous-faisceau cohérent de rang $r$ de
$B$. Alors $F^{\ast}V$ se plonge dans $F^{\ast}B/B_r$ via le
morphisme naturel $F^{\ast}V\rightarrow F^{\ast}B\rightarrow
F^{\ast}B/B_{r}$. Et par suite, $\mathrm{deg}(V)\leq
\frac{r(r+1)(g-1)}{p}$.
\end{theorem}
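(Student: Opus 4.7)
Le plan est de r\'eduire l'\'enonc\'e au lemme pr\'ec\'edent par un passage au point g\'en\'erique, puis de conclure par un calcul de degr\'es. Tout d'abord, comme $V\subset B$ est un sous-faisceau coh\'erent du fibr\'e $B$ sur la courbe lisse $X_1$, il est sans torsion, donc localement libre de rang $r$; par platitude de $F$, $F^{\ast}V$ est localement libre de rang $r$ sur $X$ et se plonge dans $F^{\ast}B$. Le noyau de la compos\'ee $\psi:F^{\ast}V\to F^{\ast}B\to F^{\ast}B/B_r$, \'etant un sous-faisceau du faisceau sans torsion $F^{\ast}V$, est lui-m\^eme sans torsion. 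Il suffit donc de v\'erifier l'injectivit\'e de $\psi$ au point g\'en\'erique $\widetilde{\eta}$ de $X$ pour l'avoir globalement.

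Pour cette injectivit\'e g\'en\'erique, je proposerais d'appliquer le lemme pr\'ec\'edent. Posons $K=k(X_1)$ et $K'=k(X)$, extension radicielle de degr\'e $p$ de $K$. Alors $B_{\eta}=K'/K$ s'identifie \`a l'espace $E$ du lemme, $V_{\eta}\subset E$ est un $K$-sous-espace de dimension $r$, $(F^{\ast}V)_{\widetilde{\eta}}=V_{\eta}\otimes_{K}K'$ et $(F^{\ast}B)_{\widetilde{\eta}}=E\otimes_{K}K'$. La filtration $B_{\bullet}$ sur $F^{\ast}B$ provient des puissances de l'id\'eal diagonal $\mathcal{I}$ de $X$ dans $X\times_{X_1}X$; au point g\'en\'erique, $\mathcal{I}$ s'identifie \`a $I=\ker(K'\otimes_{K}K'\to K')$, et $(B_r)_{\widetilde{\eta}}$ correspond, via l'isomorphisme $\phi$ du lemme, au sous-espace $\mathrm{Fil}_r=\phi(I^{r+1})$. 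Le lemme appliqu\'e avec $n=r$ donne alors l'injectivit\'e cherch\'ee de $V_{\eta}\otimes_{K}K'\to (E\otimes_{K}K')/\mathrm{Fil}_r$, d'o\`u le plongement global $F^{\ast}V\hookrightarrow F^{\ast}B/B_r$.

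Pour la majoration, on compare les degr\'es. Comme $F$ est de degr\'e $p$, $\deg(F^{\ast}V)=p\deg(V)$. Par ailleurs $F^{\ast}B/B_r\simeq \mathcal{I}/\mathcal{I}^{r+1}$ h\'erite d'une filtration dont les gradu\'es successifs sont les $\mathcal{I}^{i}/\mathcal{I}^{i+1}\simeq \Omega^{\otimes i}_{X_1/k}$ pour $i=1,\ldots,r$, chacun de degr\'e $i(2g-2)$. Donc $\deg(F^{\ast}B/B_r)=\sum_{i=1}^{r}i(2g-2)=r(r+1)(g-1)$, et l'injection donne $p\deg(V)\leq r(r+1)(g-1)$, d'o\`u la majoration annonc\'ee. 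L'obstacle principal est le lemme combinatoire pr\'ec\'edent (dont la preuve par r\'ecurrence utilise une d\'erivation); une fois celui-ci acquis, la partie g\'eom\'etrique se r\'eduit \`a une identification formelle au point g\'en\'erique suivie d'un calcul de degr\'es imm\'ediat.
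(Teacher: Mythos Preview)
Ta preuve est correcte et suit essentiellement la m\^eme d\'emarche que l'article: r\'eduction au point g\'en\'erique (le noyau \'etant sans torsion), identification avec le lemme pr\'ec\'edent via $K=k(X_1)$, $K'=k(X)$, $E=K'/K$, puis calcul de degr\'es. L'article est plus laconique (il se contente d'invoquer le lemme et laisse le calcul de degr\'es implicite), tandis que tu d\'etailles l'identification $(B_r)_{\tilde\eta}=\mathrm{Fil}_r$ et la somme $\sum_{i=1}^{r} i(2g-2)=r(r+1)(g-1)$; c'est bienvenu. Une remarque mineure: les gradu\'es $\mathcal{I}^{i}/\mathcal{I}^{i+1}$ sont des faisceaux sur $X$, donc $\Omega^{\otimes i}_{X/k}$ plut\^ot que $\Omega^{\otimes i}_{X_1/k}$ (l'article commet la m\^eme impr\'ecision), mais cela ne change rien au degr\'e.
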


\begin{proof} Comme $X$ est une courbe réduite, et que $V$ est un fibré
sur $X_1$, il suffit de montrer que le morphisme
$F^{\ast}V\rightarrow F^{\ast}B/B_{n}$ est injectif en point
générique de $X_1$, ce qui résulte du lemme précédent.
\end{proof}

\begin{corollary} Le fibré vectoriel $B$ est stable pour $g\geq 2$.
\end{corollary}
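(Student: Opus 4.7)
Le plan est d'appliquer directement la borne fournie par le th�or�me pr�c�dent. Rappelons d'abord que $B$ est un fibr� vectoriel sur $X_1$ de rang $p-1$ et de degr� $(p-1)(g-1)$, et donc de pente $\mu(B)=g-1$. Pour �tablir que $B$ est stable, il suffit de v�rifier que pour tout sous-faisceau coh�rent $V\subset B$ de rang $r$ avec $1\leq r\leq p-2$, on a l'in�galit� stricte $\mu(V)<g-1$.

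Je commencerais par observer que, quitte � remplacer $V$ par sa saturation dans $B$ (ce qui ne peut qu'augmenter $\mathrm{deg}(V)$, et par suite $\mu(V)$), on peut supposer que $V$ est un sous-fibr� satur� de $B$, de rang $r$ avec $1\leq r\leq p-2$. Le th�or�me pr�c�dent fournit alors la majoration
$$
\mu(V)\;=\;\frac{\mathrm{deg}(V)}{r}\;\leq\;\frac{(r+1)(g-1)}{p}.
$$
Comme $r\leq p-2$, on a $r+1\leq p-1<p$, et puisque $g\geq 2$ entra�ne $g-1\geq 1>0$, on en d�duit
$$
\mu(V)\;\leq\;\frac{(p-1)(g-1)}{p}\;<\;g-1\;=\;\mu(B),
$$
ce qui donne pr�cis�ment la stabilit� stricte voulue.

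Toute la substance du r�sultat est en fait d�j� encod�e dans le th�or�me pr�c�dent, lequel s'appuie sur la filtration canonique de $F^{\ast}B$ et sur le lemme alg�brique concernant les extensions radicielles de degr� $p$. Une fois cette borne acquise, le corollaire d�coule d'un simple calcul num�rique, sans obstacle technique suppl�mentaire~; notons en particulier que la stricte in�galit� $r+1<p$ (et non seulement $r+1\leq p$) est cruciale, et qu'elle est assur�e exactement par la restriction naturelle $r\leq p-2$ sur le rang d'un sous-faisceau propre.
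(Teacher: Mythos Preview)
Votre d�monstration est correcte et suit essentiellement la m�me voie que celle du papier: application directe de la borne $\mu(V)\leq \frac{(r+1)(g-1)}{p}$ du th�or�me pr�c�dent, puis usage de $r\leq p-2$ et $g\geq 2$ pour conclure. La seule diff�rence est que vous passez explicitement � la saturation de $V$, alors que le th�or�me s'applique d�j� � tout sous-faisceau coh�rent; cette r�duction est donc superflue (bien qu'inoffensive).
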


\begin{proof} Soit $V$ un sous-fibré de rang $r\leq p-2$ de $B$. D'après
le théorème ci-dessus, on sait que $\mathrm{deg}(V)\leq
\frac{r(r+1)(g-1)}{p}$. Notons $\lambda(V)$ la pente de $V$. Comme
$g\geq 2$, on a $\lambda(V)\leq \frac{(r+1)(g-1)}{p}\leq
\frac{(p-1)(g-1)}{p}< g-1$. D'où le résultat.
\end{proof}

\subsubsection*{Utilisation des puissances symétriques.}\label{puissance sym}

Soit $L\subset B$ un sous-faisceau inversible de $B$, on va
construire des sous-faisceaux de $B$ de rang supérieur à l'aide de
la structure multiplicative de $F_{\ast}(\OO_{X})$. Soit
$\pi:F_{\ast}(\OO_X)\rightarrow B$ le morphisme de quotient, et
soit $E=\pi^{-1}(L)\subset F_{\ast}(\OO_X)$, qui est un
sous-faisceau de rang $2$ de $F_{\ast}(\OO_X)$ sur $X_1$. On a une
suite exacte
\begin{eqnarray}\label{fibre E} 0\rightarrow \OO_{X_1}\rightarrow
E\rightarrow L\rightarrow 0
\end{eqnarray}
où $E$ est un fibré vectoriel de rang $2$ sur $X_1$. Or
$F_{\ast}(\OO_{X})$ est un faisceau en algèbres, la structure
multiplicative nous donne une flèche
$\mathrm{Sym}^{i}_{\OO_{X_1}}E\rightarrow F_{\ast}(\OO_{X})$, qui
est une injection pour $0\leq i\leq p-1$. Notons $E_i$ l'image de
$\mathrm{Sym}^{i}_{\OO_{X}}E$ dans $F_{\ast}(\OO_X)$, qui est donc
un sous-faisceau de rang $i+1$. D'où une filtration de
$F_{\ast}(\OO_X)$ par ses sous-faisceaux
$$
0\subset E_0\subset E_1\subset E_2\subset \cdots\subset
E_{p-1}\subset F_{\ast}(\OO_{X})
$$
de quotients successifs $E_{i}/E_{i-1}\simeq
L^{\otimes^{i}}~(0\leq i\leq p-1)$. Par passage au quotient, on
obtient des sous-faisceaux $\mathcal{F}_i:=\pi(E_i)$ de $B$ et une
filtration
$$
0=\mathcal{F}_0\subset \mathcal{F}_1(=L)\subset
\mathcal{F}_2\subset \cdots\subset \mathcal{F}_{p-1}\subset B
$$
de quotients successifs $\mathcal{F}_{i}/\mathcal{F}_{i-1}\simeq
L^{\otimes^{i}} (1\leq i\leq p-1)$

\subsubsection*{Courbes de Tango.}

On va donner des exemples de courbes qui montrent que la borne
donnée ci-dessus pour les degrés des sous-faisceaux de $B$ est la
meilleure possible.
\begin{definition} Une courbe lisse connexe $X/k$ est dite \emph{de
Tango} s'il existe un sous-fibré inversible $L$ de $B$ avec
$p\cdot \mathrm{deg}(L)=2g-2$.
\end{definition}

\begin{lemma}[\cite{Tango} lemme 12]Pour qu'il existe un diviseur $D$ sur $X_1$
tel que $\mathcal{L}=\OO_{X_1}(D)$ soit un sous-faisceau de $B$,
il faut et il suffit qu'il existe $f\in K$ avec
$\mathrm{div}(df)\geq pD$ et $df\neq 0$ (où $K$ est le corps de
fonctions de $X_1$).
\end{lemma}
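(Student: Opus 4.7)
The plan is to translate the statement via the adjunction $F^* \dashv F_*$ applied to the inclusion $B \hookrightarrow F_*\Omega^1_{X/k}$, and then to invoke Cartier's classical description of the kernel of the Cartier operator on the function field.

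First, I would use adjunction: $\Hom_{\OO_{X_1}}(\OO_{X_1}(D), F_*\Omega^1_{X/k}) = \Hom_{\OO_X}(F^*\OO_{X_1}(D), \Omega^1_{X/k})$. Since the Frobenius multiplies the coefficients of Cartier divisors by $p$, we have $F^*\OO_{X_1}(D) = \OO_X(pD)$, so this Hom space is identified with meromorphic differentials $\omega \in \Omega^1_K$ satisfying $\mathrm{div}(\omega) \geq pD$ (here $K$ is the common function field, identifying the closed points of $X$ and $X_1$). Any nonzero morphism from the line bundle $\OO_{X_1}(D)$ to the torsion-free sheaf $F_*\Omega^1_{X/k}$ is automatically an injection, so giving a sub-line bundle is the same as giving such a nonzero $\omega$.

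Next, imposing that the embedding factor through $B = \ker(C)$ amounts to the equation $C(\omega) = 0$ at the level of the generic stalk, i.e.\ in $\Omega^1_K$. Cartier's classical theorem on the function field $K$ states that the kernel of $C$ acting on $\Omega^1_K$ is exactly $dK$. Hence $C(\omega) = 0$ if and only if $\omega = df$ for some $f \in K$, in which case the nonzero condition on $\omega$ becomes simply $df \neq 0$.

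Combining the two steps yields the stated criterion: $\OO_{X_1}(D)$ embeds in $B$ iff there exists $f \in K$ with $df \neq 0$ and $\mathrm{div}(df) \geq pD$. There is no genuine obstacle; the only points requiring care are the identification $F^*D = pD$ coming from the Frobenius, and the compatible identification of the function fields $K(X)$ and $K(X_1)$ (via perfection of $k$) which allows the conclusion to be stated in terms of an $f$ in the function field of $X_1$, as in the lemma.
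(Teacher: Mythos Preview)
Your argument is correct and is essentially the standard proof: use the adjunction $F^* \dashv F_*$ to turn a morphism $\OO_{X_1}(D)\to F_*\Omega^1_{X/k}$ into a meromorphic differential on $X$ with $\mathrm{div}(\omega)\geq pD$, and then invoke Cartier's theorem identifying $\ker(C)$ on the function field with exact differentials. Note, however, that the paper does not give its own proof of this lemma; it simply cites Tango \cite{Tango}, lemme~12, and uses the statement. Your proof matches the classical argument (which is what Tango does), so there is nothing to compare beyond observing that you have supplied what the paper leaves to the reference.
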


\begin{corollary}Une courbe $X/k$ est de Tango si et seulement s'il
existe un diviseur $D$ sur $X_1$ et une $f\in K$ avec
$\mathrm{div}(df)=pD$ et $df\neq 0$.
\end{corollary}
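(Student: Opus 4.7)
Le plan est d'appliquer directement le lemme pr�c�dent et de faire un simple d�compte des degr�s. Supposons d'abord que $X$ est de Tango: il existe un sous-fibr� inversible $L\subset B$ avec $p\cdot \mathrm{deg}(L)=2g-2$. �crivons $L=\OO_{X_1}(D)$ pour un diviseur $D$ sur $X_1$. D'apr�s le lemme pr�c�dent, il existe $f\in K$ avec $df\neq 0$ et $\mathrm{div}(df)\geq pD$. Or $df$ est une $1$-forme diff�rentielle non nulle sur $X_1$, donc $\mathrm{deg}(\mathrm{div}(df))=2g-2$. La condition de Tango donne $\mathrm{deg}(pD)=p\cdot\mathrm{deg}(D)=2g-2=\mathrm{deg}(\mathrm{div}(df))$, et comme $\mathrm{div}(df)-pD$ est un diviseur effectif de degr� $0$, il est nul, d'o� $\mathrm{div}(df)=pD$.

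R�ciproquement, si l'on dispose de $f\in K$ et d'un diviseur $D$ sur $X_1$ avec $df\neq 0$ et $\mathrm{div}(df)=pD$, alors en particulier $\mathrm{div}(df)\geq pD$, ce qui, d'apr�s le lemme, entra�ne que $L:=\OO_{X_1}(D)$ est un sous-faisceau de $B$. En prenant les degr�s dans l'�galit� $\mathrm{div}(df)=pD$, on obtient $2g-2=p\cdot\mathrm{deg}(D)=p\cdot\mathrm{deg}(L)$, ce qui est exactement la condition d�finissant une courbe de Tango.

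Je n'attends pas d'obstacle particulier: tout revient � observer que l'in�galit� de diviseurs $\mathrm{div}(df)\geq pD$ du lemme devient une �galit� exactement lorsque la borne sup�rieure naturelle $p\cdot\mathrm{deg}(D)\leq 2g-2$ est atteinte, ce qui est pr�cis�ment la d�finition d'une courbe de Tango.
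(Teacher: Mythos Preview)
La preuve est correcte et suit exactement la d�marche implicite du texte: le corollaire est �nonc� sans d�monstration, pr�cis�ment parce qu'il r�sulte imm�diatement du lemme pr�c�dent par le d�compte de degr�s que vous effectuez (l'in�galit� $\mathrm{div}(df)\geq pD$ devient une �galit� si et seulement si les deux membres ont m�me degr�, soit $2g-2=p\cdot\mathrm{deg}(D)$).
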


\begin{example}\label{courbe de tango} On se place sur un corps $k$ de
caractéristique $3$. Soit $\mathbf{P}=\mathbf{P}_{k}^{1}$ la
droite projective et $\mathbf{A}=\mathbf{A}_{k}^{1}$ l'ouvert
droite affine de coordonnée $t$. Soit $d>0$ un entier pair.
Considérons un polynôme de degré $3d$ en $t$, $F_{3d}$, dont la
dérivée $F_{3d}'$ a $3d-2$ racines simples. Par exemple $t^{3d} -
t^{3d-1} +t$. Soient $a_i$ les racines simples de $F_{3d}'$. Soit
$X$ la courbe hyperelliptique, d'équation  $y^2=F_{3d}'$. Notons
$\pi:X\rightarrow \mathbf{P}$ le revêtement double et $b_i$ le
point de $X$ au-dessus de $a_i$. Comme $d$ est pair, $\{b_i\}$ est
l'ensemble des points de ramification de $\pi$, et $X$ est de
genre $(3d-4)/2$. Soit $G_d$ un polynôme en $t$ de degré $d$,
disons à racines simples $c_j$ distinctes des $a_i$ (mais ce n'est
pas important). La fonction rationnelle $f:= F_{3d}/ (G_d)^{3}$ a
pour différentielle $df$, de diviseur $\sum_ia_{i}-3\sum_j c_j$.
Calculée sur $X$, $df$ a pour diviseur $3(\sum_ib_i-
\pi^{-1}(\sum_j c_j))$ (rappelons que $\pi$ n'est pas ramifié
au-dessus de $\infty\in \mathbf{P}$). Notons $M:=\OO_X(\sum_i b_i-
\pi^{-1}(\sum_j c_j))$. Alors $X$ est de Tango avec $M$ le
faisceau inversible sur $X$ tel que $p\cdot\mathrm{deg}(M)=2g-2$.
On renvoie à \cite{R4} pour un autre exemple de courbe de Tango
qui utilise le revêtement d'Artin-Schreier de la droite affine.
\end{example}


\begin{remark} Soient $X/k$ une courbe de Tango, $L$ un sous-fibré
inversible de $B$ tel que $p\cdot\mathrm{deg}(L)=2g-2$. En
utilisant les puissances symétriques, on voit qu'il existe des
sous-faisceaux $\mathcal{F}_{i}~(0\leq i\leq p-1)$ de $B$:
$$ 0=\mathcal{F}_0\subset
\mathcal{F}_1(=L)\subset \mathcal{F}_2\subset \cdots\subset
\mathcal{F}_{p-1}\subset B
$$
de quotients successifs $\mathcal{F}_{i}/\mathcal{F}_{i-1}\simeq
L^{\otimes^{i}} (1\leq i\leq p-1)$, les $\mathcal{F}_i$ sont de
rang $i$ et de degré
$\mathrm{deg}(\mathcal{F}_{i})=\frac{i(i+1)\mathrm{deg}(L)}{2p}=
\frac{i(i+1)(g-1)}{p}$.
\end{remark}

\subsubsection*{Utilisation d'un résultat d'Hirschowitz.}

Rappelons le théorème suivant dû à Hirschowitz.
\begin{theorem}[Hirschowitz, \cite{Hirschowitz} théorème 4.4] Soient $X/k$ une courbe lisse connexe de genre $g\geq
2$,  $F$ un fibré vectoriel stable générique de pente $g-1$. Alors
$F$ contient un sous-faisceau inversible de degré $\delta$ dès que
$\delta\leq (g-1)/(p-1)$.
\end{theorem}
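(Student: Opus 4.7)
Le plan est de d�duire l'�nonc� d'une analyse dimensionnelle du sch�ma Quot relatif au-dessus de l'espace de modules $\mathcal{M}$ des fibr�s vectoriels stables de rang $p-1$ et pente $g-1$ sur $X$, en exhibant un point concret o� la projection vers $\mathcal{M}$ est lisse et surjective. Posons $r=p-1$ et $d=(g-1)(p-1)$, et fixons un entier $\delta\leq (g-1)/(p-1)$.

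D'abord je calculerais la dimension attendue du sch�ma $\mathrm{Quot}^{\delta}(F)$ param�trant les sous-faisceaux inversibles de degr� $\delta$ d'un fibr� stable $F$ fix�. Pour $L\hookrightarrow F$ de conoyau $G$ de rang $r-1$ et de degr� $d-\delta$, la dimension attendue du Quot en ce point est $\chi(L^{\vee}\otimes G)=d-r\delta+(r-1)(1-g)=(g-1)-(p-1)\delta$, qui est positive ou nulle exactement sous l'hypoth�se de l'�nonc�. La dimension attendue de $\mathcal{Q}\rightarrow \mathcal{M}$ relative est donc $(g-1)-(p-1)\delta\geq 0$.

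Ensuite, pour montrer la surjectivit�, je construirais un point de $\mathcal{Q}$ au-dessus d'un fibr� stable $F$. Pour cela, choisissons un fibr� stable $G$ de rang $r-1$ et de degr� $d-\delta$ (qui existe de fa�on g�n�rique gr�ce � l'irr�ductibilit� du champ de modules), un faisceau inversible g�n�rique $L\in \mathrm{Pic}^{\delta}(X)$, puis une classe d'extension g�n�rique dans $\mathrm{Ext}^{1}(G,L)\simeq H^{1}(X,G^{\vee}\otimes L)$ donnant
$$
0\rightarrow L\rightarrow F\rightarrow G\rightarrow 0.
$$
Le param�tre compte correctement: $\dim \mathrm{Pic}^{\delta}+\dim \mathcal{M}_{G}+\dim\mathrm{Ext}^{1}(G,L)-1$ s'�gale bien $\dim \mathcal{M}+(g-1)-(p-1)\delta$ par Riemann-Roch, ce qui confirme que la famille ainsi obtenue balaie $\mathcal{M}$ avec fibres de la dimension attendue.

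L'obstacle principal sera l'�tablissement de la stabilit� de $F$ pour un choix g�n�rique de $G$ et de la classe d'extension. Un sous-fibr� $F'\subset F$ d�stabilisant s'envoie dans $G$ par projection: soit cette projection est nulle, auquel cas $F'\subset L$, ce qui est incompatible avec $\mu(L)=\delta<g-1=\mu(F)$ (on utilise $\delta\leq (g-1)/(p-1)\leq g-1$); soit elle est non nulle, et par stabilit� de $G$ on contr�le sa pente, puis un argument de dimension sur $\mathrm{Ext}^{1}(G,L)$ exclut l'existence d'une telle d�stabilisation pour une classe d'extension g�n�rique. Une fois la stabilit� acquise, la lissit� g�n�rique de $\mathcal{Q}\rightarrow \mathcal{M}$ au point construit r�sulte d'un calcul standard de l'espace tangent (via $\mathrm{Hom}(L,G)$ et $\mathrm{Ext}^{1}(L,F)$), et l'on conclut par l'ouverture de la stabilit� et la propret� de la projection que l'image contient un ouvert dense de $\mathcal{M}$, donc contient le point g�n�rique de $\mathcal{M}$.
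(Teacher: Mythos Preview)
Le papier ne d\'emontre pas cet \'enonc\'e : il s'agit d'un r\'esultat d\^u \`a Hirschowitz, cit\'e tel quel depuis \cite{Hirschowitz} (th\'eor\`eme 4.4) et utilis\'e ensuite, via d\'eformation et sp\'ecialisation, pour minorer $n(X)$. Il n'y a donc pas de preuve dans le papier \`a laquelle comparer votre proposition.

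Cela dit, votre plan est conforme \`a la strat\'egie standard pour ce type d'\'enonc\'e (et proche de l'argument original d'Hirschowitz) : calcul de dimension attendue du Quot relatif, construction d'un point par extension g\'en\'erique $0\to L\to F\to G\to 0$, contr\^ole de la stabilit\'e de $F$, puis conclusion par dominance. Le calcul de $\chi(L^{\vee}\otimes G)=(g-1)-(p-1)\delta$ est correct et donne bien la borne annonc\'ee.

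Deux points m\'eriteraient d'\^etre resserr\'es. D'abord, l'argument de stabilit\'e de $F$ est l'\'etape d\'elicate : le cas o\`u la projection $F'\to G$ est non nulle demande un contr\^ole pr\'ecis des sous-faisceaux de $G$ et un d\'ecompte des extensions qui \'echouent, ce que vous n'esquissez que vaguement. Ensuite, dans la conclusion, la propret\'e du Quot relatif sur $\mathcal{M}$ donne que l'image est ferm\'ee, mais c'est la lissit\'e g\'en\'erique (donc l'ouverture locale) qui fournit la dominance ; il faut aussi s'assurer de l'irr\'eductibilit\'e de la composante de $\mathcal{Q}$ consid\'er\'ee, ou travailler directement au niveau du champ. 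Enfin, $\mathcal{M}$ n'\'etant pas un espace de modules fin en g\'en\'eral, il est plus propre de mener l'argument localement pour la topologie \'etale, ou sur le champ des fibr\'es stables.
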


Ceci étant, par déformation et spécialisation, tout fibré
vectoriel de pente $g-1$ sur $X_1$ contient un sous-faisceau
inversible de degré $\delta$ pour $\delta\leq (g-1)/(p-1)$.

Suivant Tango \cite{Tango}, on note $n(X)$ le degré maximal d'un
sous-fibré inversibles de $B$, on a donc:

\begin{prop} $\frac{g-1}{p-1}\leq n(X)\leq \frac{2g-2}{p}$.
\end{prop}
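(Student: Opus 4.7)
The plan is to prove the two inequalities separately. For the upper bound $n(X) \leq (2g-2)/p$, I would simply invoke the theorem on subsheaves of $B$ established just above, taken with $r = 1$: every invertible subsheaf $L \subset B$ satisfies $\mathrm{deg}(L) \leq \frac{1 \cdot 2 \cdot (g-1)}{p} = \frac{2g-2}{p}$, and passing to the supremum yields the bound.

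For the lower bound, fix an integer $\delta$ with $\delta \leq (g-1)/(p-1)$. Hirschowitz's theorem just recalled says that a \emph{generic} stable bundle of rank $p-1$ and slope $g-1$ on $X_1$ contains an invertible subsheaf of degree $\delta$. The task is then to transport this property to the particular bundle $B$, which has the same numerical invariants. I would consider the moduli space (or stack) $M$ of semistable bundles on $X_1$ of rank $p-1$ and degree $(p-1)(g-1)$, which is classically known to be irreducible. Let $Z \subset M$ be the locus of bundles admitting an invertible subsheaf of degree $\delta$; since any such subsheaf $L \subset F$ is determined by the quotient $F/L$, which has a fixed Hilbert polynomial, $Z$ is the image in $M$ of a proper relative Quot scheme and is therefore closed. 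Hirschowitz's theorem says $Z$ contains the generic point of $M$, so by irreducibility $Z = M$. Since $B$ is stable (by the preceding corollary) of the right rank and degree, $B$ corresponds to a point of $M$ and hence contains an invertible subsheaf of degree $\delta$. Taking $\delta$ maximal with $\delta \leq (g-1)/(p-1)$ gives the lower bound.

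The step that deserves the most care is the specialization argument: one must rigorously exhibit $B$ and a generic stable bundle inside a common connected family so that Hirschowitz's ``generic'' statement can be propagated. The cleanest way is to combine the irreducibility of $M$ with the properness of the relative Quot scheme parametrizing the subsheaves in question; with these two ingredients the deformation-specialization alluded to in the paragraph preceding the proposition becomes formal.
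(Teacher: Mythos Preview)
Your proof is correct and follows essentially the same approach as the paper. The paper itself gives no explicit proof of this proposition: the upper bound is an immediate specialisation ($r=1$) of the preceding theorem on subsheaves of $B$, exactly as you write, and for the lower bound the paper simply states, in the sentence right before the proposition, that ``par d\'eformation et sp\'ecialisation, tout fibr\'e vectoriel de pente $g-1$ sur $X_1$ contient un sous-faisceau inversible de degr\'e $\delta$ d\`es que $\delta\leq (g-1)/(p-1)$'' --- your Quot-scheme/irreducibility argument is precisely a rigorous unpacking of that phrase. The only minor difference is that the paper asserts the result for \emph{every} bundle of the given slope (not just semistable ones), whereas you work inside the moduli of semistable bundles and invoke the stability of $B$; since $B$ is indeed stable for $g\geq 2$, this makes no difference for the proposition at hand.
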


\subsection{Questions sur le diviseur thêta}

Soit $X$ une courbe propre, lisse, connexe, de genre $g\geq 2$,
définie sur un corps $k$ algébriquement clos de caractéristique
$p>0$. On rassemble dans ce $\S$ quelques questions qui se posent
naturellement dans l'étude géométrique et arithmétique de $\Theta$
(aussi \cite{R2} $\S$ 1.4).

\begin{quest} Le diviseur $\Theta$ est-il intègre?\end{quest}

A défaut d'être irréductible, on peut se demander:

\begin{quest} Le diviseur $\Theta$ est-il réduit? \end{quest}

Si $\Theta$ n'est pas irréductible, il y a lieu de répartir les
composantes irréductibles en deux types:

\begin{defn}\label{composante principale}
Soit $\Theta_i$ une composante irréductible de $\Theta$, on dit
que $\Theta_i$ est \textit{principale}, si $\Theta_i\cap
\ker(V)\neq \emptyset$ (où $V:J_1\rightarrow J$ est le
Verschiebung de $J$). Sinon, on dit que $\Theta_i$ est
\textit{secondaire}.
\end{defn}

D'après la propriété de Dirac ($\S$ \ref{propriete de Dirac}), on
sait que $\Theta$ contient des composantes principales.

\begin{quest}
Le diviseur $\Theta$ peut-il avoir des composantes secondaires?
\end{quest}

En étudiant les liens entre $\Theta$ et le groupe fondamental de
$X$, les questions suivantes se présentent:

\begin{quest}
Toute composante $\Theta_i$ de $\Theta$ est-elle ample?
\end{quest}

A défaut d'être ample, on peut se demander

\begin{quest}
Le diviseur $\Theta$ peut-il avoir des composantes irréductibles
qui sont des translatés de sous-variétés abéliennes de $J_1$?
\end{quest}

On peut aussi s'intéresser à la propriété de Dirac:

\begin{quest} Etant donnée une variété abélienne principalement
polarisée par une polarisation $\tau$, existe-t-il au plus un
diviseur positif algébriquement équivalent à $(p-1)\tau$ qui
satisfait à la propriété de Dirac?
\end{quest}

Les résultats suivants sont maintenant connus. D'abord, on sait
que si $p=2$, le diviseur $\Theta$ est intègre et normal ($\S$
\ref{cas p=2}). Supposons $p\geq 3$, alors

\begin{itemize}

\item Le diviseur $\Theta$ est normal si $X$ est une courbe
générale (théorème \ref{irr de theta gen} et théorème \ref{nor de
theta gen}), mais pas nécessairement normal si $X$ est spéciale
($\S$ \ref{Theta est sans TAS}).

\item $\Theta$ possède au moins une composante principale qui
n'est pas un translaté d'une sous-variété abélienne (\cite{R2}
prop. 1.2.1).

\item Si $p=3$, le diviseur $\Theta$ est réduit ($\S$ \ref{Theta
est reduit}), et ne possède pas de composante qui soit le
translatés d'une sous-variété abélienne ($\S$ \ref{Theta est sans
TAS}).

\item Si $p=3$ et $g=2$, $\Theta$ est intègre (théorème \ref{cas
p=3 g=2} ).

\item Si $g=2$, et $X$ est ordinaire, toute composante de $\Theta$
est ample (corollaire \ref{theta est ample si g=2}).
\end{itemize}

\section{Etude différentielle du diviseur thêta}
\markboth{chapitre2}{chapitre2}

Dans ce chapitre, $k$ désigne un corps algébriquement clos de
caractéristique $p>0$.

\subsection{Rappel des résultats de Laszlo} Soit $X/k$ une courbe
lisse connexe de genre $g$, $E$ un fibré vectoriel sur $X$ de
pente $g-1$ et de rang $r$ avec $h^{0}(X,E)\neq 0$. Soit
$\mathcal{E}$ une déformation verselle de $E$ sur une base $S$, en
particulier, $S$ est un schéma local essentiellement lisse sur
$k$, d'espace tangent
$\left(\mathfrak{m}_s/\mathfrak{m}_{s}^{2}\right)^{\vee}\simeq
H^{1}(X,\mathcal{E}nd(E))$ au point fermé $s\in S$. Par
définition, $\mathcal{E}$ est un fibré vectoriel sur $X\times_k S$
qui relève $E$. Si $E$ est simple, c'est-à-dire si
$H^{0}(X,\mathcal{E}nd(E))\simeq k$ (par exemple, si $E$ est un
fibré stable sur $X$), $S$ est donc de dimension $r^{2}(g-1)+1$.
Alors il existe un diviseur positif $\Theta_{\mathrm{univ},E}$ sur
$S$ défini comme le diviseur associé au déterminant de
$\mathrm{R}f_{\ast}(\mathcal{E})$, où $f:X\times_k S\rightarrow S$
est le morphisme naturel.

\begin{theoreme}[Laszlo,\cite{Laszlo}]\label{theta universel} La multiplicité de
$\Theta_{\mathrm{univ},E}$ en $s$ est $h^{0}(X,E)$.
\end{theoreme}

Laszlo étudie l'application donnée par le cup-produit
$$
H^{1}(X,\mathcal{E}nd(E))\otimes_k H^{0}(X,E)\rightarrow
H^{1}(X,E)
$$
et montre que pour $\alpha$ général dans
$H^{1}(X,\mathcal{E}nd(E))$, l'application
$$
\alpha\cup: ~~H^{0}(X,E)\rightarrow H^{1}(X,E)
$$
est injective donc bijective.

Considérons ensuite $\mathcal{U}(r,g-1)$ l'espace de modules des
fibrés vectoriels stables de rang $r$ et de pente $g-1$. Il existe
un changement de base étale $\mathcal{V}\rightarrow
\mathcal{U}(r,g-1)$, et une certaine {\textquotedblleft famille de
Poincaré\textquotedblright} de fibrés vectoriels stables
$\mathfrak{E}$ de rang $r$ et de pente $g-1$ au-dessus de $X$. Le
déterminant de $\mathrm{R}f_{\ast}'(\mathfrak{E})$ (où $f':X\times
\mathcal{V}\rightarrow \mathcal{V}$ est le morphisme naturel)
définit un diviseur positif sur $\mathcal{V}$ qui ne dépend pas du
choix de $\mathfrak{E}$. On obtient donc, par descente étale, le
diviseur thêta universel $\Theta_{\mathrm{univ}}$ sur
$\mathcal{U}(r,g-1)$.

Supposons maintenant $E$ stable, et que $h^{0}(X,E\otimes L)=0$
pour $L$ inversible de degré $0$ général dans la jacobienne $J$ de
$X$. Alors $E$ possède un diviseur thêta $\Theta_E$, qui est un
diviseur positif de $J$ défini comme déterminant de
$\mathrm{R}pr_{J,\ast}(E\otimes_{\OO_{X}}\mathcal{P})$ avec
$pr_J:X\times_k J\rightarrow J$ le morphisme naturel et
$\mathcal{P}$ un faisceau de Poincaré sur $X\times_k J$.
L'application $L\mapsto E\otimes L$ définit une application de $J$
dans $\mathcal{U}(r,g-1)$, et $\Theta_E$ est l'image réciproque du
diviseur $\Theta_{\mathrm{univ}}$ par cette application. Fixons un
point $x$ de $\Theta_E$ correspondant au faisceau inversible $L$
de degré $0$. On a une application naturelle
$$
\alpha:\OO_{X}\rightarrow \mathcal{E}nd(E\otimes L)
$$
qui envoie une section locale $t$ de $\OO_X$ sur l'homothétie de
$E\otimes L$ associée à $t$. D'où une application
$$
H^{1}(X,\OO_{X})\rightarrow H^{1}(X,\mathcal{E}nd(E\otimes L)).
$$
On déduit de \ref{theta universel} la proposition suivante:

\begin{proposition}[Laszlo] (1) On a $\mathrm{mult}_x(\Theta_E)\geq h^{0}(X,E\otimes
L)$;

(2) Il y a égalité si et seulement si pour $a\in H^{1}(X,\OO_X)$
assez général, l'application
$$
a\cup: ~~~ H^{0}(X,E\otimes L)\rightarrow H^{1}(X,E\otimes L)
$$
est bijective;

(3) Lorsque $\Theta_E$ est lisse en $x$, l'espace tangent à
$\Theta_E$ en $x$ est le noyau de
$$
H^{1}(X,\OO_{X})\rightarrow \mathrm{Hom}_k(H^{0}(X,E\otimes L),
H^{1}(X,E\otimes L))
$$
défini par $a\in H^{1}(X,\OO_{X})\mapsto a\cup\in
\mathrm{Hom}_k(H^{0}(X,E\otimes L), H^{1}(X,E\otimes L))$.
\end{proposition}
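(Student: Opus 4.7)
The plan is to realize $\Theta_{E}$ as the pullback of the universal theta divisor under the morphism $\phi: J\rightarrow \mathcal{U}(r,g-1)$, $L'\mapsto E\otimes L'$, and to apply Laszlo's theorem \ref{theta universel} to the versal deformation of $E\otimes L$ at the point $L$. The differential of $\phi$ at $L$ is the natural map $\iota:H^{1}(X,\OO_{X})\rightarrow H^{1}(X,\mathcal{E}nd(E\otimes L))$ induced by $\OO_{X}\hookrightarrow \mathcal{E}nd(E\otimes L)$, $t\mapsto t\cdot\mathrm{id}$, so all three statements reduce to controlling the initial form of a local equation of $\Theta_{\mathrm{univ},E\otimes L}$ on the versal base $S$ at $s$, together with its pullback along $\iota$.

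The first step is to represent $\mathrm{R}f_{\ast}(\mathcal{E})$ in a neighbourhood of $s\in S$ by a two-term complex $u:\mathcal{M}^{0}\rightarrow \mathcal{M}^{1}$ of free $\OO_{S}$-modules of the same rank (EGA~$\mathrm{III}_{2}$, 7.7.6), so that $\Theta_{\mathrm{univ},E\otimes L}$ has local equation $\det(u)$. At $s$ we have $\ker(u(s))\simeq H^{0}(X,E\otimes L)$ and $\mathrm{coker}(u(s))\simeq H^{1}(X,E\otimes L)$, of common dimension $m:=h^{0}(X,E\otimes L)$ (these two agree because $\chi(E\otimes L)=0$). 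Choosing a splitting in which $u(s)$ is block-diagonal with a zero $m\times m$ block and an invertible complement, the standard computation of the initial form of a determinant shows that $\mathrm{in}_{s}(\det u)$ is, up to a nonzero scalar, the homogeneous polynomial of degree $m$ on $T_{s}S\simeq H^{1}(X,\mathcal{E}nd(E\otimes L))$ given by
\[
\alpha\;\longmapsto\;\det\bigl(\alpha\cup\colon H^{0}(X,E\otimes L)\longrightarrow H^{1}(X,E\otimes L)\bigr),
\]
where the cup product is the natural pairing $H^{1}(\mathcal{E}nd)\otimes H^{0}\rightarrow H^{1}$. This identification of $du(s)$ with cup product is the key (and only non-formal) point; it is the Kodaira--Spencer type computation carried out in \cite{Laszlo}.

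Granting this, (1) and (2) are immediate. The multiplicity of $\Theta_{E}=\phi^{\ast}\Theta_{\mathrm{univ}}$ at $x$ equals the order of vanishing at $L$ of $\phi^{\ast}(\det u)$, which is at least the multiplicity of $\det u$ at $s$ on $S$, namely $m=h^{0}(X,E\otimes L)$ by theorem \ref{theta universel}. Equality holds exactly when $\mathrm{in}_{s}(\det u)$ does not vanish identically on the image of $\iota$, that is, when there exists $a\in H^{1}(X,\OO_{X})$ with $\det(a\cup)\neq 0$; since source and target of $a\cup$ have the same dimension $m$, this is exactly the condition that $a\cup$ be bijective for generic $a$, which yields (2). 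For (3), smoothness of $\Theta_{E}$ at $x$ forces $m=1$, so $\mathrm{in}_{s}(\det u)$ becomes the linear form $\alpha\mapsto (\alpha\cup)\in\Hom(H^{0}(X,E\otimes L),H^{1}(X,E\otimes L))$; the tangent space to $\Theta_{E}$ at $x$ inside $T_{L}J=H^{1}(X,\OO_{X})$ is the kernel of its pullback $a\mapsto (a\cup)$, as asserted.

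The main technical obstacle is the identification of $\mathrm{in}_{s}(\det u)$ with the cup-product determinant. Concretely, one must verify via an explicit \v{C}ech-cohomological computation that $du(s):T_{s}S\rightarrow \Hom(\ker u(s),\,\mathrm{coker}\,u(s))$ coincides, up to a nonzero scalar, with cup product by classes of $H^{1}(X,\mathcal{E}nd(E\otimes L))$. Once this deformation-theoretic identification is in place, assertions (1)--(3) follow from a single local computation on the versal base.
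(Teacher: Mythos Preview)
Your proposal is correct and follows exactly the approach the paper indicates: the paper simply writes ``On d\'eduit de \ref{theta universel} la proposition suivante'' after setting up the map $J\to\mathcal{U}(r,g-1)$, $L\mapsto E\otimes L$, and its differential $H^{1}(X,\OO_X)\to H^{1}(X,\mathcal{E}nd(E\otimes L))$ induced by homotheties. You have supplied the details the paper leaves implicit---in particular the identification of the initial form of $\det u$ with the cup-product determinant, which is precisely the content of Laszlo's argument cited there.
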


\subsection{Application à $B$.} Désormais dans ce chapitre, $X$
désigne une courbe propre lisse connexe de genre $g\geq 1$ sur
$k$, et $X_1$ l'image réciproque de $X/k$ par le Frobenius absolu
$F:\mathrm{Spec}(k)\rightarrow \mathrm{Spec}(k)$. Notons $J$
(resp. $J_1$) la jacobienne de $X/k$ (resp. de $X_1/k$), et
$\Theta\hookrightarrow J_1$ le diviseur thêta associé au fibré $B$
des formes différentielles localement exactes sur $X_1$.

Rappelons que l'on a un produit alterné sur $B$ à valeurs dans
$\Omega^{1}_{X_1/k}$:
$$
(\cdot,\cdot):~~ B\otimes_{\OO_{X_1}}B\rightarrow
\Omega^{1}_{X_1/k}.
$$
Pour $L$ un faisceau inversible sur $X_1$, via l'isomorphisme
$H^{0}(X_1,B\otimes L)\rightarrow
\mathrm{Hom}_{\OO_{X_1}}(L^{-1},B)$, on a donc un accouplement
$$
H^{0}(X_1,B\otimes L)\otimes_k H^{0}(X_1,B\otimes
L^{-1})\rightarrow H^{0}(X_1,\Omega^{1}_{X_1/k}).
$$
Par dualité de Serre, on déduit du résultat de Laszlo la
proposition suivante:

\begin{proposition} Soit $x\in \Theta\subset J_1$ un point fermé, $L$ le
faisceau inversible de degré $0$ sur $X_1$ associé à $x$. Alors

(1) On a $\mathrm{mult}_{x}(\Theta)\geq h^0(X_1, B\otimes L)$.

(2) Pour qu'il y ait égalité, il faut et il suffit que pour
$\alpha:H^{0}(X_1,\Omega^{1}_{X_1/k})\rightarrow k$, forme
linéaire assez générale, l'application bilinéaire
$$
\xymatrix{H^0(X_1,B\otimes L)\otimes H^0(X_1,B\otimes
L^{-1})\ar[r]& H^{0}(X_1,\Omega^{1}_{X_1/k})\ar[r]^<<<<<<{\alpha}&
k}
$$
soit non dégénérée. Cette condition (2) est automatiquement
assurée pour $p=2$.
\end{proposition}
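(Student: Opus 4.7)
The plan is to reduce the statement to the preceding proposition of Laszlo (applied with $E = B$) and then translate its equality criterion via Serre duality combined with the self-duality of $B$ from Proposition \ref{autodual}. Laszlo's proposition yields (1) immediately, and furnishes the following criterion for equality: $\mathrm{mult}_x(\Theta) = h^0(X_1, B \otimes L)$ if and only if, for a general $a \in H^1(X_1, \OO_{X_1})$, the cup-product map $a \cup : H^0(X_1, B \otimes L) \to H^1(X_1, B \otimes L)$ is bijective.

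The next step is to rewrite this criterion in the form asked for. Serre duality identifies $H^1(X_1, \OO_{X_1})$ with $H^0(X_1, \Omega^1_{X_1/k})^\vee$, so that $a$ corresponds to a linear form $\alpha$ on global differentials; similarly, Serre duality combined with the isomorphism $B^\vee \otimes \Omega^1_{X_1/k} \simeq B$ from Proposition \ref{autodual} identifies $H^1(X_1, B \otimes L)$ with $H^0(X_1, B \otimes L^{-1})^\vee$. Under these identifications, the cup-product map becomes a bilinear form on $H^0(X_1, B \otimes L) \times H^0(X_1, B \otimes L^{-1})$; the essential compatibility to establish is that this bilinear form coincides with $(s, t) \mapsto \alpha((s, t))$, where $(\cdot, \cdot)$ is the alternating product $B \otimes B \to \Omega^1_{X_1/k}$ of $\S$ 1.2.1. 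This follows from the functoriality of the cup product together with the fact that the Serre-duality identification $B \simeq \mathcal{H}om(B, \Omega^1_{X_1/k})$ is defined by the alternating product itself. Since the Euler--Poincar\'e characteristic of $B \otimes L$ vanishes and $B$ is self-dual, $H^0(X_1, B \otimes L)$ and $H^0(X_1, B \otimes L^{-1})$ have the same dimension, so bijectivity of $a \cup$ is equivalent to non-degeneracy of the corresponding bilinear form; running over all $a$ amounts to running over all linear forms $\alpha$. This proves (2).

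For $p = 2$, the bundle $B$ is a line bundle with $B^{\otimes 2} \simeq \Omega^1_{X_1/k}$, so both $H^0(X_1, B \otimes L)$ and $H^0(X_1, B \otimes L^{-1})$ are at most one-dimensional. If they vanish nothing has to be checked; otherwise, choosing nonzero generators $s$ and $t$, the product $st$ is a nonzero section of $B^{\otimes 2} \otimes L \otimes L^{-1} \simeq \Omega^1_{X_1/k}$, and any $\alpha$ not vanishing at $st$ produces a non-degenerate pairing, as required.

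The main obstacle is the explicit identification of the cup-product map with the pairing induced by the alternating form on $B$: this is a Serre--Grothendieck duality compatibility in the same spirit as the chain of isomorphisms computing $\RHom(\QQ[-1], \OO_{J_1})$ in $\S$ \ref{accouplement}. Once this compatibility is in place, the rest of the argument is entirely formal.
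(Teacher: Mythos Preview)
Your approach for parts (1) and (2) is exactly the one the paper has in mind: it states the proposition immediately after Laszlo's result with the single sentence ``Par dualit\'e de Serre, on d\'eduit du r\'esultat de Laszlo la proposition suivante'', and your unpacking of that sentence (Serre duality on both $H^1(\OO_{X_1})$ and $H^1(B\otimes L)$, combined with the self-duality $B\simeq \mathcal{H}om(B,\Omega^1_{X_1/k})$ from Proposition~\ref{autodual}) is correct.

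There is, however, a genuine gap in your treatment of the $p=2$ clause. You claim that because $B$ is a line bundle with $B^{\otimes 2}\simeq\Omega^1_{X_1/k}$, the spaces $H^0(X_1,B\otimes L)$ and $H^0(X_1,B\otimes L^{-1})$ are at most one-dimensional. This is false: $B\otimes L$ is a line bundle of degree $g-1$, and for $g\geq 3$ there are certainly points $x\in\Theta$ with $h^0(X_1,B\otimes L)\geq 2$ (these are precisely the singular points of the classical theta divisor). So your argument, which relies on reducing to a $1\times 1$ pairing, does not go through.

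The correct reason condition (2) is automatic for $p=2$ is more structural. When $B\otimes L$ is invertible, $\mathcal{E}nd(B\otimes L)=\OO_{X_1}$, so the natural map $H^1(X_1,\OO_{X_1})\to H^1(X_1,\mathcal{E}nd(B\otimes L))$ occurring in Laszlo's criterion is an \emph{isomorphism}. Laszlo's universal result (Th\'eor\`eme~\ref{theta universel}) says that for generic $\beta\in H^1(\mathcal{E}nd(B\otimes L))$ the cup-product $\beta\cup$ is bijective; since every such $\beta$ now comes from $H^1(\OO_{X_1})$, the equality criterion of the preceding proposition is automatically met. Equivalently, this is just the Riemann--Kempf singularity theorem for the classical theta divisor, as the paper recalls in \S\ref{cas p=2}.
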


\begin{corollaire}[Le critère de lissité]\label{critere de lissite}
Soient $x\in \Theta$ un point fermé, $L$ le faisceau inversible
sur $X_1$ correspondant. Alors $\Theta$ est lisse en $x$ si et
seulement si les deux conditions suivantes sont réalisées:

(1) $h^0(X_1,B\otimes L)=1$ (et donc aussi $h^{0}(X_1,B\otimes
L^{-1})=1$).

(2) L'application
$$
H^{0}(X_1,B\otimes L)\otimes_k H^0(X_1, B\otimes
L^{-1})\rightarrow H^{0}(X_1,\Omega^{1}_{X_1/k})
$$
donnée par $(\cdot,\cdot)$ est non nulle. Cette condition est
automatique pour $p=2$.

De plus, si ces conditions sont réalisées, notons $\omega$ un
générateur de la droite vectorielle, l'image dans
$H^{0}(X_1,\Omega^{1}_{X_1/k})$ de $H^{0}(X_1,B\otimes L)\otimes_k
H^0(X_1, B\otimes L^{-1})$, alors l'espace tangent à $\Theta$ en
$x$ est le noyau de l'application linéaire sur
$H^{1}(X_1,\OO_{X_1})$ définie par $\omega$.
\end{corollaire}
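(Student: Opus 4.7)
Mon plan est de déduire ce critère de lissité directement de la proposition précédente (résultat de Laszlo appliqué au fibré $B$) combinée à l'auto-dualité de $B$ (proposition \ref{autodual}).

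Je commencerais par observer que, puisque $\Theta$ est un diviseur de Cartier dans la variété lisse $J_1$ de dimension $g$, il est lisse en $x$ si et seulement si $\mathrm{mult}_x(\Theta)=1$. D'après la partie (1) de la proposition précédente, $\mathrm{mult}_x(\Theta)\geq h^{0}(X_1,B\otimes L)\geq 1$, la dernière inégalité résultant de $x\in \Theta$. Donc la lissité de $\Theta$ en $x$ équivaut à la conjonction $h^{0}(X_1,B\otimes L)=1$ et égalité dans la borne de Laszlo, c'est-à-dire exactement aux conditions (1) et (2) de la partie (2) de la proposition précédente. Il reste à traduire la condition (2) sous l'hypothèse (1).

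Sous $h^{0}(X_1,B\otimes L)=1$, la caractéristique d'Euler-Poincaré $\chi(X_1,B\otimes L)=0$ et la dualité de Serre, combinées à l'auto-dualité $B\simeq \mathcal{H}om_{\OO_{X_1}}(B,\Omega^{1}_{X_1/k})$, entraînent $h^{0}(X_1,B\otimes L^{-1})=h^{1}(X_1,B\otimes L)=1$. L'accouplement $(\cdot,\cdot)$ fournit alors une application linéaire $k\simeq H^{0}(X_1,B\otimes L)\otimes_k H^{0}(X_1,B\otimes L^{-1})\to H^{0}(X_1,\Omega^{1}_{X_1/k})$, qui sélectionne un élément $\omega\in H^{0}(X_1,\Omega^{1}_{X_1/k})$. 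La condition (2) de la proposition précédente --- à savoir la non-dégénérescence de $\alpha\circ(\cdot,\cdot)$ pour $\alpha$ générique --- se réduit, en dimension $1$, à $\omega\neq 0$, qui est précisément la condition (2) du corollaire. Pour $p=2$, l'auto-dualité livre $B^{\otimes 2}\simeq \Omega^{1}_{X_1/k}$; si $s$ et $s'$ sont des sections non nulles respectivement de $H^{0}(X_1,B\otimes L)$ et $H^{0}(X_1,B\otimes L^{-1})$, le produit $s\cdot s'$ est une section non nulle de $\Omega^{1}_{X_1/k}$ sur la courbe intègre $X_1$, ce qui donne automatiquement la condition (2).

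Enfin, pour la description de l'espace tangent, j'invoquerais la partie (3) de la proposition précédente, qui identifie $T_x(\Theta)$ au noyau de l'application $H^{1}(X_1,\OO_{X_1})\to \mathrm{Hom}_k(H^{0}(X_1,B\otimes L), H^{1}(X_1,B\otimes L))$, $a\mapsto a\cup$. Par la dualité de Serre jointe à l'auto-dualité $\Omega^{1}_{X_1/k}\otimes B^{\vee}\simeq B$, on a $H^{1}(X_1,B\otimes L)^{\vee}\simeq H^{0}(X_1,B\otimes L^{-1})$, et la forme bilinéaire résultante sur $H^{0}(X_1,B\otimes L)\otimes H^{0}(X_1,B\otimes L^{-1})$ s'identifie, par compatibilité du cup-produit avec $(\cdot,\cdot)$, à l'évaluation en $\omega$ de l'accouplement de Serre $H^{1}(X_1,\OO_{X_1})\otimes H^{0}(X_1,\Omega^{1}_{X_1/k})\to k$. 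Le point technique le plus délicat sera précisément la justification de cette compatibilité du cup-produit avec l'accouplement antisymétrique $(\cdot,\cdot)$, dans le même esprit que la construction de $\phi$ au §\ref{accouplement}; une fois admise, on obtient la description annoncée de $T_x(\Theta)$ comme noyau de la forme linéaire $a\mapsto \langle a,\omega\rangle$ sur $H^{1}(X_1,\OO_{X_1})$.
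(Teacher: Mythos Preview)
Your proposal is correct and follows exactly the line of reasoning the paper intends: the corollary is stated without proof in the paper precisely because it is an immediate specialization of the preceding proposition (Laszlo's criterion applied to $B$) once one translates $H^{1}(X_1,B\otimes L)$ into $H^{0}(X_1,B\otimes L^{-1})$ via Serre duality and the auto-duality of $B$. Your identification of the compatibility of the cup product with the pairing $(\cdot,\cdot)$ as the only technical point to check is apt, and this is indeed the content of the discussion in \S\ref{accouplement}.
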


\begin{remarque}\label{Theta est singulier en point d'ordre divisiant 2}
Pour $p\geq 3$ impair, nons avons montré que $\Theta$ est
totalement symétrique au sens de Mumford (\ref{theta est fortement
symetrique}). Alors $\Theta$ est toujours singulier à un point $x$
d'ordre divisant $2$ de $J_1$ dès que $x\in \Theta$. De plus,
$\mathrm{mult}_x(\Theta)$ est paire.
\end{remarque}

\subsection{Etude de $\Theta$ aux points d'ordre $p$} Commençons
par quelques rappels:

\begin{itemize}


\item Soit $L$ un faisceau inversible de degré $0$ sur $X_1$.
Alors $L$ se plonge dans $F_{\ast}(\OO_X)$ si et seulement si
$F^{\ast}L\simeq \OO_X$, et donc si et seulement si $L$ est
d'ordre divisant $p$. On a la suite exacte suivante:
$$
0\rightarrow L\rightarrow F_{\ast}(\OO_X\otimes
F^{\ast}L)\rightarrow B\otimes L\rightarrow 0.
$$

\item Réciproquement, soit $L$ inversible sur $X_1$ d'ordre $p$,
il correspond à un élément non-nul de $H^{1}(X_1,\mu_p)$, et donc
à un torseur de base $X_1$ sous $\mu_p$, d'algèbre
$\mathcal{A}=\oplus_{i=0}^{p-1}L^{i}$, où la multiplication est
donnée par $L^{p}\simeq \OO_{X_1}$. Soient
$$
0\subset E_0\subset E_1\subset \cdots \subset E_{p-1}\subset
F_{\ast}(\OO_{X})
$$
les sous-faisceaux de $F_{\ast}(\OO_{X})$ obtenus à partir des
puissances symétriques ($\S$ \ref{puissance sym}). Comme $L$ se
plonge dans $F_{\ast}(\OO_{X})$, la suite exacte (pour la
définition de $E$, on renvoye à $\S$ \ref{puissance sym}
(\ref{fibre E}))
$$
0\rightarrow \OO_{X_1}\rightarrow E\rightarrow L\rightarrow 0
$$
est scindée. Par suite, $E_{p-1}\simeq \oplus_{i=1}^{p-1}L^{i}$.
Donc l'algèbre $\mathcal{A}$ est contenue dans
$F_{\ast}(\OO_X)$.\newline

\item Rappelons le lien entre faisceau inversible d'ordre $p$ et
formes différentielles de Cartier. Soient $Y/k$ une courbe propre
lisse connexe, $L$ un faisceau d'ordre $p$ sur $Y$ correspondant à
un point $x$ de $J_Y$, réalisé comme sous-faisceau du faisceau des
fonctions méromorphes sur $Y$, et défini par une section
méromorphe $(U_{\alpha},f_{\alpha})$ (où $\{U_{\alpha}\}$ est un
recouvrement ouvert de $Y$). Les
$g_{\alpha,\beta}:=f_{\alpha}/f_{\beta}\in \OO_{Y}(U_{\alpha}\cap
U_{\beta})^{\ast}$ forment un $1$-cocycle de $\OO_{Y}^{\ast}$.
Comme $L^{p}\simeq \OO_{Y}$, $(g_{\alpha,\beta}^{p})$ est un
$1$-cobord, il existe donc des sections locales $u_{\alpha}\in
\OO_{X_1}(U_{\alpha})^{\ast}$ telles que
$g_{\alpha,\beta}^{p}=u_{\alpha}/u_{\beta}$ dans
$\OO_{Y}(U_{\alpha}\cap U_{\beta})^{\ast}$. Donc
$f_{\alpha}^{p}/u_{\alpha}=f_{\beta}^{p}/u_{\beta}$ dans
$U_{\alpha}\cap U_{\beta}$. Ceci nous donne une section globale
non-nulle $(U_{\alpha},f_{\alpha}^{p}/u_{\alpha})$ de $L^{p}$.
Donc $(U_{\alpha},du_{\alpha}/u_{\alpha})$ définit une forme
différentielle holomorphe sur $Y$, que l'on note $\omega_x$. La
forme $\omega_x$ ne dépend pas du choix de la section méromorphe
$(U_{\alpha},f_{\alpha})$. Cette forme $\omega_x$ est \emph{la
forme de Cartier associée à $x$}.\newline

\item Pour $Y$ une courbe lisse connexe sur $k$ un corps
algébriquement clos de caractéristique $p>0$, on dispose d'un
opérateur de Cartier $C_Y$ sur $H^{0}(Y,\Omega^{1}_{Y/k})$ qui est
$Fr_{k}^{-1}$-linéaire. Le noyau de
$C_Y-\mathrm{Id}:H^{0}(Y,\Omega^{1}_{Y/k})\rightarrow
H^{0}(Y,\Omega^{1}_{Y/k})$ est un $\mathbf{F}_p$-espace de
dimension $d$ égale au $p$-rang de $Y$, et on a
$\ker(C_Y-\mathrm{Id})=\{0\}\cup \{\omega_x| x\in
J_Y~\mathrm{d'ordre}~p\}$. Les formes de Cartier sur $Y/k$
engendrent un $k$-sous-espace vectoriel de
$H^{0}(Y,\Omega^{1}_{Y/k})$ de dimension $d$. On renvoie à
\cite{Serre1} pour les détails.

\end{itemize}

\begin{proposition} Soient $L$ un faisceau d'ordre $p$ sur $X_1$,
$\omega$ la forme de Cartier associée à $L$. On considère
$\mathcal{A}=\oplus_{i=0}^{p-1}L^{i}$ comme sous-faisceau de
$F_{\ast}(\OO_X)$. Soient $\beta:L\hookrightarrow B$ et $\beta':
L^{-1}=L^{p-1}\hookrightarrow B$ les plongements ainsi obtenus.
Alors $1\in \Gamma(X_1,L\otimes L^{-1})$ s'envoie sur $-\omega\in
H^{0}(X_1,\Omega^{1}_{X_1/k})$ par
$(\cdot,\cdot)\circ(\beta\otimes \beta')$.
\end{proposition}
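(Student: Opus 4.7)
The plan is to reduce to a local calculation on an open $U\subset X_1$ over which $L$ is trivialized. Choose such a generator $e$ of $L|_U$; then $e^p$ generates $L^p|_U$ and the isomorphism $L^p\simeq\OO_{X_1}$ sends $e^p$ to a unit $u\in\OO_{X_1}^{\ast}(U)$, so that $\omega|_U = du/u$ by construction of the Cartier form. Write $\tilde e\in F_{\ast}\OO_X(U)$ for $\beta(e)$. Since $\beta$ is the restriction of the algebra inclusion $\mathcal{A}=\bigoplus_{i=0}^{p-1}L^{i}\hookrightarrow F_{\ast}\OO_X$, multiplicativity yields $\tilde e^{\,p}=u$ in $F_{\ast}\OO_X(U)$, where $u$ on the right is viewed via the Frobenius inclusion $\OO_{X_1}\hookrightarrow F_{\ast}\OO_X$.

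I next make $\beta'(e^{-1})$ explicit. Under the canonical identification $L^{-1}\simeq L^{p-1}$ induced by $L^p\simeq\OO_{X_1}$, the dual generator $e^{-1}$ corresponds to $u^{-1}e^{\,p-1}$, the unique element of $L^{p-1}$ whose product with $e$ equals $1\in\OO_{X_1}$. Applying $\beta'$ and using $\tilde e^{\,p}=u$ gives
\[
\beta'(e^{-1})\;=\;u^{-1}\,\tilde e^{\,p-1}\;=\;\tilde e^{-1}
\]
in $F_{\ast}\OO_X(U)$. The global section $1\in\Gamma(X_1,L\otimes L^{-1})$ is locally $e\otimes e^{-1}$, so the quantity under study is locally represented by $\tilde e\otimes\tilde e^{-1}\in\Gamma(U,B\otimes B)$.

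By definition of $(\cdot,\cdot)$ as the descent of $(f,g)\mapsto c(f\,dg)$ from $F_{\ast}\OO_X$, one then has
\[
(\beta(e),\beta'(e^{-1}))\;=\;c\bigl(\tilde e\cdot d(\tilde e^{-1})\bigr)\;=\;-\,c(\tilde e^{-1}\,d\tilde e).
\]
I rewrite $\tilde e^{-1}\,d\tilde e=\tilde e^{-p}\cdot\tilde e^{\,p-1}\,d\tilde e=u^{-1}\cdot\tilde e^{\,p-1}\,d\tilde e$ and invoke the $\OO_{X_1}$-linearity of $c$ together with the fundamental relation $c(\tilde e^{\,p-1}\,d\tilde e)=du$, which holds precisely because $\tilde e^{\,p}=u$ descends to $\OO_{X_1}$. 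The result is $c(\tilde e^{-1}\,d\tilde e)=u^{-1}\,du=\omega|_U$, whence $(\beta(e),\beta'(e^{-1}))=-\omega|_U$. Since this is independent of the choice of local generator, these local identities glue to the global equality announced in the statement.

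The only substantive ingredient is the Cartier formula $c(f^{\,p-1}\,df)=du$ for $f^p=u\in\OO_{X_1}$, which is the elementary local incarnation of the Cartier isomorphism (it boils down, in any uniformizer, to $c(t^{p-1}\,dt)=dt'$). Everything else is bookkeeping with the algebra structure on $\mathcal{A}\subset F_{\ast}\OO_X$ and the standard compatibility of $c$ with the Frobenius inclusion $\OO_{X_1}\hookrightarrow F_{\ast}\OO_X$.
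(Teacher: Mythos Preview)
Your proof is correct and follows essentially the same local computation as the paper: pick a local generator $s$ of $L$ inside $F_\ast\OO_X$ with $s^p=f_1\in\OO_{X_1}^\ast$, identify $s^{-1}=f_1^{-1}s^{p-1}$, and compute $(s,s^{-1})=c(s\,d(s^{-1}))=-c(ds/s)=-df_1/f_1=-\omega$. The paper compresses the last chain into one line, while you spell out the intermediate step $c(\tilde e^{\,p-1}d\tilde e)=du$ explicitly; the argument is otherwise identical.
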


\begin{proof}  Soit $s$ un générateur
local de $L$, alors $s^{p}=f_1\in \OO_{X_1}^{\ast}$. Considérons
$s^{-1}=s^{p-1}/f_1$, c'est un générateur local de $L^{p-1}\simeq
L^{-1}$. Donc $1=s\otimes s^{-1}\in \OO_{X_1}\simeq L\otimes
L^{-1}$ s'envoie sur $C(s\cdot d(1/s))=-ds/s=-\omega$ par
$(\cdot,\cdot): F_{\ast}(\OO_X)\otimes F_{\ast}(\OO_X)\rightarrow
\Omega^{1}_{X_1/k}$. D'où le résultat.

\end{proof}

\begin{corollaire} Gardons les notations ci-dessus. Pour que $
\Theta$ soit lisse en $x\in \Theta$ qui correspond à $L$ d'ordre
$p$, il faut et il suffit que $h^{0}(X_1,B\otimes L)=1$. Dans ce
cas, l'espace tangent en $x$ à $\Theta$ est l'orthogonal de
$\omega_x$ dans $H^{1}(X_1,\OO_{X_1})$.
\end{corollaire}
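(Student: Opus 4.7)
The proof is a direct application of the smoothness criterion (corollaire \ref{critere de lissite}) combined with the preceding proposition computing $(\beta,\beta')$. My plan is as follows.

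First, I would reduce the smoothness criterion to a single condition. The criterion requires both $h^{0}(X_1,B\otimes L)=h^{0}(X_1,B\otimes L^{-1})=1$ and non-vanishing of the induced pairing into $H^{0}(X_1,\Omega^{1}_{X_1/k})$. But the auto-dualité de $B$ sous la dualité de Serre (proposition \ref{autodual}), combinée avec $\chi(B\otimes L)=(p-1)(g-1)+(p-1)(1-g)=0$, entraîne $h^{0}(X_1,B\otimes L)=h^{1}(X_1,B\otimes L)=h^{0}(X_1,B\otimes L^{-1})$. Ainsi la condition (1) se réduit à $h^{0}(X_1,B\otimes L)=1$.

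Second, supposing $h^{0}(X_1,B\otimes L)=1$, I would show that condition (2) of the criterion is automatic. Via l'identification $H^{0}(X_1,B\otimes L)\simeq \Hom_{\OO_{X_1}}(L^{-1},B)$, le plongement canonique $\beta:L\hookrightarrow B$ issu de la structure de $\mu_p$-torseur (décrite juste avant la proposition) engendre la droite $H^{0}(X_1,B\otimes L)$, et de même $\beta':L^{-1}\hookrightarrow B$ engendre $H^{0}(X_1,B\otimes L^{-1})$. D'après la proposition précédente, l'accouplement envoie $\beta\otimes \beta'$ sur $-\omega_x\in H^{0}(X_1,\Omega^{1}_{X_1/k})$. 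Comme $L$ est d'ordre exactement $p$, la forme de Cartier $\omega_x$ est non nulle (d'après le rappel sur $\ker(C_{X_1}-\mathrm{Id})$). L'accouplement est donc non dégénéré entre ces deux droites, ce qui fournit la condition (2). La réciproque est immédiate à partir de la condition (1) du critère.

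Finally, la description de l'espace tangent résulte directement du critère: celui-ci affirme que $T_x\Theta$ est le noyau de la forme linéaire sur $H^{1}(X_1,\OO_{X_1})$ associée (par dualité de Serre) à un générateur $\omega$ de l'image de l'accouplement; or ici cette image est la droite engendrée par $-\omega_x$, d'où $T_x\Theta=\omega_x^{\perp}\subset H^{1}(X_1,\OO_{X_1})$. Il n'y a à proprement parler aucun obstacle dans ce raisonnement, tout le travail étant déjà accompli dans la proposition précédente (le calcul explicite du pairing en termes de formes de Cartier) et dans le critère de lissité.
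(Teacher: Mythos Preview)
Your argument is correct and is exactly the intended one: the paper states this corollary without proof as an immediate consequence of the preceding proposition and the crit\`ere de lissit\'e (\ref{critere de lissite}). One small labeling slip: since $\beta:L\hookrightarrow B$ lies in $\Hom(L,B)\simeq H^{0}(X_1,B\otimes L^{-1})$ and $\beta':L^{-1}\hookrightarrow B$ lies in $\Hom(L^{-1},B)\simeq H^{0}(X_1,B\otimes L)$, you have the two identifications interchanged; this is harmless for the argument, as the pairing involves both one-dimensional spaces and the proposition gives its value $-\omega_x\neq 0$ regardless.
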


\subsection*{Application aux composantes irréductibles de $\Theta$.}

Rappelons qu'une composante $D$ de $\Theta$ est dite
\emph{principale} si $D\cap \ker(V)\neq \emptyset$, où
$V:J_1\rightarrow J$ est le Verschiebung de $J$.

\begin{lemme}\label{lemme sur espace tangent}
Soient $x,\ y$ deux points d'ordre $p$ de $J_1$ où $\Theta$ est
lisse. Alors $T_{x}\Theta$ et $T_{y}\Theta$ coïncident (vus comme
$k$-sous-espace de $H^{1}(X_1,\OO_{X_1})$) si et seulement $x$ et
$y$ sont colinéaires dans $J_1[p](k)$.
\end{lemme}

\begin{proof} En fait, d'après le corollaire précédent, via les
identifications précédentes, on sait que $T_{x}\Theta$ est le même
que $T_{y}\Theta$ si et seulement si les formes de Cartier
$\omega_{x}$ et $\omega_y$ sont $k$-linéairement dépendantes. Donc
si et seulement si $x$ et $y$ sont colinéaire dans $J_1[p](k)$
(\cite{Serre1} page 41 proposition 10).
\end{proof}

\begin{proposition} \label{critere de ne pas d'etre un TAS}
Soit $D$ une composante irréductible de $\Theta$, qui contient
deux points $x,\ y$ d'ordre $p$ de $J_1$, où $\Theta$ est lisse.
Alors $D$ n'est pas un translaté d'une sous-variété abélienne de
$J_1$.
\end{proposition}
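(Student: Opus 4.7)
The strategy is a direct argument by contradiction via Lemma \ref{lemme sur espace tangent}. Suppose $D = a + A$ for some abelian subvariety $A \subset J_1$ and some $a \in J_1$. The plan is to show that $T_x\Theta = T_y\Theta$ as subspaces of $H^1(X_1,\OO_{X_1})$, whence the lemma forces $x$ and $y$ to be $\mathbf{F}_p$-colinear in $J_1[p](k)$; this contradicts the (implicit) hypothesis that the two given points are linearly independent in the $\mathbf{F}_p$-vector space $J_1[p](k)$.

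First, I would observe that for every $z \in D$, under the canonical translation isomorphism $T_z J_1 \simeq T_0 J_1 = H^1(X_1,\OO_{X_1})$, the tangent subspace $T_z D \subset T_z J_1$ is identified with the fixed subspace $T_0 A \subset H^1(X_1,\OO_{X_1})$. This is standard: since $D = a + A$ is a translate of a subgroup, translations in $J_1$ carry $T_0 A$ to $T_z D$ for any $z \in D$, and these translations act trivially under the canonical identification.

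Next, since $\Theta$ is smooth at $x$ and $y$, $D$ is the unique irreducible component of $\Theta$ through each of these points, so $T_x \Theta = T_x D$ and $T_y \Theta = T_y D$ as hyperplanes in $T_x J_1$ and $T_y J_1$ respectively. Combined with the first step, $T_x \Theta$ and $T_y \Theta$ both correspond to the same subspace $T_0 A$ of $H^1(X_1,\OO_{X_1})$; hence they coincide.

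Applying Lemma \ref{lemme sur espace tangent} then yields the desired contradiction. No serious obstacle is expected in this plan; the only point that deserves a brief verification is the compatibility between the identification of $T_z \Theta$ as a hyperplane in $H^1(X_1,\OO_{X_1})$ used in the lemma (which comes from the smoothness criterion, Corollary \ref{critere de lissite}) and the translation identification used in the first step. Both boil down to the canonical isomorphism $T_0 J_1 \simeq H^1(X_1,\OO_{X_1})$ combined with group translations, so they agree, and the proof concludes.
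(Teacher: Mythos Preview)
Your argument has a genuine gap: you assume that $x$ and $y$ are $\mathbf{F}_p$-linearly independent in $J_1[p](k)$, but the proposition makes no such hypothesis. It only asks that $x$ and $y$ be two distinct points of order $p$ lying on $D$ where $\Theta$ is smooth; they may perfectly well be colinear (and in the later application to Corollary~\ref{principal} nothing rules this out). In the colinear case, Lemma~\ref{lemme sur espace tangent} yields no contradiction at all: it merely confirms that $x$ and $y$ lie on the same $\mathbf{F}_p$-line, which is exactly the situation you are in.

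The paper's proof treats precisely this missing case by bringing in the Dirac property (Theorem~\ref{Dirac}). From $x=ny$ with $n\neq 1$ one gets $a=x-y=(n-1)y\in A$, and since $n-1$ is prime to $p$ this forces $y\in A$, hence $0=y+(-y)\in D\subset\Theta$. Now one takes a local equation $f$ of $\Theta$ at $x$; smoothness makes $f$ a local equation of $D$ at $x$, and by the Dirac property its restriction to $\ker(V)$ vanishes in the local ring at $x\neq 0$. Translating by $-x\in A$ (which preserves both $D$ and $\ker(V)$) carries this to a local equation of $D$ at $0$ whose restriction to $\ker(V)$ vanishes at $0$; since this equation divides a local equation of $\Theta$ at $0$, the latter also vanishes on $\ker(V)$ near $0$, contradicting the Dirac property. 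Your tangent-space step is correct and is exactly how the paper begins, but you need this second, Dirac-based step to finish.
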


\begin{proof} Soit $A$ une sous-variété abélienne de $J_1$ qui laisse $D$
stable. Montrons qu'il n'y a pas de $a\in A(k)$ tel que $x=y+a$.
Sinon, l'espace tangent en $x$ et $y$ est le même, ce qui signifie
que $x$ et $y$ sont $\mathbf{F}_p$-colinéaires. C'est-à-dire, il
existe un $n\in \mathbf{N}$ tel que $1\leq n\leq p-1$, de sorte
que $x=ny$. Donc $a=(n-1)y$. Comme $x\neq y$, on obtient que
$n\neq 1$. Et comme $y$ est un point d'ordre $p$, on obtient que
$y\in A$, donc $0=y-y\in D\subset \Theta$. Par ailleurs, soit $f$
une équation locale de $\Theta$ en $x$, comme $\Theta$ est lisse
en $x$, $f$ est aussi une équation locale de $D$ en $x$, d'après
la propriété de Dirac, $f$ est nulle dans l'anneau local de
$\ker(V)$ en $x\in \ker(V)$ (avec $V:J_1\rightarrow J$ le
Verschiebung de $J$). Puisque $D$ est stable par $A$, $f$ est
encore une équation locale de $D$ en $0$, donc fait partie d'une
équation de $\Theta$ en $0$. Donc cette équation locale de
$\Theta$ est nulle dans l'anneau local de $\ker V$ en $0$. Ce qui
contredit à la propriété de Dirac pour $\Theta$. D'où le résultat.
\end{proof}

\begin{proposition} Soit $X/k$ une courbe lisse connexe
ordinaire de genre $g$. Supposons que pour toute droite affine
$\Delta$ du $\mathbf{F}_p$-espace vectoriel $J_1[p](k)\simeq
\mathbf{F}_{p}^{g}$ qui ne passe pas par l'origine, $\Theta$ soit
lisse en au moins deux points de $\Delta$. Alors toute composante
principale de $\Theta$ est ample.
\end{proposition}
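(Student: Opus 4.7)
Le plan est de proc\'eder par l'absurde. Soit $D$ une composante principale de $\Theta$, et notons $A\subset J_1$ la composante neutre de son stabilisateur ensembliste $\{a\in J_1:a+D=D\}$; puisque l'amplitude de $D$ \'equivaut \`a $A=0$, on suppose $\dim A\geq 1$ en vue d'obtenir une contradiction. L'\'etape cl\'e consiste \`a produire un point non nul $a_0\in A(k)\cap \ker(V_{J_1})(k)$. Ici intervient l'ordinarit\'e de $X$: $J_1$ est alors ordinaire, et toute sous-vari\'et\'e ab\'elienne de $J_1$ l'est aussi, car les pentes du groupe $p$-divisible d'une telle sous-vari\'et\'e sont parmi celles de $J_1$, c'est-\`a-dire $0$ et $1$. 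Donc $A$ est ordinaire, et $A[p](k)\simeq \mathbf{F}_{p}^{\dim A}$ se plonge dans $J_1[p](k)=\ker(V_{J_1})(k)\simeq \mathbf{F}_{p}^{g}$; on choisit alors $a_0\neq 0$ dans $A(k)\cap \ker(V_{J_1})(k)$.

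Comme $D$ est principale, il existe $x\in D\cap \ker(V_{J_1})(k)$. La droite affine $\Delta:=x+\mathbf{F}_p\cdot a_0$ est alors contenue dans $D\cap \ker(V_{J_1})(k)$ (gr\^ace \`a la stabilit\'e de $D$ par $A$). De plus, la propri\'et\'e de Dirac (th\'eor\`eme \ref{Dirac}) combin\'ee avec l'ordinarit\'e assure que $\Theta\cap \ker(V_{J_1})=\ker(V_{J_1})-\{0\}$ sch\'ematiquement, donc $0\notin D$ et $\Delta$ ne passe pas par l'origine. L'hypoth\`ese de la proposition s'applique alors \`a $\Delta$: elle fournit deux points distincts $x_1,x_2\in \Delta\subset D$ en lesquels $\Theta$ est lisse. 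Puisque $x_2-x_1\in \mathbf{F}_p\cdot a_0\subset A$, la translation par $x_2-x_1$ pr\'eserve $D$, d'o\`u $T_{x_1}\Theta=T_{x_2}\Theta$ dans $\mathrm{Lie}(J_1)=H^{1}(X_1,\OO_{X_1})$. Le lemme \ref{lemme sur espace tangent} force alors $x_1$ et $x_2$ \`a \^etre $\mathbf{F}_p$-colin\'eaires dans $J_1[p](k)$, c'est-\`a-dire port\'es par une m\^eme droite passant par l'origine. Mais deux points distincts d'une droite affine qui ne passe pas par $0$ d\'eterminent cette m\^eme droite, laquelle ne passe pas par $0$: contradiction. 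On conclut que $A=0$ et que $D$ est ample.

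La difficult\'e principale anticip\'ee est la premi\`ere \'etape: il faut montrer que toute sous-vari\'et\'e ab\'elienne non triviale de $J_1$ contient un point non nul de $\ker(V_{J_1})$, ce qui utilise de mani\`ere essentielle l'hypoth\`ese d'ordinarit\'e via la stabilit\'e de cette propri\'et\'e par passage aux sous-vari\'et\'es ab\'eliennes et la compatibilit\'e $A[p](k)\hookrightarrow J_1[p](k)$. Une fois cet \'el\'ement $a_0$ obtenu, la propri\'et\'e de Dirac et le lemme \ref{lemme sur espace tangent} rendent la conclusion presque m\'ecanique.
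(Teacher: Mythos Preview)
Your argument is correct and follows essentially the same route as the paper's proof: assume the stabilizer $A$ is nontrivial, use ordinarity of $J_1$ to ensure $A$ is ordinary and hence $A[p](k)\neq 0$, build an affine line $\Delta=x+\mathbf{F}_p a_0\subset D$ not through the origin, and derive a contradiction from the tangent-space lemma (lemme \ref{lemme sur espace tangent}). You are in fact slightly more explicit than the paper in justifying why $A$ has a nonzero $p$-torsion point over $k$ (via ordinarity of abelian subvarieties of an ordinary abelian variety), a point the paper leaves implicit.
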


\begin{proof} En fait, soit $\Theta'\subset \Theta$ une composante principale de
$\Theta$. Soit $A/k$ la plus grande sous-variété abélienne de
$J_1$ qui laisse stable $\Theta'$. Comme $X$ est ordinaire,
$\Theta$ ne passe pas par l'origine. Soit $x\in \Theta'$ un point
d'ordre $p$ de $J_1$, alors $x+A[p](k)\subset \Theta'$ contient
une droite affine $\Delta$ du $\mathbf{F}_p$-espace vectoriel
$J_1[p](k)$ qui ne passe pas par l'origine. Par hypothèse,
$\Theta$ est lisse en au moins deux points $a,~a'\in \Delta\subset
\Theta'$. Soit $z\in A(k)$ tel que $a=a'+z$. Alors l'espace
tangent à $\Theta$ en $a$ est le même que l'espace tangent à
$\Theta$ en $a'$, donc $a$ et $a'$ sont $\mathbf{F}_p$-colinéaires
dans $A[p](k)$ (lemme \ref{lemme sur espace tangent}), d'où une
contradiction.
\end{proof}

De la même façon, on montre:

\begin{proposition} Soit $X/k$ une courbe ordinaire lisse connexe de
genre $g$. Supposons que pour toute droite affine $\Delta$ du
$\mathbf{F}_p$-espace vectoriel $J_1[p](k)\simeq
\mathbf{F}_{p}^{g}$, $\Theta$ est lisse en au moins deux points de
$\Delta$. Alors toute composante principale n'est pas stable par
une sous-variété abélienne de $p$-rang $\geq 1$.
\end{proposition}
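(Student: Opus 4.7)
The plan is to argue by contradiction along the same lines as the preceding proposition, handling uniformly both the case where the affine line built from $x$ and a nonzero $p$-torsion point of $A$ avoids $0$ and the case where it passes through $0$.  Suppose some composante principale $\Theta'$ of $\Theta$ is stable under an abelian subvariety $A\subset J_1$ of $p$-rang $\geq 1$.  Since $X$ est ordinaire, applying the propri\'et\'e de Dirac (th\'eor\`eme~\ref{Dirac}) to a local equation of $\Theta$ at the origin forces $0\notin \Theta$; in particular $\ker(V)\cap \Theta$, being supported on the \'etale $\ker(V)$, consists of points of order exactly $p$.  As $\Theta'$ is principale, pick $x\in \Theta'\cap \ker(V)$, of order $p$.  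Using $p$-rang$(A)\geq 1$, pick also $y\in A[p](k)-\{0\}$.

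Set $\Delta := x+\mathbf{F}_p y$, an affine line in $J_1[p](k)\simeq \mathbf{F}_p^g$; it is contained in $\Theta'$ because $\mathbf{F}_p y\subset A$ and $A$ laisse stable $\Theta'$.  The hypothesis furnishes $a\neq a'$ dans $\Delta$ au voisinage desquels $\Theta$ est lisse; since $0\notin \Theta$, neither $a$ nor $a'$ is zero, so both are points of order exactly~$p$.  The difference $a-a'$ lies in $\mathbf{F}_p y\subset A$, so translation by $a-a'$ is an automorphism of $J_1$ preserving $\Theta'$ and envoyant $a'$ sur $a$.  Identifying the tangent spaces of $J_1$ at all points with $H^1(X_1,\OO_{X_1})$ via the translation structure, this gives $T_a\Theta = T_{a'}\Theta$ as subspaces of $H^1(X_1,\OO_{X_1})$.

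By lemme~\ref{lemme sur espace tangent}, $a$ and $a'$ are then $\mathbf{F}_p$-colin\'eaires dans $J_1[p](k)$.  Since $a\neq a'$, the unique affine $\mathbf{F}_p$-line of $J_1[p](k)$ through $a$ and $a'$ then contains the origin; but that affine line is $\Delta$ itself.  Hence $0\in \Delta\subset \Theta'\subset \Theta$, contradicting $0\notin \Theta$.

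The only substantive step is the translation-compatibility argument delivering $T_a\Theta = T_{a'}\Theta$; the rest is automatic from lemme~\ref{lemme sur espace tangent} and from the Dirac property applied to the ordinariness of $X$.  The main new point compared with the preceding proposition is that here one cannot demand \emph{a priori} that the affine line $\Delta$ avoids the origin (it would fail exactly when $x\in A$), and this is precisely why the hypothesis is strengthened to cover \emph{every} affine line of $J_1[p](k)$, not only those away from $0$.
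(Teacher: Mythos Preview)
Your argument is correct and follows exactly the approach the paper intends (it merely writes ``De la m\^eme fa\c{c}on, on montre'' and gives no separate proof).

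One correction to your closing commentary, though: since $\Delta = x + \mathbf{F}_p y \subset \Theta' \subset \Theta$ and you have already established $0 \notin \Theta$, the line $\Delta$ automatically avoids the origin \emph{before} the hypothesis is ever invoked. (Equivalently: $x \in A[p](k)$ would give $0 = x + (-x) \in \Theta'$ by $A$-stability, contradicting $0\notin\Theta$; hence $x \notin A[p](k) \supset \mathbf{F}_p y$, so $0 \notin \Delta$.) Your proof therefore only applies the hypothesis to a line avoiding the origin, and the strengthened hypothesis plays no role. In fact, since every nonzero abelian subvariety of the ordinary $J_1$ is itself ordinary and hence of $p$-rang $\geq 1$, the present proposition is a formal consequence of the preceding one; the paper is recording a variant rather than a genuinely stronger statement.
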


\begin{proposition} Soit $X/k$ une courbe lisse connexe
non-ordinaire. Soit $\Theta''$ la réunion des composantes
irréductibles qui passent par l'origine de $J_1$. Supposons que
$\Theta$ est lisse en tous les points d'ordre $p$, alors
$\Theta''$ est ample.
\end{proposition}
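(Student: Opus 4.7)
Le plan est de montrer que la plus grande sous-variété abélienne $A\subset J_1$ laissant stable $\Theta''$ est réduite à $\{0\}$, ce qui équivaut à l'amplitude de $\Theta''$. J'observe d'abord que $\Theta''$ est non nul: puisque $X$ est non-ordinaire, $(\ker V)^\circ$ est non trivial, et la fonction de Dirac associée à $\Theta$, qui engendre le socle de $\OO_{(\ker V)^\circ,0}$, appartient donc à l'idéal maximal; d'où $0\in\Theta$. Le corollaire \ref{Dirac implique ordinaire} appliqué à $D_0=\Theta''$ fournit alors déjà que $A$ est ordinaire.

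Je raisonnerais ensuite par l'absurde, en supposant $\dim A\geq 1$. Comme $A$ est ordinaire, $\ker V|_A$ est un sous-schéma en groupes étale de rang $p^{\dim A}\geq p$, et je choisirais un $k$-point non nul $x\in \ker V|_A$. Puisque $0\in\Theta''$ et que $A$ stabilise $\Theta''$, on a $A\subset\Theta''$, d'où $x\in\Theta''$; c'est un point d'ordre $p$ où $\Theta$ est lisse par hypothèse. L'unique composante $D$ de $\Theta$ passant par $x$ est alors nécessairement contenue dans $\Theta''$, et par connexité $A$ la stabilise; l'orbite $x+A\subset D$ donne l'inclusion $\mathrm{Lie}(A)\subset T_x D = T_x\Theta$ (via la trivialisation canonique du fibré tangent à $J_1$).

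L'étape suivante consisterait à appliquer le corollaire \ref{critere de lissite} et l'identification de l'espace tangent à $\Theta$ aux points d'ordre $p$ en termes de formes de Cartier: $T_x\Theta$ est l'orthogonal $\omega_x^\perp\subset H^1(X_1,\OO_{X_1})$ de la forme de Cartier $\omega_x$ attachée à $x$. L'inclusion $\mathrm{Lie}(A)\subset\omega_x^\perp$ se traduit alors par l'annulation sur $\mathrm{Lie}(A)$ de la forme invariante $\omega_x$, c'est-à-dire la nullité de la restriction $\omega_x|_A$ à la sous-variété $A\hookrightarrow J_1$.

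Le point le plus délicat sera d'en déduire une contradiction. L'idée est d'exploiter la fonctorialité de la construction des formes de Cartier vis-à-vis de l'inclusion $A\hookrightarrow J_1$: la restriction $\omega_x|_A$ doit coïncider, à un scalaire non nul près (issu du degré de la polarisation induite sur $A$ par celle de $J_1$), avec la forme de Cartier $\omega_x^A$ du point $x\in A[p](k)\setminus\{0\}$ sur la sous-variété abélienne ordinaire $A$, laquelle est non nulle puisque $A$ est ordinaire. Vérifier soigneusement cette identification et, le cas échéant, que le facteur est bien premier à $p$, sera le cœur technique de la preuve; on obtiendra alors la contradiction attendue et donc $A=0$, ce qui établit l'amplitude de $\Theta''$.
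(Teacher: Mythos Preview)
Ton plan est essentiellement le même que celui du papier jusqu'au choix du point $x\in A[p](k)$ d'ordre $p$ où $\Theta$ est lisse. C'est à l'étape finale que tu divergentes, et c'est là que se trouve une vraie lacune.

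Tu affirmes que $\omega_x|_A$ coïncide \og à un scalaire non nul près (issu du degré de la polarisation induite)\fg{} avec une forme de Cartier non nulle sur $A$. Mais la relation n'est pas une multiplication par un scalaire: par fonctorialité de $d\log$, $\omega_x|_A$ est la forme de Cartier du faisceau $\mathcal L_x|_A$, où $\mathcal L_x$ est le faisceau d'ordre $p$ sur $J_1$ associé à $x$ via la polarisation principale. Or $\mathcal L_x|_A$ correspond à $\lambda(x)\in A^\vee[p]$, où $\lambda:A\to A^\vee$ est la polarisation \emph{induite} sur $A$ par celle de $J_1$. Si $x\in\ker\lambda$, alors $\mathcal L_x|_A$ est trivial et $\omega_x|_A=0$: aucune contradiction. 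Rien ne garantit a priori que le degré de $\lambda$ soit premier à $p$, ni que $A[p]_{et}\cap\ker\lambda=0$; ton \og facteur premier à $p$\fg{} n'est donc pas justifié, et c'est précisément le point que tu signales toi-même comme \og le c\oe ur technique\fg.

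Le papier contourne cette difficulté par un argument bien plus court: puisque $\Theta$ est lisse en $x$, $\Theta''$ l'est aussi (une seule composante localement); comme $-x\in A$ et $A$ stabilise $\Theta''$, la translation par $-x$ montre que $\Theta''$ est lisse en $0$. Or pour $p\geq 3$, $\Theta$ est totalement symétrique (proposition~\ref{theta est fortement symetrique}), donc de multiplicité paire en $0\in\Theta$, d'où singularité en $0$: contradiction. Le cas $p=2$ est trivial puisque $\Theta$ y est irréductible et ample. Tu avais tous les ingrédients en main pour cet argument (tu observes que $A$ stabilise la composante $D$ et que $0\in D$); il suffisait de translater au lieu d'invoquer la description fine de l'espace tangent.
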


\begin{proof} Dans le cas où $p=2$, on sait que $\Theta$ est toujours
irréductible et ample ($\S$ \ref{cas p=2}), donc $\Theta''=\Theta$
est ample. Supposons maintenant $p\geq 3$. Comme $X$ est
non-ordinaire, $\Theta'' \neq \emptyset$. Soit $A$ la plus grande
sous-variété abélienne de $J_1$ qui laisse stable $\Theta''$.
D'après la propriété de Dirac (corollaire \ref{Dirac implique
ordinaire}), $A$ est ordinaire. Si $A\neq 0$, $\Theta''$ contient
au moins un point $x\in A(k)$ d'ordre $p$. Par hypothèse, $\Theta$
est lisse en $x$. A priori, $\Theta''$ est lisse sur $x$, et donc
lisse à l'origine de $J_1$ puisque $A$ laisse stable $\Theta''$.
D'où une contradiction puisque $\Theta$ est toujours singulier à
l'origine lorsque $p\geq 3$ (remarque \ref{Theta est singulier en
point d'ordre divisiant 2}).
\end{proof}

\subsubsection*{Un exemple où $\Theta$ est singulier en un point d'ordre $p$.}
Soit $k$ un corps algébriquement clos de caractéristique $p>0$. On
va construire une courbe ordinaire $X/k$ avec un faisceau $L$
d'ordre $p$, tel que $H^{0}(X_1,B\otimes L)\geq 2$, donc $\Theta$
n'est pas lisse en $L$. Comme $k$ est algébriquement clos, le
Frobenius absolu $Fr:\mathrm{Spec}(k)\rightarrow \mathrm{Spec}(k)$
est un isomorphisme. On peut donc identifier $X_{1}$ à $X$ de
sorte que $F$ s'identifie au Frobenius absolu $Fr:X\rightarrow X$.

Considérons $\mathbf{P}$ la droite projective, $\mathbf{A}\subset
\mathbf{P}$ l'ouvert droite affine de coordonnée $t$. Soit $f(t)$
un polynôme en $t$ de degré $n$ à racines simples. Considérons la
courbe projective $X$ définie par l'équation
$$
\frac{1}{v^{p}}-\frac{1}{v}=\frac{1}{f(t)},
$$
c'est-à-dire $v^{p}+f(t)v^{p-1}-f(t)=0$. Alors le morphisme
naturel $\pi:X\rightarrow \mathbf{P}$ est fini galoisien de degré
$p$, ramifié au-dessus de $t_i$ ($i\in I$ ) avec ramification
sauvage minimale. Donc, d'après la formule de
Crew-Deuring-Shafarevich (théorème 3.1 de \cite{Bouw}), $X$ est
une courbe ordinaire de genre $g=(n-1)(p-1)$. Soient $x_i\in X$
les points au-dessus de $t_i\in \mathbf{P}$ où $\pi$ est ramifié.
Soit $B_{X}$ (resp. $B_{\mathbf{P}}$) le faisceau des formes
différentielles localement exactes de $X$ (resp. de $\mathbf{P}$).
D'abord, on établit une relation entre $B_{X}$ et
$B_{\mathbf{P}}$. Commençons par un calcul local.

Soient $R=k[[t]]$, $R'=k[[t]][v]/(v^{p}+tv^{p-1}-t)=k[[v]]$.
Notons $\phi:R=k[[t]]\rightarrow
R'=k[[t]][v]/(v^{p}+tv^{p-1}-t)=k[[v]]$ le morphisme continu
naturel de $k$-algèbres, alors
$\phi(t)=v^{p}/(1-v^{p-1})=v^{p}\sum_{i=0}^{\infty} v^{i(p-1)}$.
Soient $F_t:k[[t]]\rightarrow k[[t]]$ (resp.
$F_v:k[[v]]\rightarrow k[[v]]$) le morphisme de Frobenius relatif:
c'est un morphisme continu de $k$-algèbres tel que $t\mapsto
t^{p}$ (resp. tel que $v\mapsto v^{p}$). Notons
$B_t=\oplus_{i=0}^{p-2}k[[t^{p}]]t^{i}$ (resp.
$B_v=\oplus_{i=0}^{p-2}k[[v^{p}]]v^{i}$), on a un diagramme à
lignes exactes:
$$
\xymatrix{0\ar[r] & k[[t]]\ar[r]^{F_t}\ar[d]^{\phi} &
k[[t]]\ar[r]^{d/dt}\ar[d]^{\phi}\ar[d]& B_t\ar[r]\ar@{.>}[d]^{\phi'}& 0\\
0\ar[r]& k[[v]]\ar[r]^{F_v}& F[[v]]\ar[r]^{d/dv}& B_v\ar[r]& 0}
$$
d'où un morphisme $\phi':B_t\rightarrow B_v$ rendant le carré de
droite commutatif.

\begin{lemme} Avec les notations ci-dessus, on a $\phi'(B_t)\subset
v^{p}B_{v}$.
\end{lemme}

\begin{proof} C'est un calcul direct. Par définition, pour $0\leq i\leq p-1$,
$$
\phi'(t^{i})=(\phi(t^{i+1})/(i+1))/dv=d\left(\frac{1}{i+1}\cdot
\left(\frac{v^{p}}{1-v^{p-1}}\right)^{i+1}\right)'=
\frac{(1-v^{p-1})^{p-2-i}v^{p(i+1)}v^{p-2}}{(1-v^{p-1})^{p}}\subset
v^{p}B_v.
$$
D'où le résultat.
\end{proof}

\begin{corollaire} Gardons les notations ci-dessus. Alors le morphisme
naturel $\pi^{\ast}B_{\mathbf{P}}\rightarrow B_{X}$ est injectif
et son image est contenue dans $B_{X}(-\sum_{i\in I}x_i)\subset
B_X$.
\end{corollaire}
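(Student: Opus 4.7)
L'assertion �tant locale sur $X_1$, je proc�derais par localisation compl�t�e en chaque point ferm� $x\in X_1$, en distinguant deux cas selon que $x$ est ou non dans la fibre ramifi�e. Aux points non ramifi�s, le morphisme $\pi$ est �tale, donc la formation de $B$ commute au changement de base: le morphisme $\pi^{\ast}B_{\mathbf{P}}\rightarrow B_X$ y est un isomorphisme, et la restriction de $B_X(-\sum_i x_i)$ y co�ncide avec $B_X$; il n'y a donc rien � v�rifier en ces points. Aux points ramifi�s $x_i$, la situation est exactement celle du calcul local pr�c�dent: on a $\widehat{\OO}_{\mathbf{P},\pi(x_i)}\simeq R=k[[t]]$ (apr�s translation ramenant $t_i$ � $0$), $\widehat{\OO}_{X,x_i}\simeq R'=k[[v]]$, et la relation $\phi(t)=v^p/(1-v^{p-1})$ est exactement celle du lemme.

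L'�tape cl� est d'identifier, aux points ramifi�s, le morphisme naturel $\pi^{\ast}B_{\mathbf{P}}\rightarrow B_X$ � l'application $\phi':B_t\rightarrow B_v$ construite dans le lemme. Pour cela, je remarquerais que le diagramme � lignes exactes dans la preuve du lemme n'est autre que la version compl�t�e locale du diagramme
$$
\xymatrix{0\ar[r]& \OO_{\mathbf{P}}\ar[r]^{F}\ar[d]& F_{\ast}\OO_{\mathbf{P}}\ar[r]\ar[d]& B_{\mathbf{P}}\ar[r]\ar[d]& 0\\
0\ar[r]& \OO_X\ar[r]^{F}& F_{\ast}\OO_X\ar[r]& B_X\ar[r]& 0}
$$
obtenu par application de $\pi_{\ast}$ � l'image r�ciproque par $\pi$ (les $B$ �tant vus sur $X_1\simeq X$), d'o� par adjonction le morphisme $\pi^{\ast}B_{\mathbf{P}}\rightarrow B_X$ cherch�. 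Apr�s ce recollement, le lemme pr�c�dent fournit directement l'inclusion $\phi'(B_t)\subset v^p B_v\subset v B_v$, et puisque $v$ est une uniformisante de $\widehat{\OO}_{X,x_i}$, ceci dit exactement que l'image locale est contenue dans $B_X(-x_i)$.

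Reste l'injectivit�. Comme $\pi$ est fini plat de degr� $p$ et g�n�riquement �tale (il n'est ramifi� qu'aux $t_i$), la formation de $B$ �tant compatible au changement de base �tale, le morphisme $\pi^{\ast}B_{\mathbf{P}}\rightarrow B_X$ est un isomorphisme au-dessus de l'ouvert non ramifi�, en particulier au point g�n�rique de $X$. Or $\pi^{\ast}B_{\mathbf{P}}$ est localement libre de rang $p-1$ sur le sch�ma int�gre $X$, donc sans torsion, ce qui entra�ne l'injectivit� du morphisme. Le point d�licat sera bien le recollement g�om�trique de l'�tape centrale --- v�rifier que le calcul formel avec $\phi'$ correspond fid�lement au morphisme canonique $\pi^{\ast}B_{\mathbf{P}}\rightarrow B_X$ --- mais une fois cette identification faite, la conclusion est imm�diate.
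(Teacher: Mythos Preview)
Your approach is exactly the one the paper intends: the corollary is stated without proof precisely because it follows from the local lemma by the localization argument you outline (isomorphism on the \'etale locus, the lemma at the ramified points, and injectivity from generic isomorphism plus local freeness).

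One small correction is in order, however. The intermediate inclusion $v^{p}B_v\subset vB_v$ is both false and unnecessary. In the lemma's exact sequence, the source $k[[v]]$ plays the role of $\OO_{X_1}$ and acts on the target $k[[v]]$ (hence on $B_v$) \emph{via} $F_v$; thus the uniformizer $v\in\OO_{X_1,x_i}$ acts on $B_v$ as physical multiplication by $v^{p}$, not by $v$. Consequently $v^{p}B_v$ (physical multiplication inside $k[[v]]$) is already equal to $\mathfrak{m}_{x_i}\cdot B_v$, the completed stalk of $B_X(-x_i)$. So the lemma's conclusion $\phi'(B_t)\subset v^{p}B_v$ gives the desired containment directly, and the passage through ``$vB_v$'' should be deleted. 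Apart from this, your argument is complete.
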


Ecrivons $I=I'\coprod I''$ avec $\sharp I''=(p-1)\sharp I'$. Soit
$L=\OO_{X}(\sum_{i'\in I'}(p-1)x_{i'}-\sum_{i''\in I''}x_{i''})$,
alors $L^{p}=\pi^{\ast}\OO_{\mathbf{P}}(\sum_{i'\in
I'}(p-1)t_{i'}-\sum_{i''\in I''}t_{i''})\simeq
\pi^{\ast}\OO_{\mathbf{P}}=\OO_{X}$, et on a
\begin{eqnarray*}
B_X\otimes L  &=& B_X(\sum_{i'\in I'}(p-1)x_{i'}-\sum_{i''\in
I''}x_{i''}) \\ & \supset& \pi_{1}^{\ast}B_{\mathbf{P}}(\sum_{i\in
I}x_i+\sum_{i'\in I'}(p-1)x_{i'}-\sum_{i''\in I''}x_{i''}) \\
& =& \pi^{\ast}B_{\mathbf{P}}(\sum_{i'\in I'}px_{i'})\\
&=& \pi^{\ast}\left(B_{\mathbf{P}}(\sum_{i'\in I'}t_{i'})\right).
\end{eqnarray*}
Par ailleurs, sur la droite projective $B_{\mathbf{P}}\simeq
\OO_{\mathbf{P}}(-1)^{\oplus^{p-1}}$, donc $L\otimes B_{X}\supset
\OO_{\mathbf{P}^{1}_{k}}(\sharp I'-1)^{\oplus^{p-1}}$. Donc si
$\sharp I'\geq 2$ pour $p\geq 2$ (resp. si $\sharp I'\geq 1$ pour
$p\geq 3$), on a $H^{0}(X,B_X\otimes L)\geq (p-1)(\sharp I')\geq
2$.

\subsection{Etude différentielle du schéma $\mathcal{H}$} Rappelons
que l'on a noté $\mathcal{H}$ le schéma de Hilbert des
sous-faisceaux inversibles de $B$ de degré $0$ (pour la
définition, voir $\S$ \ref{le schema H}). On a une application
naturelle $\mathcal{H}\rightarrow \Theta$, qui est un
{\textquotedblleft fibré projectif\textquotedblright} sur $\Theta$
relativement à un certain faisceau cohérent sur $\Theta$
(proposition \ref{H est fibre}). Si $L$ est un faisceau inversible
sur $X_1$ de degré $0$, qui correspond à un point $x$ de $\Theta$.
La fibre de $\mathcal{H}\rightarrow \Theta$ au-dessus de $x$ est
l'espace projectif des droites de
$\mathrm{Hom}_{\OO_{X_1}}(L,B)=H^{0}(X_1,B\otimes L^{-1})$.

\begin{proposition}[\cite{FantechiGottsche} corollary 6.4.11 ]
\label{etude diff de H} Soit $\mathcal{L}^{-1}$ un faisceau
inversible de degré $0$ sur $X_1$ qui correspond à un point $x\in
\Theta$. Soit $\alpha:\mathcal{L}\hookrightarrow B$ un plongement
correspondant à un point $z$ de $\mathcal{H}$ au-dessus de $z$, et
soit $\mathcal{G}=\mathrm{coker}(\alpha)$.

(1) L'espace tangent à $\mathcal{H}/k$ en $x$ est canoniquement
isomorphe à $H^{0}(X_1,\mathcal{G}\otimes \mathcal{L}^{-1})$,
l'obstruction à la lissité de $\mathcal{H}$ en $z$ est dans
$H^{1}(X,\mathcal{G}\otimes \mathcal{L}^{-1})$.

(2) Posons $d=h^{0}(X_1,\mathcal{G}\otimes \mathcal{L}^{-1})$,
$r=h^{1}(X_1,\mathcal{G}\otimes \mathcal{L}^{-1})$. Alors
$d-r=g-1$, et
$$
g-1=d-r\leq \mathrm{dim}_{z}(\mathcal{H})\leq d.
$$
Si $\mathrm{dim}_{z}(\mathcal{H})=d-r$, $\mathcal{H}$ est une
intersection complète en $z$; si
$\mathrm{dim}_{z}(\mathcal{H})=d$, $\mathcal{H}$ est lisse en $z$.
\end{proposition}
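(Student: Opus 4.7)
The plan is to recognise $\mathcal{H}$ as the Quot scheme parametrising quotients of $B$ and invoke the standard deformation theory of Quot schemes, after which everything reduces to Riemann--Roch and general formal-smoothness yoga.

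More precisely, the point $z \in \mathcal{H}$ corresponds to the exact sequence $0 \to \mathcal{L} \to B \to \mathcal{G} \to 0$ on $X_1$. By Grothendieck's theory of the Quot scheme (cf.\ the reference \cite{FantechiGottsche}), the Zariski tangent space of $\mathcal{H}$ at $z$ is canonically identified with $\mathrm{Hom}_{\OO_{X_1}}(\mathcal{L},\mathcal{G})$ and the obstructions to lifting a deformation live in $\mathrm{Ext}^{1}_{\OO_{X_1}}(\mathcal{L},\mathcal{G})$. Since $\mathcal{L}$ is invertible, $\mathrm{Ext}^{i}(\mathcal{L},\mathcal{G}) \simeq H^{i}(X_1,\mathcal{G}\otimes \mathcal{L}^{-1})$ for $i=0,1$, which gives assertion (1) directly.

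For (2), I would start from the ranks and degrees: $\mathcal{L}$ is invertible of degree $0$ and $B$ has rank $p-1$ and degree $(p-1)(g-1)$, so $\mathcal{G}$ has rank $p-2$ and degree $(p-1)(g-1)$; tensoring by $\mathcal{L}^{-1}$ does not change the degree, and Riemann--Roch then yields
\[
\chi(\mathcal{G}\otimes \mathcal{L}^{-1})=(p-1)(g-1)+(p-2)(1-g)=g-1,
\]
which gives $d-r=g-1$. The lower bound $\dim_z(\mathcal{H})\geq d-r$ is the standard virtual-dimension estimate: locally $\mathcal{H}$ is cut out in a smooth ambient space of dimension $d$ by at most $r$ equations (corresponding to a surjection onto the obstruction space $H^{1}(X_1,\mathcal{G}\otimes \mathcal{L}^{-1})$), so Krull's Hauptidealsatz gives $\dim_z \geq d - r$. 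The upper bound $\dim_z(\mathcal{H})\leq d$ is tautological from the dimension of the tangent space. The two extreme cases then follow formally: equality $\dim_z=d-r$ forces the $r$ equations to form a regular sequence, so $\mathcal{H}$ is a local complete intersection at $z$; equality $\dim_z=d$ means the tangent space has the same dimension as the local ring, hence $\mathcal{H}$ is smooth at $z$.

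There is essentially no obstacle beyond pointing to the correct reference: the argument is purely the formalism of deformation theory for Quot schemes on a smooth projective curve, combined with a routine Riemann--Roch computation to pin down the virtual dimension. Accordingly, a clean write-up can simply identify $\mathcal{H}$ with the relevant Quot scheme, quote \cite{FantechiGottsche} (or EGA/Grothendieck) for (1), and perform the Riemann--Roch and dimension-bound manipulations for (2).
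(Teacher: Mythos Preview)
Your proposal is correct and matches the paper's approach: the paper gives no proof beyond citing \cite{FantechiGottsche}, and what you have written is precisely the standard deformation-theory argument for Quot schemes (tangent space $=\mathrm{Hom}(\mathcal{L},\mathcal{G})$, obstructions in $\mathrm{Ext}^1$) together with the Riemann--Roch computation that that reference encapsulates.
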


Gardons les notations précédentes, et notons
$\overline{\alpha}:\overline{\mathcal{L}}\hookrightarrow B$ le
sous-fibré inversible de $B$ engendré par
$\alpha:\mathcal{L}\hookrightarrow B$. Soit
$\overline{\mathcal{G}}=\mathrm{coker}(\overline{\alpha})$. Notons
$\mathcal{L}_{\alpha}^{\bot}$ l'orthogonal de
$\alpha:\mathcal{L}\hookrightarrow B$ pour $(\cdot,\cdot)$.

\begin{proposition}\label{diff prop of H} Les conditions (1)-(4) suivantes sont équivalentes:

(1) $\mathcal{H}$ est lisse en $z$ de dimension $g-1$.

(2) $H^{1}(X_1,\mathcal{G}\otimes \mathcal{L}^{-1})=0$

(3) $H^{1}(X_1,\overline{\mathcal{G}}\otimes \mathcal{L}^{-1})=0$

(4)
$\mathrm{Hom}(\mathcal{L}^{-1},\mathcal{L}^{\bot}_{\alpha})=0$.

\noindent De plus, ces conditions sont entraînées par

(5) $H^{1}(X_1,\overline{\mathcal{G}}\otimes
\bar{\mathcal{L}}^{-1})=0$.
\end{proposition}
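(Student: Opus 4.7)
\medskip\noindent\emph{Plan de démonstration.}
L'équivalence (1) $\iff$ (2) suit immédiatement de la proposition \ref{etude diff de H}: puisque $d - r = g-1$ et $g-1 \leq \mathrm{dim}_z(\mathcal{H}) \leq d$, la lissité de $\mathcal{H}$ en $z$ de dimension $g-1$ équivaut à $d = g-1$, donc à $r = h^{1}(X_1, \mathcal{G} \otimes \mathcal{L}^{-1}) = 0$.

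Pour (2) $\iff$ (3), on exploitera l'inclusion $\mathcal{L} \hookrightarrow \overline{\mathcal{L}}$ qui fournit la suite exacte $0 \to \overline{\mathcal{L}}/\mathcal{L} \to \mathcal{G} \to \overline{\mathcal{G}} \to 0$, dont le premier terme est un faisceau de torsion à support fini. La tensorisation par $\mathcal{L}^{-1}$ et le passage à la cohomologie, combinés avec la nullité de $H^{1}$ d'un faisceau de torsion sur une courbe, donneront un isomorphisme $H^{1}(\mathcal{G} \otimes \mathcal{L}^{-1}) \simeq H^{1}(\overline{\mathcal{G}} \otimes \mathcal{L}^{-1})$.

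Le cœur de la preuve, et l'étape la plus substantielle, sera l'équivalence (3) $\iff$ (4), où il s'agit de traduire l'orthogonal par dualité. Comme $\Omega^{1}_{X_1/k}$ est sans torsion et que $\overline{\mathcal{L}}/\mathcal{L}$ est un faisceau de torsion, tout élément de $B$ orthogonal à $\mathcal{L}$ sera automatiquement orthogonal à $\overline{\mathcal{L}}$, d'où $\mathcal{L}^{\bot}_{\alpha} = \overline{\mathcal{L}}^{\bot}_{\overline{\alpha}}$. La stratégie est alors de dualiser la suite $0 \to \overline{\mathcal{L}} \to B \to \overline{\mathcal{G}} \to 0$ (exacte puisque $\overline{\mathcal{L}}$ est saturé dans $B$) et d'utiliser l'auto-dualité $B \simeq \mathcal{H}om_{\OO_{X_1}}(B, \Omega^{1}_{X_1/k})$ fournie par la proposition \ref{autodual}, afin d'identifier $\mathcal{L}^{\bot}_{\alpha} \simeq \mathcal{H}om_{\OO_{X_1}}(\overline{\mathcal{G}}, \Omega^{1}_{X_1/k})$. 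La dualité de Serre donnera alors
$$\mathrm{Hom}(\mathcal{L}^{-1}, \mathcal{L}^{\bot}_{\alpha}) \simeq H^{0}(X_1, \mathcal{L} \otimes \mathcal{H}om_{\OO_{X_1}}(\overline{\mathcal{G}}, \Omega^{1}_{X_1/k})) \simeq H^{1}(X_1, \overline{\mathcal{G}} \otimes \mathcal{L}^{-1})^{\vee},$$
d'où l'équivalence voulue.

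Enfin, pour l'implication (5) $\Rightarrow$ (3), on utilisera que $\mathcal{L}^{-1} \simeq \overline{\mathcal{L}}^{-1}(D)$ où $D$ est le diviseur effectif associé à $\overline{\mathcal{L}}/\mathcal{L}$, d'où une inclusion $\overline{\mathcal{L}}^{-1} \hookrightarrow \mathcal{L}^{-1}$ à conoyau de torsion. Tensoriser par le fibré vectoriel $\overline{\mathcal{G}}$ et prendre la cohomologie fournira, à nouveau grâce à la nullité de $H^{1}$ d'un faisceau de torsion, une surjection $H^{1}(\overline{\mathcal{G}} \otimes \overline{\mathcal{L}}^{-1}) \twoheadrightarrow H^{1}(\overline{\mathcal{G}} \otimes \mathcal{L}^{-1})$, ce qui conclura.
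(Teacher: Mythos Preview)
Your proof is correct and follows essentially the same lines as the paper's. The only notable difference is in the treatment of the equivalence with condition (4): the paper argues (2)$\iff$(4) directly by taking the long exact sequence associated to $0 \to \mathcal{L} \to B \to \mathcal{G} \to 0$ tensored by $\mathcal{L}^{-1}$, observing that $H^1(X_1,\mathcal{G}\otimes\mathcal{L}^{-1})=0$ iff the map $\phi_\alpha\colon H^1(X_1,\OO_{X_1})\to H^1(X_1,B\otimes\mathcal{L}^{-1})$ is surjective, and then noting that $\phi_\alpha$ is Serre-dual to the map $\psi_\alpha\colon \mathrm{Hom}(\mathcal{L}^{-1},B)\to H^0(X_1,\Omega^1_{X_1/k})$ given by pairing against $\alpha$, whose kernel is precisely $\mathrm{Hom}(\mathcal{L}^{-1},\mathcal{L}^{\bot}_\alpha)$. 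You instead prove (3)$\iff$(4) by first identifying $\mathcal{L}^{\bot}_\alpha$ sheaf-theoretically as $\mathcal{H}om_{\OO_{X_1}}(\overline{\mathcal{G}},\Omega^1_{X_1/k})$ via the auto-duality of $B$, and then applying Serre duality to the bundle $\overline{\mathcal{G}}\otimes\mathcal{L}^{-1}$. Both arguments encode the same Serre-duality computation; your version has the minor advantage of making the sheaf $\mathcal{L}^{\bot}_\alpha$ explicit, while the paper's version avoids the preliminary observation that $\mathcal{L}^{\bot}_\alpha=\overline{\mathcal{L}}^{\bot}_{\overline{\alpha}}$.
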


\begin{proof} D'après la proposition précédente, $\mathcal{H}$ est
lisse en $z$ de dimension $g-1$ si et seulement si
$H^{1}(X_1,\mathcal{G}\otimes \mathcal{L}^{-1})=0$, d'où
(1)$\Leftrightarrow$(2). Or par définition, $\mathcal{G}$ est une
extension de $\overline{\mathcal{G}}$ par son sous-module des
torsions $\mathcal{T}$, on a donc $H^{1}(X_1,\mathcal{G}\otimes
\mathcal{L}^{-1})\simeq H^{1}(X_1,\overline{\mathcal{G}}\otimes
\mathcal{L}^{-1})$, d'où (2)$\Leftrightarrow$(3). A partir de la
suite exacte suivante:
$$
\xymatrix{0\ar[r] &  \mathcal{L}\ar[r]^{\alpha} & B\ar[r]&
\mathcal{G}\ar[r]& 0},
$$
on sait que $H^{1}(X_1,\mathcal{G}\otimes \mathcal{L}^{-1})=0$ si
et seulement si le morphisme $\phi_{\alpha}:
H^{1}(X_1,\OO_{X_1})\rightarrow H^{1}(X_1,B\otimes
\mathcal{L}^{-1})$ est surjectif. Or $\phi_{\alpha}$ est dual de
$\psi_{\alpha}:\mathrm{Hom}_{\OO_{X_1}}(\mathcal{L}^{-1},B)\rightarrow
\mathrm{Hom}_{\OO_{X_1}}(\OO_{X_1},\Omega_{X_1/k}^{1})$ (donné par
$<\cdot,\cdot>$ et $\alpha:\mathcal{L}\rightarrow B$) par la
dualité de Serre. Donc dire que $\phi_{\alpha}$ est surjectif
équivaut à dire que le morphisme $\psi_{\alpha}$ est injectif.
Autrement dit, on a (3)$\Leftrightarrow$(4). Par définition, on a
une suite exacte:
$$
0\rightarrow \mathcal{L}\rightarrow
\overline{\mathcal{L}}\rightarrow \mathcal{T}'\rightarrow 0
$$
avec $\mathcal{T}'$ un $\OO_{X_1}$-module de torsion. On en déduit
un morphisme surjectif $H^{1}(X_1,\mathcal{G}\otimes
\bar{\mathcal{L}}^{-1})\rightarrow H^{1}(X_1,\mathcal{G}\otimes
\mathcal{L}^{-1})$. Donc $H^{1}(X_1,\overline{\mathcal{G}}\otimes
\bar{\mathcal{L}}^{-1})\simeq H^{1}(X_1,\mathcal{G}\otimes
\bar{\mathcal{L}}^{-1})=0$ implique que
$H^{1}(X_1,\mathcal{G}\otimes \mathcal{L}^{-1})=0$, d'où
(5)$\Rightarrow$(2). Ceci finit la preuve.
\end{proof}

\begin{corollaire}\label{lissite de point de codim 0} Soit $\eta$ un point générique de $\Theta$. Les
conditions suivantes sont équivalentes:

(1) $\Theta$ est lisse en $\eta$;

(2) Il existe $\alpha:\mathcal{L}_{\eta}\hookrightarrow B_{\eta}$
qui est un plongement d'un faisceau inversible de degré $0$
au-dessus de $\eta$, tel que
$\mathrm{Hom}(\mathcal{L}_{\eta}^{-1},\mathcal{L}_{\eta,\alpha}^{\bot})=0$.

\noindent De plus, sous ces conditions, $\mathcal{H}\rightarrow
\Theta$ est un isomorphisme au-dessus d'un voisinage ouvert de
$\eta$.
\end{corollaire}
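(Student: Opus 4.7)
The central idea is to exploit the projective bundle structure of $\mathcal{H}$ over $\Theta$ supplied by Proposition \ref{H est fibre} (which identifies $\mathcal{H}$ with $\mathbf{P}(\mathcal{R})$ for $\mathcal{R} \simeq \mathcal{H}om(\QQ, \omega_\Theta)$) together with the differential smoothness criterion on $\mathcal{H}$ from Proposition \ref{diff prop of H}. The bridge I will use is the equivalence: $\mathcal{H}$ smooth of dimension $g-1$ at a point $z$ above $\eta$ $\iff$ $\QQ$ is invertible on $\Theta$ near $\eta$ (so that $\mathcal{H} \to \Theta$ is an isomorphism in a neighbourhood of $\eta$) \emph{et} $\Theta$ is smooth at $\eta$. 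This will simultaneously yield the equivalence $(1) \Leftrightarrow (2)$ and the ``de plus'' assertion.

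For $(1) \Rightarrow (2)$: suppose $\Theta$ is smooth at $\eta$, so the artinien local ring $\OO_{\Theta, \eta}$ is a field. Over the DVR $\OO_{J_1, \eta}$ with uniformiser $\pi$, put the square presentation $\mathcal{M}^0 \xrightarrow{u} \mathcal{M}^1 \to \QQ \to 0$ of \S\ref{construction, cas lisse} into Smith normal form $u \sim \mathrm{diag}(\pi^{n_1}, \ldots, \pi^{n_r})$. Since $\Theta = V(\det u) = V(\pi^{\sum n_i})$, smoothness at $\eta$ forces $\sum n_i = 1$, so exactly one $n_i$ equals $1$ and the rest vanish. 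Hence $\dim_{k(\eta)} \QQ(\eta) = 1$, and by Lemme~\ref{lemme trivial}(2), $\QQ$ (and thus $\mathcal{R}$) is invertible on $\Theta$ near $\eta$; therefore $\mathcal{H} = \mathbf{P}(\mathcal{R}) \to \Theta$ is an isomorphism over a neighbourhood of $\eta$. The unique preimage $z \in \mathcal{H}$ of $\eta$ inherits smoothness of dimension $g-1$, and applying the equivalence $(1) \Leftrightarrow (4)$ of Proposition \ref{diff prop of H} at $z$ delivers an embedding $\alpha : \mathcal{L}_\eta \hookrightarrow B_\eta$ with $\Hom(\mathcal{L}_\eta^{-1}, \mathcal{L}_{\eta, \alpha}^\bot) = 0$, which is $(2)$.

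For $(2) \Rightarrow (1)$: the given $\alpha$ corresponds, via Remarque~\ref{fibre de H}, to a point $z \in \mathcal{H}$ above $\eta$, at which Proposition \ref{diff prop of H} gives $\mathcal{H}$ smooth of dimension $g-1$. A standard fibre-dimension argument applied to the projective morphism $\mathcal{H} \to \Theta$ (using that the closure of the component of $\mathcal{H}$ through $z$ maps to the $(g-1)$-dimensional component of $\Theta$ through $\eta$) forces the fibre of $\mathcal{H} \to \Theta$ over $\eta$ to be zero-dimensional at $z$; hence $\dim_{k(\eta)} \QQ(\eta) = 1$. By Lemme~\ref{lemme trivial}(2), $\QQ$ and $\mathcal{R}$ are invertible on $\Theta$ near $\eta$, so $\mathcal{H} \to \Theta$ is an isomorphism there, and the smoothness of $\mathcal{H}$ at $z$ transfers to $\Theta$ at $\eta$.

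The principal technical point I anticipate is that Proposition \ref{diff prop of H} is stated at closed points whereas $\eta$ is generic: I plan to handle this either by running the tangent/obstruction computation of \emph{loc.\ cit.}\ verbatim after base change from $X_1/k$ to $X_1 \times_k \mathrm{Spec}\, k(\eta)$, or by invoking openness of the smooth locus on $\mathcal{H}$ to propagate the criterion from a nearby closed specialisation. Beyond this, only the Smith-normal-form computation and the projective-bundle dictionary from Proposition \ref{H est fibre} are required.
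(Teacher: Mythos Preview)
Your proposal is correct and follows essentially the same route as the paper: for $(2)\Rightarrow(1)$ you use Proposition~\ref{diff prop of H} to get $\mathcal{H}$ smooth of dimension $g-1$ at $z$, then a fibre-dimension argument (the paper phrases this as ``$W_\eta=\{\xi\}$'') to force $\dim_{k(\eta)}\QQ(\eta)=1$, whence $\mathcal{H}\simeq\Theta$ near $\eta$; for $(1)\Rightarrow(2)$ the paper simply says ``clairement'', and your Smith-normal-form computation over the DVR $\OO_{J_1,\eta}$ is a perfectly good way to unpack that (it is the local-algebra content of the crit\`ere de lissit\'e). Your remark about applying Proposition~\ref{diff prop of H} at a non-closed point is handled exactly as you suggest, by base change to $k(\eta)$.
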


\begin{proof} Clairement, (1) implique (2), et si $\Theta$ est
lisse en $\eta$, $\mathcal{H}$ est isomorphe à $\Theta$ sur un
voisinage de $\eta$. Supposons ensuite qu'il existe un prolongment
$\alpha:L_{\eta}\hookrightarrow B_{\eta}$ satisfaisant la
condition (2), Alors $\mathcal{H}$ est lisse de dimension $g-1$ en
le point $\xi\in \mathcal{H}$ correspondant à $\alpha$. Soit $W$
la composante irréductible (munie de la structure de sous-schéma
réduit) de $\mathcal{H}$ passant par $\xi$, c'est une composante
de dimension $g-1$ qui est génériquement lisse sur $k$. Comme
$\xi$ est au-dessus de $\eta$ qui est un point générique de
$\Theta$, on en déduit que $W_{\eta}=\{\xi\}$. En particulier,
pour $z\in \overline{\{\eta\}}\subset \Theta$ assez général,
$\mathrm{dim}_{k(z)}\left(\QQ\otimes_{\OO_{\Theta,z}}k(z)\right)=1$,
donc $\QQ$ est inversible sur un voisinage de $\eta\in \Theta$.
Donc $\mathcal{H}$ est isomorphe à $\Theta$ sur un voisinage de
$\eta$, à priori, $\Theta$ est lisse en $\eta$.
\end{proof}

\section{Etude générique du diviseur thêta}
\markboth{chapitre3}{chapitre3}

\subsection{Préliminaires.}




\subsubsection{Rappels sur l'action de monodromie.}\label{Section de
Monodromie}

\subsubsection*{Rappels sur les familles de courbes semi-stables.}

Soient $g\geq 1$ un entier, $k$ un corps algébriquement clos.
Notons $\underline{H}_g$ le foncteur de la catégorie de
$k$-schémas vers la catégorie des ensembles défini de la façon
suivante: soit $T$ un $k$-schéma.
\begin{itemize}
\item Pour $g\geq 2$, $\underline{H}_g(T)$ est l'ensemble des
classes d'isomorphisme de courbes stables tricanoniquement
plongées de genre $g$ (\cite{DM}).

\item Pour $g=1$, $\underline{H}_1(T)$ est l'ensemble des classes
d'isomorphisme de courbes $E/T$ propres plates à fibres
géométriques intègres de genre arithmétique $1$ avec pour
singularité au plus un point double ordinaire, munie d'une section
$o\in E(T)$ dans le lieu lisse, plongées dans $\mathbf{P}^{2}$
comme cubique plane par $\OO_E(3o)$:
$$
\xymatrix{E\ar@{^{(}->}[r]^{i} \ar[d]_{\pi} & \mathbf{P}^{2}_{T}\ar[ld]\\
T &}.
$$
\end{itemize}

\begin{theorem}[pour $g\geq 2$, \cite{DM}] \label{moduli} (1) Pour $g\geq 1$, le foncteur $\underline{H}_g$ est
représentable par un schéma $H_g$ irréductible lisse sur $k$. Soit
$\mathcal{X}_g/H_g$ la courbe universelle (pour $g=1$, on note
aussi $\mathcal{E}=\X_1$ la courbe universelle sur $H_1$).

(2) Pour $g\geq 2$, soit $H_{g,\mathrm{sing}}$ le sous-schéma
fermé réduit de $H_g$ image du lieu où la courbe universelle
$\X_g$ n'est pas lisse sur $H_g$. Alors $H_{g,\mathrm{sing}}$ est
un diviseur à croisements normaux de $H_g$. Plus précisément, si
$t\in H_g$, soit $\mathfrak{O}_t$ un hensélisé strict de l'anneau
local de $H_{g,\mathrm{sing}}$ en $t$. Les branches de
$H_{g,\mathrm{sing}}$ passant par $t$ (qui correspondent aux
composantes irréductibles de $\mathfrak{O}_t$) sont en bijection
avec les points doubles de la fibre $\X_t$ de $\X$.

\end{theorem}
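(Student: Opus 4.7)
The plan is to reduce both parts to the main results of \cite{DM}, treating the case $g=1$ by a direct construction via Weierstrass equations. For part (1) with $g\geq 2$, this is the content of the Deligne--Mumford construction: for a stable curve $X$ of genus $g\geq 2$ the line bundle $\omega_X^{\otimes 3}$ is very ample, giving an embedding $X\hookrightarrow \mathbf{P}^{5g-6}$ with fixed Hilbert polynomial $P$, so $\underline{H}_g$ is represented by a locally closed subscheme of $\mathrm{Hilb}^{P}_{\mathbf{P}^{5g-6}}$ cut out by the conditions that the fibers be stable curves and the embedding be tricanonical. Smoothness of $H_g$ follows from the unobstructedness of deformations of stable curves (vanishing of $\mathrm{Ext}^2$ of the cotangent complex, the obstruction space being zero because the nodes are local complete intersections), and irreducibility is inherited from the irreducibility of the moduli stack $\overline{\mathcal{M}}_g$ via the $\mathrm{PGL}_{5g-5}$-torsor $H_g\to \overline{\mathcal{M}}_g$.

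For part (1) with $g=1$, I would argue directly. Given $(E,o)\to T$ with $o$ in the smooth locus and embedded via $\OO_E(3o)$ as a plane cubic, after an étale-local change of projective coordinates on $\mathbf{P}^{2}_T$ placing $o$ at $[0:1:0]$ with tangent line $z=0$, the cubic takes the Weierstrass form $y^{2}z+a_1 xyz+a_3 yz^{2}=x^{3}+a_2 x^{2}z+a_4 xz^{2}+a_6 z^{3}$. The representing scheme $H_1$ is then identified with the open subscheme of $\Spec k[a_1,a_2,a_3,a_4,a_6]\simeq \mathbf{A}^{5}_k$ where the cubic has at worst one nodal singularity (the complement of the locus where the discriminant vanishes to order $\geq 2$, or where a cusp appears); this is clearly smooth and irreducible.

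For part (2), the question is étale-local at $t\in H_g$, and the essential input is the deformation theory of nodal curves. Let $p_1,\ldots,p_n$ denote the nodes of $X_t$. The plan is to show that the strict henselization of $\OO_{H_g,t}$ is isomorphic to a power series ring $k[[s_1,\ldots,s_n,u_1,\ldots,u_{3g-3-n}]]$ in which $s_i$ is the ``smoothing parameter'' of $p_i$: étale-locally around $p_i$, where $X_t$ has equation $xy=0$, the universal family is given by $xy=s_i$; the remaining parameters $u_j$ deform the complement of the nodes and are unobstructed. Granting this product structure, the locus where $p_i$ remains a node is the hyperplane $\{s_i=0\}$, so the full non-smooth locus $H_{g,\mathrm{sing}}$ is cut out by $s_1\cdots s_n=0$, a normal crossings divisor whose branches through $t$ are canonically labeled by $\{p_1,\ldots,p_n\}$.

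The main obstacle is the verification of this product decomposition of the versal deformation space. The approach is to use the local-to-global spectral sequence for $\mathrm{Ext}^{\bullet}(\Omega^{1}_{X_t},\OO_{X_t})$: the global sections of $\mathcal{E}xt^{1}(\Omega^{1}_{X_t},\OO_{X_t})$, supported at the nodes, contribute one parameter $s_i$ per node and deform each node independently; $H^{1}(X_t,\mathcal{T}_{X_t}(-\sum p_i))$ contributes the locally trivial deformations; and the obstruction group $H^{2}$ vanishes for dimension reasons on a curve. This is classical and is proved in \cite{DM}; the conclusion gives simultaneously that $H_g$ is smooth at $t$ of dimension $3g-3$ and that $H_{g,\mathrm{sing}}$ is a normal crossings divisor with branches indexed by the nodes of $\X_t$.
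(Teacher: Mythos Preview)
The paper gives no proof of this statement: it is recalled with a citation to Deligne--Mumford for $g\geq 2$, and nothing further is said. So there is no argument in the paper to compare your sketch against.

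Your outline is the standard one and is correct in spirit, but you repeatedly conflate $H_g$ with the moduli stack $\overline{\mathcal{M}}_g$. You claim that the strict henselization of $\OO_{H_g,t}$ is $k[[s_1,\ldots,s_n,u_1,\ldots,u_{3g-3-n}]]$ and that $H_g$ is smooth of dimension $3g-3$; but $H_g$ parametrizes \emph{tricanonically embedded} stable curves, so $H_g\to\overline{\mathcal{M}}_g$ is a $\mathrm{PGL}_{5g-5}$-torsor and $\dim H_g = 3g-3 + (5g-5)^{2}-1$. What has the product form you describe is the versal deformation space of the curve $X_t$ itself (equivalently the formal neighbourhood in the stack); the normal-crossings structure of the boundary then pulls back along the smooth morphism $H_g\to\overline{\mathcal{M}}_g$, which supplies the missing $(5g-5)^{2}-1$ smooth parameters. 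Once stated this way, your argument goes through.

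The same slip occurs for $g=1$: the functor $\underline{H}_1$ records the actual closed embedding $E\hookrightarrow\mathbf{P}^{2}_T$, not just the pair $(E,o)$, so $H_1$ is the $9$-dimensional scheme of (at worst nodal) plane cubics with a marked flex in the smooth locus, not the $5$-dimensional space of Weierstrass coefficients. Your Weierstrass computation is the slice obtained by fixing the flex at $[0:1:0]$ with tangent $z=0$ and normalizing two coefficients; it exhibits an \'etale-local chart and hence proves smoothness, but it is not the global description of $H_1$. Irreducibility then follows because $H_1$ maps onto the irreducible space of nodal plane cubics with connected fibres.
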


\subsubsection*{Théorème d'Igusa et d'Ekedahl.}
Pour $g\geq 1$, soit $H_{g}'$ (resp. $\widetilde{H_g'}$) le lieu
des $t\in H_g$ tels que la jacobienne de $\X_{g,t}$ soit une
variété abélienne ordinaire sur $t$ (resp. tel que $\X_{g,t}$ soit
une courbe lisse ordinaire). Alors $H_g'$ (resp.
$\widetilde{H_{g}'}$) est un ouvert non vide de $H_g$. Notons
$\mathcal{X}_g'=\X_g\times_{H_g}H_g'$, et
$\widetilde{\mathcal{X}_{g}'}=X_g\times_{\mathcal{H}_g}\widetilde{H_g'}$
(pour $g=1$, on note aussi $\mathcal{E}'=\X_1'$ et
$\widetilde{E'}=\widetilde{\X_1'}$),
$\mathcal{J}_{g}'=\mathrm{Pic}^{\circ}_{\X_g'/H_g'}$ sa
jacobienne. $\mathcal{J}_{g}'$ est un schéma abélien ordinaire sur
$H_g'$ de dimension relative $g$. Soient $\bar{s}\rightarrow H_g'$
un point géométrique, $N=p^{e}N'\geq 1$ un entier (où $N'$ est
premier à $p$). Notons $\mathcal{J}_{g}'[N]_{et}$ le plus grand
quotient étale de $\mathcal{J}_{g}'[N]]$ sur $H_{g}'$, et
$\rho_{g,N}:\pi_{1}(H_g',\bar{s})\rightarrow
\mathrm{Aut}_{\mathbf{Z}/N
\mathbf{Z}}\left(\left(\mathcal{J}_{g}'[N]\right)_{et,\bar{s}}\right)$
l'action de monodromie associée (c'est-à-dire, l'action de
$\pi_{1}(H_g',\bar{s})$ sur la fibre en $s$ de
$\mathcal{J}_{g}'[N]_{et,\bar{s}}$, vu comme faisceau localement
constant constructibe sur $H_g'$).

\begin{theorem}[Igusa, Ekedahl \cite{Ekedahl}]\label{monodromie} Supposons $g\geq 1$.

(1) Si $N=p^{e}$ est une puissance de $p$, le morphisme
$\rho_{g,p^{e}}$ est surjectif.

(2) Si $N=N'$ est premier à $p$, le morphisme $\rho_{g,N'}$ a
$\mathbf{Sp}(2g,(\mathbf{Z}/N\mathbf{Z}))$ pour image, où
$\mathbf{Sp}(2g,(\mathbf{Z}/N\mathbf{Z}))$ est le groupe
symplectique pour l'accouplement de Weil sur $\mathcal{J}_g'[N]$.
\end{theorem}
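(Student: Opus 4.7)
La strat\'egie repose sur deux ingr\'edients principaux: la formule de Picard-Lefschetz pour la monodromie autour d'une d\'eg\'en\'erescence en point double, et le th\'eor\`eme \ref{moduli}(2) qui garantit que $H_{g,\mathrm{sing}}$ est un diviseur \`a croisements normaux dont les branches locales en un point correspondent bijectivement aux points doubles de la fibre correspondante. Ceci fournit, apr\`es hens\'elisation stricte, autant de g\'en\'erateurs de monodromie locale que de points doubles sur la fibre d\'eg\'en\'er\'ee, chacun op\'erant comme une transvection dans la direction d'un cycle \'evanescent.

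Pour la partie (2), cas $N=N'$ premier \`a $p$, je me sp\'ecialiserais en un point $t\in H_g'\cap H_{g,\mathrm{sing}}$ correspondant \`a une courbe stable ordinaire \`a un unique point double non s\'eparant, de normalisation lisse de genre $g-1$. La jacobienne de la fibre d\'eg\'en\'er\'ee est alors une extension d'une vari\'et\'e ab\'elienne ordinaire de dimension $g-1$ par $\mathbf{G}_m$, et la formule de Picard-Lefschetz affirme que la monodromie autour de cette branche agit sur $\mathcal{J}_{g,\bar\eta}'[N']$ par la transvection symplectique $x\mapsto x+\langle x,\delta\rangle\,\delta$, o\`u $\delta$ est le cycle \'evanescent (primitif car le point double est non s\'eparant). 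En faisant varier $t$ et le choix du point double dans $H_g'$ -- loisible par irr\'eductibilit\'e de $H_g$ --, je ferais parcourir \`a $\delta$ tous les vecteurs primitifs de $(\Z/N'\Z)^{2g}$; or il est classique que les transvections en ces vecteurs engendrent $\mathbf{Sp}(2g,\Z/N'\Z)$.

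Pour la partie (1), cas $N=p^e$, je proc\'ederais par r\'ecurrence sur $g$. La partie \'etale $(\mathcal{J}_g'[p^e])_{et,\bar s}$ est libre de rang $g$ sur $\Z/p^e\Z$, et il s'agit de prouver que la monodromie surjecte sur $\mathrm{GL}_g(\Z/p^e\Z)$. Le cas $g=1$ est le th\'eor\`eme d'Igusa original: la monodromie sur la partie \'etale de $E[p^e]$ d'une elliptique ordinaire variant dans son espace de modules a pour image $(\Z/p^e\Z)^{\times}$ tout entier. Pour passer au genre sup\'erieur, je d\'eg\'en\'ererais vers une cha\^ine de $g$ courbes elliptiques ordinaires g\'en\'eriques reli\'ees par des points doubles non s\'eparants: la monodromie sur chaque facteur elliptique fournit alors un tore diagonal maximal de $\mathrm{GL}_g(\Z/p^e\Z)$, tandis que les monodromies autour des points doubles livrent, via une version de Picard-Lefschetz pour la partie \'etale, des matrices unipotentes reliant les blocs diagonaux; l'ensemble engendre $\mathrm{GL}_g(\Z/p^e\Z)$.

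L'obstacle principal sera l'analyse fine de la monodromie $p$-adique: contrairement au cas premier \`a $p$, la formule de Picard-Lefschetz porte a priori sur le module de Tate entier, qui contient une partie toro\"idale et une partie connexe-connexe en plus de sa partie \'etale; isoler la composante agissant sur le quotient \'etale requiert la th\'eorie de Dieudonn\'e-Cartier ou l'isomorphisme de Serre-Tate pour la d\'eformation des vari\'et\'es ab\'eliennes ordinaires. C'est pr\'ecis\'ement ce qu'accomplit Ekedahl \cite{Ekedahl} en raffinant la m\'ethode classique d'Igusa, et c'est le point o\`u la preuve se s\'epare vraiment du cas premier \`a $p$.
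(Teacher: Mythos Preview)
Le papier ne d\'emontre pas ce th\'eor\`eme: il renvoie simplement \`a Ekedahl \cite{Ekedahl} pour la preuve, signale que le cas $g=1$, $N=p$ est d\^u \`a Igusa, et que le cas $N$ premier \`a $p$ est classique (\cite{DM}). Ta strat\'egie pour la partie (2) est bien celle de Deligne--Mumford, donc conforme \`a ce que le papier invoque.

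Pour la partie (1), en revanche, ta proposition contient une confusion qui la fait \'echouer telle quelle. Tu \'ecris vouloir d\'eg\'en\'erer vers une \emph{cha\^ine} de $g$ courbes elliptiques reli\'ees par des points doubles \emph{non s\'eparants}: c'est contradictoire. Dans une cha\^ine $E_1\!-\!E_2\!-\!\cdots\!-\!E_g$, chaque n\oe ud est s\'eparant (sa suppression d\'econnecte la courbe), le cycle \'evanescent correspondant est nul, et la monodromie de Picard--Lefschetz autour de ce n\oe ud est triviale sur toute la cohomologie --- donc a fortiori sur le quotient \'etale de la $p^e$-torsion. Tu n'obtiens ainsi aucun \'el\'ement unipotent reliant les blocs diagonaux, et ta construction ne produit que le tore diagonal $\bigl((\Z/p^e\Z)^{\times}\bigr)^g$, tr\`es loin d'engendrer $\mathrm{GL}_g(\Z/p^e\Z)$. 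Si tu remplaces la cha\^ine par un collier (cycle ferm\'e), les n\oe uds deviennent non s\'eparants, mais alors la jacobienne acquiert une partie torique et n'est plus un produit d'elliptiques ordinaires; l'analyse change compl\`etement.

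Le v\'eritable argument d'Ekedahl pour produire les \'el\'ements hors diagonale n'est pas de type Picard--Lefschetz na\"if: il passe par la th\'eorie de Serre--Tate des d\'eformations de vari\'et\'es ab\'eliennes ordinaires et l'action de la monodromie sur les coordonn\'ees canoniques, ce qui est d'une nature assez diff\'erente de ce que tu esquisses. Tu as raison d'identifier cet endroit comme le point d\'elicat, mais le m\'ecanisme que tu proposes pour le franchir ne fonctionne pas.
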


\begin{remark} On renvoie le lecteur à \cite{Ekedahl} pour une
preuve du théorème \ref{monodromie}. Le cas où $g=1$ et $N=p$ a
été d'abord prouvé par Igusa. Le cas où $N$ est premier à $p$ est
classique (\cite{DM}).
\end{remark}

\begin{remark}\label{lemme sur la mono} (1) Pour $g\geq 2$,
soit $\mathcal{X}_{g}'/H_g'$ la courbe universelle sur $H_g'$.
Alors $\mathcal{X}_{g}'$ représente le foncteur des courbes
semi-stables tricanoniquement plongées de genre $g$ munies d'un
point rationnel, dont la jacobienne soit une variété abélienne
ordinaire. Comme $\X_g'/H_g'$ est à fibres géométriques connexes,
étant donné un point géométrique $\bar{x}$ de $\X_g'$, le
morphisme naturel $\pi_1(\X_{g}', \bar{x})\rightarrow
\pi_1(H_g',\bar{x})$ est surjectif. Par suite, l'action de
monodromie associée à la famille universelle de $\X_g'$ est aussi
surjective en vertu du théorème \ref{monodromie}.

(2) Par conséquent, pour $g\geq 2$, il existe une courbe $C_g$
semi-stable définie sur un corps $K_1$, telle que $C_g$ soit
constituée de deux composantes irréductibles: (i) une courbe $X'$
lisse de genre $g-1$ définie sur $K_1$ munie d'un point rationnel;
(ii) une courbe elliptique $E$ sur $K_1$, qui se coupent en un
point rationnel. De plus l'action de
$\mathrm{Gal}(\overline{K_1}/K_1)$ sur
$J_{C_g}[p](\overline{K_1})\simeq J_{X'}[p](\overline{K_1})\times
J_{E}[p](\overline{K_1})$ a
$\mathrm{Aut}_{\mathbf{F}_p}(J_{X'}[p](\overline{K_1}))\times
\mathrm{Aut}_{\mathbf{F}_p}(J_{E}[p](\overline{K_1}))$ comme image
dans $\mathrm{Aut}_{\mathbf{F}_p}(J_{C_g}[p](\overline{K_1}))$.
\end{remark}

\subsubsection{Préliminaires sur les points doubles ordinaires.}

Dans ce $\S$, nous rappelons la notion de point double ordinaire
(en codimension $1$) et leur déformation.

\begin{definition}[point double ordinaire]\label{pt double}
Soient $F$ un corps, $X/F$ un schéma localement de type fini. Un
point singulier $x\in X$ de codimension $1$ est \textit{un point
double ordinaire} s'il existe un $F$-schéma lisse
$Y=\mathrm{Spec}(A)$ et un diagramme commutatif suivant:
$$
\xymatrix{X\ar[d]&&U\ar[ll]_{\alpha}\ar[d]^{\beta}\\
\mathrm{Spec}(F)& Y\ar[l]& \mathrm{Spec}(A[u,v]/(uv))\ar[l]}
$$
dans lequel $\alpha$ et $\beta$ sont des morphismes étales, et
$x\in \alpha(U)\subset X$.
\end{definition}

Soit $R$ un anneau de valuation discrète, de corps résiduel $k$ et
de corps de fractions $K$. Notons $\mathfrak{m}$ l'idéal maximal
de $R$, $\varpi\in \mathfrak{m}$ une uniformisante de $R$. Par
convention, on note $\varpi^{\infty}=0\in R$.

\begin{notation} Pour $n\in \mathbf{Z}_{\geq 1}\cup \{\infty\}$,
notons $C_n=\mathrm{Spec}(R[u,v]/(uv-\varpi^{n}))$.
\end{notation}

\begin{remark} Pour $n\geq 1$ et $n\neq \infty$, $C_n$ est un
schéma normal, tandis que $C_{\infty}$ a deux composantes
irréductibles qui se coupent transversalement le long de $u=v=0$.
\end{remark}

Notons $S=\mathrm{Spec}(R)$, $\eta$ (resp. $s$) le point générique
(resp. le point fermé) de $S$.

\begin{prop}[déformation des points doubles ordinaires]
\label{deformation de point double ordinaire} Soient $X/S$ un
$S$-schéma plat, $x\in X_s$ un point ordinaire double.

(1) Il existe alors un $n\in \mathbf{Z}_{\geq 1}\cup \{\infty\}$,
un $S$-schéma lisse $Y$, et un diagramme commutatif:
$$
\xymatrix{X\ar[d]& Z\ar[l]_{\alpha'}\ar[d]^{\beta'} \\ S &
Y\times_S C_n\ar[l]}
$$
avec $\alpha'$ et $\beta'$ des morphismes étales tels que $x\in
\alpha'(Z)$.

(2) Si $n\neq \infty$, il existe un voisinage ouvert $U$ de $x$
dans $X$ qui est normal et  tel que $U_{\eta}$ soit lisse sur
$\eta$.

(2)' (Cas équisingulier) Si $n=\infty$, il existe un voisinage
ouvert $U$ de $x$ dans $X$ dont le normalisé $\widetilde{U}$ est
lisse sur $S$ et a pour fibre spéciale le normalisé de $U_s$.
\end{prop}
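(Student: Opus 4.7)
The plan is to reduce, after passing to the completion, to a normal form $\widehat{\OO}_{X,x}\simeq R[[t_1,\ldots,t_r,u,v]]/(uv-\varpi^n)$ (with $n=\infty$ allowed), to descend this to the étale diagram of (1) via Artin approximation, and finally to deduce (2) and (2)' by direct inspection of the model $C_n$.

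To produce the normal form, set $\widehat{A}=\widehat{\OO}_{X,x}$. The definition of ordinary double point supplies $\widehat{A}\otimes_R k\simeq k[[t_1,\ldots,t_r,u_0,v_0]]/(u_0v_0)$, and $\widehat{A}$ is $R$-flat. I would lift the parameters $t_i,u_0,v_0$ to $\widehat{A}$; by Nakayama and completeness this gives a surjection $R[[t,u,v]]\twoheadrightarrow \widehat{A}$ whose kernel modulo $\mathfrak{m}_R$ equals $(uv)$. A lifting-of-relations argument, using $R$-flatness, then shows the kernel is principal, generated by some $uv-f$ with $f\in\mathfrak{m}_R\cdot R[[t,u,v]]$. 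A substitution $u\mapsto u+\alpha(t)$, $v\mapsto v+\beta(t)$ with $\alpha,\beta\in \mathfrak{m}_R R[[t]]$ kills the $u$- and $v$-linear parts of $f$, so one may assume $f\in R[[t]]$; then, invoking that the deformation functor of the node $k[[u,v]]/(uv)$ has one-dimensional tangent space (with miniversal deformation $R[[s,u,v]]/(uv-s)$), a change of the $t_i$ absorbs the $t$-dependence of $f$ to yield $f=a\in\mathfrak{m}_R$. Writing $a=\varpi^n\zeta$ with $\zeta\in R^\times$ (or $n=\infty$ if $a=0$) and rescaling $v\mapsto \zeta^{-1}v$ reaches the claimed form. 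Artin approximation then produces the étale diagram of (1), with $Y$ étale over $\Spec R[t_1,\ldots,t_r]$.

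For (2), when $n$ is finite the partial derivatives $v$, $u$, $n\varpi^{n-1}$ of $uv-\varpi^n$ never vanish simultaneously, so $C_n$ is regular; smoothness of $Y/S$ then gives regularity of $Y\times_S C_n$, hence, through the étale neighbourhood, of some neighbourhood $U$ of $x$ in $X$, so $U$ is normal. The generic fibre $C_{n,\eta}$ is smooth over $K$ since $\varpi^n\in K^\times$, whence $U_\eta$ is smooth over $\eta$. For (2)', when $n=\infty$ the normalisation of $C_\infty=\Spec(R[u,v]/(uv))$ is $\Spec R[u]\amalg \Spec R[v]$, smooth over $S$, with special fibre $\Spec k[u]\amalg \Spec k[v]$, which is exactly the normalisation of $C_{\infty,s}$; since normalisation commutes with étale base change, pulling back gives the conclusion for $\widetilde{U}$.

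The main obstacle is the step in paragraph two that absorbs the $t$-dependence of $f$. This is the genuinely deformation-theoretic content and reflects the one-parameter versality of the node singularity; it may alternatively be established by a direct Weierstrass-type argument exploiting that the Jacobian ideal $(\partial(uv)/\partial u,\partial(uv)/\partial v)=(u,v)$ is the maximal ideal at the origin of the closed fibre.
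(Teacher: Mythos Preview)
There is a genuine gap, and it occurs precisely at the step you flag as the main obstacle. The point $x$ is a codimension-$1$ point of $X_s$ (this is part of the paper's definition of ordinary double point), so its residue field $k(x)$ is a transcendental extension of $k$ and the completion $\widehat{\OO}_{X_s,x}$ is the one-dimensional ring $k(x)[[u,v]]/(uv)$, with no $t$-variables. Your formula $\widehat A\otimes_R k\simeq k[[t_1,\dots,t_r,u,v]]/(uv)$ is instead the completion at a \emph{closed} point of the singular locus; but at such a point the claimed normal form $uv-\varpi^n$ is unattainable in general. Concretely, over $R=k[[\varpi]]$ with one parameter $t$, the rings $R[[t,u,v]]/(uv-\varpi t)$ and $R[[t,u,v]]/(uv-\varpi^n)$ are never isomorphic: the absolute singular locus of the first is the single closed point, while that of the second is the entire line $u=v=\varpi=0$. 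The versality heuristic does not rescue this: you are deforming not the isolated node $k[[u,v]]/(uv)$ (whose $T^1$ is one-dimensional) but the family $k[[t,u,v]]/(uv)$, whose $T^1=k[[t,u,v]]/(u,v)=k[[t]]$ is infinite-dimensional over $k$, so there is no reason for the classifying map to factor through a single value of the modulus.

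The paper avoids the issue by never working at a closed point. Since $k(x)/k$ is separable, it is (after an \'etale extension) the fraction field of a polynomial ring $k[t_1,\dots,t_r]$; lifting the $t_i$ to $X$ gives a morphism $X\to\mathbf A^r_S$ taking $x$ to the generic point $\xi$ of the special fibre. The local ring $R'=\OO_{\mathbf A^r_S,\xi}$ is again a discrete valuation ring, and $X\times_{\mathbf A^r_S}\Spec R'$ is now a flat $R'$-\emph{curve} in which $x$ has become a \emph{closed} ordinary double point. One is thus reduced to the classical curve case (the paper cites de~Jong). Your arguments for (2) and (2)$'$ are essentially correct, with one small correction: for $n\geq 2$ the scheme $C_n$ is \emph{not} regular at the origin (the element $uv-\varpi^n$ lies in the square of the maximal ideal, so the embedding dimension is $3>2$); it is however normal ($S_2$ as a hypersurface, $R_1$ since the origin is its unique singular point), and normality is all that (2) asserts.
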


\begin{proof}Vu la définition d'un point double ordinaire (\ref{pt double}), le corps résiduel $k(x)$ en $x\in X$
est une extension séparable de $k(s)=k$, donc une extension finie
étale d'une extension transcendante pure $k[t_1,\cdots,t_n]$ de
$k$. Quitte à restreindre à un voisinage ouvert de $x$ dans $X$,
on peut supposer que les $t_i$ se relèvent en des fonctions $T_i$
sur $X$. Considérons le morphisme $\phi:X\rightarrow
\mathrm{Spec}(R[t_1,\cdots,t_n])=\mathbf{A}_S^{n}$ défini par les
$T_i$, et soit $\xi\in \mathbf{A}_{S}^{n} $ le point générique de
la fibre spéciale de $\mathbf{A}^{n}_{S}$. Alors $\xi=\phi(x)$ et
chaque composante irréductible de $X$ domine $\mathbf{A}_S^{n}$
par $\phi$. Soit $R'$ l'anneau local de $\mathbf{A}^{n}_{S}$ en
$\xi$. Donc $R'$ est un anneau de valuation discrète. Soit
$X'=X\times_{\mathbf{A}^{n}_{S}}\mathrm{Spec}(R')$. Alors $X'$ est
maintenant une $R'$-courbe plate qui présente un point double
ordinaire en $x$. Quitte à faire une localisation étale
$Y\rightarrow \mathbf{A}_{S}^{n}$ avec $\xi'$ un point au-dessus
de $\xi$, on peut remplacer $\mathbf{A}_S^{n}$ par un $S$-schéma
lisse $Y$ de $R'$ tel que l'extension $k(\xi')\rightarrow k(x)$
soit triviale. On est donc ramené au cas d'une courbe relative, un
cas qui est bien connu. On renvoie à \cite{de Jong}($\S$ 2.23)
pour les détails. Les autres assertions sont immédiates.
\end{proof}

\subsection{Théorème d'irréductibilité et de normalité} Dans cette
section, $k$ est un corps algébriquement clos de caractéristique
$p>0$. Si $Y$ est un schéma, et $y\in Y$, on note $k(y)$ le corps
résiduel de $Y$ en $y$.

\begin{lemme}\label{lemme technique} Soit $g\geq 2$ un entier. Il existe un $k$-schéma
$S=\mathrm{Spec}(R)=\{\eta,s\}$ spectre d'un anneau de valuation
discrète complet et une courbe semi-stable $X/S$ de genre $g$
telle que

(1) La fibre générique $X_{\eta}$ de $X/S$ est lisse.

(2) La jacobienne $J_1$ de $X_1$ est propre sur $S$.

(3) La fibre spéciale $X_s$ est constituée de deux composantes:
(i) une courbe générique $Y$ de genre $g-1$, et (ii) une courbe
elliptique générique $E$ qui se coupent transversalement en un
point rationnel. De plus, l'action de
$\mathrm{Gal}(\overline{k(s)}/k(s))$ sur les points d'ordre $p$ de
$J_{1,s}\simeq J_{Y,1}\times E_1$ a pour image
$\mathrm{Aut}_{\mathbf{Z}/p\mathbf{Z}}(J_{Y,1}[p](\bar{s}))\times
\left(\mathbf{Z}/p\mathbf{Z}\right)^{\ast}$.

\end{lemme}

\begin{proof} On a $g\geq 2$. Reprenons la courbe $C_g/h$ au-dessus
de $h=\mathrm{Spec}(K_1)$ construite dans la remarque \ref{lemme
sur la mono}. Choisissons un plongement tricanonique de $C_{g}/h$,
ce qui définit un morphisme de $h$ dans $H_{g}'$. Soit $h'$ son
image. En $h'$, $H_{g,\mathrm{sing}}'$ est un diviseur lisse de
$H_{g}'$(théorème \ref{moduli}). Soit $s$ le point générique de ce
diviseur. On prend $R$ le complété de l'anneau local de $H_g'$ en
$s$. Alors $R$ est un anneau de valuation discrète complet. Posons
$S=\mathrm{Spec}(R)=\{\eta,s\}$, $X=\X_{g}'\times_{H_{g}'}S$.
Ainsi $X/S$ est une courbe stable de fibre générique la courbe
générique de genre $g$, et sa jacobienne est un schéma abélien sur
$S$. De plus, $X_s$ est constituée de deux composantes: (i) une
courbe générique $Y$ de genre $g-1$, et (ii) une courbe elliptique
générique $E$. Ces deux composantes se coupent transversalement en
un point rationnel. Il reste à vérifier les conditions sur
l'action de monodromie. Par construction (remarque \ref{lemme sur
la mono}), le morphisme
$\mathrm{Gal}(\overline{k(h)}/k(h))\rightarrow
\mathrm{Aut}\left(J_{C_g,1}[p](\overline{k(h)})\right)$ a
$\mathrm{Aut}_{\mathbf{Z}/p\mathbf{Z}}(J_{Y,1}[p](\bar{h}))\times
\left(\mathbf{Z}/p\mathbf{Z}\right)^{\ast}$ comme image. Soit
$U\subset H_g'$ le lieu lisse du diviseur passant par $h'\in
H_g'$, alors $h'\in U$. Considérons le diagramme commutative
suivant
$$
\xymatrix{\mathrm{Gal}(\overline{k(h)}/k(h))=\pi_1(h,\bar{h})\ar[r]\ar[rd]&
\pi_1(U,\bar{h})\ar[d]^{\rho'}\\ &
\mathrm{Aut}\left(J_{\mathcal{C}_g,1}[p](\overline{k(h)})\right)},
$$
on en déduit que $\mathrm{Aut}(J_{Y,1}[p](\bar{h}))\times
(\mathbf{Z}/p\mathbf{Z})^{\ast}\subset \mathrm{im}(\rho')$. Or
$X_h$ est une courbe stable sur $k(h)$ constituée de deux
composantes lisses se coupant transversalement en un point
rationnel, on a $\mathrm{Aut}(J_{Y}[p](\bar{h}))\times
(\mathbf{Z}/p\mathbf{Z})^{\ast}\supset \mathrm{im}(\rho')$. Donc
$\rho'$ a
$\mathrm{Aut}_{\mathbf{Z}/p\mathbf{Z}}(J_{Y,1}[p](\bar{h}))\times
\left(\mathbf{Z}/p\mathbf{Z}\right)^{\ast}$ comme image. Puisque
$s\in U$ est le point générique, par généralisation de $h'$ à $s$,
l'action de $\mathrm{Gal}(\overline{k(s)}/k(s))$ sur les points
d'ordre $p$ de $J_{1,s}\simeq J_{Y,1}\times E_1$ a pour image
$\mathrm{Aut}_{\mathbf{Z}/p\mathbf{Z}}(J_{Y,1}[p](\bar{s}))\times
\left(\mathbf{Z}/p\mathbf{Z}\right)^{\ast}$.

\end{proof}

\begin{theoreme}\label{irr de theta gen} Si $g=2$, le diviseur $\Theta_{\mathrm{gen}}$ de la courbe générique est
lisse, et donc géométriquement irréductible.
\end{theoreme}

\begin{proof}
Considérons la courbe $X/S$ construite dans le lemme précédent.
Soit $k'/k(s)$ une extension finie galoisienne de $k(s)$ de groupe
$(\mathbf{Z}/p\mathbf{Z})^{\ast}\times
(\mathbf{Z}/p\mathbf{Z})^{\ast}$ qui déploie les points d'ordre
$p$ de $J_{1,s}$. Comme $S$ est complet donc hensélien,
l'extension $k'/k(s)$ s'étend en un revêtement étale galoisien
$S'/S$ de groupe $(\mathbf{Z}/p\mathbf{Z})^{\ast}\times
(\mathbf{Z}/p\mathbf{Z})^{\ast}$. Soient $(b_i)_{i=1,\cdots,p-1}$
et $(b_i')_{i=1,\cdots,p-1}$ les points d'ordre $p$ de $E_{1}$ et
$E_1'$. Sur $s'$, le diviseur $\Theta_{s'}$ est tel que (remarque
\ref{theta pour les courbes stables})
$$\Theta_{s'}=
\Theta_{s}\times_{k(s)}k(s')
=\bigcup_{i=1,j=1}^{p-1}\left(\left(\{b_i\}\times
E_{1}'\right)\cup \left(E_1\times \{b_j'\}\right)\right).
$$
C'est une courbe semi-stable de points doubles $(b_i,b_j')_{i,j\in
\{1,\cdots, p-1\}}$ qui sont conjugués sous $G$. En particulier,
$\Theta_{s'}$ a seulement des points doubles ordinaires pour
singularités.

Soit $a\in \Theta_{s'}$ un point singulier, $a$ est donc un point
double ordinaire de $\Theta_{s'}$. D'après la déformation de
points doubles ordinaires (proposition \ref{deformation de point
double ordinaire}), il y a deux cas à distinguer:

(1) Ou bien il existe un voisinage ouvert $U$ de $a$ dans
$\Theta':=\Theta\times_S S'$ tel que $U_{\eta}'$ soit lisse sur
$\eta'$, auquel cas $\Theta_{\eta'}'$ et donc $\Theta_{\eta}$ sont
lisses.

(2) Ou bien, $\Theta'$ est équisingulier en $a$ sur $S'$, auquel
cas le normalisé $\widetilde{\Theta'}$ de $\Theta'$ est lisse sur
$S'$ et la fibre spéciale $\widetilde{\Theta'}_{s'}$ est le
normalisé de $\Theta_{s'}'$. Alors $\widetilde{\Theta'}_{s'}$ est
une somme disjointe de courbes irréductibles isomorphes à $E_1$ ou
$E_1'$. Par suite, les composantes de
$\widetilde{\Theta'}_{\eta'}$ (où $\eta'$ est le point générique
de $S'$) sont des courbes elliptiques. Or $J_{1}$ ne contient pas
de courbes elliptiques puisque
$\mathrm{NS}(J_{1,\eta}\times_{\eta} \bar{\eta})=\mathbf{Z}$
(\cite{Mori}), d'où une contradiction.

\end{proof}

\begin{theoreme}\label{nor de theta gen} Si $g\geq 3$, le diviseur $\Theta_{\mathrm{gen}}$ de la courbe générique
est géométriquement normal.
\end{theoreme}

\begin{proof} On raisonne par récurrence sur $g$. D'après le
théorème précédent, on sait que le diviseur thêta pour la courbe
générique de genre $2$ est lisse. Supposons maintenant $g\geq 3$
et que le théorème a été démontré pour les courbes génériques de
genre $\leq g-1$. Considérons la courbe $X/S$ construite dans le
lemme \ref{lemme technique}. Soit $K'/k(\eta)$ une extension
galoisienne de corps telle que les conditions suivantes soient
réalisée: notons $S'=\{\eta',s'\}$ le normalisé de $S$ dans $K'$,
alors (i) toutes les composantes irréductibles de $\Theta_{\eta'}$
sont géométriquement irréductibles; (ii) l'extension de corps
$k(s')/k(s)$ déploie les points d'ordre $p$ de $J_{s}$. Notons
$(b_i)_{i=1,\cdots,p-1}$ les points d'ordre $p$ de $E_{1,s'}$.

Montrons d'abord que $\Theta_{\eta'}$ est géométriquement intègre.
D'après la remarque \ref{theta pour les courbes stables}, la fibre
de $\Theta$ au-dessus de $s'$ est telle que
$$
\Theta_{s'}=\Theta_{Y_{s}}\times
E_{1,s'}\cup\left(\cup_{i=1}^{p-1}\left(J_{Y_1,s'}\times\{b_i\}\right)\right),
$$
où $\Theta_{Y_{s'}}$ est le diviseur thêta de la courbe générique
de genre $g-1$, et donc est normal. Par conséquent, $\Theta$ a une
fibre spéciale géométriquement réduite. Il reste à prouver que
$\Theta_{\eta'}$ est géométriquement irréductible. Soit
$\Theta_{\eta',i}~(i\in I)$ les composantes irréductibles de
$\Theta_{\eta'}$, munies de la structure de sous-schéma fermé
réduit. Comme $\mathrm{NS}(J_{1,\eta'})\simeq \mathbf{Z}$, elles
sont amples. Soit $\overline{\Theta_{\eta',i}}$ l'adhérence
schématique de $\Theta_{\eta',i}$ dans $\Theta':=\Theta\times_S
S'$ ($=$ le diviseur thêta pour la courbe $X'/S'$). Alors
$\overline{\Theta_{\eta',i}}$ a une fibre spéciale ample. Or
d'après la description explicite de $\Theta_{s'}$ donnée
ci-dessus, on a donc $\Theta_Y\times E_1\subset
(\overline{\Theta_i})_s$. Or par l'hypothèse de récurrence,
$\Theta_{s'}$ est réduit, ce qui implique que
$\mathrm{card}(I)=1$. Donc $\Theta_{\eta'}$ est géométriquement
irréductible.

Ensuite, on montre que $\Theta_{\eta'}$ est géométriquement
normal. Soit $Z$ le fermé de $\Theta_{s'}$ où $\Theta_{s'}$ n'est
pas lisse. Par hypothèse de récurrence, $\Theta_Y$ est normal,
donc les composantes de $Z$ de codimension $1$ dans $\Theta_{s'}$
sont les $\Theta_Y\times \{b_i\}$ ($1\leq i\leq p-1$). Soit
$\xi_i$ le point générique de $\Theta_Y\times \{b_i\}$. Comme les
$b_i$ ($1\leq i\leq p-1$) sont conjugués sous le groupe de
décomposition de $k(\eta')/k(\eta)$, les $\xi_i$ le sont aussi. En
$\xi_i$, $\Theta_{s'}$ présente un point double ordinaire de
codimension $1$. Soit $\pi:\widetilde{\Theta}'\rightarrow \Theta'$
la normalisation. Il y a deux cas à distinguer:

(1) $\Theta'$ est normal en chacun des $\xi_i$, auquel cas,
$\Theta'$ est normal, et en particulier,
$\Theta_{\eta'}=\Theta_{\eta'}'$ est normal. De plus,
$\Theta_{\eta'}$ est en fait lisse en codimension $\leq 1$ (lemme
\ref{deformation de point double ordinaire} (2)), donc est
géométriquement normal en vertu du critère de normalité de Serre.

(2) Il existe un point $\xi_{i_0}$ où $\Theta'$ est équisingulier
(au sens de lemme \ref{deformation de point double ordinaire}
(2')). Par l'action de monodromie, $\Theta'$ est équisingulier en
tous les $\xi_i$. Alors $\widetilde{\Theta}'$ est lisse sur $S'$
au-dessus d'un voisinage ouvert $V_i$ de $\xi_i$, et la fibre
spéciale $\widetilde{\Theta}_{s'}$ de $\widetilde{\Theta}'$ est la
normalisé de $\Theta_{s'}$ au-dessus de $V_{s'}$ (d'après la
proposition \ref{deformation de point double ordinaire}).

\begin{lemme} Il existe un fermé $\widetilde{W}$ de $\widetilde{\Theta}_{s'}$ de
codimension $\geq 2$ dans $\widetilde{\Theta}_{s'}$ tel que
$\widetilde{\Theta}_{s'}-\widetilde{W}$ ait au moins $2$
composantes connexes.
\end{lemme}

\begin{proof}[Démonstration du lemme] Soit
$W=Z-(\cup_{i=1}^{p-1}V_i)$,
$\widetilde{W}=\pi_{s'}^{-1}(W)\subset \widetilde{\Theta}$. Alors
$\widetilde{W}$ est de codimension $\geq 2$ dans $\Theta_{s'}$. De
plus, $\widetilde{\Theta}_{s'}-\widetilde{W}$ est le normalisé de
$\Theta_{s'}-W$, et il est aussi lisse en les points au-dessus de
$\xi_i$ ($1\leq i\leq p-1$). Par conséquent,
$\widetilde{\Theta}_{s'}-\widetilde{W}$ n'est pas connexe (en
effet, $\widetilde{\Theta}_{s'}-\widetilde{W}$ a $p$ composantes
connexes).
\end{proof}

On termine la preuve par la proposition ci-après que l'on applique
à $Z=\widetilde{\Theta}'$ et $F=\widetilde{W}$. On voit que comme
$\widetilde{\Theta}_{s'}-\widetilde{W}$ n'est pas connexe par le
lemme ci-dessus, $\widetilde{\Theta}_{\eta'}$ n'est pas connexe.
Or $\Theta_{\eta'}$ est géométriquement intègre d'après ce que
l'on a montré au début de la preuve, $\widetilde{\Theta}_{\eta'}$
l'est aussi, d'où une contradiction. Ceci finit la preuve.
\end{proof}

\begin{proposition}[Utilisation de SGA$2$]Soit
$S=\mathrm{Spec}(R)$ le spectre d'un anneau de valuation discrète
complet, de point fermé $s$, de point générique $\eta$ et
d'uniformisante $\varpi$. Soit $f:Z\rightarrow S$ un schéma propre
et plat à fibre générique équidimensionnelle de dimension $\geq
2$. On suppose que $Z$ est normal. Soit $F$ un fermé de $Z_s$ tel
que $\mathrm{Codim}(F,Z_s)\geq 2$, et soit $V=Z-F$. Notons
$\widehat{V}$ le complété formel de $V$ le long de $V_s$, alors

(1) L'application canonique $H^{0}(V,\OO_{V})\rightarrow
H^{0}(\widehat{V},\OO_{\widehat{V}})$ est bijective.

(2) $Z_{\eta}=V_{\eta}$ est connexe si et seulement si $V_s$ est
connexe.
\end{proposition}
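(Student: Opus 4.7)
My plan is to derive both assertions from Grothendieck's theorem on formal functions, combined with the depth conditions coming from the normality of $Z$; both ingredients are supplied by SGA 2.

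For (1), I begin by observing that since $Z$ is flat over the DVR $S$, the special fibre $Z_s$ is a Cartier divisor in $Z$, so $F$ has codimension $\geq 3$ in $Z$; as $Z$ is normal it satisfies Serre's condition $S_2$, hence the depth of $\OO_Z$ along $F$ is $\geq 2$. By the extension-of-sections theorem of SGA 2, the inclusion $j : V \hookrightarrow Z$ induces an isomorphism $\OO_Z \xrightarrow{\sim} j_* \OO_V$, and taking global sections yields $H^0(Z, \OO_Z) \xrightarrow{\sim} H^0(V, \OO_V)$. Next, since $R$ is complete and $f$ is proper, the theorem on formal functions (EGA III, 4.1.5) gives $H^0(Z, \OO_Z) \xrightarrow{\sim} H^0(\widehat{Z}, \OO_{\widehat{Z}})$, where $\widehat{Z}$ is the formal completion of $Z$ along $Z_s$. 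Applying the formal analogue of the same depth argument (provided by SGA 2) then yields $H^0(\widehat{Z}, \OO_{\widehat{Z}}) \xrightarrow{\sim} H^0(\widehat{V}, \OO_{\widehat{V}})$, and composing the three isomorphisms proves (1).

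For (2), I will deduce the equivalence from (1) by an idempotent count. Set $A := H^0(V, \OO_V)$; by the depth argument in (1), $A = H^0(Z, \OO_Z)$, which is a finite $R$-algebra (by properness) and $R$-torsion-free (by flatness of $Z$), hence a free $R$-module. As $Z$ is normal, $A$ decomposes as a finite product $A = \prod_{i=1}^{n} A_i$ of complete normal local domains finite over $R$, so in particular each $A_i \otimes_R K$ remains a domain. On one hand, (1) identifies $A$ with $H^0(\widehat{V}, \OO_{\widehat{V}})$, and idempotents of this ring lift uniquely through the nilpotent thickenings of $V_s$ inside $\widehat{V}$, so they correspond bijectively to the connected components of $V_s$. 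On the other hand, flat base change gives $A \otimes_R K = H^0(Z_\eta, \OO_{Z_\eta})$; since each factor $A_i \otimes_R K$ is a domain, the idempotents of $A \otimes_R K$ coincide with those of $A$ and correspond bijectively to the connected components of $Z_\eta$. Hence $V_s$ and $Z_\eta$ have the same number of connected components, which gives the equivalence.

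The main technical obstacle lies in transferring the depth-based extension of sections from the scheme $Z$ to its formal completion $\widehat{Z}$, since the infinitesimal thickenings $Z_n$ do not a priori inherit $S_2$ from $Z$; one has to argue directly at the formal level via the inverse limit of the local cohomology sequences, and this is precisely the SGA 2 input that the proof's opening remark invokes.
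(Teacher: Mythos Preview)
Your argument is correct, but it is organised differently from the paper's and takes a small detour.

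The paper does not pass through $Z$ and $\widehat{Z}$ at all. Instead it works directly with the non-proper morphism $f\circ j:V\rightarrow S$ and invokes SGA2 IX, Corollaire~1.2, which is a formal-functions theorem for such morphisms: it gives the isomorphism $H^{0}(V,\OO_V)\simeq \varprojlim H^{0}(V_n,\OO_{V_n})$ once one knows that $(f\circ j)_{\ast}\OO_V$ and $R^{1}(f\circ j)_{\ast}\OO_V$ are coherent on $S$. That coherence is exactly what the depth condition (coming from the normality of $Z$) buys, via SGA2~VIII, Th\'eor\`eme~3.1. So the paper's proof of (1) consists of two steps: check the depth hypothesis, then cite the ready-made comparison theorem. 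Your three-step factorisation $H^{0}(V)\simeq H^{0}(Z)\simeq H^{0}(\widehat{Z})\simeq H^{0}(\widehat{V})$ is valid, but its third step --- the ``formal Hartogs'' you flag as the main obstacle --- is precisely where the content of SGA2~IX~1.2 resides; invoking that corollary directly for $V\to S$ collapses your steps~1--3 into a single citation and avoids re-proving anything on the formal side. In short, the paper's route is cleaner because the relevant SGA2 statement already packages together the algebraic and formal halves of your argument.

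For (2), the paper is extremely terse (``cons\'equence imm\'ediate de (1) et du fait que les composantes connexes de $\widehat{V}$ sont celles de $V_s$''); your idempotent count via the decomposition of $H^{0}(Z,\OO_Z)$ into complete local normal domains is a perfectly good way of filling in what the paper leaves implicit, and is essentially the same idea spelt out.
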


\begin{proof} L'assertion (2) est une conséquence immédiate de (1) et
du fait que les composantes connexes de $\widehat{V}$ sont celles
de $V_s$. Prouvons (1) comme cas particulier de SGA2 IX corollaire
1.2. Notons $j:V\rightarrow Z$ l'immersion ouverte, montrons
d'abord que les faisceaux $(f\circ j)_{\ast}(\OO_{V})$ et
$\mathrm{R}^{1}(f\circ j)_{\ast}(\OO_V)$ sont des faisceaux
cohérents sur $Z$ comme conséquence de SGA2 VIII, théorème 3.1. En
effet, pour tout $x\in V$ tel que
$\mathrm{codim}(\overline{\{x\}}\cap (X-V),\overline{\{x\}})=1$,
on a $\mathrm{dim}(\OO_{X,s})\geq 2$. Comme $V$ est normal,
$\mathrm{prof}_x(\OO_{V})\geq 2$. Appliquons SGA2 VIII théorème
3.1 à la situation où $X=Z$, $Y=S$, $U=V$, $F=\OO_V$ et $n=2$, on
trouve que les faisceaux $(f\circ j)_{\ast}(\OO_{V})$ et
$\mathrm{R}^{1}(f\circ j)_{\ast}(\OO_U)$ sont des faisceaux
cohérents sur $Z$. Par SGA2 IX corollaire 1.2, on en conclut que
l'application canonique
$$
H^{0}(V,\OO_V)\rightarrow \varprojlim
H^{0}(V_n,\OO_{V_n})=H^{0}(\widehat{V},\OO_{\widehat{V}})
$$
est bijective (où $V_{n}=V\otimes_R R/\varpi^{n+1} $). D'où le
résultat.

\end{proof}

\begin{remarque} En utilisant le même genre de dégénérescence, on
peut majorer les multiplicités du $\Theta_{\mathrm{gen}}$. En
effet, si $g=2n$ (resp. $g=2n+1$), on dégénère la courbe générique
en une chaîne de $n$ courbes génériques de genre $2$ (resp. en une
chaîne de $n$ courbes génériques de genre $2$ et une courbe
elliptique), on en déduit que en un point de
$\Theta_{\mathrm{gen}}$, la multiplicité est $\leq n$ (resp. $\leq
n+1$). D'où le corollaire suivant:
\end{remarque}

\begin{corollaire} Pour une courbe générique de genre $g=2n$ (resp.
de genre $g=2n+1$), et tout $L$ inversible de degré $0$ sur $X_1$,
on a $h^{0}(X_1,B\otimes L)\leq n$ (resp. $h^{0}(X_1,B\otimes
L)\leq n+1$).
\end{corollaire}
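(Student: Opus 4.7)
Le plan est d'appliquer la majoration de Laszlo $h^{0}(X_{1},B\otimes L)\leq \mathrm{mult}_{x}(\Theta)$ apr\`es avoir born\'e les multiplicit\'es de $\Theta$ par une d\'eg\'en\'erescence semi-stable bien choisie. Je construirais une famille $X/S$, avec $S$ spectre d'un anneau de valuation discr\`ete complet, de fibre g\'en\'erique lisse de genre $g$, et de fibre sp\'eciale une cha\^{\i}ne $X_{s}=Y_{1}\cup\cdots\cup Y_{n}$ de $n$ courbes lisses g\'en\'eriques de genre $2$ se coupant transversalement en des points rationnels (cas $g=2n$), ou la m\^eme cha\^{\i}ne compl\'et\'ee par une courbe elliptique ordinaire g\'en\'erique $E$ attach\'ee \`a une extr\'emit\'e (cas $g=2n+1$). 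Puisque tous les n\oe uds sont s\'eparateurs (le graphe dual est un arbre), la partie torique de $J_{1,s}$ est triviale, et $J_{1}/S$ est un sch\'ema ab\'elien avec $J_{1,s}\simeq \prod_{i}J_{Y_{i},1}$ (augment\'e par $J_{E,1}$ le cas \'ech\'eant).

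D'apr\`es la remarque \ref{theta pour les courbes stables} (o\`u $\nu$ est ici l'identit\'e), $\Theta_{s}$ s'identifie au diviseur th\^eta sur $\prod J_{Y_{i},1}$ associ\'e \`a $\widetilde{B}=\bigoplus_{i}B_{Y_{i}}$ sur la normalisation. La d\'ecomposition de $\mathrm{R}f_{\ast}$ selon la somme directe et la multiplicativit\'e du $0$-i\`eme id\'eal de Fitting donnent l'\'egalit\'e de diviseurs
$$
\Theta_{s}=\sum_{i}\mathrm{pr}_{i}^{\ast}(\Theta_{Y_{i}})\,\bigl(+\,\mathrm{pr}_{E}^{\ast}(\Theta_{E})\text{ le cas \'ech\'eant}\bigr).
$$
Par le th\'eor\`eme \ref{irr de theta gen}, chaque $\Theta_{Y_{i}}$ est lisse, donc de multiplicit\'e $\leq 1$ en tout point; et pour $E$ ordinaire g\'en\'erique, le corollaire \ref{thetaforell} donne aussi la multiplicit\'e $\leq 1$ sur $\Theta_{E}$. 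Puisque la multiplicit\'e d'une somme de diviseurs tir\'es en arri\`ere par des projections distinctes est la somme des multiplicit\'es ponctuelles respectives, on obtient $\mathrm{mult}_{y}(\Theta_{s})\leq n$ (resp. $\leq n+1$) en tout $y\in \Theta_{s}$.

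Par semi-continuit\'e sup\'erieure de la multiplicit\'e dans la famille propre $\Theta/S$ (le morphisme $J_{1}/S$ \'etant propre puisque la d\'eg\'en\'erescence est de type compact), la m\^eme majoration vaut pour $\mathrm{mult}_{x}(\Theta_{\eta})$ en tout $x\in \Theta_{\eta}$. La fonction $M(t):=\max_{x\in \Theta_{t}}\mathrm{mult}_{x}(\Theta_{t})$ \'etant semi-continue sup\'erieurement sur le lieu des courbes lisses de $H_{g}$, la majoration $M(X_{\eta})\leq n$ (resp. $\leq n+1$) s'\'etend \`a la courbe g\'en\'erique, et la proposition de Laszlo permet de traduire cette majoration en la conclusion souhait\'ee. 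L'\'etape principalement d\'elicate est la v\'erification rigoureuse de la d\'ecomposition de $\Theta_{s}$ comme somme de pull-backs (qui passe par la structure de somme directe de $\widetilde{B}$ et un calcul de d\'eterminants), ainsi que la construction propre d'un lissage simultan\'e des $n-1$ n\oe uds avec la structure combinatoire voulue.
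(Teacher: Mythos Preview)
Your argument is correct and follows essentially the same approach as the paper: the paper's proof is the brief remark immediately preceding the corollary, which proposes exactly the degeneration to a chain of $n$ generic genus-$2$ curves (plus an elliptic curve when $g$ is odd), bounds the multiplicities of $\Theta_{s}$, and deduces the bound on $h^{0}$ via Laszlo's inequality. You have supplied the details (the decomposition of $\Theta_{s}$ as a sum of pull-backs, the smoothness of each $\Theta_{Y_i}$ from th\'eor\`eme~\ref{irr de theta gen}, and the semi-continuity step) that the paper leaves implicit.
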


\subsection{Compléments}
\subsubsection{Une propriété de $\QQ$.}

\begin{prop}\label{Q ne provient pas de J, cas generique}
Pour une courbe $X$ générique de genre $g\geq 2$, le faisceau
$\QQ$ ($\S$ \ref{accouplement}) ne provient pas d'un faisceau
inversible sur $J_1$.
\end{prop}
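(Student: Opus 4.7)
L'id�e est de raisonner par l'absurde en supposant qu'il existe un faisceau inversible $\mathcal{L}$ sur $J_1$ tel que $\QQ\simeq \mathcal{L}|_{\Theta}$, puis de suivre la m�me strat�gie que celle de la d�monstration de la proposition \ref{purete}; le point d�licat est que l'isomorphisme $\Pic(J_1)\simeq \Pic(\Theta)$ qu'on y invoquait via SGA2 n�cessitait $g\geq 4$, et doit ici �tre remplac� par un argument plus faible, valable d�s $g\geq 2$. L'accouplement $(-1_{\Theta})^{\ast}\QQ\otimes_{\OO_{\Theta}}\QQ\simeq \omega_{\Theta}$ du $\S$ \ref{accouplement}, joint � la sym�trie du diviseur $\Theta$, fournit l'isomorphisme
\[
\left((-1)^{\ast}\mathcal{L}\otimes \mathcal{L}\right)\big|_{\Theta}\simeq \OO_{J_1}(\Theta)\big|_{\Theta};
\]
en posant $N=(-1)^{\ast}\mathcal{L}\otimes \mathcal{L}\otimes \OO_{J_1}(-\Theta)$, le faisceau $N|_{\Theta}$ est donc trivial.

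Il s'agirait ensuite de d�terminer $\mathrm{cl}(\mathcal{L})\in \mathrm{NS}(J_1)$ par un calcul d'intersection, qui palliera l'absence de l'isomorphisme de Picard. Pour $X$ g�n�rique, $\mathrm{NS}(J_1)\simeq \Z\cdot \theta$ avec $\theta=\mathrm{cl}(\Theta_{\mathrm{class}})$ (\cite{Mori}); �crivons $\mathrm{cl}(\mathcal{L})=r\theta$. Puisque $[-1]^{\ast}$ agit trivialement sur $\mathrm{NS}(J_1)$, on a $\mathrm{cl}(N)=(2r-p+1)\theta$, et la trivialit� de $N|_{\Theta}$ entra�ne la nullit� de son auto-intersection maximale sur $\Theta$; par la formule de projection,
\[
0=\mathrm{cl}(N)^{g-1}\cdot [\Theta]=(2r-p+1)^{g-1}(p-1)\cdot g!,
\]
en utilisant $\theta^{g}=g!$ pour la polarisation principale. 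Comme $p\geq 2$ et $g\geq 2$, il vient $2r=p-1$. Pour $p=2$, l'�quation $2r=1$ n'a pas de solution enti�re, d'o� une contradiction imm�diate.

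Supposons d�sormais $p\geq 3$, de sorte que $r=(p-1)/2>0$: alors $\mathcal{L}$ est ample, et $\OO_{J_1}(-\Theta)\otimes \mathcal{L}$ a un inverse ample. Appliquant le foncteur de Fourier-Mukai � la suite exacte
\[
0\rightarrow \OO_{J_1}(-\Theta)\otimes \mathcal{L}\rightarrow \mathcal{L}\rightarrow \QQ\rightarrow 0,
\]
et en utilisant que la transform�e d'un fibr� d'inverse ample sur une vari�t� ab�lienne de dimension $g\geq 2$ est concentr�e en degr� $g$ (d'o� $\mathcal{F}^{0}(\OO_{J_1}(-\Theta)\otimes \mathcal{L})=\mathcal{F}^{1}(\OO_{J_1}(-\Theta)\otimes \mathcal{L})=0$), on obtient une injection $\mathcal{F}^{0}(\mathcal{L})\hookrightarrow \mathcal{F}^{0}(\QQ)$ avec $\mathcal{F}^{0}(\mathcal{L})\neq 0$ par l'amplitude de $\mathcal{L}$.

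L'�tape finale consiste � calculer ind�pendamment $\mathcal{F}^{0}(\QQ)$, ce qui fournit la contradiction. De l'isomorphisme $\mathcal{F}(i_{1,\ast}B)\simeq \QQ[-1]$ du $\S$ \ref{accouplement} et de l'involutivit� de Mukai $\mathcal{F}\circ \mathcal{F}\simeq (-1)^{\ast}[-g]$, on d�duit $\mathcal{F}(\QQ)\simeq (-1)^{\ast}i_{1,\ast}B[1-g]$, complexe concentr� en degr� $g-1\geq 1$; donc $\mathcal{F}^{0}(\QQ)=0$, ce qui contredit l'injection pr�c�dente. La principale difficult� se situe au second paragraphe: il s'agit de substituer � l'isomorphisme $\Pic(J_1)\simeq \Pic(\Theta)$ (qui n'est disponible que pour $g\geq 4$) un calcul d'intersection sur $J_1$ valable pour tout $g\geq 2$.
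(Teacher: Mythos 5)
Votre d\'emonstration est correcte et suit pour l'essentiel la m\^eme strat\'egie que celle du texte~: raisonnement par l'absurde, d\'etermination de la classe de $\mathcal{L}$ dans $\mathrm{NS}(J_1)\simeq\mathbf{Z}$ au moyen de l'accouplement $(-1)^{\ast}\QQ\otimes\QQ\simeq\omega_{\Theta}$, puis contradiction via la transformation de Fourier-Mukai appliqu\'ee \`a la suite exacte $0\rightarrow\OO_{J_1}(-\Theta)\otimes\mathcal{L}\rightarrow\mathcal{L}\rightarrow\QQ\rightarrow 0$. Votre calcul d'intersection donnant $2r=p-1$ ne fait qu'expliciter ce que le texte r\'esume par l'amplitude de $\Theta_{\mathrm{class}}$, et votre argument couvre uniform\'ement le cas $g\geq 4$, que le texte exp\'edie s\'epar\'ement via la non-inversibilit\'e de $\QQ$ (proposition \ref{purete}).
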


\begin{proof} C'est clair pour $g\geq 4$ car $\QQ$ n'est pas
inversible (proposition \ref{purete}). Supposons $g=2$ ou $3$ et
que $\QQ$ soit inversible sur $\Theta$ si $g=3$ (rappelons que
c'est automatique si $g=2$ puisque $\Theta_{\mathrm{gen}}$ est
lisse). On raisonne par l'absurde. Supposons que $\QQ$ provienne
d'un faisceau inversible $\mathcal{L}$ sur $J_1$. Comme
$\mathrm{NS}(J_1)\simeq \mathbf{Z}$ avec $\Theta_{\mathrm{class}}$
comme générateur (\cite{Mori}), il existe $r\in \mathbf{Z}$ tel
que $\mathcal{L}$ est algébriquement équivalent à
$\OO_{J_1}(r\Theta_{\mathrm{class}})$. On a aussi
$(-1)^{\ast}\mathcal{L}\equiv
\OO_{J_1}(r\Theta_{\mathrm{class}})$. Comme
$(-1_{\Theta})^{\ast}\QQ\otimes \QQ\simeq
\omega_{\Theta}=\OO_{J_1}(\Theta)|_{\Theta}$ ($\S$
\ref{accouplement}), $\OO_{J_1}(2r\Theta_{\mathrm{class}})$ et
$\OO_{J_1}((p-1)\Theta_{\mathrm{class}})$ ont des restrictions à
$\Theta$, qui sont algébriquement équivalentes. On en déduit que
$2r=p-1$ (car $\Theta_{\mathrm{class}}$ est ample), ce qui n'est
impossible que pour $p\geq 3$. Supposons $p\geq 3$, on trouve que
$r=(p-1)/2$. On en déduit que $\mathcal{L}$ et
$\left(\mathcal{O}_{J_1}(-\Theta)\otimes \mathcal{L}\right)^{-1}$
sont amples sur $J_1$. Considérons la suite exacte
$$
0\rightarrow \OO_{J_1}(-\Theta)\otimes \mathcal{L}\rightarrow
\mathcal{L}\rightarrow \mathcal{Q}\rightarrow 0,
$$
et appliquons lui la transformation de Fourier-Mukai (inverse), on
obtient la suite exacte suivante
$$
0=\mathcal{F}^0(\OO_{J_1}(-\Theta)\otimes \mathcal{L})\rightarrow
\mathcal{F}^{0}(\mathcal{L})\rightarrow
\mathcal{F}^{0}(\QQ)\rightarrow
\mathcal{F}^{1}(\OO_{J_1}(-\Theta)\otimes \mathcal{L}).
$$
Comme $(\OO_{J_1}(-\Theta)\otimes \mathcal{L})^{-1}$ est ample, et
$J_1$ est de dimension $g\geq 2$, on sait que
$\mathcal{F}^{1}(\OO_{J_1}(-\Theta)\otimes \mathcal{L})=0$. Donc
$\mathcal{F}^{0}(\mathcal{L})\simeq
\mathcal{F}^{0}(\mathcal{\QQ})\neq 0$. Or
$\mathcal{F}(\mathcal{Q})=\mathcal{F}\circ\mathcal{F}(B)[1]=(-1)^{\ast}B[1-g]$
(\cite{Mukai}), on en déduit que $\mathcal{F}^{0}(\mathcal{Q})=0$,
d'où une contradiction.
\end{proof}

\subsubsection{Etude des points de torsion de
$\Theta_{\mathrm{gen}}$.} D'après \ref{Dirac}, le diviseur
$\Theta$ satisfait à la propriété de Dirac, a fortiori, $\Theta$
contient tous les points d'ordre $p$ de $J_1(k)$. Pour une courbe
générique, on a le résultat suivant:

\begin{prop} Le diviseur $\Theta_{\mathrm{gen}}$ de la
courbe générique de genre $g\geq 1$ contient comme seuls points
d'ordre fini les points d'ordre $p$ de $J_1$. De plus,
$\Theta_{\mathrm{gen}}$ est lisse en ces points.
\end{prop}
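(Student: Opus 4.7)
\emph{Lissit� aux points d'ordre $p$.} En un point $x\in\Theta$ correspondant � un faisceau inversible $L$ d'ordre exactement $p$, la forme de Cartier $\omega_L$ associ�e � $L$ est non nulle, ce qui assure automatiquement la condition (2) du crit�re de lissit� (corollaire~\ref{critere de lissite}). La lissit� de $\Theta$ en $x$ �quivaut donc � l'�galit� $h^{0}(X_1, B\otimes L)=1$, l'in�galit� $\geq 1$ d�coulant de la propri�t� de Dirac (th�or�me~\ref{Dirac}). Par semi-continuit� sup�rieure de $h^0$, il suffira d'exhiber une seule courbe $X$ pour laquelle cette �galit� vaut en tout $L$ d'ordre $p$. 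Je proc�derai par r�currence sur $g$, le cas $g=1$ r�sultant du corollaire~\ref{thetaforell} qui d�crit $\Theta$ comme sch�ma r�duit support� par les points non nuls d'ordre $p$.

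\emph{Caract�risation des points de torsion.} Je proc�derai �galement par r�currence sur $g$, le cas $g=1$ �tant donn� par le corollaire~\ref{thetaforell}. Pour le pas inductif ($g\geq 2$), j'utiliserai la d�g�n�rescence $X/S$ du lemme~\ref{lemme technique} : sa fibre sp�ciale $X_s=Y\cup E$ (avec $Y$ courbe g�n�rique de genre $g-1$ et $E$ courbe elliptique g�n�rique ordinaire) fournit la d�composition $\Theta_s=(\Theta_Y\times E_1)\cup\bigcup_{i=1}^{p-1}(J_{Y,1}\times\{b_i\})$ d'apr�s la remarque~\ref{theta pour les courbes stables}, o� $b_1,\ldots,b_{p-1}$ sont les points d'ordre $p$ de $E_1$. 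Soit $x\in\Theta_{\bar{\eta}}$ un point de torsion d'ordre $n\neq p$.

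Dans le cas $\mathrm{pgcd}(n,p)=1$, comme $J_1[n]$ est �tale sur $S$, le point $x$ s'�tend en une section (apr�s un changement de base �tale de $S$) et se sp�cialise en $x_s\in\Theta_s\cap J_{1,s}[n]$ ; puisque les $b_i$ sont d'ordre $p$ et $n$ premier � $p$, on doit avoir $x_s\in\Theta_Y\times E_1$, donc sa projection sur $J_{Y,1}$ est un point non nul de $\Theta_Y\cap J_{Y,1}[n]$ (car $0\notin\Theta_Y$ pour $Y$ ordinaire), ce qui contredit l'hypoth�se de r�currence. Dans le cas $n=p^k$ avec $k\geq 2$, tous les points g�om�triques de torsion vivent dans la partie �tale $J_1[p^k]^{\mathrm{et}}\simeq(\mathbf{Z}/p^k\mathbf{Z})^g$, sur laquelle la monodromie op�re via $\mathrm{GL}_g(\mathbf{Z}/p^k\mathbf{Z})$ tout entier par le th�or�me d'Igusa-Ekedahl (th�or�me~\ref{monodromie}) ; un point primitif d'ordre $p^k$ sur $\Theta_{\bar\eta}$ entra�nerait alors la pr�sence de tous les $p^{(k-1)g}(p^g-1)$ points primitifs sur $\Theta_{\bar\eta}$, puis, par sp�cialisation injective sur la partie �tale, sur $\Theta_s$ ; une analyse combinatoire de la d�composition de $\Theta_s$ (en utilisant l'hypoth�se de r�currence appliqu�e � $Y$ pour borner les contributions de $\Theta_Y\times E_1$) donnera une majoration strictement inf�rieure d�s que $g\geq 2$ et $k\geq 2$, d'o� la contradiction cherch�e.

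La difficult� principale sera la mise en \oe{}uvre rigoureuse du d�compte dans le cas $p^k$-torsion ($k\geq 2$), qui demande une analyse combinatoire fine de la stratification de $\Theta_s$ selon les ordres des projections sur $J_{Y,1}$ et $E_1$, ainsi qu'une utilisation pr�cise du th�or�me de monodromie ; le volet de lissit� exigera par ailleurs un contr�le d�licat par Mayer-Vietoris aux points o� la fibre sp�ciale $\Theta_s$ est singuli�re.
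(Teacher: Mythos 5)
Votre strat�gie g�n�rale (d�g�n�rescence vers une courbe r�ductible � fibre $\Theta_s$ explicite, plus monodromie d'Igusa--Ekedahl) est bien celle de l'article, et votre traitement du cas $(n,p)=1$ par r�currence via $X_s=Y\cup E$ est correct et m�me un peu plus direct que celui du texte. Mais deux points bloquent. D'abord, votre analyse de cas omet enti�rement les ordres mixtes $n=p^kM$ avec $k\geq 1$ et $M>1$ (par exemple $n=2p$) : vous ne traitez que $(n,p)=1$ et $n=p^k$, et comme l'ensemble des points de torsion de $\Theta$ n'est pas un groupe, on ne peut pas se ramener � ces deux cas en multipliant $x$ par un entier. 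De plus, avec la d�g�n�rescence $X_s=Y\cup E$, une seule sp�cialisation ne suffit pas � exclure un point d'ordre $p^kM$ ou $p^k$ ($k\geq 2$) : un point $(u,v)$ avec $u\in\Theta_Y$ d'ordre exactement $p$ et $v\in E_1$ d'ordre quelconque appartient bien � $\Theta_s$. C'est pr�cis�ment pour cela que l'article normalise la sp�cialisation par monodromie (chaque coordonn�e de composante $M$-primaire d'ordre exactement $M$, resp. d'ordre exactement $p^e$) sur une cha�ne de $g$ courbes elliptiques ordinaires, ce qui r�gle d'un coup le cas mixte et le cas $p^e$, $e\geq 2$, sans d�compte. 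Votre d�compte pour $n=p^k$ aboutirait en effet (la majoration $(p^{g-1}-1)(p^k-p^{k-1})+(p-1)(p^{k(g-1)}-p^{(k-1)(g-1)})<p^{(k-1)g}(p^g-1)$ vaut pour $g,k\geq 2$), mais il resterait � l'�tendre au cas mixte, que vous n'avez pas identifi�.

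Ensuite, pour la lissit�, votre plan (exhiber une courbe o� $h^0(B\otimes L)=1$ pour \emph{tout} $L$ d'ordre $p$, puis conclure par semi-continuit�) est � la fois trop exigeant et incomplet : sur les fibres sp�ciales naturelles (cha�ne de courbes elliptiques ordinaires, ou $Y\cup E$), il existe toujours des points d'ordre $p$ o� $h^0\geq 2$ (d�s que deux coordonn�es sont non triviales), de sorte que la r�currence que vous esquissez ne fournit pas une telle courbe. L'argument de l'article est plus �conome : il suffit d'un seul point $y$ d'ordre $p$ de $\Theta_s$ o� $\Theta_s$ est lisse ; le point d'ordre $p$ de $\Theta_{\eta}$ qui s'y sp�cialise est alors lisse, et la transitivit� de la monodromie sur les points d'ordre $p$ (th�or�me \ref{monodromie}) propage la lissit� � tous les autres. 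Cette �tape de monodromie, que vous utilisez ailleurs, manque ici et est indispensable pour conclure.
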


\begin{proof}
Dans le cas où $g=1$, le résultat est clair. Supposons maintenant
que $g\geq 2$. Soit $\eta$ le point générique de l'espace de
modules des courbes de genre $g$. Soit $x\in \Theta(\bar{\eta})$
un point d'ordre fini égal à $N=p^{e}M$ avec $e,M$ des entier tels
que $e\geq 0, M\geq 1, (e,M)=1$. Fixons un isomorphisme
$J_{\eta,1}[N](\bar{\eta})\simeq
J_{\eta,1}[p^{e}](\bar{\eta})\times J_{\eta,1}[M](\bar{\eta})$
donné par $x\in J_{\eta,1}[N](\bar{\eta})\mapsto (x_p,x_M)\in
J_{\eta,1}[p^{e}](\bar{\eta})\times J_{\eta,1}[M](\bar{\eta})$.
D'après le théorème \ref{monodromie},
$\mathrm{Gal}(\bar{\eta}/\eta)$ agit transitivement sur les points
d'ordre $M$ (resp. sur les points d'ordre $p^{e}$), donc tout
point d'ordre $M$ (resp. d'ordre $p^{e}$) apparaît comme la
composante $M$-primaire (resp. $p$-primaire) d'un certain point
d'ordre $N$ contenu dans $\Theta_{\eta}=\Theta_{\mathrm{gen}}$. Or
on peut trouver $X/S$ une courbe semi-stable telle que $X_{\eta}$
soit générique, et que $X_s$ soit une chaîne de $g$ courbes
elliptiques ordinaires. $x$ se spécialise en un point d'ordre
exactement $N$ de $J_{1,s}$. Si $M\neq 1$, par conjugaison des
points d'ordre $M$, on peut supposer que la spécialisation
$(y_1,\cdots,y_g)\in E_{1,1}\times \cdots \times E_{1,g}\simeq
J_{1,s}$ de $x$ est telle que chacun des $y_i$ ($i=1,\cdots,g$)
ait sa composante $M$-primaire d'ordre exactement $M$. Par
ailleurs, par la description de $\Theta_s$ (remarque \ref{theta
pour les courbes stables}), les seuls points d'ordre fini sont les
points $(y_1,\cdots,y_g)\in E_{1,1}\times \cdots\times
E_{1,g}\simeq J_{1,s}$ dont au moins un des $y_i$ est exactement
d'ordre $p$. D'où une contradiction. Donc $M=1$ et $N=p^{e}$ est
une puissance de $p$. Si $e\geq 2$, de la même façon, par
conjugaison des points d'ordre $N=p^{e}$, on peut supposer que la
spécialisation $(y_1,\cdots,y_g)\in E_{1,1}\times \cdots\times
E_{1,g}$ de $x$ est telle que chacun des $y_i$ ($i=1,\cdots,g$)
soit d'ordre exactement $p^{e}$, ce qui contredit la description
explicite des points d'ordre fini de $\Theta_{s}$ (remarque
\ref{theta pour les courbes stables}). Par conséquent, on a $N=p$.

Pour la lissité de $\Theta_{\eta}$ en les points d'ordre $p$, il
suffit de remarquer que $\Theta_s$ contient au moins un point $y$
d'ordre $p$ où $\Theta_s$ est lisse. Soit $x\in \Theta_{\eta}$ le
point d'ordre $p$ qui se spécialise en $y\in \Theta_s$, alors
$\Theta_{\eta}$ est lisse en $y$, et donc lisse en tous les points
d'ordre $p$ par l'action de monodromie.
\end{proof}

\section{Cas où $g=2$ ou $p=3$} On donne des résultats sur le \
diviseur thêta $\Theta$ dans le cas où $g=2$ ou $p=3$.
\markboth{chapitre4}{chapitre4}

\subsection{Diviseur thêta en caractéristique $3$.}

Dans cette section, $k$ est un corps algébriquement clos de
caractéristique $p=3$, $X/k$ est une courbe propre lisse connexe
sur $k$. Notons $B$ le fibré des formes différentielles localement
exactes sur $X_1$, et $\Theta=\Theta_B$ le diviseur thêta associé
à $B$. Comme $p=3$, $B$ est un fibré vectoriel de rang $2$ sur
$X_1$.

\subsubsection{Préliminaires.}

\subsubsection*{Quelques rappels.}

Soit $x$ un point de $\Theta$, notons $L$ le faisceau inversible
de degré $0$ sur $X_1$ correspondant. D'après le critère de
lissité, pour que $x$ soit un point singulier de $\Theta$, il faut
et il suffit que l'une des deux conditions suivantes soit
réalisée:
\begin{itemize}
\item soit $h^{0}(X_1,B\otimes L)\geq 2$,

\item soit $h^{0}(X_1,B\otimes L)=1$ (donc on a aussi
$h^{0}(X_1,B\otimes L^{-1})=1$). Et si l'on note
$\tau:L^{-1}\hookrightarrow B$ et $\tau':L\hookrightarrow B$ les
uniques (à une multiplication par un scalaire près) plongements de
$L^{-1}$ dans $B$ et de $L$ dans $B$, on a en plus que $L$ et
$L^{-1}$ sont orthogonaux par rapport à l'accouplement
antisymétrique $(\cdot,\cdot):B\otimes B\rightarrow
\Omega^{1}_{X_1/k}$ sur $B$ (\ref{critere de lissite}).

\end{itemize}

En général, soit $\tau:L^{-1}\hookrightarrow B$ un plongement de
$L^{-1}$ dans $B$, qui correspond à un élément non-nul de
$H^{0}(X_1,B\otimes L)$. Soit $M_{\tau}$ le sous-fibré en droites
de $B$ contenant $\tau(L^{-1})\subset B$. Puisque $B$ est un fibré
de rang $2$ sur $X_1$, $M_{\tau}$ est l'orthogonal dans $B$ de
$\tau(L^{-1})\subset B$ par rapport à l'accouplement
anti-symétrique non-dégénéré $(\cdot,\cdot)$ sur $B$. En
particulier, si $\tau':L\hookrightarrow B$ est un plongement de
$L$ dans $B$ tel que $\tau(L)^{-1}$ et $\tau'(L)$ soient
orthogonaux, alors $M_{\tau}$ contient à la fois $\tau(L^{-1})$ et
$\tau'(L)\subset B$.

Le plongement $\tau:L^{-1}\hookrightarrow B$ correspond aussi à un
point (noté encore $\tau$) de $\mathcal{H}$ (schéma de Hilbert des
faisceaux inversibles de degré $0$ plongés dans $B$, $\S$ \ref{le
schema H}) au-dessus de $x\in \Theta$. Pour que
$\mathrm{Hom}(L,M_{\tau})=0$ (c'est-à-dire, pour qu'il n'y ait pas
de $\tau':L\hookrightarrow B$ qui soit orthogonal à $\tau$), il
faut et il suffit que $\mathcal{H}$ soit lisse en $\tau$ de
dimension $g-1$ (proposition \ref{etude diff de H}).

\subsubsection*{{\textquotedblleft Le schéma des différences\textquotedblright}.}

\begin{definition}[Schéma des différences]\label{schema de diff}Pour $d>0$ un entier, on note $V_d$ l'image
schématique du morphisme
\begin{eqnarray*}
\phi_d:X_{1}^{d}\times_k X_{1}^{d}&\rightarrow& J_1 \\
(x_1,\cdots,x_{d}, y_1,\cdots,y_{d})&\mapsto&
\OO_{X}\left(x_1+\cdots x_{d}-(y_1+\cdots y_{d})\right).
\end{eqnarray*}
Définissons $V_{d}'$ par le carré cartésien suivant
$$
\xymatrix{\ar@{} |\Box[rd]V_{d}'\ar[r]^{i'}\ar[d] & J_1\ar[d]^{[2]}\\
V_d\ar[r]^{i} & J_1}
$$
\end{definition}

Les propriétés suivantes des schémas des différences sont faciles
à vérifier.
\begin{lemma} \label{prop de schema de diff}(1) Pour $1\leq d\leq g/2$,
$\mathrm{dim}(V_{d}')=\mathrm{dim}(V_d)=2d$. Pour $d>g/2$, on a
$\mathrm{dim}(V_d')=g$.

(2) Pour $d>0$, $V_d'$ est irréductible.

(3) Pour $d>0$, $V_d$ contient un translaté de $X_1$, par suite
$i':V_d'\hookrightarrow J_1$ induit une surjection sur les groupes
fondamentaux $\pi_1(V_d')\rightarrow \pi_1(J_1)$.
\end{lemma}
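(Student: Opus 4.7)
Le plan est de traiter s\'epar\'ement les trois assertions. Pour (1), je commencerais par calculer la dimension de $V_d$ comme le rang g\'en\'erique de la diff\'erentielle de $\phi_d$. Fixons un morphisme d'Abel-Jacobi $\iota:X_1\rightarrow J_1$. En un point $(x_1,\ldots,x_d,y_1,\ldots,y_d)\in X_1^{2d}$ \`a coordonn\'ees $p_1,\ldots,p_{2d}$ deux \`a deux distinctes, la diff\'erentielle de $\phi_d$ envoie $\bigoplus_{k=1}^{2d}T_{p_k}X_1$ dans $T_0 J_1\simeq H^{0}(X_1,\Omega^{1}_{X_1/k})^{\vee}$ par $\pm\iota_{\ast}$ sur chaque facteur, et son conoyau s'identifie au dual de $H^{0}(X_1,\Omega^{1}_{X_1/k}(-p_1-\cdots-p_{2d}))$. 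Pour des points g\'en\'eraux, ce dernier espace est de dimension $\max(0,g-2d)$ par Riemann-Roch, d'o\`u $\dim V_d=\min(g,2d)$, puis $\dim V_d'=\dim V_d$ puisque $[2]:J_1\rightarrow J_1$ est surjective et finie.

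Pour (2), je formerais le produit fibr\'e auxiliaire $Y:=X_1^{2d}\times_{\phi_d,J_1,[2]}J_1$; c'est un rev\^etement de $X_1^{2d}$, \'etale galoisien de groupe $J_1[2]$ lorsque $p\neq 2$. Sa monodromie se factorise par le compos\'e
$$
\pi_{1}(X_1^{2d})\xrightarrow{\phi_{d,\ast}}\pi_{1}(J_1)\twoheadrightarrow J_1[2].
$$
Chaque morphisme d'Abel-Jacobi $X_1\rightarrow J_1$ induisant une surjection $\pi_{1}(X_1)\twoheadrightarrow \pi_{1}(J_1)$ par ab\'elianisation, le compos\'e ci-dessus est surjectif et la monodromie agit transitivement sur $J_1[2]$. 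Donc $Y$ est connexe, et \'etant lisse sur $k$ (comme rev\^etement \'etale d'un sch\'ema lisse), il est int\`egre. Son image dans $J_1$ par la seconde projection \'etant pr\'ecis\'ement $V_d'$, on conclut que $V_d'$ est irr\'eductible.

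Pour (3), l'inclusion d'un translat\'e $T$ de $X_1$ dans $V_d$ s'obtient en fixant toutes les coordonn\'ees de $\phi_d$ sauf une. Je consid\'ererais alors la courbe $C:=[2]^{-1}(T)\subset V_d'$: c'est un rev\^etement \'etale connexe de $T$ (m\^eme argument de monodromie qu'en (2)). Notons $p_1:C\rightarrow T$ et $p_2:C\rightarrow J_1$ les projections du carr\'e cart\'esien $C=T\times_{J_1,[2]}J_1$. La relation $[2]_{\ast}\circ p_{2,\ast}=\iota_{T,\ast}\circ p_{1,\ast}$ sur les groupes fondamentaux, combin\'ee \`a la surjectivit\'e de $\iota_{T,\ast}:\pi_{1}(T)\twoheadrightarrow \pi_{1}(J_1)$, fournit $2\cdot p_{2,\ast}(\pi_{1}(C))=2\pi_{1}(J_1)$. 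Comme $\pi_{1}(J_1)$ est un groupe ab\'elien sans torsion (produit des modules de Tate de $J_1$), on en d\'eduit $p_{2,\ast}(\pi_{1}(C))=\pi_{1}(J_1)$; l'inclusion $C\subset V_d'$ donne alors la surjectivit\'e voulue.

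La difficult\'e principale sera le calcul du rang dans (1), qui repose essentiellement sur le fait classique qu'un diviseur effectif g\'en\'eral de degr\'e $n$ sur une courbe lisse impose $\min(n,g)$ conditions ind\'ependantes sur $H^{0}(X_1,\Omega^{1}_{X_1/k})$; une fois cela admis, les points (2) et (3) se ram\`enent \`a des arguments de monodromie standard.
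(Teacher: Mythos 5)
The paper itself offers no argument here: the lemma is introduced with \emph{\og Les propri\'et\'es suivantes des sch\'emas des diff\'erences sont faciles \`a v\'erifier \fg}, so there is nothing to compare your proof against except its own correctness, and on that score it holds up. In (1) the tangent-space computation is the right one: the differential of $\phi_d$ factors through the tangent space of the (reduced, hence generically smooth) scheme-theoretic image, so its generic rank $g-\max(0,g-2d)$ is a lower bound for $\dim V_d$, and the trivial upper bound $\min(2d,g)$ matches; the fact that general points impose independent conditions on $H^{0}(\Omega^{1}_{X_1/k})$ is characteristic-free. In (2) and (3) the monodromy arguments are correct, granting the standard surjectivity of $\pi_{1}(X_1)\rightarrow\pi_{1}(J_1)$ under Abel--Jacobi. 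Two remarks. First, your parenthesis \emph{\og lorsque $p\neq 2$ \fg} in (2) is not a gap in context, since the whole section assumes $p=3$; but if one wanted the lemma in characteristic $2$ one would factor $[2]$ into its radiciel and \'etale parts, the radiciel part being a homeomorphism so that irreducibility is unaffected. Second, your step in (3) is genuinely needed and correctly executed: the naive reading \emph{\og $V_d$ contient un translat\'e de $X_1$, donc $\pi_1(V_d')\rightarrow\pi_1(J_1)$ est surjectif \fg} does not follow formally, because $[2]^{-1}(T)$ only surjects onto $2\pi_{1}(J_1)$ a priori; it is exactly the torsion-freeness of $\pi_{1}(J_1)=\widehat{T}(J_1)$ (together with $2H=2G\Rightarrow H=G$ for $H\subset G$ with $G$ sans torsion) that closes the argument, and you supply it.
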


\subsubsection*{Une limitation des sous-faisceaux inversibles de
$B$.}\label{limitation}

Les considérations qui suivent valent pour toute caractéristique
$p\geq 2$, mais nous n'avons des applications que pour $p=3$, et
nous nous limitons à ce cas.

Pour $m\geq 0$ un entier, soit $\Theta_m$ le fermé de
$J_{1}^{[m]}$ formé des faisceaux inversibles $M$ sur $X_1$ de
degré $m$ qui admettent un plongement $M\hookrightarrow B$. En
particulier, $\Theta_0$ est l'ensemble sous-jacent à $\Theta$.
Nous allons donner une limitation à priori de $\Theta_m$.

Soit $M$ un faisceau inversible sur $X_1$ de degré $m$. Alors un
plongement $M\hookrightarrow B\hookrightarrow
F_{\ast}\Omega^{1}_{X/k}$ donne par adjonction un plongement
$F^{\ast}M\hookrightarrow \Omega^{1}_{X/k}$, et donc une
réalisation $F^{\ast}M\simeq \Omega^{1}_{X/k}(-D)$ où $D$ est un
diviseur positif ou nul sur $X$ de degré $2g-2-3m$. Les diviseurs
$D$ en question sont paramétrés par $X^{(2g-2-3m)}$. Son image
schématique dans $J^{[2g-2-3m]}$ est un fermé irréductible normal
de dimension $\mathrm{min}(2g-2-3m, g)$. Par suite les faisceaux
$F^{\ast}M\simeq \Omega^{1}_{X_1/k}(-D)$ sont paramétrés par un
fermé irréductible et normal de $J^{[3m]}$ de dimension
$\mathrm{min}(2g-2-3m,g)$, que nous notons $A_{3m,\mathrm{max}}'$.
Le passage $M\mapsto F^{\ast}M$ est donné par le Verschiebung
$V:J^{[m]}_{1}\rightarrow J^{[3m]}$. Finalement $M$ appartient à
$A_{m,\mathrm{max}}:=\left(V^{-1}(A_{3m,\mathrm{max}}')\right)_{\mathrm{red}}$.

\begin{lemma} $A_{m,\mathrm{max}}$ est équidimensionnel de
dimension $\mathrm{min}(2g-2-3m,g)$, irréductible lorsque
$2g-2-3m>0$.
\end{lemma}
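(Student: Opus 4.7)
Le plan repose sur le fait que le Verschiebung $V:J_{1}^{[m]}\to J^{[3m]}$ est un morphisme fini surjectif (isog\'enie de degr\'e $p^{g}=3^{g}$ entre vari\'et\'es lisses de dimension $g$), et que $A_{3m,\mathrm{max}}'$ est par construction l'image sch\'ematique du morphisme d'Abel-Jacobi $X^{(2g-2-3m)}\to J^{[3m]}$ envoyant $D$ sur $\Omega^{1}_{X/k}(-D)$.

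Pour l'\'equidimensionnalit\'e et la dimension, l'argument est direct: $A_{3m,\mathrm{max}}'$ est l'image d'un sch\'ema irr\'eductible, donc irr\'eductible (a fortiori \'equidimensionnel), et sa dimension $\mathrm{min}(2g-2-3m, g)$ se calcule par l'analyse standard de l'application d'Abel-Jacobi (en s\'eparant les cas $2g-2-3m\leq g$ et $2g-2-3m>g$). Comme $V$ est fini, l'image r\'eciproque d'un ferm\'e \'equidimensionnel reste \'equidimensionnelle de m\^eme dimension, d'o\`u le r\'esultat pour $A_{m,\mathrm{max}}=(V^{-1}(A_{3m,\mathrm{max}}'))_{\mathrm{red}}$.

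Pour l'irr\'eductibilit\'e lorsque $2g-2-3m>0$, je consid\`ere le carr\'e cart\'esien
$$
\xymatrix{Z\ar[r]\ar[d] & X^{(2g-2-3m)}\ar[d] \\ J_{1}^{[m]}\ar[r]^{V} & J^{[3m]}}
$$
Le morphisme $Z\to V^{-1}(A_{3m,\mathrm{max}}')$ \'etant surjectif, il suffit de prouver l'irr\'eductibilit\'e topologique de $Z$. D\'ecomposons $V=V_{\mathrm{sep}}\circ V_{\mathrm{rad}}$: la partie radicielle $V_{\mathrm{rad}}:J_{1}\to J_{1}/(\ker V)^{\circ}$ \'etant un hom\'eomorphisme universel, il suffit de traiter le tir\'e-en-arri\`ere par $V_{\mathrm{sep}}$, qui est un rev\^etement \'etale fini de groupe $(\ker V)^{\mathrm{et}}$. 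Sur la base lisse $X^{(2g-2-3m)}$, ce tir\'e-en-arri\`ere est lisse, donc ses composantes connexes co\"incident avec ses composantes irr\'eductibles, et sont class\'ees par l'action de $\pi_{1}(X^{(2g-2-3m)})$ sur une fibre. Comme $2g-2-3m\geq 1$, $X^{(2g-2-3m)}$ contient un translat\'e de $X$, et la surjectivit\'e de l'Albanese $\pi_{1}(X)\twoheadrightarrow \pi_{1}(J)$ entra\^{\i}ne celle de $\pi_{1}(X^{(2g-2-3m)})\to \pi_{1}(J)\twoheadrightarrow (\ker V)^{\mathrm{et}}$. Donc $Z$ est connexe, a fortiori irr\'eductible, et $V^{-1}(A_{3m,\mathrm{max}}')$ l'est aussi par surjectivit\'e.

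L'obstacle principal r\'eside dans cette \'etape d'irr\'eductibilit\'e: $A_{3m,\mathrm{max}}'$ n'\'etant pas n\'ecessairement normal, on ne peut pas lui appliquer directement un argument de groupe fondamental; on contourne cette difficult\'e en tirant en arri\`ere \`a la param\'etrisation lisse $X^{(2g-2-3m)}$, o\`u l'argument de $\pi_{1}$ devient standard gr\^ace \`a la pr\'esence d'un translat\'e de $X$.
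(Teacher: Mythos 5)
Your proof is correct and follows essentially the same route as the paper's, which consists precisely of factoring $V$ into a radicial morphism followed by an \'etale one and then invoking the surjectivity of $\pi_1(X)\to\pi_1(J)$ (the cited Proposition 9 of Milne) together with the presence of translates of $X_1$; you merely make this two-line sketch explicit by pulling the \'etale part of $V$ back to the smooth parametrizing space $X^{(2g-2-3m)}$, which is a clean way to run the $\pi_1$-argument without worrying about the singularities of $A_{3m,\mathrm{max}}'$. The only point worth flagging is that the equidimensionality step uses not just finiteness of $V$ but also its surjectivity and flatness (or, equivalently, the radicial--\'etale factorization you already set up), which is indeed available here since $V$ is a finite surjective morphism between smooth varieties of the same dimension.
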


\begin{proof} En effet, $V:J_{1}^{[m]}\rightarrow J^{[3m]}$ se
factorise en un morphisme radiciel et un morphisme $V_{et}$. Pour
établir le lemme, il suffit de considérer $V_{et}$.
L'irréductibilité résulte de \cite{Milne} (proposition 9) et du
fait que pour $2g-2-3m>0$, $A_{m,\mathrm{max}}$ contient des
translatés de $X_1$.
\end{proof}

\vspace{2mm}

On a donc $\Theta_{m}\subset A_{m,\mathrm{max}}$, ce qui fournit
une limitation à priori de la taille de $\Theta_m$. La discussion
dans les sections suivantes s'organise autour de la taille de
$\Theta_m$ dans $A_{m,\mathrm{max}}$.

Si $i:M\hookrightarrow B$ est un plongement d'un faisceau
inversible de degré $m\geq 0$, $M(-D)$ est contenu dans $B$ pour
tout $D$ effectif de degré $m$. Considérons l'application
$$
\alpha_m:X^{(m)}_1\times A_{m,\mathrm{max}}\rightarrow J_1
$$
donnée par $(D,M)\mapsto M(-D)$, et soit $B_{m,\mathrm{max}}$ son
image. Notons $\mathrm{Sat}(\Theta_m)$ l'image de
$X^{(m)}_{1}\times \Theta_{m}$ par $\alpha_m$. Comme $X_1$
engendre $J_1$, le fermé $B_{m,\mathrm{max}}$ est irréductible de
dimension $2g-2-2m$ du moins, tant que $2g-2-2m\leq g$,
c'est-à-dire $m\geq (g-2)/2$.


\subsubsection{$\Theta$ est réduit en caractéristique $3$ pour
$g\geq 2$.} \label{Theta est reduit}

Dans ce numéro, on montre que $\Theta$ est toujours réduit en
caractéristique $3$ pour $g\geq 2$. Supposons que $\Theta$ ne soit
pas réduit. Alors il existe une composante irréductible
$\widetilde{\Theta}$ (munie de la structure de sous-schéma fermé
réduit) de $\Theta$ de multiplicité $\geq 2$. D'après la
proposition \ref{polarisation}, on a alors
$\Theta=2\widetilde{\Theta}$, et donc $\widetilde{\Theta}$ est
algébriquement équivalent à $\Theta_{\mathrm{class}}$.

Soit $\eta$ le point générique de $\widetilde{\Theta}$. Comme
$\Theta$ est singulier en $\eta$, d'après le corollaire
\ref{lissite de point de codim 0}, il existe un plongement
$\tau:L_{\eta}^{-1}\hookrightarrow B_{\eta}$ (sur $X_1\times_k
\eta$) de saturation $M_{\tau}$, et un plongement
$\tau':L_{\eta}\hookrightarrow B_{\eta}$ de sorte que
$\tau'(L_{\eta})\subset M_{\tau}$. Soit
$m=\mathrm{deg}(M_{\tau})$. Comme $g\geq 2$, $L_{\eta}$ n'est pas
d'ordre divisant $2$, donc $m>0$. Il existe donc des diviseurs
effectifs $D_{\eta}$ et $D_{\eta}'$ de degré $m$ sur la courbe
$X_{1,\eta}:=X_1\times_k \eta$ tels que
$L_{\eta}(D_{\eta})=M_{\tau}$ et
$L^{-1}_{\eta}(D_{\eta}')=M_{\tau}$. En particulier, on trouve que
$M_{\tau}^{2}=\OO_{X_{1,\eta}}(D_{\eta}+D_{\eta}')$ et
$L^{2}_{\tau}=\OO_{X_{1,\eta}}(D_{\eta}-D_{\eta}')$.

Cette dernière condition signifie que $L_{\eta}^{2}$ est dans le
{\textquotedblleft schéma des différences\textquotedblright} $V_m$
(définition \ref{schema de diff}). Donc
$\overline{\{\eta\}}=\widetilde{\Theta}\subset V_m'$. Par suite
$g-1\leq \mathrm{dim}(V_{m}')=\mathrm{min}(2m,g)$ (\ref{prop de
schema de diff}), donc $2m\geq g-1$. Par ailleurs, $M_{\tau}\in
\Theta_{m}$, et les $L$ de $\widetilde{\Theta}$ de la forme
$M(-D)$, avec $M\in \Theta_m$ et $D$ diviseur effectif de degré
$m$ sur $X_1$, sont dans $B_{m,\mathrm{max}}$ de dimension
$\mathrm{min}(2g-2-2m,g)$. Comme $\widetilde{\Theta}$ est de
dimension $g-1$, on a $2m\leq g-1$.

Finalement, on a $2m=g-1$, ce qui achève la démonstration pour $g$
pair.

Pour $g$ impair, on a $m=(g-1)/2>0$. Comme $V_{m}'$ est
irréductible de dimension $g-1$, on a $V_{m}'=\widetilde{\Theta}$.
Or $V_{m}'$ est algébriquement équivalent à $2^{2}V_m=4V_m$. Comme
$\widetilde{\Theta}$ est algébriquement équivalent à
$\Theta_{\mathrm{class}}$ (remarque \ref{cas p=3 facile}), et que
la classe de la polarisation principale de
$\Theta_{\mathrm{class}}$ est non divisible dans
$\mathrm{NS}(J_1)$, on a une contradiction. Donc $\Theta$ est
réduit.

\subsubsection{Etude du lieu singulier de $\Theta$ de dimension
$g-2$.} \label{Theta est sans TAS} Dans ce numéro, on recherche
les composantes du lieu singulier $\Theta_{\mathrm{sing}}$ de
$\Theta$ de dimension $g-2$, qui existent si et seulement si
$\Theta$ est non normal, et on montre que que $\Theta$ ne contient
pas de composante irréductible qui est un translaté d'une
sous-variété abélienne de $J_1$ pour une courbe de genre $g\geq 3$
(le cas où $g=2$ sera traité dans $\S$ \ref{cas p=3 g=2}, où l'on
montrera que $\Theta$ est toujours intègre pour une courbe lisse
de genre $2$ en caractéristique $3$).

\begin{lemma} Supposons $p=3$. Soit $t$ un point de $\Theta$ de codimension
$1$ en lequel $\Theta$ n'est pas lisse. Soit $L_t$ le faisceau
inversible sur $X_{1,t}:=X_1\times_k t$ correspondant, alors il
existe $\xi\in \mathcal{H}$ ($\S$ \ref{le schema H}) au-dessus de
$t$, où $\mathcal{H}$ n'est pas lisse de dimension $g-1$. En
particulier, $\xi$ correspond à un plongement
$L_{t}^{-1}\hookrightarrow B_{t}$ de saturation $M_{\xi}$ avec
$L_{t}\subset M_{\xi}$ pour un plongement $L_t\hookrightarrow B_t$
convenable.
\end{lemma}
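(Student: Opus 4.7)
Le plan consiste à appliquer le critère de lissité de $\Theta$ en $t$ (Corollaire \ref{critere de lissite}) et à en déduire la non-lissité de $\mathcal{H}$ en dimension $g-1$ en un point $\xi$ convenable au moyen de la Proposition \ref{diff prop of H}(4). Comme $B$ est de rang $2$ et $p = 3 \neq 2$, l'accouplement alterné non-dégénéré $(\cdot,\cdot) : B \otimes B \to \Omega^1_{X_1/k}$ fait que tout sous-fibré inversible $M \subset B$ est lagrangien, i.e., $M^\perp = M$. La condition $\mathrm{Hom}(L_t, M_\xi) \neq 0$, équivalente à la non-lissité de $\mathcal{H}$ de dimension $g-1$ en $\xi$, se traduit alors en l'existence d'un plongement $\tau: L_t \hookrightarrow B_t$ vérifiant $(\xi, \tau) = 0$ dans $H^0(\Omega^1_{X_{1,t}/k(t)})$. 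Notons $V_+ = H^0(X_{1,t}, B_t \otimes L_t)$ et $V_- = H^0(X_{1,t}, B_t \otimes L_t^{-1})$, de même dimension $n$ par l'auto-dualité de $B$.

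Par le critère de lissité, deux cas se présentent. \emph{Cas 1}: $n = 1$ et l'accouplement $V_+ \otimes V_- \to H^0(\Omega^1_{X_{1,t}/k(t)})$ est nul. On prend alors les uniques $\xi \in V_+$ et $\tau \in V_-$ non nuls (à scalaire près); la nullité de l'accouplement donne $(\xi, \tau) = 0$, donc $\tau(L_t) \subseteq \xi(L_t^{-1})^\perp = M_\xi$, ce qui conclut.

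\emph{Cas 2}: $n \geq 2$. La fibre $\alpha^{-1}(t) \simeq \mathbf{P}(V_+)$ est de dimension projective $\geq 1$. Via la Proposition \ref{etude diff de H} et la suite exacte longue issue de $0 \to L_t^{-1} \to B_t \to \mathcal{G} \to 0$ (tensorisée par $L_t$), combinée à la dualité de Serre et à l'auto-dualité de $B$, on vérifie que pour tout $\xi \in V_+$, l'espace tangent à $\mathcal{H}$ en $\xi$ a dimension $g - 1 + \dim K_\xi$, où $K_\xi := \ker\bigl(V_- \to H^0(\Omega^1_{X_{1,t}}),\ \tau \mapsto (\xi,\tau)\bigr)$. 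Il suffit donc de trouver $\xi \in V_+ \setminus \{0\}$ avec $K_\xi \neq 0$, autrement dit de trouver un tenseur décomposable non nul dans le noyau de l'application bilinéaire $V_+ \otimes V_- \to H^0(\Omega^1_{X_{1,t}})$. Le point le plus délicat sera d'établir cette existence en toute généralité: un comptage dimensionnel entre la variété de Segre dans $\mathbf{P}(V_+ \otimes V_-)$ et le noyau suffit quand $n \geq (g+2)/2$, mais pour les petits $n$ on devra exploiter l'antisymétrie de $(\cdot,\cdot)$ combinée aux bornes de stabilité $\deg M < g - 1$ pour tout sous-fibré inversible $M \subset B$ (conséquence de la stabilité de $B$ pour $g \geq 2$), afin de forcer la dégénérescence requise.
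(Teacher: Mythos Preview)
Your treatment of Case~1 ($h^0(B_t\otimes L_t)=1$) is essentially the paper's argument.

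In Case~2 ($n\geq 2$), however, your approach diverges from the paper's and has a real gap. You reduce to finding a nonzero decomposable tensor in the kernel of the bilinear map $V_+\otimes V_-\to H^0(\Omega^1_{X_{1,t}})$, and you concede that this is ``le point le plus d\'elicat'': your dimension count only works for $n\geq (g+2)/2$, and for small $n$ you gesture at antisymmetry and stability without giving an argument. I do not see how to complete this: when $n=2$ and $g$ is large, the map $V_+\otimes V_-\to H^0(\Omega^1)$ can a priori be injective, and neither antisymmetry (which relates $(\xi,\tau)$ to $(\tau,\xi)$, living in different twists) nor the stability bound $\deg M<g-1$ obviously forces a decomposable element in the kernel. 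More tellingly, your Case~2 argument never uses the hypothesis that $t$ has codimension~$1$ in $\Theta$, so you are implicitly attempting a stronger statement that may simply be false.

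The paper's Case~2 is entirely different and much cleaner. It exploits two ingredients you do not use: (i) the reducedness of $\Theta$ for $p=3$, already established in \S\ref{Theta est reduit}, and (ii) the codimension-$1$ hypothesis on $t$. Reducedness implies $\mathcal{H}\to\Theta$ is an isomorphism over a neighbourhood of the generic point $\eta_i$ of a component $\Theta_i\ni t$; let $Y\subset\mathcal{H}$ be the closure of the unique point over $\eta_i$. Properness makes $Y\to\Theta_i$ surjective, so pick $\xi\in Y$ over $t$. Since $t$ has codimension~$1$, $Y\to\Theta_i$ is quasi-finite at $\xi$; but the fibre $\mathcal{H}_t\simeq\mathbf{P}(V_+)$ has dimension $\geq 1$, so a second irreducible component of $\mathcal{H}$ passes through $\xi$. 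Hence $\mathcal{H}$ is not smooth at $\xi$, and Proposition~\ref{diff prop of H} gives $\mathrm{Hom}(L_t,M_\xi)\neq 0$. This geometric argument sidesteps the algebraic difficulty you encountered.
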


\begin{proof} Considérons le morphisme naturel surjectif
$\mathcal{H}\rightarrow \Theta$, et considérons la fibre
$\mathcal{H}_t$ au-dessus de $t$. Distinguons deux cas:

(1) Si $h^{0}(X_{1,t},B\otimes L_{t})=1$ (et donc
$h^{0}(X_{1,t},B\otimes L_{t}^{-1})=1$), alors le morphisme
$\mathcal{H}\rightarrow \Theta$ est un isomorphisme au-dessus d'un
voisinage de $t$, et en particulier, $\mathcal{H}$ n'est pas lisse
de dimension $g-1$ en l'unique point $\xi\in \mathcal{H}$
au-dessus de $t\in \Theta$. D'après proposition \ref{diff prop of
H}, l'unique plongement $L_{t}\hookrightarrow B_{t}$ est donc tel
que $L^{-1}_{t}\hookrightarrow M_{t}$.

(2) Supposons que $h^{0}(X_{1,t}, B\otimes L_{t})>1$, de sorte que
la fibre de $\mathcal{H}$ au-dessus de $t$ est de dimension $\geq
1$. Soit $\Theta_i$ une composante irréductible de $\Theta$
contenant $t$, $\eta_i$ son point générique. Comme $\Theta$ est
réduit (\ref{Theta est reduit}), le morphisme
$\mathcal{H}\rightarrow \Theta$ est un isomorphisme au-dessus d'un
voisinage de $\eta_i\in \Theta$. Soit $\tau_i$ l'unique point de
$\mathcal{H}$ au-dessus de $\eta_i$. Notons $Y$ l'adhérence
schématique de $\tau_i$ dans $\mathcal{H}$. Comme
$\mathcal{H}\rightarrow \Theta$ est propre, $Y\rightarrow
\Theta_i$ est surjectif. Soit $\xi$ un point de $Y$ au-dessus de
$t$. Comme $t$ est de codimension $1$ dans $\Theta$, $Y\rightarrow
\Theta_i$ est quasi-fini en $\xi$. Or $\mathcal{H}$ a une fibre
irréductible de dimension $\geq 1$ au-dessus de $t$, il y a donc
au moins deux composantes irréductibles de $\mathcal{H}$ qui
passent par $\xi$, et donc $\mathcal{H}$ n'est pas lisse en $\xi$.
Alors $\xi$ correspond à un plongement $\xi:L_{t}^{-1}\rightarrow
B_{t}$ de saturation $M_{\xi}$, tel qu'il existe un plongement
$L_{t}\hookrightarrow B_{t}$ de sorte que $L_{t}\hookrightarrow
M_{\xi}$ (proposition \ref{diff prop of H}), d'où le résultat.
\end{proof}

\vspace{2mm}La preuve du lemme suivant est immédiate.

\begin{lemma}\label{lemme facile}Soit $k$ un corps.

(1) Soient $A$ une variété abélienne, $Y=x+B$ un translaté d'une
sous-variété abélienne de dimension $<\mathrm{dim}(A)$ de $A$.
Alors le morphisme $\pi_{1}(Y)\rightarrow \pi_1(A)$ est de conoyau
infini.

(2) Soit $C$ une courbe lisse de genre $g\geq 2$ sur $k$, avec
$J_C$ sa jacobienne. Soit $F\subset J_C$ un fermé irréductible de
$J_C$ muni de la structure de sous-schéma réduit. Supposons que
$F$ contienne un translaté $\widetilde{C}$ de $C$, alors le
morphisme naturel $\pi_1(F)\rightarrow \pi_1(J_C)$ est surjectif.
En particulier, $F$ n'est pas un translaté d'une sous-variété
abélienne de $J_C$.
\end{lemma}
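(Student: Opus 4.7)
Pour (1), je commencerais par se ramener au cas $x=0$, c'est-à-dire à $Y=B$, quitte à translater (une translation est un automorphisme de $A$ et n'affecte ni $\pi_1(A)$ ni l'image de $\pi_1(Y)$ dans $\pi_1(A)$, à conjugaison près). On dispose alors de la suite exacte courte de variétés abéliennes
$$
0 \longrightarrow B \longrightarrow A \longrightarrow A/B \longrightarrow 0.
$$
Comme $A \to A/B$ est lisse (et même un torseur sous $B$), on obtient une suite exacte de groupes fondamentaux étales
$$
\pi_1(B) \longrightarrow \pi_1(A) \longrightarrow \pi_1(A/B) \longrightarrow 0,
$$
où la surjection à droite traduit la surjectivité géométrique de $A\to A/B$. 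Le conoyau de $\pi_1(Y)=\pi_1(B) \to \pi_1(A)$ se surjecte donc sur $\pi_1(A/B)$. Puisque $\dim(A/B)\geq 1$, ce dernier groupe contient le module de Tate $T_\ell(A/B)$ pour tout premier $\ell\neq \mathrm{car}(k)$, qui est infini; d'où l'infinitude du conoyau.

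Pour (2), l'ingrédient principal est la surjectivité classique $\pi_1(\widetilde C) \twoheadrightarrow \pi_1(J_C)$ induite par le plongement d'Albanese (à translation près, ce qui ne change rien aux groupes fondamentaux). Comme $\widetilde C \subset F$, ce morphisme se factorise en
$$
\pi_1(\widetilde C) \longrightarrow \pi_1(F) \longrightarrow \pi_1(J_C),
$$
et la composée étant surjective, la deuxième flèche l'est aussi. Pour le \emph{en particulier}, je raisonnerais par l'absurde: si $F$ était le translaté d'une sous-variété abélienne propre $A'\subsetneq J_C$, alors d'après (1) appliqué à $A=J_C$ et $B=A'$, le conoyau de $\pi_1(F)\to \pi_1(J_C)$ serait infini, contredisant la surjectivité établie ci-dessus. (Le cas $F=J_C$ n'intervient pas ici, puisqu'on s'intéresse à des composantes strictes de $\Theta$.)

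L'étape la plus délicate — mais sans réelle difficulté — est l'exactitude à droite $\pi_1(B) \to \pi_1(A) \to \pi_1(A/B) \to 0$. Elle provient du fait que $A \to A/B$ est un morphisme propre, lisse, à fibres géométriquement connexes (ce sont des translatés de $B$), si bien qu'on peut invoquer la suite exacte d'homotopie pour un morphisme propre lisse à fibres géométriquement connexes (SGA1, X). Une fois ce point acquis, tout le reste se réduit à la surjectivité $\pi_1(C)\twoheadrightarrow \pi_1(J_C)$, qui est classique.
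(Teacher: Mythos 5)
Votre argument est correct et correspond à ce que l'auteur sous-entend en déclarant la preuve \og immédiate \fg{} : pour (1), la surjection $\pi_1(A)\twoheadrightarrow\pi_1(A/B)$ tue l'image de $\pi_1(B)$ et $\pi_1(A/B)$ est infini dès que $\dim(A/B)\geq 1$ ; pour (2), la surjectivité classique $\pi_1(\widetilde C)\twoheadrightarrow\pi_1(J_C)$ se factorise par $\pi_1(F)$. Votre remarque que le cas $F=J_C$ est exclu dans l'application visée (les composantes de $\Theta$ sont des diviseurs) est pertinente et complète correctement le \og en particulier \fg.
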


Soient $\Sigma$ une composante irréductible réduite de dimension
$g-2$ de $\Theta_{\mathrm{sing}}$, $t\in \Sigma$ son point
générique avec $L_t$ le faisceau inversible correspondant sur
$X_{1,t}:=X_1\times_k t$. D'après le lemme ci-dessus, il existe
$\xi:L_{t}^{-1}\hookrightarrow B_{t}$ de saturation $M_{\xi}$, et
$\xi':L_{t}\hookrightarrow B_t$ de sorte que $\xi'(L_t)\subset
M_{\xi}$. Soit $m$ le degré de $M_{\xi}$. Comme $g\geq 3$, $L_{t}$
n'est pas d'ordre divisant $2$, donc $m>0$. Par suite, il existe
des diviseurs $D_{t}$ et $D_t'$ positifs de degré $m$ de $X_{1,t}$
tels que $L_t(D_t)=M_{\xi}$ et $L^{-1}_{t}(D_t')=M_{\xi}$. En
particulier, $L^{2}_{t}=\OO_{X_{1,t}}(D_t-D_t')$,
$M_{\xi}^{2}=\OO_{X_{1,\xi}}(D_t+D_t')$. On a donc $t\in
V_{m}'\cap B_{m,\mathrm{max}}$, par suite $\Sigma\subset V_m'\cap
B_{m,\mathrm{max}}$. Par conséquent $(g-2)/2\leq m\leq g/2$ pour
une raison de dimension. Il y a deux cas à distinguer: le cas où
$g$ est pair et le cas où $g$ est impair.

\subsubsection*{Cas où $g$ est pair.}
On suppose donc $g\geq 4$, le cas où $g=2$ sera traité plus tard
au $\S$ \ref{cas p=3 g=2}. Alors ou bien $m=(g-2)/2$, ou bien
$m=g/2$.

Examinons d'abord le cas $m=(g-2)/2$. Alors $V_m'$ est un
sous-schéma fermé irréductible de $J_1$ de dimension $g-2$
(\ref{prop de schema de diff}). Donc $\Sigma =V_m'$. Alors une
composante irréductible $\Theta_i$ de $\Theta$ qui contient
$\Sigma=V_m'$ ne peut être un translaté d'une sous-variété
abélienne de $J_{1}$ car son image par la multiplication par $2$
serait aussi un translaté d'une sous-variété abélienne, qui
contiendrait $V_m$. Or $V_{m}$ pour $m>0$, contient un translaté
de $X_1$, d'où une contradiction (lemme \ref{lemme facile}).

Examinons maintenant le cas $m=g/2$. Alors $A_{m,\mathrm{max}}$
est équidimensionnel de dimension $2g-2-3m=\frac{g}{2}-2$,
irréductible pour $g\geq 6$, et $B_{m,\mathrm{max}}$ est
équidimensionnel de dimension $g-2$. Donc $\Sigma$ est une
composante irréductible de $B_{m,\mathrm{max}}$. En particulier,
$\Sigma$ contient un translaté de $X_1$. Donc, d'après le lemme
\ref{lemme facile}, une composante irréductible $\Theta_i$ de
$\Theta$ qui contient $\Sigma$ ne peut être un translaté d'une
sous-variété abélienne de $J_{1}$

Réciproquement, si une composante irréductible de
$A_{m,\mathrm{max}}$ est une composante de $\Theta_m$, alors
$\mathrm{Sat}(\Theta_m)$ sera une composante $\Sigma$ singulière
de $\Theta$, de dimension $g-2$, à condition que si $M\in
\Theta_{m}$, on ait $h^{0}(X_1,M^{2})>m$, et on donnera plus loin
un exemple pour $g=4$.

\subsubsection*{Cas où $g$ est impair.}
Supposons $g\geq 3$ impair. On doit examiner le cas $m=(g-1)/2$.
Alors $A_{m,\mathrm{max}}$ est irréductible de dimension
$(g-1)/2$, et $\mathrm{dim}(\Theta_{m})\leq (g-1)/2$. Par
ailleurs, $\Sigma$ est contenu dans $\mathrm{Sat}(\Theta_m)$, donc
$$
g-2=\mathrm{dim}(\Sigma)\leq
\mathrm{dim}(\mathrm{Sat}(\Theta_{m}))=\mathrm{dim}(\Theta_m)+(g-1)/2\leq
\mathrm{dim}(B_{m,\mathrm{max}})=g-1.
$$
Distinguons à nouveau deux cas:

\vspace{2mm}

Cas (i): $\mathrm{dim}(\Theta_m)=\frac{g-1}{2}-1$. Soient
$\Theta_{m,i}$ ($i\in I$) les composantes irréductibles de
$\Theta_{m}$, de dimension $\frac{g-1}{2}-1$. Alors il existe
$i_0\in I$ tel que $\Sigma=\alpha_m(X^{(m)}_1\times
\Theta_{m,i_0})$. En particulier, comme $m>0$, $\Sigma$ contient
un translaté de $X_1$, et aucune composante irréductible contenant
$\Sigma$ n'est un translaté d'une sous-variété abélienne de $J_1$
(lemme \ref{lemme facile}).

Cas (ii): $\mathrm{dim}(\Theta_m)=(g-1)/2$. Alors
$\Theta_m=A_{m,\mathrm{max}}$ et $\Theta':=\mathrm{Sat}(\Theta)$
est une composante irréductible de $\Theta$.

\begin{lemma} $\Theta'$ est la spécialisation ensembliste d'un
diviseur qui existe sur la jacobienne générique, par suite
$\Theta'$ est algébriquement équivalent à
$r\cdot\Theta_{\mathrm{class}}$ avec $r\geq 1$ un entier.
\end{lemma}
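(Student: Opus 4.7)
Le plan est de r�aliser $\Theta'$ comme la sp�cialisation ensembliste d'un diviseur construit par la m�me m�thode sur la jacobienne de la courbe g�n�rique, puis d'utiliser le r�sultat de Mori pour identifier sa classe dans $\mathrm{NS}(J_1)$. Je choisirais un anneau de valuation discr�te complet $R$ de corps r�siduel $k$ et de point g�n�rique $\eta$, et une courbe propre et lisse $\mathcal{X}/S$ sur $S=\mathrm{Spec}(R)$ telle que $\mathcal{X}_s=X$ et $\mathcal{X}_{\eta}$ soit la courbe g�n�rique de genre $g$; une telle d�formation existe puisque $X$ d�finit un $k$-point de l'espace de modules lisse des courbes de genre $g$. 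Notons $\mathcal{X}_{1,S}/S$ le tordu par le Frobenius absolu de $S$ et $\mathcal{J}_{1,S}/S$ sa jacobienne relative.

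Je reprendrais ensuite la construction de $A_{m,\mathrm{max}}$ et $B_{m,\mathrm{max}}$ du $\S$ \ref{limitation} en famille. Le Verschiebung relatif $V:\mathcal{J}_{1,S}^{[m]}\rightarrow \mathcal{J}_S^{[3m]}$ et le morphisme $\mathcal{X}_{1,S}^{(2g-2-3m)}\rightarrow \mathcal{J}_S^{[3m]}$ donn� par $D\mapsto \Omega^{1}_{\mathcal{X}_{1,S}/S}(-D)$ se comportent bien dans la famille. Je d�finirais ainsi $\mathcal{A}_{m,\mathrm{max},S}\subset \mathcal{J}_{1,S}^{[m]}$ comme l'image r�ciproque par $V$ de l'image sch�matique de ce dernier, munie de la structure r�duite, puis $\mathcal{B}_{m,\mathrm{max},S}\subset \mathcal{J}_{1,S}$ comme l'image sch�matique r�duite du morphisme $\mathcal{X}_{1,S}^{(m)}\times_S \mathcal{A}_{m,\mathrm{max},S}\rightarrow \mathcal{J}_{1,S}$, $(D,M)\mapsto M(-D)$. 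Sous notre hypoth�se ($m=(g-1)/2$ et $\Theta_m=A_{m,\mathrm{max}}$), la semi-continuit� des dimensions des fibres donnera que $\mathcal{B}_{m,\mathrm{max},S}$ est irr�ductible, plat sur $S$ de dimension relative $g-1$, et d�finira donc un diviseur de Cartier effectif $\mathcal{D}$ sur le sch�ma r�gulier $\mathcal{J}_{1,S}$, dont la fibre sp�ciale a pour support ensembliste $B_{m,\mathrm{max},s}=\Theta'$.

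Sur la fibre g�n�rique, $\mathcal{D}_{\eta}$ est un diviseur effectif non trivial sur $J_{1,\eta}$. Comme $\mathrm{NS}(J_{1,\eta})\simeq \mathbf{Z}$ est engendr� par $[\Theta_{\mathrm{class},\eta}]$ d'apr�s \cite{Mori}, il existe un entier $r\geq 1$ tel que $[\mathcal{D}_{\eta}]=r[\Theta_{\mathrm{class},\eta}]$. Par compatibilit� de $\mathrm{NS}$ avec la sp�cialisation, on en d�duit $[\mathcal{D}_s]=r[\Theta_{\mathrm{class},s}]$ dans $\mathrm{NS}(J_{1,s})$. En �crivant $\mathcal{D}_s=n\Theta'$ avec $n\geq 1$, et en observant que $[\Theta_{\mathrm{class},s}]$ est primitive dans le groupe sans torsion $\mathrm{NS}(J_{1,s})$ (car associ�e � la polarisation principale), on conclut que $[\Theta']=r''[\Theta_{\mathrm{class},s}]$ pour un entier $r''\geq 1$ (l'effectivit� et l'amplitude de $\Theta_{\mathrm{class}}$ forcent $r''\geq 1$).

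L'obstacle principal sera le contr�le pr�cis de la dimension et de l'irr�ductibilit� des fibres de $\mathcal{B}_{m,\mathrm{max},S}$ au-dessus de $S$. La dimension attendue $(g-1)/2$ de $\mathcal{A}_{m,\mathrm{max}}$ est atteinte sur la fibre g�n�rique par la construction m�me; l'hypoth�se $\Theta_m=A_{m,\mathrm{max}}$ garantit qu'elle l'est aussi sur la fibre sp�ciale, et la platitude ainsi que la sp�cialisation ensembliste de $\mathcal{B}_{m,\mathrm{max},S}$ en d�couleront. Un second point d�licat est la justification soigneuse de l'argument de primitivit� dans $\mathrm{NS}(J_{1,s})$, qui pourrait alternativement s'obtenir en choisissant $\mathcal{D}$ g�n�riquement r�duit et en invoquant la s�micontinuit� sup�rieure des multiplicit�s.
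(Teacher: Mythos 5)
Votre d�monstration suit essentiellement la m�me strat�gie que celle de l'article : d�former $X$ en la courbe g�n�rique au-dessus d'un anneau de valuation discr�te complet, construire la version relative de $A_{m,\mathrm{max}}$ (image de $\X^{(m)}$ par $D\mapsto \Omega^{1}(-D)$ tir�e en arri�re par le Verschiebung) et de sa satur�e $B_{m,\mathrm{max}}$, identifier $\Theta'$ au support de la fibre sp�ciale, puis invoquer $\mathrm{NS}(J_{1,\eta})\simeq \mathbf{Z}$ (Mori) et la sp�cialisation des classes d'�quivalence alg�brique. Votre r�daction est m�me un peu plus explicite que celle de l'article sur la platitude de l'adh�rence sch�matique et sur la primitivit� de $[\Theta_{\mathrm{class}}]$, qui garantit que le quotient $r'/n$ est entier.
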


\begin{proof} Gardons les notations ci-dessus. Soient $S=\{\eta,s\}$
le spectre d'un anneau de valuation discrète complet contenant
$k$, $\X/S$ une courbe propre lisse de genre $g$ dont la fibre
générique est la courbe générique de genre $g$, et dont la fibre
$\X_s$ est $X_s=X$. Notons $\mathcal{J}$ la jacobienne de $\X/S$.
Soit $\mathcal{D}$ l'image schématique du morphisme
$$
\X^{(g-1)/2}\rightarrow \mathcal{J}^{3(g-1)/2}
$$
donné par $(x_1,\cdots,x_{(g-1)/2})\mapsto
\Omega^{1}_{\X/S}(-x_1-\cdots-x_{(g-1)/2})$. Soit $\mathcal{D}'$
l'image réciproque de $\mathcal{D}$ par
$V:=F^{\ast}:\mathcal{J}^{(g-1)/2}\rightarrow
\mathcal{J}^{3(g-1)/2}$. On a
$(\mathcal{D}'_{s})_{\mathrm{red}}=\Theta'$. Par ailleurs, comme
$\X_{\eta}$ est générique,
$\mathrm{NS}(\mathcal{J}_{1,\eta})\simeq \mathbf{Z}$, ce qui
implique que $\mathcal{D}'_{\eta}$ est algébriquement équivalent à
$r'\Theta_{\mathrm{class}}$ avec $r'\geq 1$ un entier. Par
spécialisation, $\Theta'=(\mathcal{D}_s')_{\mathrm{red}}$ est donc
algébriquement équivalent à $r\Theta_{\mathrm{class}}$ avec $r\leq
r'$ un entier $\geq 1$. D'où le résultat.
\end{proof}
\vspace{2mm}

Revenons au diviseur $\Theta'$. Alors $\Theta'$ est algébriquement
équivalent à $r\Theta_{\mathrm{class}}$ avec $r$ un entier
strictement positif. Comme $\Theta$ est algébriquement à
$2\Theta_{\mathrm{class}}$, on sait que $1\leq r\leq 2$
(proposition \ref{polarisation}). Donc ou bien $r=1$, et ceci
implique que $\Theta$ est une somme de deux composantes translatés
de $\Theta_{\mathrm{class}}$; ou bien $r=2$, et alors
$\Theta=\Theta'$ est irréductible (proposition
\ref{polarisation}). En tous cas, $\Theta$ ne contient pas de
composante translatée d'une sous-variété abélienne de $J_1$.

En résumé, on a montré le théorème suivant:

\begin{theorem}\label{cas p=3} Soient $k$ un corps algébriquement clos de caractéristique
$3$, $X/k$ une courbe propre lisse connexe de genre $g\geq 2$.
Alors: (1) $\Theta$ est réduit; (2) si $g\geq 3$, aucune
composante de $\Theta$ n'est un translaté d'une sous-variété
abélienne de $J_1$.
\end{theorem}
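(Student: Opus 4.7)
The plan is to assemble the two parallel analyses already carried out in \S\ref{Theta est reduit} and \S\ref{Theta est sans TAS}, using as the common engine the fact that for $p=3$ the bundle $B$ has rank $2$, so at any non-smooth point of $\Theta$ one can apply the smoothness criterion in its rank-$2$ form: writing $\tau:L^{-1}\hookrightarrow B$ at a generic point of a non-smooth locus and letting $M_\tau$ be its saturation (a line subbundle, since $B$ has rank $2$), the failure of smoothness forces the existence of a second embedding $\tau':L\hookrightarrow B$ with $\tau'(L)\subset M_\tau$. This is the content of Corollary \ref{lissite de point de codim 0} combined with Proposition \ref{diff prop of H}.

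For (1), I would argue by contradiction. If $\Theta$ is not reduced, Remark \ref{cas p=3 facile} (itself a consequence of Proposition \ref{polarisation}) forces $\Theta=2\widetilde{\Theta}$ with $\widetilde{\Theta}$ algebraically equivalent to $\Theta_{\mathrm{class}}$. Applied at the generic point $\eta$ of $\widetilde{\Theta}$, the rank-$2$ mechanism above produces a line bundle $M_\tau$ of some degree $m$ with $L_\eta(D_\eta)=M_\tau$ and $L_\eta^{-1}(D'_\eta)=M_\tau$ for effective divisors $D_\eta,D'_\eta$ of degree $m$. Since $g\geq 2$, $L_\eta$ is not $2$-torsion, hence $m>0$, and $L_\eta^2=\mathcal{O}(D_\eta-D'_\eta)$ shows $\widetilde{\Theta}\subset V'_m$. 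The dimension bound of Lemma \ref{prop de schema de diff} then gives $g-1\le 2m$, and the inclusion $\widetilde{\Theta}\subset B_{m,\mathrm{max}}$ gives $2m\le g-1$. So $2m=g-1$. For $g$ even this is already a contradiction; for $g$ odd it forces $\widetilde{\Theta}=V'_m$, which is algebraically equivalent to $4V_m$, contradicting non-divisibility of the class of $\Theta_{\mathrm{class}}$ in $\mathrm{NS}(J_1)$.

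For (2), I would take an irreducible component $\Sigma$ of $\Theta_{\mathrm{sing}}$ of dimension $g-2$ (these are the only components that could lead to $\Theta$ containing a translate of an abelian subvariety not already excluded by reducedness). The same rank-$2$ argument at the generic point of $\Sigma$ yields $\Sigma\subset V'_m\cap B_{m,\mathrm{max}}$ with $m>0$, so $(g-2)/2\le m\le g/2$. I would then split into cases by parity and by the value of $m$. When $g$ is even and $m=(g-2)/2$ or $m=g/2$, and when $g$ is odd and $\dim\Theta_m=(g-1)/2-1$, the component $\Sigma$ contains a translate of $X_1$; by Lemma \ref{lemme facile}(2), any component of $\Theta$ containing $\Sigma$ has surjective $\pi_1$ onto $\pi_1(J_1)$ and so cannot be a translate of a proper abelian subvariety, while being a translate of $J_1$ itself is ruled out by dimensions. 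The hardest subcase is the remaining one for odd $g$, namely $m=(g-1)/2$ with $\Theta_m=A_{m,\mathrm{max}}$: here $\Theta':=\mathrm{Sat}(\Theta_m)$ is a component of $\Theta$, and a degeneration argument specializing the generic curve shows $\Theta'$ is algebraically equivalent to $r\Theta_{\mathrm{class}}$ for some $r\ge 1$. Combined with $\mathrm{cl}(\Theta)=2\,\mathrm{cl}(\Theta_{\mathrm{class}})$ and Proposition \ref{polarisation}, either $r=2$ and $\Theta=\Theta'$ is irreducible, or $r=1$ and the complementary component is itself a translate of $\Theta_{\mathrm{class}}$, which is irreducible and not a translate of an abelian subvariety.

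The main obstacle is the last subcase in (2): it is here that one must invoke the degeneration to the generic curve to pin down the class of $\Theta'$ in $\mathrm{NS}(J_1)$, and then appeal to the irreducibility of the principal polarization $\Theta_{\mathrm{class}}$ (Proposition \ref{polarisation}) to rule out both a non-trivial decomposition of $\Theta$ and the possibility that $\Theta'$ itself be a translate of an abelian subvariety. Everything else is a fairly mechanical but careful bookkeeping of dimensions in $V'_m$ and $B_{m,\mathrm{max}}$ as set up in \S\ref{limitation}.
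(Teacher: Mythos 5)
Your proposal reproduces the paper's own argument: part (1) is exactly the dimension count of \S\ref{Theta est reduit} ($\widetilde{\Theta}\subset V_m'\cap B_{m,\mathrm{max}}$ forcing $2m=g-1$, then parity for $g$ even and non-divisibility of $\mathrm{cl}(\Theta_{\mathrm{class}})$ via $V_m'\equiv 4V_m$ for $g$ odd), and part (2) is the case analysis of \S\ref{Theta est sans TAS} on the $(g-2)$-dimensional components of $\Theta_{\mathrm{sing}}$, ending with the same degeneration argument and Proposition \ref{polarisation} in the hard subcase $m=(g-1)/2$, $\Theta_m=A_{m,\mathrm{max}}$. The only cosmetic slip is that for $m=(g-2)/2$ one concludes via $\Sigma=V_m'$ and the surjectivity of $\pi_1(V_m')\to\pi_1(J_1)$ (or the image under multiplication by $2$ containing $V_m$) rather than $\Sigma$ literally containing a translate of $X_1$; the conclusion via Lemme \ref{lemme facile} is unaffected.
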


\begin{remark}(Complément au cas $g$ impair) Dans le cas $g$ impair,
il paraît probable que le cas où $\Theta$ est une somme de deux
composantes translatées de $\Theta_{\mathrm{class}}$ ne puisse pas
se produire. En effet, sur les complexes, il est connu ($\S$ 3 de
\cite{Beauville}, voir aussi \cite{GeemenIzadi}) que, pour une
courbe $X$ lisse sur $k$, l'application $\mathcal{M}\rightarrow
|2\Theta_{\mathrm{class}}|$ qui à un fibré vectoriel semi-stable
$V$ de rang $2$ de pente $g-1$ sur $X$, associe son diviseur
$\Theta_V$, est une immersion de l'espace de modules des fibrés
semi-stables de rang $2$ de pente $g-1$ vers le système linéaire
de $2\Theta_{\mathrm{class}}$ si $X$ n'est pas hyperelliptique, et
induit une immersion $\mathcal{M}/\{1,\tau\}\rightarrow
|2\Theta_{\mathrm{class}}|$ si $X$ est hyperelliptique (où $\tau$
est l'involution hyperelliptique de $X$). Si un tel résultat est
encore vrai en caractéristique $p=3$, $\Theta_{B}$ ne peut pas
être somme de deux translatés de $\Theta_{\mathrm{class}}$, en
raison de la stabilité de $B$.
\end{remark}

\subsubsection*{Un exemple en genre $4$ où $\Theta$ n'est pas normal.}
\label{exemple ou theta n'est pas normal} Reprenons la courbe de
Tango $X$ construite dans l'exemple \ref{courbe de tango} en
prenant $d=4$. Gardons les notations de \ref{courbe de tango}.
Alors $X$ est une courbe de genre $4$, et
$M=\OO_{X}(\sum_{i=1}^{10}b_i-\pi^{-1}(\sum_{j=1}^{4}c_j))$ est un
sous-faisceau inversible de $B$ (où $\pi:X\rightarrow
\mathbf{P}_{k}^{1}$ est le revêtement fini étale de degré $2$,
$\{b_{i}\}_{i=1,\cdots,10}$ sont les points de $X$ de ramification
de $\pi$, $\{c_{j}\}_{j=1,2,3,4}\subset
\mathbf{P}^{1}_{k}=\mathbf{P}$ tels que $c_j\neq a_i:=\pi(b_i)$
pour tout $i$). Alors $\Sigma:=\{M(-D)|D\in X^{(2)}\}$ est un
fermé de dimension $2$ de $\Theta$. Montrons que $\Theta$ est
singulier en tous les points de $\Sigma$. En effet, comme
$M^{2}=\OO_{X}\left(2\left(\sum_{i=1}^{10}b_i-\pi^{-1}(\sum_{j=1}^{4}c_j)\right)\right)
\simeq \pi^{\ast}\left(\OO_{\mathbf{P}}\left(\sum_{i=1}^{10}a_i-2
(\sum_{j=1}^{4}c_j)\right)\right)\simeq
\pi^{\ast}(\OO_{\mathbf{P}_{k}^{1}}(2))$, on a $h^{0}(X,M^{2})\geq
3$. Donc $h^{0}(X,M^{2}(-D))\geq 1$ pour tout diviseur $D$
effectif de degré $2$. Par suite, pour tout $D$ diviseur effectif
de degré $2$ sur $X$, il existe un $D'$ diviseur effectif de degré
$2$ tel que $M^{2}=\OO_{X}(D+D')$. Donc $L:=M(-D)\hookrightarrow
N\hookrightarrow B$ est tel que $L^{-1}\simeq
M(-D')\hookrightarrow M$, c'est-à-dire, ces deux plongements sont
orthogonaux par rapport au produit $(\cdot,\cdot)$ sur $B$. Donc,
d'après le critère de lissité (proposition \ref{critere de
lissite}), $\Theta$ est singulier en $L$. Donc $\Theta$ contient
un fermé de points singuliers de codimension $1$, ce qui implique
que $\Theta$ n'est pas normal.

\subsection{Le diviseur thêta en genre $2$ et en toute
caractéristique (positive)}

Soient $k$ un corps algébriquement clos de caractéristique $p>0$,
$X$ une courbe propre lisse connexe de genre $g=2$ sur $k$. Notons
$\Theta$ le diviseur thêta associé au faisceau des formes
différentielles localement exactes sur $X_1$. Le but de cette
section est de montrer qu'au moins dans le cas ordinaire, aucune
composante de $\Theta$ n'est un translaté d'une sous-variété
abélienne de $J_1$.

\subsubsection{Analyse de la lissité aux points d'ordre $p$.}

Sur $X_1$, on dispose de la suite exacte suivante:
$$
\xymatrix{0\ar[r] &  B\ar[r] & F_{\ast}\Omega^{1}_{X/k}\ar[r]^{c}
& \Omega^{1}_{X_1/k}\ar[r] & 0}.
$$
Pour $L$ un faisceau inversible de degré $0$, on a donc
$h^{0}(X_1,B\otimes L )\leq
h^{0}(X_1,F_{\ast}(\Omega^{1}_{X/k})\otimes
L)=h^{0}(X,\Omega^{1}_{X/k}\otimes F^{\ast}L)=1$ sauf si
$F^{\ast}L=L^{p}=\OO_{X}$, où l'on pourrait avoir
$h^{0}(X_1,B\otimes L)=2$. On a donc le lemme suivant:

\begin{lemma} Avec les notations ci-dessus, soit $x\in \Theta$ un point
d'ordre $p$ avec $L$ le faisceau inversible de degré $0$
correspondant. Alors $\Theta$ est singulier en $x$ si et seulement
si $h^{0}(X_1,B\otimes L)=2$. Cette condition équivaut au fait que
le morphisme $c\otimes 1_L:H^{0}(X, \Omega^{1}_{X/k}\otimes
F^{\ast}L)\rightarrow H^{0}(X_1,\Omega^{1}_{X_1/k}\otimes L)$ est
nul.
\end{lemma}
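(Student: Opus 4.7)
The plan is to extract the whole statement from the cohomology long exact sequence of the defining sequence of $B$, combined with the smoothness criterion at points of order $p$ already established earlier in $\S$ 2.3.

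First, I would tensor the fundamental exact sequence
$$0 \rightarrow B \rightarrow F_{\ast}\Omega^{1}_{X/k} \xrightarrow{c} \Omega^{1}_{X_1/k} \rightarrow 0$$
with $L$ and pass to cohomology. For the middle term, the projection formula gives $F_{\ast}\Omega^{1}_{X/k} \otimes L \simeq F_{\ast}(\Omega^{1}_{X/k} \otimes F^{\ast}L)$. Since $L$ has order $p$ on $X_1$, we have $F^{\ast}L \simeq \OO_{X}$ (recalled at the very beginning of $\S$ 2.3), and consequently
$$h^{0}(X_1, F_{\ast}\Omega^{1}_{X/k} \otimes L) = h^{0}(X, \Omega^{1}_{X/k}) = g = 2.$$

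Second, I would compute the right-hand term: because $\deg L = 0$ but $L \not\simeq \OO_{X_1}$ (since $L$ has order exactly $p$), Serre duality yields $h^{1}(X_1, \Omega^{1}_{X_1/k} \otimes L) = h^{0}(X_1, L^{-1}) = 0$, and Riemann--Roch then gives $h^{0}(X_1, \Omega^{1}_{X_1/k} \otimes L) = 2g-2 - 0 + 1 - g = 1$. Feeding both computations into the long exact sequence produces
$$0 \rightarrow H^{0}(X_1, B\otimes L) \rightarrow H^{0}(X, \Omega^{1}_{X/k} \otimes F^{\ast}L) \xrightarrow{\,c\otimes 1_{L}\,} H^{0}(X_1, \Omega^{1}_{X_1/k} \otimes L),$$
of dimensions $?, 2, 1$ respectively. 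Hence $h^{0}(X_1, B\otimes L) \in \{1, 2\}$, with equality to $2$ precisely when the map $c\otimes 1_{L}$ on global sections vanishes. This already gives the equivalence of the second and third conditions in the lemma.

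Finally, it remains to match $h^{0}(X_1, B\otimes L) = 2$ with the singularity of $\Theta$ at $x$. Since $L$ has order $p$, $L^{-1} \simeq L^{p-1}$ is also of order $p$ (so the same computation gives $h^{0}(X_1, B\otimes L^{-1}) \le 2$ as well), and the smoothness criterion at points of order $p$ established just above in $\S$ 2.3 (the corollary following the pairing computation with the Cartier form $\omega_x$) states that $\Theta$ is smooth at $x$ \emph{if and only if} $h^{0}(X_1, B\otimes L) = 1$. Combining this with the previous step completes the proof. There is no real obstacle; the only point of care is ensuring one invokes the point-of-order-$p$ version of the smoothness criterion, since in that case the automatic non-degeneracy of the pairing removes the second condition of the general criterion (\ref{critere de lissite}).
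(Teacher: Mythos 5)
Votre démonstration est correcte et suit essentiellement la même voie que le texte : on tensorise la suite exacte $0\to B\to F_{\ast}\Omega^{1}_{X/k}\to\Omega^{1}_{X_1/k}\to 0$ par $L$, on utilise $F^{\ast}L\simeq\OO_X$ pour obtenir $h^{0}(X_1,B\otimes L)=\dim\ker\bigl(c\otimes 1_L\bigr)\in\{1,2\}$, puis on conclut par le critère de lissité aux points d'ordre $p$ (où la non-dégénérescence de l'accouplement est automatique grâce à la forme de Cartier $\omega_x\neq 0$). Le calcul explicite de $h^{0}(X_1,\Omega^{1}_{X_1/k}\otimes L)=1$ n'est pas indispensable mais ne nuit pas.
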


\begin{remark} Soit $x=[L]\in \Theta$ un point d'ordre $p$, et
notons $\omega_{x}$ la forme de Cartier associée. On voit $L$
comme un sous-faisceau du faisceau des fonctions méromorphes de
$X_1$. Soit $(U_{\alpha},f_{\alpha})$ une section méromorphe de
$L$, on a donc $f_{\alpha}/f_{\beta}\in \OO_{X_1}(U_{\alpha}\cap
U_{\beta})^{\ast}$. Comme $L$ est d'ordre $p$, il existe
$u_{\alpha}\in \OO_{X_1}(U_{\alpha})^{\ast}$ pour chaque $\alpha$
tel que $f_{\alpha}^{p}/f_{\beta}^{p}=u_{\alpha}/u_{\beta}$. La
forme $\omega_x$ est donc la forme différentielle holomorphe
$\left(U_{\alpha},d(u_{\alpha})/u_{\alpha}\right)$. Soit $\omega$
une autre section globale de $\Omega^{1}_{X_1/k}$ tel que $\omega$
et $\omega_x$ forment une $k$-base de $H^{0}(X,\Omega^{1}_{X/k})$.
Ceci nous fournit une section $(U_{\alpha},\omega\otimes
f^{p}_{\alpha}u_{\alpha})$ de $\Omega^{1}_{X/k}\otimes L^{p}$. Par
l'opérateur de Cartier, elle donne la section
$(U_{\alpha},c(u_{\alpha}\omega)\otimes f_{\alpha})$ de
$\Omega^{1}_{X_1/k}\otimes_{\OO_{X_1}}L$. Par le lemme, on voit
que $\Theta$ est singulier en $x$ si et seulement si cette forme
est nulle. Ceci équivaut à dire que $u_{\alpha}\omega$ est
localement exacte.
\end{remark}

\begin{theorem}\label{lissite de points d'ordre p,cas ordinaire}
Supposons que $X$ est ordinaire de genre $2$ en caractéristique
$p>0$. Soit $H$ une droite affine dans le $\mathbf{F}_p$-espace
$J_1[p](k)\simeq \mathbf{F}_{p}^{2}$ qui ne passe pas par
l'origine de $J_{1}[p](k)$, alors $\Theta$ est lisse en au moins
deux points de $H$.
\end{theorem}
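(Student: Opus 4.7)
My plan is to reformulate smoothness of $\Theta$ at an order-$p$ point as a Cartier-operator vanishing condition, then bound the singular locus inside $H$ by a polynomial-degree argument combined with analysis of ``axis'' points of $H$. By the lemma and remark at the start of this subsection, for $L \in J_1[p](k) \setminus \{0\}$ the condition ``$\Theta$ singular at $[L]$'' is equivalent to $h^0(X_1, B \otimes L) = 2$, and via the short exact sequence $0 \to B \otimes L \to F_*\Omega^1_X \otimes L \to \Omega^1_{X_1} \otimes L \to 0$ (with $F^*L \cong \OO_X$ for $L$ of order $p$) equivalent to the vanishing of the twisted Cartier map $c_L \colon H^0(X, \Omega^1_X) \to H^0(X_1, \Omega^1_{X_1} \otimes L)$, whose source has dimension $g = 2$ and target dimension $1$. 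Writing $L = [v]$ with $v \in K(X)^*$ satisfying $(v) = pD$, the universal identity $c_L(\omega_L) = c(v \cdot dv/v) = c(dv) = 0$ shows $\omega_L \in \ker c_L$ always, so smoothness of $\Theta$ at $[L]$ is equivalent to $\mathrm{rank}(c_L) = 1$.

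Next, since $X$ is ordinary, $H^0(X, \Omega^1_X)$ admits a $k$-basis of logarithmic forms $\eta_i = du_i/u_i$ ($i = 1, 2$) associated to an $\mathbf{F}_p$-basis $e_1, e_2$ of $J_1[p](k)$. Parametrize $H = \{x_0 + n x_1 \mid n \in \mathbf{F}_p\}$ with $x_0 = a_1 e_1 + a_2 e_2$ and $x_1 = b_1 e_1 + b_2 e_2$, so that $L_n = [v_n]$ with $v_n = u_1^{\alpha_n} u_2^{\beta_n}$, $\alpha_n = a_1 + nb_1$, $\beta_n = a_2 + nb_2$, and $\omega_{L_n} = \alpha_n \eta_1 + \beta_n \eta_2$. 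Setting $\psi(n) := c(v_n \eta_1)$ and $\chi(n) := c(v_n \eta_2)$, the universal identity becomes $\alpha_n \psi(n) + \beta_n \chi(n) = 0$, and $\Theta$ is singular at $[L_n]$ if and only if $\psi(n) = \chi(n) = 0$. There are up to two exceptional ``axis'' values of $n$: $n_b$ with $\alpha_n = 0$ (forcing $\chi(n_b) = 0$) and $n_c$ with $\beta_n = 0$ (forcing $\psi(n_c) = 0$).

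A case analysis using the identity at ``type-a'' values (both $\alpha_n$ and $\beta_n$ nonzero, where $\psi(n) = 0 \iff \chi(n) = 0$) shows that the number of singular points of $\Theta$ on $H$ equals $|Z_\psi| - 1 + [\chi(n_c) = 0]$, where $Z_\psi \subset \mathbf{F}_p$ is the zero set of $\psi$. A local power-series computation at a generic point of $X_1$ (where Cartier extracts the coefficient of $t^{p-1}$ in a local parameter) expresses $\psi(n)$ as a polynomial function of $n$ on $\mathbf{F}_p$, and I expect that careful examination of the polynomial structure, combined with the ordinariness of $X$ (which makes the Hasse--Witt matrix invertible), yields the bound $|Z_\psi| - 1 + [\chi(n_c) = 0] \leq p - 2$, leaving at least two smooth points on $H$. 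The main obstacle will be making this last inequality rigorous: either proving $|Z_\psi| \leq p - 2$ or showing $\chi(n_c) \neq 0$. A parallel approach I would use to guide the argument is specialization of $X$ to a nodal union $E_1 \cup E_2$ of two ordinary elliptic curves: on the special fibre $\Theta_s$ is a union of lines with singularities precisely at the $(p-1)^2$ node points $(b_{1,i}, b_{2,j})$, while the axis points on any affine line $H \not\ni 0$ are smooth, suggesting that both axis points of $H$ remain smooth on the generic (hence on any ordinary) fibre by upper-semicontinuity and the stability of $B$.
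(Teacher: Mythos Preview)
Your initial reformulation is correct and coincides with the paper's: for $L$ of order $p$, smoothness of $\Theta$ at $[L]$ is equivalent to $\mathrm{rank}(c_L)=1$, i.e.\ to the existence of $\omega\notin k\omega_L$ with $c_L(\omega)\neq 0$, which in turn reads ``$v\omega$ is not locally exact'' when $L=[v]$. From there, however, your argument stalls: writing $\psi(n)=c(v_n\eta_1)$ does give a function on $\mathbf{F}_p$, but every function on $\mathbf{F}_p$ is a polynomial of degree $\le p-1$, so a degree bound alone cannot limit $|Z_\psi|$ below $p$. You correctly identify this as the gap and never close it.

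The paper's proof supplies exactly the missing idea, and it is a choice of coordinates adapted to the line $H$. Write $H=\{x+ix':i\in\mathbf{F}_p\}$ with $x'$ the direction of $H$ and $x\in H$; then $x,x'$ are $\mathbf{F}_p$-independent since $0\notin H$. Let $u,v$ be local units with $\omega_x=du/u$, $\omega_{x'}=dv/v$. Since $dv\neq 0$, one may expand $u=\sum_{j=0}^{p-1}\lambda_j^{p}\,v^{j}$ locally. A direct computation shows that $v^{i}\,du$ is locally exact if and only if $\lambda_{p-i}=0$; hence ``$\Theta$ singular at $x+ix'$'' for each $i\in\{1,\dots,p-1\}$ kills a \emph{distinct} coefficient $\lambda_{p-i}$. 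If all $p-1$ of these vanish then $u=\lambda_0^{p}$ is a $p$-th power, contradicting $du\neq 0$. This gives one smooth point in $H\setminus\{x\}$; if it is the only one, one normalises to $i=1$ (so $u=\lambda_0^{p}+\lambda_{p-1}^{p}v^{p-1}$) and checks by hand that $v\,du/u$ is then not locally exact, so $\Theta$ is also smooth at $x$. The whole argument is two lines of calculus once the expansion of $u$ in the $v$-basis is written down; your general parametrisation with an arbitrary $\mathbf{F}_p$-basis $e_1,e_2$ obscures this.

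A separate problem: your specialisation suggestion at the end goes the wrong way. Degenerating to $E_1\cup E_2$ and observing that $\Theta_s$ is smooth at the axis points of $H$ does imply smoothness of $\Theta_\eta$ there by openness---but only for the \emph{generic} ordinary curve. Passing from the generic curve to an arbitrary ordinary curve is a further specialisation, and smoothness of $\Theta$ does not specialise (indeed the paper later exhibits ordinary curves where $\Theta$ is singular at order-$p$ points). So this route cannot prove the theorem for all ordinary $X$.
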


\begin{proof} Soient $x=[L],~x'=[L']\in \Theta$ deux points d'ordre
$p$ qui ne sont pas $\mathbf{F}_p$-colinéaires dans $J_{1}[p](k)$,
et notons
$\omega_{x}=\left(U_{\alpha},d(u_{\alpha})/u_{\alpha}\right)$ et
$\omega_{x'}=\left(U_{\alpha},d(v_{\alpha})/v_{\alpha}\right)$ les
formes de Cartier associées (où $\{U_{\alpha}\}$ est un
recouvrement ouvert de $X_1$, $u_{\alpha}, ~v_{\alpha}\in
\OO_{X_1}(U_{\alpha})^{\ast}$ tels que $(U_{\alpha},u_{\alpha}) $
(resp. $(U_{\alpha},v_{\alpha})$) est un $1$-cobord de
$\OO_{X_1}^{\ast}$ correspondant à $L^{p}$ (resp. à $L'^{p}$)).
Pour le faisceau inversible $L\otimes L'^{\otimes^{i}}$
($i=0,\cdots ,p-1$), sa forme de Cartier s'écrit
$\omega_{x}+i\omega_{x'}=(U_{\alpha},\frac{du_{\alpha}}{u_{\alpha}}
+i\frac{dv_{\alpha}}{v_{\alpha}})$. Comme $x$ et $x'$ ne sont pas
colinéaires, on peut prendre $\{\omega_{x+ix'},\omega_{x}\}$ comme
$k$-base des sections globales de $\Omega^{1}_{X/k}$ pour $i\neq
0$. D'après ce qui précède, $\Theta$ est singulier en $x+ix'$ si
et seulement si la forme $(U_{\alpha},
u_{\alpha}v_{\alpha}^{i}du_{\alpha}/u_{\alpha})$ est localement
exacte.

Comme $\omega_{x'}\neq 0$ dans $H^{0}(X,\Omega^{1}_{X/k})$, on
sait que $v_{\alpha}$ n'est pas une puissance $p$-ième d'un
élément de $\OO_{X}(U_{\alpha})$. Quitte à diminuer $U_{\alpha}$,
on peut supposer que $u_{\alpha}$ s'écrit sous la forme
$u_{\alpha}=\sum_{j=0}^{p-1}\lambda_{j}^{p}v_{\alpha}^{j}$ avec
$\lambda_j\in \OO_{X}(U_{\alpha})$ . Donc
$du_{\alpha}=\sum_{j=1}^{p-1}j\lambda_{j}^{p}v_{\alpha}^{j-1}dv_{\alpha}$,
et $v_{\alpha}^{i}du_{\alpha}=\sum_{j=1}^{p-1}j\lambda_{j}^{p}
v_{\alpha}^{i+j-1}dv_{\alpha}$. C'est une forme localement exacte
si et seulement si $\lambda_{p-i}=0$. Donc, pour que $\Theta$ soit
singulier en $x+iy$ pour $1\leq i\leq p-1$, il faut
$\lambda_{p-i}=0$, ceci entraîne que $u_{\alpha}$ est une
puissance $p$-ième, d'où une contradiction.

Ceci étant, il existe donc au moins un $i\in \{1,\cdots,p-1\}$ tel
que $\Theta$ soit lisse en $x+iy$. Supposons qu'il y en a un seul
$i\in \{1,\cdots,p-1\}$ tel que $\Theta$ soit lisse en $x+iy$, et
montrons que $\Theta$ est lisse en $x$. Quitte à remplacer $L'$
par une puissance convenable, on peut supposer que $i=1$. Donc
$u_{\alpha}=\lambda_0^{p}+\lambda_{p-1}^{p}v_{\alpha}^{p-1}$. Si
$\Theta$ est singulier en $x$, ceci implique que
$v_{\alpha}du_{\alpha}/u_{\alpha}$ est localement exacte. Or
$v_{\alpha}du_{\alpha}/u_{\alpha}=(p-1)\lambda_{p-1}^{p}v_{\alpha}^{p-1}dv_{\alpha}/
(\lambda_{0}^{p}+\lambda_{p-1}^{p}v_{\alpha}^{p-1})$. Si c'est une
forme localement exacte, $\lambda_0=0$, ce qui implique que
$(p-1)dv_{\alpha}/v_{\alpha}=du_{\alpha}/u_{\alpha}$, d'où une
contradiction puisque $L$ et $M$ ne sont pas colinéaires.
\end{proof}

Complètons le théorème \ref{lissite de points d'ordre p,cas
ordinaire} en considérant les courbes de $p$-rang $1$.
\begin{theorem}\label{lissite de points d'ordre p,cas non-ordinaire}
Soit $X$ de genre $2$ et de $p$-rang $1$. Alors $\Theta$ est lisse
en au moins un point d'ordre $p$ (et donc en deux points d'ordre
$p$ si $p\geq 3$ par symétrie).
\end{theorem}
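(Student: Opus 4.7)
L'approche consiste \`a adapter l'argument sur les formes de Cartier du th\'eor\`eme pr\'ec\'edent (cas ordinaire), en rempla\c{c}ant la deuxi\`eme forme de Cartier $\omega_{x'}$ (qui n'existe plus lorsque le $p$-rang vaut $1$) par une induction triangulaire utilisant la seule forme de Cartier disponible et l'expression locale de l'op\'erateur de Cartier. Rappelons que pour un $p$-rang \'egal \`a $1$, les $p-1$ points d'ordre $p$ de $J_1(k)$ sont tous multiples d'un unique g\'en\'erateur $L$, et que $c : H^{0}(X,\Omega^{1}_{X/k}) \to H^{0}(X_{1},\Omega^{1}_{X_{1}/k})$ est non nul.

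Raisonnant par l'absurde, supposons que $\Theta$ soit singulier en chaque $L^i$ pour $i=1,\ldots,p-1$. D'apr\`es le lemme pr\'ec\'edent, ceci \'equivaut \`a la nullit\'e du morphisme
\[
c \otimes 1_{L^i}: H^0(X, \Omega^1_{X/k}) \longrightarrow H^0(X_1, \Omega^1_{X_1/k} \otimes L^i).
\]
Choisissons un g\'en\'erateur local $s$ de $L$ avec $s^p = u \in \OO_{X_1}^*$, et utilisons la trivialisation canonique $F^*L^i \simeq \OO_X$ issue de l'inclusion $L^i \hookrightarrow F_*\OO_X$. Via la formule de projection et la $\OO_{X_1}$-lin\'earit\'e de $c$, on v\'erifie que l'image de $\omega \in H^{0}(X, \Omega^{1}_{X/k})$ par $c \otimes 1_{L^i}$ est repr\'esent\'ee localement par $c(u^i \omega) \otimes s^i$. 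L'hypoth\`ese de singularit\'e se traduit donc en: pour tout $\omega$, tout $i \in \{1,\ldots,p-1\}$ et tout point $P \in X_1$, on a $c(u^i \omega) = 0$ au voisinage de $P$.

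En un tel $P$, prenons $t$ param\`etre local sur $X$ au-dessus de $P$, \'ecrivons $u^i = \sum_{n \geq 0} c_n^{(i)} t^{np}$ avec $c_0^{(i)} = u(P)^i \neq 0$, et d\'eveloppons $\omega = h(t)\,dt$ avec $h = \sum_n h_n t^n$. La formule locale de l'op\'erateur de Cartier identifie le coefficient de $(t^p)^m d(t^p)$ dans $c(u^i \omega)$ avec $\sum_{n=0}^{m} c_n^{(i)} h_{(m+1-n)p-1}$. La nullit\'e de tous ces coefficients pour un seul $i \in \{1,\ldots,p-1\}$ force alors, par r\'ecurrence sur $m$ (gr\^ace \`a $c_0^{(i)} \neq 0$), $h_{ap-1} = 0$ pour tout $a \geq 1$, c'est-\`a-dire $c(\omega) = 0$ en $P$. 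Ceci \'etant valable en tout $P$ et pour tout $\omega$, on en d\'eduit $c \equiv 0$ sur $H^{0}(X, \Omega^{1}_{X/k})$, ce qui force $X$ \`a \^etre supersinguli\`ere et contredit l'hypoth\`ese $p$-rang $=1$. Donc $\Theta$ est lisse en au moins un $L^i$.

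Pour $p \geq 3$, la sym\'etrie totale de $\Theta$ rappel\'ee pr\'ec\'edemment fournit imm\'ediatement la lissit\'e en $L^{-i} = L^{p-i}$ d\`es qu'elle est acquise en $L^i$, et $L^i \neq L^{p-i}$ en caract\'eristique impaire car $2i \not\equiv 0 \pmod{p}$. L'obstacle principal sera l'identification soigneuse du morphisme $c \otimes 1_{L^i}$ avec l'expression $c(u^i \omega) \otimes s^i$ (\`a travers les trivialisations de $F^*L^i$ et la semi-lin\'earit\'e de l'op\'erateur de Cartier); l'induction triangulaire qui s'ensuit est essentiellement formelle.
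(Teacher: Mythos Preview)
There is a genuine gap in your identification of the map $c\otimes 1_{L^i}$. You write $u^i=\sum_{n\ge 0}c_n^{(i)}t^{np}$ with $t$ a local parameter on $X$; this would make $u^i$ a $p$-th power in $\OO_X$, i.e.\ you are implicitly replacing $u^i\in\OO_{X_1}$ by $F^\ast(u^i)$. But then, by the very $\OO_{X_1}$-linearity of $c$ that you invoke, one has $c\bigl(F^\ast(u^i)\cdot\omega\bigr)=u^i\,c(\omega)$, and your ``triangular induction'' is nothing more than a power-series verification of this identity. Your argument therefore shows that \emph{for a single $i$}, the vanishing of $c\otimes 1_{L^i}$ would force $c(\omega)=0$ for every $\omega$. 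This cannot be right: it would make the condition ``$\Theta$ singular at $L^i$'' independent of $i$ and equivalent to $c|_{H^0}=0$, whereas the paper exhibits (just above, and also by the explicit example at the end of the section) curves for which $\Theta$ is singular at some, but not all, points of order $p$.

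The source of the error is the trivialisation of $F^\ast L^i$. The canonical inclusion $L^i\hookrightarrow F_\ast\OO_X$ sends a local generator $s^i$ to a function $\tilde s^{\,i}\in\OO_X$ with $\tilde s^{\,p}=F^\ast u$; it is this $\tilde s$ (equivalently, $u$ itself under the identification $X\simeq X_1$) that multiplies $\omega$ in the correct local expression, and $\tilde s$ is \emph{not} a $p$-th power since $d\tilde s/\tilde s$ is the (nonzero) Cartier form $\omega_x$. The paper's proof exploits precisely this: writing $\omega=h\,\omega_x$ and expanding $h=\sum_j\lambda_j^{\,p}u^j$ in the basis $\{1,u,\dots,u^{p-1}\}$ over $p$-th powers, one finds $c(u^i\omega)=\lambda_{p-i}\,du$, so the vanishing for \emph{all} $i$ forces $h=\lambda_0^p$, and then a short degree argument on the genus-$2$ curve produces the contradiction. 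Your approach misses this dependence on $i$ and cannot be repaired without essentially reverting to the paper's computation.
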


\begin{proof} Soit $x\in J_1$ un point d'ordre $p$. Soit
$\omega_x=(U_{\alpha},du_{\alpha}/u_{\alpha})$ la forme de Cartier
associée à $x$. Soit $\omega$ une autre forme différentielle
holomorphe sur $X$ telle que $\omega$ et $\omega_x$ forment une
$k$-base de $H^{0}(X_1,\Omega^{1}_{X_1/k})$. Comme
$\Omega^{1}_{X_1/k}$ est un $\OO_{X_1}$-module de rang $1$, il
existe une fonction méromorphe $h$ de $X$ telle que
$\omega=h\omega_x$. Donc il suffit de montrer qu'il existe un
$i\in \{1,\cdots,p-1\}$ tel que
$(U_{\alpha},hu_{\alpha}^{i-1}du_{\alpha})$ ne soit pas localement
exacte. Quitte à diminuer $U_{\alpha}$, on peut supposer que
$h=\sum_{j=0}^{p-1}\lambda_{j}^{p}u_{\alpha}^{j}$. Alors
$hu_{\alpha}^{i-1}du_{\alpha}=\sum_{j=0}^{p-1}\lambda_{j}^{p}u_{\alpha}^{i+j-1}du_{\alpha}$
est une forme localement exacte si et seulement si
$\lambda_{p-i}=0$. Donc si $\Theta$ est singulier en tout $ix$
quelque soit $i\in \{1,\cdots,p-1\}$, on obtient que
$\omega=\lambda_0^{p}\omega_{x}$. Or $\omega$ et $\omega_x$ sont
des formes holomorphes sur $X_1$ de genre $2$, les diviseurs de
$\omega$ et $\omega_x$ sont effectifs de degré $2$. Ce qui
implique que $\lambda_0\in k-\{0\}$: en effet, si $\lambda_0$
n'est pas un scalaire, $\lambda_0$ a un seul pôle d'ordre $1$,
mais comme $X$ est de genre $g=2$, un tel $\lambda_0$ n'existe
jamais. Donc $\lambda_0\in k-\{0\}$. Ceci contredit le fait que
$\omega$ et $\omega_x$ forment une $k$-base de
$H^{0}(X_1,\Omega^{1}_{X_1/k})$.
\end{proof}

\subsubsection{Composantes principales de $\Theta$ et amplitude.}

\begin{corollary}\label{principal} Supposons $X$ ordinaire de genre $2$. Alors
toute composante principale (définition \ref{composante
principale}) est ample.
\end{corollary}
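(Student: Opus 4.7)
The plan is to combine Theorem \ref{lissite de points d'ordre p,cas ordinaire} with the general criterion for amplitude of principal components established earlier (the proposition in the "Application aux composantes irr�ductibles de $\Theta$" part of Section 2.3). That proposition says: if $X/k$ is ordinary of genus $g$ and if, for every affine line $\Delta$ of the $\mathbf{F}_p$-vector space $J_1[p](k) \simeq \mathbf{F}_p^g$ not passing through the origin, $\Theta$ is smooth at at least two points of $\Delta$, then every principal component of $\Theta$ is ample.

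First I would recall the setup: since $X$ is ordinary of genus $2$, we have $J_1[p](k) \simeq \mathbf{F}_p^2$, so an affine line $\Delta \subset J_1[p](k)$ not through the origin is just a coset $x + \mathbf{F}_p \cdot y$ with $y \neq 0$ and $x,y$ independent in $\mathbf{F}_p^2$ (or more generally a line of order-$p$ points avoiding $0$). Theorem \ref{lissite de points d'ordre p,cas ordinaire} then provides exactly the smoothness hypothesis needed: $\Theta$ is smooth at at least two points of every such affine line $\Delta$ in $J_1[p](k)$ not passing through the origin.

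With the hypothesis of the previously proved proposition verified, the conclusion follows immediately: every principal component of $\Theta$ is ample. There is no real obstacle here, since both ingredients are already in place --- the genus $2$ ordinary hypothesis allows application of Theorem \ref{lissite de points d'ordre p,cas ordinaire}, which in turn feeds directly into the amplitude criterion. The proof is thus a one-line deduction, and the corollary should be stated simply as an application of these two earlier results.
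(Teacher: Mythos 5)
Your proof is correct and follows essentially the same route as the paper: the key input in both cases is Theorem \ref{lissite de points d'ordre p,cas ordinaire}, and the paper's own argument (which runs the contradiction through Proposition \ref{critere de ne pas d'etre un TAS}) is just an inlined version of the general amplitude criterion from Section 2 that you invoke. Citing that criterion directly, with its hypothesis verified by the theorem, is a valid and clean way to conclude.
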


\begin{proof} En effet, soit $D\subset \Theta$ une composante principale de
$\Theta$. Supposons qu'elle est de la forme $x+E$, avec $E\subset
J_1$ une courbe elliptique, $x$ un point d'ordre $p$. Comme $E$
est ordinaire, $D$ contient au moins deux points $x,y$ d'ordre
$p$. Comme $X$ est ordinaire, $\Theta$ ne passe pas par l'origine.
Ceci entraîne que $x+E[p](k)$ est une droite affine de
$J_1[p](k)\simeq \mathbf{F}_{p}^{2}$ qui ne passe pas par
l'origine. D'après le théorème \ref{lissite de points d'ordre
p,cas ordinaire}, on trouve que $\Theta$ est lisse en au moins
deux points d'ordre $p$ qui sont contenus dans $D$. Or $D$ est un
translaté d'une sous-variété abélienne de $J_1$, il existe un
$z\in E(k)$ tel que $x=y+z$, d'où une contradiction avec
\ref{critere de ne pas d'etre un TAS}.
\end{proof}

Complètons le corollaire \ref{principal} en considérant les
courbes non-ordinaires.

\begin{corollary} Soit $X$ une courbe de genre $2$ de
$p$-rang $\leq 1$. Soit $\Theta'$ la réunion des composantes
passant par l'origine. Alors $\Theta'$ est un diviseur ample de
$J_1$.
\end{corollary}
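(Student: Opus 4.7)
Mon plan est de contr\^oler, via la propri\'et\'e de Dirac, la plus grande sous-vari\'et\'e ab\'elienne $A_0\subset J_1$ qui stabilise $\Theta'$, puis de combiner cette information avec les r\'esultats de lissit\'e de $\Theta$ en points d'ordre $p$ \'etablis pr\'ec\'edemment dans cette section.

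D'abord, je traiterais \`a part le cas $p=2$: le diviseur $\Theta$ est alors int\`egre et normal ($\S$ \ref{cas p=2}), et c'est un diviseur th\^eta classique, donc ample; comme $\Theta=\Theta'$, la conclusion est imm\'ediate. Supposons d\'esormais $p\geq 3$. Je noterais $A_0$ la plus grande sous-vari\'et\'e ab\'elienne de $J_1$ laissant stable $\Theta'$. D'apr\`es le corollaire \ref{Dirac implique ordinaire}, $A_0$ est ordinaire. Puisque $X$ est de $p$-rang $\leq 1$, on a $|J_1[p](k)|\leq p$, et donc toute sous-vari\'et\'e ab\'elienne ordinaire de $J_1$ est de dimension $\leq 1$. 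Il ne reste que deux possibilit\'es: soit $A_0=0$, soit $A_0$ est une courbe elliptique ordinaire $E$. Dans le premier cas, $\Theta'$ a un stabilisateur fini et, \'etant effectif, est ample, ce qui conclut.

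Le coeur de la preuve sera d'exclure le cas $A_0=E$. Sous cette hypoth\`ese, la connexit\'e de $E$ assure que chaque composante irr\'eductible de $\Theta'$ est $E$-stable, donc est un translat\'e de $E$; comme elle passe par l'origine, ce translat\'e co\"incide avec $E$. Ainsi $\Theta'=E$ ensemblistement, et $\Theta'=nE$ comme diviseur, pour un certain $n\geq 1$. Or $J_1$ \'etant de $p$-rang $1$, tout point non-nul de $p$-torsion de $J_1$ est contenu dans $E$; le th\'eor\`eme \ref{lissite de points d'ordre p,cas non-ordinaire} (combin\'e avec la sym\'etrie $(-1)^{\ast}\Theta=\Theta$, valable pour $p\geq 3$) fournira alors au moins \emph{deux} points $x,y\in E$ d'ordre $p$ en lesquels $\Theta$ est lisse. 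La lissit\'e en $x$ imposera que $\Theta$ soit localement r\'eduit en $x$ avec une unique composante de multiplicit\'e $1$, d'o\`u $n=1$ et $\Theta'=E$. La proposition \ref{critere de ne pas d'etre un TAS} s'appliquera alors \`a la composante $E$ de $\Theta$: contenant deux points d'ordre $p$ o\`u $\Theta$ est lisse, $E$ ne peut \^etre un translat\'e d'une sous-vari\'et\'e ab\'elienne de $J_1$, contradiction.

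L'obstacle principal se situera dans cette derni\`ere \'etape: il est crucial de disposer de \emph{deux} points d'ordre $p$ en lesquels $\Theta$ est lisse (un seul ne suffirait pas pour invoquer la proposition \ref{critere de ne pas d'etre un TAS}), et c'est pr\'ecis\'ement l'hypoth\`ese $p\geq 3$, via la sym\'etrie totale de $\Theta$ (proposition \ref{theta est fortement symetrique}), qui permet de doubler le nombre de points lisses fournis par le th\'eor\`eme \ref{lissite de points d'ordre p,cas non-ordinaire}.
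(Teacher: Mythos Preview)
Your proof is correct and follows essentially the same strategy as the paper: both arguments reduce to the case where the stabilizer $A_0$ of $\Theta'$ is an ordinary elliptic curve $E$ (via corollaire~\ref{Dirac implique ordinaire}), locate a smooth point of order $p$ on $E$ using th\'eor\`eme~\ref{lissite de points d'ordre p,cas non-ordinaire}, and derive a contradiction with the Dirac property at the origin.

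The only difference lies in how the final contradiction is obtained. The paper uses a \emph{single} smooth point $x$ of order~$p$: since $x\in E$, translation by $x$ preserves $\Theta'$, so a local equation of $\Theta$ at $x$ (which vanishes on $\ker(V)$ at $x$ by Dirac) translates to a factor of the local equation of $\Theta$ at $0$ that vanishes on $\ker(V)$ at $0$, contradicting Dirac directly. You instead manufacture \emph{two} smooth points $x,-x$ via the symmetry of $\Theta$ and invoke proposition~\ref{critere de ne pas d'etre un TAS} as a black box. This is perfectly valid---the proof of that proposition is precisely the translation-and-Dirac argument the paper runs by hand---so your route is slightly more modular, while the paper's is slightly more economical (no appeal to symmetry, no need to isolate the multiplicity $n=1$).
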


\begin{proof} Si $X$ est
de $p$-rang $0$, comme $\Theta'$ est la réunion des composantes
principales de $\Theta$, elle est ample (corollaire \ref{Dirac
implique ample}). Il reste à traiter le cas où $X$ est de $p$-rang
$1$. Si $\Theta'$ n'est pas ample, soit $E$ la plus grande
sous-variété abélienne qui laisse stable $\Theta'$. D'après la
propriété de Dirac, on sait que $E$ est une courbe elliptique
ordinaire (corollaire \ref{Dirac implique ordinaire}). En
particulier, on sait que $\Theta'$ contient tous les points
d'ordre $p$ de $J_1$. Or $\Theta$ est lisse en au moins un point
d'ordre $p$ (théorème \ref{lissite de points d'ordre p,cas
non-ordinaire}), disons $x\in J_1$. Soit $f$ une équation locale
de $\Theta$ en $x$. La propriété de Dirac nous dit que $f$ est
nulle dans $\OO_{\ker(V),x}$. Considérons l'automorphisme $T_{x}$
de $J_1$ défini par $y\mapsto y+x$. Il laisse stable $\Theta'$.
Donc $T_{x}^{\ast}(f)$ est une équation locale de $\Theta'$ en
$0$, par suite elle fait partie de l'équation locale de $\Theta$
en $0$. En particulier, si $g$ est une équation locale de $\Theta$
en $0$, elle est nulle dans $\OO_{\ker(V),0}$. Ceci nous fournit
une contradiction avec la propriété de Dirac (\ref{Dirac}).
\end{proof}

\subsubsection{Composantes non principales en genre $2$.} Dans ce
numéro, on étudie les composantes non principales. En particulier,
on montre que lorsque la courbe $X$ est ordinaire (de genre $2$),
$\Theta$ ne contient pas de composante irréductible qui est un
translaté d'une sous-variété abélienne de $J_1$.


Commençons par un résultat facile, qui découle directement de la
classification des fibrés vectoriels sur une courbe elliptique.

\begin{lemma} Soit $E$ une courbe elliptique sur $k$.

(1) Soit $V$ un fibré vectoriel sur $E$. Supposons que
$h^{0}(E,V\otimes L)=1$, et $h^{1}(E, V\otimes L)=0$ quelque soit
$L\in \mathrm{Pic}^{\circ}_{E/k}(k)$. Alors $V$ est stable.

(2) Soit $W$ un fibré vectoriel de $E$ tel que $h^{0}(E,W\otimes
L)=h^{1}(E, W\otimes L)=1$ quelque soit $L\in
\mathrm{Pic}^{\circ}_{E/k}(k)$. Alors $W$ n'est pas stable, et sa
filtration de Harder-Narasimhan s'écrit $0=W_0\subset W_1\subset
W_2=W$ avec $W_1$ un sous-fibré stable de degré $1$, et $W/W_1$ un
fibré stable de degré $-1$.
\end{lemma}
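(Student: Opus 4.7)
The plan is to deduce both assertions directly from Atiyah's classification of vector bundles on an elliptic curve: every vector bundle on $E$ decomposes uniquely (up to reordering) as a direct sum of indecomposable subbundles, and for each pair $(r,d)$ the set of indecomposables of rank $r$ and degree $d$ is parametrised by $E$ itself.

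First I would record the cohomological behaviour of an indecomposable $U$ of rank $r_U$ and degree $d_U$ twisted by an arbitrary $L\in\mathrm{Pic}^{\circ}_{E/k}(k)$. The key facts are: (a) if $d_U>0$, a filtration argument using the stable quotient associated to $U$ together with Serre duality yields $h^{1}(U\otimes L)=0$ and hence $h^{0}(U\otimes L)=d_U$ for every such $L$; (b) if $d_U<0$, dually $h^{0}(U\otimes L)=0$ and $h^{1}(U\otimes L)=-d_U$; (c) if $d_U=0$, Atiyah's structure theorem gives $U\simeq F_{r_U}\otimes L_U$ for a unique $L_U\in\mathrm{Pic}^{\circ}_{E/k}(k)$, and the filtration of $F_{r_U}$ by trivial quotients shows that $h^{0}(U\otimes L)=h^{1}(U\otimes L)$ equals $1$ when $L=L_U^{-1}$ and $0$ otherwise.

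For (1), I would decompose $V=\bigoplus V_i$ into indecomposables of rank $r_i$ and degree $d_i$. The hypothesis $h^{1}(V\otimes L)=0$ for \emph{every} $L\in\mathrm{Pic}^{\circ}_{E/k}(k)$ excludes, by (b) and (c), any summand of non-positive degree, so $d_i\geq 1$ for every $i$. Combined with $\chi(V\otimes L)=1$, which forces $\deg V=\sum d_i=1$, this produces a single summand with $d_i=1$: thus $V$ itself is indecomposable of degree $1$. As $\gcd(r,1)=1$, Atiyah's theorem then ensures that an indecomposable bundle of coprime rank and degree on $E$ is stable, giving (1).

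For (2), the same decomposition and $\chi(W)=0$ yield $\sum d_j=0$. The conditions $h^{0}(W\otimes L)\leq 1$ and $h^{1}(W\otimes L)\leq 1$ already at generic $L$ force every summand to satisfy $|d_j|\leq 1$ by (a) and (b). The crucial observation is that any indecomposable summand of degree $0$, of the form $F_{r_j}\otimes L_j$, would make both $h^{0}(W\otimes L)$ and $h^{1}(W\otimes L)$ strictly exceed $1$ at the specific twist $L=L_j^{-1}$, violating the hypothesis; this is where the pointwise form of the assumption is essential. Excluding such summands, $W$ is a direct sum of stable indecomposables of degrees $\pm 1$: denoting by $a$ (resp.\ $c$) the number of degree $+1$ (resp.\ degree $-1$) summands, the generic values of $h^{0}$ and $h^{1}$ force $a=c=1$, consistent with $\deg W=0$. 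Hence $W\simeq A\oplus C$ with $A$ (resp.\ $C$) indecomposable stable of degree $+1$ (resp.\ $-1$), and its Harder--Narasimhan filtration $0\subset A\subset W$ has precisely the stated form. The only real subtlety, as indicated, is to exploit the hypothesis at every $L$ rather than only generically so as to rule out degree-$0$ summands; everything else is routine bookkeeping.
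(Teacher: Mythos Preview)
Your argument is correct and essentially complete, but it follows a different route from the paper's. The paper works with the Harder--Narasimhan (or rather a Jordan--H\"older) filtration of $V$ (resp.\ $W$) and argues directly on the slopes $\lambda_1\ge\lambda_2\ge\cdots$ of the successive stable quotients, invoking the Atiyah--Oda result only to handle the slope-$0$ pieces. You instead use Atiyah's Krull--Schmidt decomposition into indecomposables and the cohomology table (a)--(c) for each summand; this is more bookkeeping-friendly and makes the counting of $h^0,h^1$ completely transparent. Both approaches ultimately rest on the same structural input (indecomposables on $E$ are semistable and the degree-$0$ ones are twists of the Atiyah bundles $F_r$), but yours avoids the slight awkwardness of speaking of ``stable'' HN quotients on an elliptic curve.

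One minor logical slip in your part~(2): the claim that a degree-$0$ summand $F_{r_j}\otimes L_j$ forces $h^0(W\otimes L_j^{-1})>1$ \emph{and} $h^1(W\otimes L_j^{-1})>1$ presupposes that there is at least one summand of degree $+1$ and one of degree $-1$, which you only deduce afterwards. The clean order is: first observe that $h^0(W\otimes L)=1$ for \emph{all} $L$ forces at least one positive-degree summand (otherwise $h^0=0$ generically), hence $a\ge 1$; then a degree-$0$ summand would give $h^0(W\otimes L_j^{-1})\ge a+1\ge 2$. Alternatively, first read off $a=c=1$ from the generic values and then exclude degree-$0$ summands. Either reordering fixes the issue with no new ideas needed.
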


\begin{proof} Soit
$$
0=V_0\subset V_1\subset \cdots\subset V_{i-1}\subset V_i\subset
\cdots\subset V_r=V
$$
la filtration de Harder-Narasimhan de $V$, telle que
$V_{i}/V_{i-1}$, pour $1\leq i\leq r$ soit un fibré stable
non-trivial de pente $\lambda_i$, et que l'on ait $\lambda_1\geq
\lambda_2\geq \cdots\geq \lambda_{r-1}\geq \lambda_{r}$. On
raisonne par l'absurde. Supposons que $r\geq 2$. Montrons d'abord
que $\lambda_1>0$. Sinon, $\lambda_i\leq 0$ pour $1\leq i\leq r$.
Les $V_i/V_{i-1}$ sont stables donc indécomposables de $E$.
D'après Atiyah et Oda (\cite{Oda} section 2, page 60), il existe
exactement un $L_i\in J_{E}$ tel que
$h^{0}(E,(V_{i}/V_{i-1})\otimes L_i)=h^{1}(E,
(V_{i}/V_{i-1})\otimes L_i)>0$. En particulier, ceci entraîne
qu'il n'y a qu'un nombre fini de $L\in J_E$ tels que
$h^{0}(E,V\otimes L)\neq 0$, ce qui contredit l'hypothèse que
$h^{0}(E, V\otimes L)=1$ quelque soit $L\in J_E$. Donc
$\lambda_1>0$. Ensuite, montrons que $\lambda_2<0$. En fait, on a
la suite exacte suivante:
$$
0\rightarrow V_1\rightarrow V_2\rightarrow V_2/V_1\rightarrow 0.
$$
Comme $\lambda_2\geq 0$, il existe au moins un faisceau inversible
$L$ tel que $h^{0}(E, (V_{2}/V_1)\otimes L)>0$. La suite exacte
longue associée à la cohomologie nous montre que
$h^{0}(E,V_2\otimes L)\geq 2$ (rappelons que $\lambda_1>0$, donc
$h^{1}(E,V_1\otimes L)=0$). A priori, $h^{0}(E,V\otimes L)\geq 2$,
d'où une contradiction. Donc $\lambda_{r}<0$ puisque $r\geq 2$.
Considérons la suite exacte suivante:
$$
0\rightarrow V_{r-1}\rightarrow V\rightarrow V/V_{r-1}\rightarrow
0,
$$
on trouve un morphisme surjectif: $H^{1}(E, V)\rightarrow H^{1}(E,
V/V_{r-1})$ avec $H^{1}(E,V/V_{r-1})\neq 0$ (car $V/V_{r-1}$ est
stable de pente $\lambda_r<0$). Donc $h^{1}(E, V)\geq 1$, d'où une
contradiction. Donc $r=1$, c'est-à-dire, $V$ est stable. D'où (1).
Le même raisonnement nous montre la partie (2) du lemme. Ceci
achève la démonstration.
\end{proof}

\begin{prop} Soit $D$ une composante irréductible (réduite) de
$\Theta$, et supposons que $D$ ne passe par aucun point d'ordre
divisant $p$. Alors $D$ n'est pas une courbe de genre $1$.
\end{prop}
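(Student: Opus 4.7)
Plan. Suppose for contradiction that $D$ is an irreducible reduced component of $\Theta$ of genus $1$. As an integral genus-$1$ curve in the abelian surface $J_1$, $D$ is a translate of an elliptic subvariety: write $D=x_0+E_0$ with $E_0\subset J_1$ an elliptic subvariety. The hypothesis $D\cap J_1[p]=\emptyset$ is equivalent to $p\,x_0\notin E_0$, i.e.\ the image of $x_0$ in the complementary elliptic curve $J_1/E_0$ is not a $p$-torsion point. Since $E_0\subset J_1=J_{X_1}$ is an elliptic subvariety of the Jacobian, Poincaré reducibility furnishes (up to isogeny) a finite surjective morphism $f\colon X_1\to Y$ of degree $n\geq 1$ onto an elliptic curve $Y$ such that $f^{\ast}\colon J_Y\hookrightarrow J_1$ has image $E_0$; for $t\in E_0$ corresponding to $N_t\in J_Y$, one has $L_{x_0+t}\cong L_{x_0}\otimes f^{\ast}N_t$.

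Set $W:=f_{\ast}(B\otimes L_{x_0})$, a locally free sheaf on the elliptic curve $Y$. By the projection formula, the hypothesis $D\subset\Theta$ reads $h^0(Y,W\otimes N)\geq 1$ for every $N\in\Pic^0(Y)$, and a Riemann--Roch computation with $g(X_1)=2$, $g(Y)=1$ gives $\chi(W)=\chi(X_1,B\otimes L_{x_0})=0$, so $W$ has slope $0$. If $W$ were semistable, Atiyah's classification of slope-$0$ semistable bundles on $Y$ (iterated extensions of degree-$0$ line bundles and of the indecomposable bundles $F_r$) would give $h^0(W\otimes N)>0$ only for finitely many $N\in\Pic^0(Y)$, contradicting the previous statement. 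Hence $W$ is not semistable, and its Harder--Narasimhan first piece $W_1\subsetneq W$ is semistable of strictly positive slope $\mu_1>0$, with $h^0(W_1\otimes N)=\deg W_1\geq 1$ for every $N\in\Pic^0(Y)$. By adjunction, $W_1\hookrightarrow f_{\ast}(B\otimes L_{x_0})$ yields a nonzero morphism $\varphi\colon f^{\ast}W_1\to B\otimes L_{x_0}$ on $X_1$.

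If $W_1$ were a line bundle, then $\varphi$ would realise $f^{\ast}W_1$ as a line subsheaf of $B\otimes L_{x_0}$ of degree $n\deg W_1\geq 1$, contradicting the sharp bound $\deg\leq r(r+1)(g-1)/p=2/p<1$ on line subsheaves of $B$ established in the subsection on sub-bundles of $B$ of Chapter~1; hence $\mathrm{rk}(W_1)\geq 2$. The remaining cases $2\leq\mathrm{rk}(W_1)\leq p-1$ form the arithmetic heart of the argument: one takes the determinant of $\varphi$ and uses the auto-duality of $B$ (proposition \ref{autodual}, from which one obtains $(\det B)^{\otimes 2}\simeq\Omega^{p-1}_{X_1/k}$), together with the Cartier-operator description of line subsheaves of $B$ recalled in Chapter~2 and the lemma on slope-$0$ vector bundles on elliptic curves proved just above, to rewrite the divisorial data carried by $\varphi$ as an identity in $\Pic(X_1)$ forcing a multiple of $x_0$, and ultimately $p\,x_0$ itself, to lie in $f^{\ast}\Pic^0(Y)=E_0$---the required contradiction. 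The main obstacle is precisely this last step: passing from the geometric morphism $\varphi$ to the arithmetic divisibility $p\,x_0\in E_0$, which requires a delicate case-by-case bookkeeping according to the rank of $W_1$ combined with the ramification structure of $f\colon X_1\to Y$.
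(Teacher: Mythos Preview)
Your setup is the same as the paper's: push $B\otimes L_{x_0}$ forward along the elliptic quotient $f\colon X_1\to Y$ and analyse the Harder--Narasimhan filtration of $W=f_{\ast}(B\otimes L_{x_0})$. The rank-one case is fine. The problem is the case $\mathrm{rk}(W_1)\geq 2$: you do not prove anything there, you only announce a plan (``take the determinant of $\varphi$'', ``delicate case-by-case bookkeeping'', ``force $p\,x_0\in E_0$'') and explicitly flag it as the ``main obstacle''. That is a genuine gap, not a proof. Note also that your claimed bound $\mathrm{rk}(W_1)\leq p-1$ is wrong: $W$ has rank $(p-1)\deg f$, so $W_1$ can have rank up to $(p-1)\deg f-1$, which makes a case-by-case approach even less plausible.

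The paper avoids this difficulty completely by pushing forward not just $B$ but the whole exact sequence $0\to B\to F_{\ast}\Omega^1_{X/k}\to\Omega^1_{X_1/k}\to 0$. Set $\mathcal{E}=f_{\ast}(B\otimes M)$, $\mathcal{F}=f_{\ast}(F_{\ast}\Omega^1_{X/k}\otimes M)$. The hypothesis that $D$ misses $J_1[p]$ is used \emph{here}: it guarantees that $F^{\ast}(M\otimes f^{\ast}L)$ is never trivial, hence $h^0(\mathcal{F}\otimes L)=1$ and $h^1(\mathcal{F}\otimes L)=0$ for every $L\in J_Y$. The lemma immediately preceding the proposition then says $\mathcal{F}$ is stable (part~(1)) while $\mathcal{E}$ has a stable sub-bundle $\mathcal{E}_1$ of degree~$1$ (part~(2)); since $\mathcal{E}_1\subset\mathcal{F}$ has slope strictly greater than $\mu(\mathcal{F})=1/\mathrm{rk}(\mathcal{F})$, this contradicts the stability of $\mathcal{F}$. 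No determinant trick, no divisibility argument, no case analysis on ranks. The moral: use the ambient bundle $F_{\ast}\Omega^1$ in which $B$ sits, not just $B$ itself; that is where the $p$-torsion hypothesis does its work.
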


\begin{proof} On raisonne par l'absurde. Supposons que $D$ soit
une courbe de genre $1$. Comme $D$ ne passe pas par aucun point
d'ordre divisant $p$, il existe un morphisme fini de courbes
lisses $f:X_1\rightarrow E$ avec $E$ une courbe elliptique, et un
faisceau inversible $M$ de degré $0$ sur $X_1$, tel que
$M\otimes_{\OO_{X_1}} f^{\ast}L\in D$ quelque soit $L\in
J_E=\mathrm{Pic}^{\circ}_{E/k}(k)$. Comme $D$ ne passe pas par
aucun point d'ordre divisant $p$, on a
$h^{0}(X_1,F_{\ast}\Omega^{1}_{X/k}\otimes M\otimes
f^{\ast}L)=h^{0}(X_1,\Omega^{1}_{X/k}\otimes F^{\ast}(M\otimes
f^{\ast}L))=1$. Donc $h^{0}(X_1,B\otimes M\otimes f^{\ast}L)=1$
quel que soit $L\in J_{E}$. Posons $\mathcal{E}=f_{\ast}(B\otimes
M)$, $\mathcal{F}=f_{\ast}(F_{\ast}\Omega^{1}_{X/k}\otimes M)$ et
$\mathcal{G}=f_{\ast}(\Omega^{1}_{X/k}\otimes M)$. Comme
$f:X_1\rightarrow E$ est fini, on a la suite exacte suivante:
$$
0\rightarrow \mathcal{E}\rightarrow \mathcal{F}\rightarrow
\mathcal{G}\rightarrow 0,
$$
et $\mathcal{E}$, $\mathcal{F}$ et $\mathcal{G}$ sont des fibrés
sur $E$ avec les propriétés suivantes: $h^{0}(E,\mathcal{E}\otimes
L)=h^{0}(E,\mathcal{F}\otimes L)=h^{1}(E,\mathcal{E}\otimes L)=1$
quelque soit $L\in J_E$, et $h^{1}(E,\mathcal{F}\otimes L)=0$ pour
tout $L\in J_E$. Donc, d'après le lemme précédent, on sait que
$\mathcal{F}$ est un fibré stable sur $E$, et $\mathcal{E}$ admet
une filtration de Harder-Narasimhan de la forme $0\subset
\mathcal{E}_1\subset \mathcal{E}_2=\mathcal{E}$ telle que
$\mathcal{E}_1$ soit stable de pente $>0$. En particulier,
$\mathcal{F}$ admet un sous-fibré de pente $>0$. Mais
$\mathcal{F}$ est un fibré de degré $1=h^{0}(E,\mathcal{F})$, donc
la pente de $\mathcal{E}_1$ est strictement plus grande que celle
de $\mathcal{F}$, ceci contredit la stabilité de $\mathcal{F}$. Le
résultat s'en déduit.
\end{proof}

Comme corollaire direct, on a
\begin{corollary}\label{theta est ample si g=2} Supposons $X$ ordinaire de genre $2$. Alors toute
composante irréductible de $\Theta$ est ample.
\end{corollary}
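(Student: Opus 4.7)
The plan is to split the irreducible components of $\Theta$ into two classes via the dichotomy of Definition \ref{composante principale} and to handle each separately. Let $D$ denote an irreducible component of $\Theta$, endowed with its reduced structure.

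In the principal case, where $D\cap \ker(V)\neq \emptyset$, there is nothing to do: such a $D$ is already ample by Corollary \ref{principal}. Consequently, the whole task reduces to ruling out a hypothetical non-principal component that fails to be ample.

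In the non-principal case, by definition $D\cap \ker(V)=\emptyset$. My next step is to exploit the ordinarity of $X$: since $X$ is ordinary of genus $2$, the kernel $\ker(V)\subset J_1$ of the Verschiebung is \'etale of order $p^{2}$ and already exhausts the $k$-points of $J_1[p]$. Hence the non-principality of $D$ forces $D$ to contain no $k$-point of order dividing $p$, which puts the proposition immediately preceding this corollary directly in play: $D$ cannot be a curve of genus $1$.

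To close the argument I would appeal to the standard classification of irreducible effective divisors on the abelian surface $J_1$. If $D$ were not ample, the neutral component of the reduced stabilizer of $\OO_{J_1}(D)$ would be a positive-dimensional abelian subvariety $B\subset J_1$; irreducibility of $D$ would then force $\dim B=1$ and $D$ itself to be a translate of the elliptic curve $B$, contradicting the conclusion of the previous paragraph. I do not anticipate any genuine obstacle along this outline: the substantive content has already been placed upstream, in Corollary \ref{principal} (which rests on Theorems \ref{lissite de points d'ordre p,cas ordinaire} and on Proposition \ref{critere de ne pas d'etre un TAS}) and in the preceding proposition (whose proof invokes the Atiyah--Oda classification of vector bundles on an elliptic curve through the stability of $f_{\ast}(F_{\ast}\Omega^{1}_{X/k}\otimes M)$).
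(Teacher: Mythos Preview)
Your proposal is correct and follows exactly the approach the paper has in mind: the corollary is stated there as a \emph{corollaire direct} of the preceding proposition, with the principal case already covered by Corollary~\ref{principal} and the non-principal case reduced, via ordinarity, to the fact that on an abelian surface a non-ample irreducible effective divisor must be a translate of an elliptic curve. There is nothing to add.
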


\subsection{Cas où $g=2$ et $p=3$}

\subsubsection{Un résultat classique.} Le résultat suivant est
classique, qui découle sans difficulté du théorème d'indice de
Hodge.

\begin{prop}\label{resultat classique} Soit $k$ un corps algébriquement clos de caractéristique
quelconque, $X/k$ une courbe propre lisse connexe de genre $2$.
Soit $D$ un diviseur effectif sur $J$, qui est algébriquement
équivalent à $2\Theta_{\mathrm{class}}$. Alors:

(1) Ou bien $D$ est irréductible,

(2) Ou bien $D=X_1+X_2$ où $X_i$ est un translaté de
$\Theta_{\mathrm{class}}$.

(3) Ou bien $X$ est un revêtement de degré $2$ d'une courbe
elliptique, et $D=(x_1+E_1)+(x_2+E_2)$ où $E_i$ est une courbe
elliptique $x_i\in J$. De plus, $E_1\cdot E_2=4$.
\end{prop}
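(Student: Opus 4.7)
The plan is to apply the Hodge index theorem on the abelian surface $J$, combined with the non-existence of rational curves on $J$, to enumerate the possible decompositions of $D$ into irreducible components. Set $H=\Theta_{\mathrm{class}}$, which is a principal polarisation with $H^{2}=2$; since $D\equiv 2H$ one has $D\cdot H=4$ and $D^{2}=8$. Write $D=\sum_{i} n_{i}D_{i}$ with the $D_{i}$ distinct integral curves and $n_{i}\geq 1$, and put $a_{i}=D_{i}\cdot H$, a positive integer by ampleness of $H$; then $\sum n_{i}a_{i}=4$. The Hodge index theorem gives $D_{i}^{2}\leq a_{i}^{2}/2$ with equality iff $D_{i}$ is numerically proportional to $H$, and the adjunction formula $p_{a}(D_{i})=D_{i}^{2}/2+1$ together with the fact that any morphism $\mathbf{P}^{1}\to J$ is constant yields $D_{i}^{2}\geq 0$ (and $D_{i}^{2}=0$ only if $D_{i}$ is smooth of genus $1$, hence a translate of an abelian subvariety of dimension $1$).

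The key reduction will be to exclude the case $a_{i}=1$. Indeed, $a_{i}=1$ would force $D_{i}^{2}\leq 0$, hence $D_{i}^{2}=0$, so $D_{i}$ would be a translate of an elliptic subvariety $E\subset J$. Identifying $X$ with its Albanese image $\Theta_{\mathrm{class}}\subset J$, the composition $X\hookrightarrow J\to J/E$ would be a surjective morphism to the elliptic curve $J/E$ of degree $X\cdot E=D_{i}\cdot H=1$, which is impossible since $X$ has genus $2$ and $J/E$ has genus $1$. Therefore $a_{i}\geq 2$ for every $i$.

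Combined with $\sum n_{i}a_{i}=4$, the only possibilities are: (A) a single component with $n_{1}=1$, $a_{1}=4$, i.e., $D$ is irreducible, giving case (1); (B) a single component with $n_{1}=2$, $a_{1}=2$, in which case $D^{2}=4D_{1}^{2}=8$ forces $D_{1}^{2}=2$ and equality in Hodge index, so $D_{1}\equiv H$; since $h^{0}(\OO(\Theta_{\mathrm{class}}))=1$ and algebraic equivalence coincides with $\mathrm{Pic}^{\circ}$-equivalence on $J$, $D_{1}$ is necessarily a translate of $\Theta_{\mathrm{class}}$, giving case (2) with $X_{1}=X_{2}$; (C) two distinct components with $n_{1}=n_{2}=1$, $a_{1}=a_{2}=2$. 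In case (C), $D_{2}\equiv 2H-D_{1}$ yields $D_{2}^{2}=4H^{2}-4H\cdot D_{1}+D_{1}^{2}=D_{1}^{2}$, so $D_{1}^{2}=D_{2}^{2}\in\{0,2\}$. If $D_{i}^{2}=2$, as in (B) both $D_{i}$ are translates of $\Theta_{\mathrm{class}}$, case (2); if $D_{i}^{2}=0$, both $D_{i}$ are translates of elliptic subvarieties $E_{1},E_{2}\subset J$ with $E_{1}\cdot E_{2}=D_{1}\cdot(2H-D_{1})=4$, and $E_{i}\cdot H=2$ shows that $X\to J/E_{i}$ has degree $2$, so $X$ is a double cover of an elliptic curve, case (3).

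The main obstacle will be the exclusion $a_{i}\neq 1$, which requires interpreting $D_{i}\cdot H$ geometrically as the degree of the Albanese-induced map from $X$ to the elliptic quotient $J/E$; once this is in hand, the remaining arguments are a direct numerical enumeration controlled by Hodge index, together with the standard facts that effective representatives of a principal polarisation class are unique (case (2)) and that integral curves of self-intersection $0$ on an abelian surface are translates of elliptic subvarieties (case (3)).
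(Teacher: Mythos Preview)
Your proof is correct and follows exactly the approach the paper indicates: the paper does not give a detailed proof but simply states that the result ``d\'ecoule sans difficult\'e du th\'eor\`eme d'indice de Hodge,'' which is precisely what you carry out. Your exclusion of the case $a_i=1$ via the degree of the map $X\to J/E$ and the subsequent numerical enumeration are the natural way to make this explicit.
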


\subsubsection{Irréductibilité de $\Theta$ pour $g=2$ et $p=3$.}

\begin{theorem} \label{cas p=3 g=2}Soit $X$ une courbe lisse propre connexe, de genre $g=2$
sur un corps $k$ algébriquement clos de caractéristique $p=3$.
Soit $\Theta$ son diviseur thêta associé au faisceau $B$. Alors le
faisceau $\QQ$ ($\S$ \ref{accouplement}) est inversible de degré
$4$ sur $\Theta$. De plus $\Theta$ est intègre, et ses seuls
points singuliers sont ses points de torsion d'ordre divisant $2$.
\end{theorem}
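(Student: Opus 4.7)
The plan is to combine the structural results from earlier in the paper to pin down $\Theta$ in this special case. We already know from $\S$\ref{Theta est reduit} that $\Theta$ is reduced, and from corollaire \ref{polarisation} that $\Theta$ is algebraically equivalent to $2\Theta_{\mathrm{class}}$. Applying proposition \ref{resultat classique} to this reduced divisor leaves three possibilities: (1) $\Theta$ is irreducible; (2) $\Theta = X_1 + X_2$ with each $X_i$ a translate of $\Theta_{\mathrm{class}}$; (3) $X$ is a double cover of an elliptic curve and $\Theta = (x_1 + E_1) + (x_2 + E_2)$ for distinct elliptic curves $E_1, E_2 \subset J_1$.

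To exclude case (3) I would split by $p$-rank. For ordinary $X$, corollaire \ref{theta est ample si g=2} directly forbids an elliptic-translate component. For non-ordinary $X$, the Dirac property (theorem \ref{Dirac}) together with proposition \ref{Dirac implique ample} handles $p$-rank $0$ (giving $\Theta$ ample, incompatible with a decomposition into elliptic translates), while for $p$-rank $1$ I would combine corollaire \ref{Dirac implique ordinaire} applied to the component through the origin with the constraints on the second component. Case (2) is more delicate: here each $X_i \simeq X$ sits in $J_1$ via a translated Abel--Jacobi embedding, with $X_1 \cdot X_2 = 2$ by intersection calculus. I would exclude this by combining the self-duality $(-1_\Theta)^{\ast}\QQ \otimes \QQ \simeq \omega_\Theta$ from $\S$\ref{accouplement} with the Dirac property: every point of $\ker(V)$ must lie in $\Theta = X_1 \cup X_2$, and the distribution of these $p$-torsion points on two copies of $X$ together with the multiplicity estimate $\mathrm{mult}_x(\Theta) \geq h^0(X_1, B \otimes L_x)$ will force a numerical incompatibility.

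Once $\Theta$ is integral, the invertibility of $\QQ$ reduces to showing $h^0(X_1, B \otimes L) = 1$ for every $L \in \Theta$. From the Cartier exact sequence, $h^0(X_1, B \otimes L) \leq h^0(X, \omega_X \otimes F^{\ast}L) = 1$ for $L$ of degree $0$ not of order $p$, so $\QQ$ is automatically invertible outside $p$-torsion. At the finitely many $p$-torsion points in $\Theta$, I would bound $h^0(B \otimes L)$ directly via the Cartier-operator description of the exact sequence, using that the multiplicity $\mathrm{mult}_x(\Theta)$ at a $2$-torsion point $x$ is even (corollaire after proposition \ref{theta est fortement symetrique}) to rule out $h^0 = 2$; this is the most technical step. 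The degree then follows from $\deg(\omega_\Theta) = \Theta \cdot \Theta = 4 \cdot \Theta_{\mathrm{class}}^2 = 8$ together with $(-1_\Theta)^{\ast}\QQ \otimes \QQ \simeq \omega_\Theta$ and the symmetry of $\Theta$, which give $2 \deg(\QQ) = 8$, hence $\deg(\QQ) = 4$.

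Finally, for the singular locus, remarque \ref{Theta est singulier en point d'ordre divisiant 2} already shows every $2$-torsion point of $\Theta$ is singular. Conversely, at any $x \in \Theta$ not of order dividing $2$, the smoothness criterion (corollaire \ref{critere de lissite}) requires $h^0(B \otimes L_x) = 1$ (granted by invertibility of $\QQ$) and the non-degeneracy of the induced pairing $H^0(X_1, B \otimes L_x) \otimes_k H^0(X_1, B \otimes L_x^{-1}) \to H^0(X_1, \Omega^1_{X_1/k})$. I would verify the latter using that $L_x \neq L_x^{-1}$ forces the associated line sub-bundles of $B$ (which has rank $2$) to span $B$ generically, making the antisymmetric pairing non-zero; the main obstacle I anticipate here is again the handling of $p$-torsion points, where the geometry of the Cartier forms interacts with the constraints already imposed in the invertibility step.
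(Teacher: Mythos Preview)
Your overall structure tracks the paper's, but there are two genuine gaps, and the paper's fixes for them are worth knowing.

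\textbf{Invertibility of $\QQ$.} Your Cartier-sequence bound $h^0(X_1,B\otimes L)\le h^0(X,\Omega^1_{X/k}\otimes F^\ast L)=1$ is correct for $L$ not of order $p$, but your handling of the $p$-torsion points is not an argument: you invoke parity of multiplicity at $2$-torsion to control $h^0$ at $3$-torsion, and these are disjoint sets here. The paper bypasses this case split entirely with a lemma of Brambila-Paz--Grzegorczyk--Newstead: for a stable bundle $E$ of rank $r\ge 2$ and slope $\le 1$, the subsheaf generated by $H^0(X,E)$ is a trivial subbundle $\OO_X^{h^0}$, whence $h^0(X,E)<r$. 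Since $B\otimes L$ is stable of rank $2$ and slope $1$ for every $L$ of degree $0$, this gives $h^0(X_1,B\otimes L)=1$ uniformly. Once you have this, your degree computation and your singular-locus argument both go through cleanly; in fact the paper phrases the latter exactly as you suggest, via saturation: stability forces $L^{-1}\hookrightarrow B$ to be saturated, so its orthogonal is $L^{-1}$ itself, and singularity is $\Hom(L,L^{-1})\neq 0$, i.e.\ $L^2\simeq\OO_{X_1}$.

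\textbf{Exclusion of case (2).} Your proposed numerical-incompatibility argument via Dirac and self-duality is not carried out and it is not clear it can be. The paper instead uses the Narasimhan--Ramanan identification $\phi:S_{X_1}\xrightarrow{\sim}|2\Theta_{\mathrm{class}}|$ for rank-$2$ bundles with canonical determinant: the decomposable divisors $(a+\Theta_{\mathrm{class}})+(-a+\Theta_{\mathrm{class}})$ are exactly the image of the strictly semistable locus $b(J_1)\subset S_{X_1}$, so $\Theta=\phi(B)$ landing there would force $B$ non-stable, contradicting the stability of $B$ for $g\ge 2$. For case (3), your $p$-rank-$0$ step is also flawed: ampleness of $\Theta$ as a whole does not prevent a decomposition $(x_1+E_1)+(x_2+E_2)$, since that sum \emph{is} ample. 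The paper simply cites the result (from \cite{R2}) that $\Theta$ always has at least one component that is not a translate of an abelian subvariety, which kills case (3) in one line.
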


Voici un lemme qui est extrait de \cite{BGN} (proposition 3.1 de
\cite{BGN}).

\begin{lemma} Soit $E$ un fibré stable de rang $r\geq 2$ et de pente
$\lambda\leq 1$. Soit $F$ le sous-faisceau engendré par les
sections globales $H^0(X,E)$ de $E$. Alors $F$ est un sous-fibré
de $E$ qui est isomorphe à $\mathcal{O}_{X}^n$ avec $n=h^0(X,E)$.
\end{lemma}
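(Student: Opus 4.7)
My plan is to combine three ingredients: the local freeness of torsion-free coherent subsheaves on a smooth curve, the semistability of the trivial bundle, and the stability hypothesis on $E$.

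I would first set up the evaluation morphism $\mathrm{ev}:H^{0}(X,E)\otimes_{k}\OO_{X}\to E$ and identify $F$ with its image. Since $X$ is a smooth curve and $F\subset E$ is torsion-free coherent, $F$ is automatically locally free, hence a genuine subbundle of $E$. Writing $K:=\ker(\mathrm{ev})$, the resulting sequence
$$
0\to K\to \OO_{X}^{n}\to F\to 0
$$
is a short exact sequence of vector bundles, and $K$ is a subbundle of $\OO_{X}^{n}$. The chain of inclusions $H^{0}(X,E)\hookrightarrow H^{0}(X,F)\hookrightarrow H^{0}(X,E)$ (the first because every global section of $E$ factors through $F$ by construction, the second because $F\subset E$) forces $h^{0}(X,F)=n$; the induced map $H^{0}(\OO_{X}^{n})\to H^{0}(F)$ is therefore an isomorphism between $n$-dimensional vector spaces, and the cohomology long exact sequence yields $H^{0}(X,K)=0$.

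I would then extract the numerical constraints. Since $K$ is a subbundle of the semistable bundle $\OO_{X}^{n}$ of slope $0$, one has $\deg K\leq 0$, hence $\deg F=-\deg K\geq 0$. Assuming $K\neq 0$, $F$ is a proper subbundle of the stable bundle $E$, so $\mu(F)<\mu(E)=\lambda\leq 1$. Thus $F$ is globally generated by its $n=h^{0}(F)$ sections and satisfies $0\leq\mu(F)<1$, the regime in which the BGN-type rigidity takes effect.

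The heart of the argument, and the main obstacle, is to deduce from the two conditions ``$F$ globally generated'' and ``$0\leq\mu(F)<1$'' that $K=0$, i.e.\ $F\cong\OO_{X}^{n}$. I would argue by induction on the rank $s$ of $F$: on a smooth curve, any globally generated vector bundle of rank $\geq 2$ admits a nowhere-vanishing section (by a dimension count on the incidence variety $\{(\sigma,x)\in H^{0}(F)\times X:\sigma(x)=0\}$), which yields a subbundle $\OO_{X}\hookrightarrow F$. The slope bound $\mu(F)<1$ together with a Riemann--Roch computation for the quotient $F/\OO_{X}$ ensures that this quotient remains globally generated with enough sections to iterate. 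At termination one obtains $F\cong\OO_{X}^{s}$, and comparing $h^{0}(F)=s$ with $h^{0}(F)=n$ forces $s=n$, whence $K=0$. The delicate point is precisely this inductive step, where the slope constraint inherited from the stability of $E$ must be invoked at each stage to guarantee that the quotients retain enough global sections to remain generated; this is the content of the original BGN argument.
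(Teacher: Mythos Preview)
The paper does not give its own proof; it simply cites Proposition~3.1 of \cite{BGN}. Your approach, while headed in a reasonable direction, is more involved than necessary and has two real gaps. First, ``locally free subsheaf'' is not the same as ``sous-fibr\'e'': the lemma asserts that $E/F$ is locally free (this is precisely what makes the corollary $h^0(E) < r$ follow, since $n=r$ would then force $E\cong\OO_X^r$), and your torsion-freeness remark does not yield this. Second, your induction does not close: from $\mu(F) < 1$ you obtain $\deg F \le s-1$, but then $\mu(F/\OO_X) = \deg F/(s-1) \le 1$, not strictly $< 1$, so the hypothesis is not inherited by the quotient. (There is also the unexamined case $\mathrm{rk}(F) = r$, where stability only gives $\mu(F) \le \lambda$.)

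The direct argument avoids all of this. For any nonzero $\sigma \in H^0(E)$, let $L \subset E$ be the line subbundle saturating $\sigma(\OO_X)$. Then $\deg L \ge 0$ since $\OO_X \hookrightarrow L$, while $\deg L < \lambda \le 1$ by stability (rank $1 < r$); hence $\deg L = 0$, $L=\OO_X$, and $\sigma$ is nowhere vanishing. It follows that the evaluation map $H^0(E)\otimes\OO_X\to E$ is injective on every fibre: $K = 0$, $F \cong \OO_X^n$, and $E/F$ is locally free in one stroke. A single application of stability to line subbundles replaces the entire inductive scheme and simultaneously delivers the saturation.
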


\begin{corollary} Soit $E$ est un fibré stable de rang $r\geq 2$
et de pente $\leq 1$, alors $h^0(X,E)<r$.
\end{corollary}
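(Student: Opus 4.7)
Le plan est de d\'eduire ce corollaire directement du lemme pr\'ec\'edent par un raisonnement par l'absurde. Je supposerais donc que $n := h^0(X, E) \geq r$. D'apr\`es le lemme, le sous-faisceau $F \subset E$ engendr\'e par les sections globales est un sous-fibr\'e isomorphe \`a $\OO_X^{\,n}$, en particulier $F$ est de rang $n$. Comme $F$ est contenu dans $E$ qui est de rang $r$, il vient $n \leq r$, d'o\`u n\'ecessairement $n = r$.

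Il reste alors \`a observer que, $F$ \'etant un sous-\emph{fibr\'e} de $E$ de m\^eme rang que $E$, le quotient $E/F$ est localement libre de rang $0$, donc nul, et par cons\'equent $F = E$. Ceci entra\^inerait $E \simeq \OO_X^{\,r}$, qui admet $\OO_X$ comme sous-fibr\'e strict non nul de pente $0$, \'egale \`a la pente de $\OO_X^{\,r}$. Ceci contredit la stabilit\'e de $E$ (laquelle exige, pour $r\geq 2$, une in\'egalit\'e stricte sur les pentes des sous-fibr\'es propres non nuls), d'o\`u le r\'esultat.

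Il n'y a essentiellement pas d'obstacle, tout le contenu \'etant port\'e par le lemme pr\'ec\'edent; seule la compatibilit\'e entre la structure de sous-fibr\'e fournie par ce lemme et la stabilit\'e de $E$ requiert une v\'erification imm\'ediate. On notera en passant que l'hypoth\`ese $\lambda(E)\leq 1$ n'intervient dans la preuve du corollaire que via le lemme: elle y est automatiquement compatible avec la conclusion, puisque si $E\simeq \OO_X^{\,r}$ alors $\lambda(E)=0\leq 1$.
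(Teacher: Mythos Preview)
Your proof is correct and is exactly the natural deduction from the lemma; the paper states the corollary without proof, and your argument (subbundle $\OO_X^{\,n}\subset E$ forces $n\leq r$, equality gives $E\simeq\OO_X^{\,r}$, contradicting stability) is the intended one.
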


\begin{proof}[Démonstration du théorème \ref{cas p=3 g=2}]
Comme $p=3$ et $g=2$, le fibré vectoriel stable $B$ est de pente
$1$ et de rang $2$. D'après le lemme précédent, pour tout $L$
faisceau inversible de degré sur $X_1$ qui correspond à un point
$x\in \Theta$, on a $h^{0}(X_1,B\otimes L)=1$. En particulier, le
faisceau $\QQ$ est inversible sur $\Theta$ (lemme \ref{lemme
trivial}). Soit $d$ son degré. Comme $(-1)^{\ast}\QQ\otimes
\QQ\simeq \omega_{\Theta}\simeq \OO_{J_1}(\Theta)|_{\Theta}$ ($\S$
\ref{accouplement}), on a $d=\mathrm{deg}(\omega_{\Theta})/2=4$.
Comme $B$ est stable de pente $1$, soit $L^{-1}\hookrightarrow B$
l'unique plongement (à multiplication par un scalaire près) de
$L^{-1}$ dans $B$, alors $L^{-1}$ est égal à sa saturation. Donc
si $x\in \Theta$ est un point singulier, d'après le critère de
lissité \ref{critere de lissite}, il faut et il suffit que
$\mathrm{Hom}(L, L^{-1})\neq 0$. Ceci équivaut au fait que
$L^{2}=\OO_{X_1}$.

Il reste à montrer que $\Theta$ est intègre. Comme $p=3$, $\Theta$
est un diviseur effectif réduit algébriquement équivalent à
$2\Theta_{\mathrm{class}}$. Donc on a l'une des trois possibilités
pour $\Theta$ décrites dans la proposition \ref{resultat
classique}. D'après \cite{R2}, $\Theta$ contient au moins une
composante irréductible qui n'est pas un translaté d'une
sous-variété abélienne, ceci exclut le cas (3). Si
$\Theta=\Theta_1+\Theta_2$ où
$\Theta_i=x_i+\Theta_{\mathrm{class}}$ est un translaté de
$\Theta_{\mathrm{class}}$ avec $x_i\in J_1(k)$ un point fermé.
Comme $\Theta$ est symétrique et réduit, $x_1=-x_2$ et $x_i$ n'est
pas un point d'ordre $2$. Montrons que ceci entraîne que $B$ n'est
pas stable. En fait, notons $S_{X_1}$ l'espace de modules des
fibrés semi-stables de rang $2$ et de déterminant
$\Omega^{1}_{X_1/k}$. Rappelons qu'on a l'identification
$\phi:S_{X_1}\rightarrow |2\Theta_{\mathrm{class}}|$ qui envoie
$E\in S_{X_1}$ sur le diviseur thêta associé à $E$ (\cite{NR}). De
plus, le diagramme suivant est commutatif:
$$
\xymatrix{J_1\ar[r]^{b}\ar[rd]_{K_{X_1}} & S_{X_{1}}\ar[d]^{\phi} \\
& |2\Theta_{\mathrm{class}}|}
$$
dans lequel $K_{X_1}:J_1\rightarrow |2\Theta_{\mathrm{class}}|$
est défini par $a\mapsto
(a+\Theta_{\mathrm{class}})+(-a+\Theta_{\mathrm{class}})$, et
$b:J_1\rightarrow S_{X_1}$ est la flèche naturelle associée au
faisceau $(\mathcal{P}\otimes \delta )\oplus
(\mathcal{P}^{-1}\otimes \delta)$ avec $\mathcal{P}$ un faisceau
de Poincaré de $X_1$ et $\delta$ une caractéristique thêta de
$X_1$. Alors $K_{X_{1}}$ induit une immersion fermée $J_1/\{\pm
1\}\rightarrow |2\Theta_{\mathrm{class}}|$. Comme
$\Theta=(x+\Theta_{\mathrm{class}})+(-x+\Theta_{\mathrm{class}})=K_{X_1}(x)=\phi(b(x))$,
$B=b(x)$ est donc un fibré non-stable, ce qui nous donne une
contradiction. Ceci exclut le cas (2). Donc d'après le théorème
précédent, $\Theta$ est irréductible, donc intègre.
\end{proof}

\subsubsection{Lien avec les variétés de Prym.} \label{Prym}



On suppose $p=3$. On considère une courbe lisse $X/k$ de genre
$2$, tel que $\Theta$ soit lisse sur $k$, ce qui est réalisé si
$X$ est assez générale. Par suite, $\Theta/k$ est une courbe lisse
de genre $5$, et $\QQ$ est inversible sur $\Theta$ (théorème
\ref{cas p=3 g=2}). Notons $J_1$ la jacobienne de $X_1$. Alors
$\Theta$ se descend en un diviseur de Cartier effectif $D$ sur la
variété de Kummer associée à $J_1$. Comme $\Theta$ est lisse, il
ne passe pas par un point d'ordre divisant $2$ de $J_1$. Donc $D$
est une courbe lisse connexe de genre $3$ de la surface de Kummer,
et $\pi:\Theta\rightarrow D$ est un revêtement étale de degré $2$.
Soient $\mathrm{Nm}:J_{\Theta}\rightarrow J_{D}$ le morphisme de
norme, et $P:=\left(\ker(\mathrm{Nm})\right)^{\circ}$ la variété
de Prym associée, qui est une variété abélienne de dimension $2$
sur $k$. Alors $\ker(\mathrm{Nm})/P\simeq \mathbf{Z}/2\mathbf{Z}$
(\cite{Prym}).

Comme le morphisme naturel $J_{1}^{\vee}\rightarrow J_{\Theta}$
est une immersion fermée,\footnote{Cet énoncé est du type
{\textquotedblleft Lefschetz\textquotedblright} (SGA2
\cite{SGA2}). Il n'est pas automatique pour toute surface en
caractéristique $p>0$, par suite du défaut de validité du
{\textquotedblleft vanishing de Kodaira\textquotedblright}, mais
ne pose pas de problème pour les surfaces abéliennes.} et que
$J_{1}^{\vee}\subset \ker(\mathrm{Nm})$, on a $P=J_{1}^{\vee}$.
Par ailleurs, d'après $\S$ \ref{accouplement}, on a un
isomorphisme $(-1)^{\ast}\QQ\otimes \QQ\simeq \omega_{\Theta}$
avec $\omega_{\Theta}=\OO_{J_1}(\Theta)|_{\Theta}$ le faisceau
canonique de $\Theta$. Soit $\theta$ un diviseur thêta classique
\textit{symétrique} de $J_1$, et posons $\QQ'=\QQ\otimes
(\OO_{J_1}(-\theta)|_{\Theta})$, alors $(-1)^{\ast}\QQ'\otimes
\QQ'\simeq \OO_{\Theta}$ (proposition \ref{theta est sym}). Donc
$\QQ'\in \ker(\mathrm{Nm})$.

\begin{proposition} Avec les hypothèses ci-dessus,
le faisceau $\QQ'$ est dans la composante connexe de
$\ker(\mathrm{Nm})$ qui n'est pas la composante neutre.
\end{proposition}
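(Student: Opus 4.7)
Le c\oe{}ur de la preuve est de traduire l'appartenance de $\QQ'$ � la composante neutre $P$ en une condition d'extension � $J_1$, puis d'invoquer la proposition \ref{Q ne provient pas de J, cas generique} et de conclure par sp�cialisation dans l'espace de modules $M_2$.

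Premi�rement, on exploite l'identification $P = J_1^{\vee}$ d�j� �tablie ci-dessus, r�alis�e par l'immersion ferm�e de restriction $J_1^{\vee} \hookrightarrow J_{\Theta}$ qui envoie un faisceau alg�briquement �quivalent � z�ro sur $J_1$ vers sa restriction � $\Theta$. Par cons�quent, $[\QQ'] \in P(k)$ si et seulement s'il existe $\mathcal{M}\in \mathrm{Pic}^{0}(J_1)(k)$ tel que $\QQ'\simeq \mathcal{M}|_{\Theta}$. Comme $\QQ = \QQ' \otimes \OO_{J_1}(\theta)|_{\Theta}$ par d�finition, ceci �quivaut � dire que $\QQ \simeq \bigl(\mathcal{M}\otimes \OO_{J_1}(\theta)\bigr)|_{\Theta}$ provient par restriction d'un faisceau inversible de $J_1$.

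Deuxi�mement, on traite le cas g�n�rique. Pour la courbe g�n�rique $X$ de genre $2$ en caract�ristique $3$, le th�or�me \ref{irr de theta gen} assure que $\Theta$ est lisse, donc les hypoth�ses de la proposition sont satisfaites g�n�riquement. La proposition \ref{Q ne provient pas de J, cas generique} appliqu�e au cas $g=2$ affirme que $\QQ$ ne provient pas d'un faisceau inversible sur $J_1$. Combin�e � la traduction du paragraphe pr�c�dent, ceci donne $[\QQ']\notin P(k)$ pour la courbe g�n�rique, c'est-�-dire $\QQ'$ appartient � la composante connexe non-neutre de $\ker(\mathrm{Nm})$.

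Troisi�mement, on �tend par sp�cialisation � toute courbe satisfaisant les hypoth�ses. Sur l'ouvert $U\subset M_2$ des courbes de genre $2$ (en caract�ristique $3$) dont le diviseur $\Theta$ est lisse, on dispose d'une famille universelle (apr�s une localisation �tale �ventuelle) et le faisceau relatif $\QQ'$ fournit une section du sch�ma en groupes $\ker(\mathrm{Nm})\to U$. La composante neutre $P\subset \ker(\mathrm{Nm})$ �tant ouverte et ferm�e, la fonction caract�ristique $\pi_0\bigl(\ker(\mathrm{Nm})\bigr)\simeq \mathbf{Z}/2\mathbf{Z}$ est localement constante sur $U$. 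Comme $U$ est ouvert dans $M_2$, irr�ductible donc connexe, et comme cette fonction prend la valeur non triviale au point g�n�rique d'apr�s l'�tape pr�c�dente, elle est constante, �gale � la valeur non triviale sur $U$ tout entier. L'obstacle principal n'est pas conceptuel mais tient au soin � apporter pour mettre en famille la construction de $\QQ'$ et identifier $P$ fibre � fibre; ceci est standard puisque toutes les constructions (fibr� $B$, diviseur $\Theta$, faisceau $\QQ$, twist par $\theta$) se font en famille.
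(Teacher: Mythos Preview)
Your proposal is correct and follows essentially the same strategy as the paper: translate membership in $P=J_1^{\vee}$ into the question of whether $\QQ$ extends to $J_1$, invoke Proposition~\ref{Q ne provient pas de J, cas generique} for the generic curve, then propagate by connectedness. The only cosmetic difference is the choice of parameter space: you work over the open locus $U\subset M_2$ where $\Theta$ is smooth and use that $U$ is irreducible, whereas the paper picks a single complete DVR $S$ whose generic point carries the generic curve and whose closed point carries $X$, and uses connectedness of $S$. The paper's choice sidesteps the (minor) bookkeeping about stacks and \'etale-local universal families that you flag in your third paragraph, but the two arguments are interchangeable.
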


\begin{proof} Soit $S$ le spectre d'un anneau de valuation discrète complet,
strictement hensélien, de point générique $\eta$ et de point fermé
$s$. Soit $\X/S$ une courbe propre lisse sur $S$, dont la fibre
spéciale est $X$ (à un changement de base sur $k$ près), et dont
la fibre générique est la courbe générique de genre $2$. Soient
$\mathcal{B}$ le faisceau des formes différentielles localement
exactes de $\X$, $\Theta_{\mathcal{B}}$ le diviseur thêta
(relatif) associé à $\mathcal{B}$. Notons comme avant
$\mathcal{Q}_{\mathcal{B}}$ le faisceau sur $\Theta_{\mathcal{B}}$
à partir de $\mathcal{B}$ après un choix d'une section de $\X/S$
($\S$ \ref{accouplement}). Soit $\mathcal{J}_1$ la jacobienne de
$\X/S$. Comme $\Theta_{\mathcal{B}}$ est totalement symétrique
(\ref{theta est fortement symetrique}), il se descend en un
diviseur $\mathcal{D}$ sur la surface de Kummer $S$ associée à
$\mathcal{J}_1$. Par l'hypothèse, le schéma $\Theta_{\mathcal{B}}$
est lisse sur $S$, donc $\Theta_{\mathcal{B}}$ ne passe pas par un
point d'ordre divisant $2$. Ceci entraîne que $\mathcal{D}$ est
lisse sur $S$, et que le morphisme
$\Theta_{\mathcal{B}}\rightarrow \mathcal{D}$ est fini étale de
degré $2$. Comme $g=2$ et $p=3$, $\Theta_{\mathcal{B}}/S$ (resp.
$\mathcal{D}/S$) est une courbe relative de genre $5$ (resp. de
genre $3$). Soit $J_{\Theta_{\mathcal{B}}}$ (resp.
$J_{\mathcal{D}}$) la jacobienne de $\Theta_{\mathcal{B}}$ (resp.
de $\mathcal{D}$). On a un morphisme naturel
$\mathrm{Nm}:J_{\Theta_{\mathcal{B}}}\rightarrow J_{\mathcal{D}}$.
Soit $P:=(\ker(\mathrm{Nm}))^{\circ}$ la variété de Prym associée
à $\Theta_{\mathcal{B}}\rightarrow \mathcal{D}$. Alors $P$ est un
schéma abélien sur $S$ de dimension relative $2$, et
$\Pi:=\ker(\mathrm{Nm})/P\simeq \mathbf{Z}/2\mathbf{Z}$ d'après la
théorie des variétés de Prym.

Par ailleurs, le morphisme naturel
$\mathcal{J}_{1}^{\vee}\rightarrow J_{\Theta_{\mathcal{B}}}$ est
une immersion fermée, par suite $\mathcal{J}_{1}^{\vee}=P$. Notons
$\Theta_{\mathrm{class,sym}}$ un diviseur thêta classique
symétrique de $\mathcal{J}_1$, on a
$\OO_{\mathcal{J}_1}(\Theta)=\OO_{\mathcal{J}_1}(2\Theta_{\mathrm{class,sym}})$.
Donc
$(-1)^{\ast}\QQ_{\mathcal{B}}(-\Theta_{\mathrm{class,sym}})\otimes
\QQ_{\mathcal{B}}(-\Theta_{\mathrm{class,sym}})\simeq
\OO_{\Theta_{\mathcal{B}}}$ (\ref{accouplement}). Par conséquent,
$\QQ_{\mathcal{B}}':=\QQ_{\mathcal{B}}(-\Theta_{\mathrm{class,sym}})$
est un faisceau inversible de degré $0$ sur $S$, donc il
appartient à $J_{\Theta_{\mathcal{B}}}$. En effet, comme
$(-1)^{\ast}\QQ_{\mathcal{B}}'\otimes \QQ_{\mathcal{B}}'\simeq
\OO_{\Theta_{\mathcal{B}}}$, il appartient à $\ker(\mathrm{Nm})$.
Comme $\QQ_{\mathcal{B}\eta}$ donc $\QQ_{\mathcal{B}\eta}'$ ne
provient pas d'un faisceau inversible sur $\mathcal{J}_{1,\eta}$
(proposition \ref{Q ne provient pas de J, cas generique}), le
morphisme naturel $S\rightarrow \ker(Nm)\rightarrow \Pi$ induit
par $\QQ_{\mathcal{B}}'$ n'est pas la section nulle. Comme $S$ est
connexe, ce qui implique que $\QQ_{\mathcal{B},s}\notin
\mathcal{J}_{1,s}^{\vee}$. D'où le résultat.
\end{proof}

\begin{remarque} On sait comment décrire le faisceau $\QQ$ dans ce
cas. En fait, $\QQ$ est seulement défini à une tensorisation près
avec un faisceau inversible algébriquement trivial qui provient
d'un faisceau inversible de la jacobienne. Il suffit donc de
caractériser l'image de $\QQ(-\Theta_{\mathrm{class,sym}})$ dans
$J_{\Theta}/J_1$. Et d'après la preuve de la proposition
ci-dessus, on peut caratériser $\QQ$ comme le faisceau qui
appartient à la composante connexe de $\ker(Nm)/J_1\subset
J_{\Theta}/J_1$ autre que la composante neutre. Avec ce résultat,
et en appliquant la transformation de Fourier-Mukai inverse, on
peut retrouver le faisceau $B$ et la courbe $X$ à partir de la
variété abélienne $J_1$ et du diviseur $\Theta\subset J_1$.
\end{remarque}

\section{Groupe fondamental et diviseur thêta}
\markboth{chapitre5}{chapitre5}

Soient $k$ un corps algébriquement clos de caractéristique $p>0$,
$X/k$ une courbe propre lisse connexe. Soit $\bar{x}$ un point
géométrique de $X$, on note $\pi_1(X,\bar{x})$ le groupe
fondamental de $X$ associé à $x$. Par définition (\cite{SGA1},
exposé V), $\pi_{1}(X,\bar{x})$ classifie les revêtements finis
étales de $X$. De plus, soit $\bar{x}'$ un autre point géométrique
de $X$, il existe un isomorphisme $\pi_1(X,\bar{x})\simeq
\pi_{1}(X,\bar{x}')$. Donc, s'il n'y a pas de besoin de préciser
le point géométrique, on note simplement $\pi_1(X)$ le groupe
fondamental de $X$.

Le groupe fondamental $\pi_1(X)$ est mal connu. Concernant sa
variation dans l'espace de modules des courbes (\cite{Pop-Saidi},
\cite{R2}, \cite{Tamagawa}), le résultat le plus frappant est le
suivant dû à A. Tamagawa.

\begin{theo}\textbf{\emph{(Tamagawa \cite{Tamagawa})}}  Soient $S=\mathrm{Spec}k[[T]]$ de point générique
$\eta$, et de point spécial $s$, $X/S$ une courbe relative propre
et lisse, de genre $g\geq 2$, dont la fibre spéciale $X_s$ est
définissable sur un corps fini. Alors si le morphisme de
spécialisation $\mathrm{sp}:\pi_1(X_{\bar{\eta}}) \rightarrow
\pi_1(X_{\bar{s}})$ est bijectif, la courbe relative $X/S$ est
constante.
\end{theo}

La démonstration de ce théorème utilise le diviseur $\Theta$, mais
celui-ci ne permet de contrôler qu'un certain quotient métabélien
du $\pi_1$, à savoir $\pi_{1}^{\mathrm{new,ord}}$ (on renvoye à
\cite{R2} pour la définition de $\pi_{1}^{\mathrm{new,ord}}$, et
la relation entre $\Theta$ et $\pi_{1}^{\mathrm{new,ord}}$). On
peut se demander si l'analogue du résultat de Tamagawa est encore
vrai quand on remplace le $\pi_1$ par son quotient
$\pi_{1}^{\mathrm{new, ord}}$. En fait, on est très loin de
pouvoir répondre à cette question faute de renseignements
suffisants sur la géométrie de $\Theta$ et sur la
{\textquotedblleft saturation\textquotedblright} de la torsion.
Toutefois, on donne une réponse positive lorsque la fibre spéciale
$X_s$ est supersingulière.

Plus précisément, soient $S$ le spectre d'un anneau de valuation
discrète, strictement hensélien, de point fermé $s$ et de point
générique $\eta$, $X/S$ une $S$-courbe propre lisse à fibres
géométriques connexes. On a, d'après Grothendieck, un morphisme
surjectif de spécialisation
$\mathrm{sp}:\pi_{1}(X_{\bar{\eta}})\rightarrow
\pi_1(X_{\bar{s}})$. Il induit un morphisme de spécialisation sur
$\pi^{\mathrm{new,ord}}_{1}$. La question suivante se pose
naturellement:

\begin{quest1}\label{question} Gardons les notations ci-dessus. On suppose que
$X_s$ est définissable sur un corps fini, et que le morphisme de
spécialisation:
$$
\mathrm{sp}:\pi^{\mathrm{new,ord}}_{1}(X_{\bar{\eta}})\rightarrow
\pi^{\mathrm{new,ord}}_{1}(X_{\bar{s}})
$$
est bijectif. Alors $X$ est-elle constante?
\end{quest1}

Commençons par traduire cette question au moyen de $\Theta$.

\begin{defn1} Soit $A$ une variété abélienne sur un corps $k$
algébriquement clos de caractéristique $p>0$. Soit $x\in A$ un
point de $A$ d'ordre $n$ premier à $o$, on note $\mathrm{Sat}(x)$
l'orbite de $x$ sous l'action naturelle du groupe
$(\mathbf{Z}/n\mathbf{Z})^{\ast}$ des unités de l'anneau
$\mathbf{Z}/n\mathbf{Z}$. De plus, soit $X$ un ensemble de points
de $p'$-torsion, on note $\mathrm{Sat}(X)=\cup_{x\in
X}\mathrm{Sat}(x)$.
\end{defn1}

Pour tout $n>0$ entier tel que $(n,p)=1$, les points de
$n$-torsion de $J_1$ sont décomposés sur $S$. Donc le morphisme de
spécialisation $\eta\rightarrow s$ induit une bijection sur les
points de $p'$-torsion $J_{1,\eta}\{p'\}\simeq J_{1,s}\{p'\}$
(pour une variété abélienne $A/k$ dont les points de $p'$-torsion
sont décomposés, on note $A\{p'\}$ l'ensemble des points de
$p'$-torsion de $A$). Sous cette identification, on a
$\Theta_{\eta}\{p'\}:=\Theta_{\eta}\cap J_{1,\eta}\{p'\}\subset
\Theta_{s}\{p'\}:=\Theta_{s}\cap J_{1,s}\{s\}$. Par suite,
$\mathrm{Sat}(\Theta_{\eta}\{p'\})\subset
\mathrm{Sat}(\Theta_{s}\{p'\})$. L'observation suivante est
importante dans les applications de $\Theta$ à l'étude de la
variation de $\pi_1$.

\begin{proposition1}[\cite{R2}, proposition 2.2.4]\label{sp et theta} Soit $X$ une
courbe propre et lisse sur $S$, à fibres géométriques connexes
telle que les points de $p'$-torsion de la jacobienne $J_1$ de
$X_1$ soient constants. Alors pour que le morphisme de
spécialisation
$$
\mathrm{sp}:\pi^{\mathrm{new,ord}}_{1}(X_{\bar{\eta}})\rightarrow
\pi^{\mathrm{new,ord}}_{1}(X_{\bar{s}})
$$
soit un isomorphisme, il faut et il suffit que $\mathrm{sp}$
induise une bijection $\mathrm{Sat}(\Theta_{\eta}\{p'\})\simeq
\mathrm{Sat}(\Theta_{s}\{p'\})$.
\end{proposition1}

Donc l'étude de la variation de $\pi_{1}^{\mathrm{new,ord}}$ est
ramenée à l'étude de la variation de la saturation des points de
$p'$-torsions de $\Theta$.\newline

\noindent\textbf{Rappels et notations.} Soit $k$ un corps
algébriquement clos.

(1) Soit $k\subset K$ une extension de corps, et soit $A/K$ une
variété abélienne. Il existe une plus grande sous-variété
abélienne $B$ de $A$, appelée \textit{la $k$-partie fixe} ou
\textit{la $k$-trace}, qui est isogène à une variété abélienne
définie sur $k$. Le {\textquotedblleft
supplémentaire\textquotedblright} $C$ de $B$ dans $A$ est
\textit{la $k$-partie mobile}.

(2) On dit qu'une sous-variété fermée $X\subset A$ est \emph{de
$p'$-torsion} si $X\{p'\}:=X\cap A\{p'\}$ est dense dans $X$.

(3) On dit que $A$ est \emph{supersingulière}, si $A$ est isogène
à un produit de courbes elliptiques supersingulières. Si $X$ est
une courbe lisse, on dit que $X$ est \emph{supersingulière} si sa
jacobienne $J_X$ l'est.    \hfill

\begin{theoreme1}\label{application de theta}Soit $S$
le spectre d'un anneau de valuation discrète strictement hensélien
(de caractéristique $p>0$), de point fermé $s$ et de point
générique $\eta$. Soit $X/S$ une $S$-courbe propre lisse, à fibres
géométriques connexes de genre $g\geq 2$. Notons $J/S$ la
jacobienne de $X/S$, $\Theta\subset J_1$ le diviseur thêta.
Supposons que $X_s$ soit une courbe supersingulière définissable
sur un corps fini, et que le morphisme de spécialisation
$$
\mathrm{sp}:\pi^{\mathrm{new,ord}}_{1}(X_{\bar{\eta}})\rightarrow
\pi^{\mathrm{new,ord}}_{1}(X_{\bar{s}})
$$
soit un isomorphisme. Alors $X$ est constante.


\end{theoreme1}



\begin{lemme1}\label{lemme sur sing} Soient $k$ un corps algébriquement clos, $C/k$ une
courbe projective lisse connexe. Supposons qu'une composante
irréductible $\Theta_i$ de $\Theta_C$ soit de $p'$-torsion, et
soit translatée d'une hypersurface abélienne $H$. Notons $E$ la
courbe elliptique $J_{C_1}/H$, et soit $a\in J_{C_1}$ tel que
$\Theta_i=a+H$. Notons $a'$ l'image de $a$ dans $E$. Alors si $E$
est ordinaire, $a'\neq 0$.
\end{lemme1}

\begin{proof} Notons $V_H$ (resp. $V_J$) le Verschiebung de $H$ (resp. de
$J:=J_{C_1}$). Alors si $E:=J/H$ est ordinaire,
$\ker(V_H)=\ker(V_J)$. Supposons que $a'=0$, en d'autres termes,
$\Theta_i=H$. En particulier, $\ker(V_J)=\ker(V_H)\subset
\Theta_i\subset \Theta_C$, on obtient donc une contradication avec
la propriété de Dirac (\ref{Dirac}), d'où le résultat.
\end{proof}

Le lemme ci-après est le point clé où l'on utilise la
supersingularité pour contrôler la saturation de la $p'$-torsion.

On dit qu'une composante irréductible d'un diviseur dans une
variété abélienne est \emph{abélienne} si elle est une translatée
d'une hypersurface abélienne.

\begin{lemme1}\label{lemme sur sat} Soient $k=\overline{\mathbf{F}_p}$
une clôture algébrique de $\mathbf{F}_p$, $C/k$ une courbe lisse
connexe projective \emph{supersingulière}. Soient $Y\subset
\Theta$ un sous-ensemble fermé de $\Theta$ tel que
$\mathrm{Sat}(\Theta\{p'\})=\mathrm{Sat}(Y\{p'\})$. Notons $D$ la
réunion (muni de la structure de sous-schéma fermé réduit) des
composantes de $\Theta$ autre que les composantes abéliennes non
principales (\ref{composante principale}) de $\Theta$. Alors
$D\neq \emptyset$, et $D$ est aussi ample.
\end{lemme1}

\begin{proof} On raisonne par l'absurde. Si $D$ n'est pas ample,
comme $J$ est supersingulière, il existe une courbe elliptique $E$
de $J$ qui laisse $D$ stable par translations. Notons
$\pi:A\rightarrow J/E=:B$ la projection naturelle ($J=J_{C_1}$ la
jacobienne de $C_1$). Et soit $\Delta$ le diviseur de $B$ tel que
$D'=\pi^{\ast}(\Delta)$. Notons $\Theta'$ la réunion des
composantes abéliennes de $\Theta$ qui ne passent pas par
l'origine. Alors $\mathrm{Sat}(\Theta')$ est encore une réunion
finie de translatées d'hypersurfaces abéliennes. Ecrivons
$$
\mathrm{Sat}(\Theta')=\mathrm{Sat}_1\cup \mathrm{Sat}_2,
$$
où $\mathrm{Sat}_1$ est la réunion des composantes de
$\mathrm{Sat}(\Theta')$ qui sont laissées stable par $E$. Alors
$\pi|_{\mathrm{Sat}_2}:\mathrm{Sat}_2\rightarrow B$ est un
morphisme fini surjectif, et il existe un diviseur $\Delta_1$ de
$B$ tel que $\mathrm{Sat}_1=\pi^{\ast}(\Delta_1)$. La réunion des
composantes principales de $\Theta$ est ample (\ref{Dirac implique
ordinaire}). Il existe donc une composante $\Theta_i$ de $\Theta$
telle que $\pi|_{\Theta_i}:\Theta_i\rightarrow B$ soit surjectif.
Puisque $\Theta_i$ est principal, elle n'est pas contenue dans
$\mathrm{Sat}(\Theta')$. On peut donc trouver un ouvert non-vide
$V\subset B$, tel que $V\cap
\left(\Delta\cup\Delta_1\right)=\emptyset$, et que pour tout $b\in
V$, on ait $\pi^{-1}(b)\cap \Theta_i\nsubseteq \pi^{-1}(b)\cap
\mathrm{Sat}_2$. Notons $Y'$ la réunion des composantes
irréductibles de $Y$ qui ne sont pas composantes de
$\mathrm{Sat}_2$. Et notons $G=\pi(Y')$. Alors $G$ est un fermé de
$B$ tel que $G\neq B$. Posons $G'=G\cup \Delta\cup\Delta_1$. C'est
un fermé propre de $B$. Il existe donc $b\in V(k)$ tel que
$b\notin \mathrm{Sat}(G')$ (lemme 4.3.5 de \cite{R1}). Soit $a\in
\Theta_i$ tel que $\pi(a)=b$. Comme
$\mathrm{Sat}(\Theta)=\mathrm{Sat}(Y)$, il existe $a'\in Y$ tel
que $a\in \mathrm{Sat}(a')$. Comme $\pi(a)\notin
\mathrm{Sat}(G')$, on a $\pi(a')\notin \mathrm{Sat}(G')$. Donc
$a'\in \mathrm{Sat}_2$, d'où $a\in \mathrm{Sat}_2$ (car
$\mathrm{Sat}_2$ est saturé). On en déduit que $\pi^{-1}(b)\cap
\Theta_i\subset \mathrm{Sat}_2$, ceci nous donne une contradiction
avec la définition de $V$, et finit donc la démonstration.

\end{proof}


Pour montrer le théorème \ref{application de theta}, rappelons
d'abord, d'après Hrushovski, la structure d'une sous-variété de
$p'$-torsion d'une variété abélienne.

\begin{defn1}[\cite{R2}] Soient $K$ une extension de
$\overline{\mathbf{F}_{p}}$, $A$ une variété abélienne sur $K$.

(1) On dit que $A$ est constante (resp. constante à isogénie près)
si $A$ est isomorphe (resp. isogène) à une variété abélienne
définissable sur $\overline{\mathbf{F}_{p}}$.

(2) Lorsque $A$ est constante à isogénie près, on dit qu'une
sous-variété fermé $Z\subset A$ est constante à isogénie près s'il
existe une isogénie
$u:A_{o}\times_{\overline{\mathbf{F}_{p}}}K\rightarrow A$ avec
$A_{o}$ une variété abélienne sur $\overline{\mathbf{F}_p}$, et si
$Z$ est l'image par $u$ d'une sous-variété réduite $Z_{o}$ de
$A_o\times_{\overline{\mathbf{F}_{p}}}K$ qui est définie sur
$\overline{\mathbf{F}_{p}}$.
\end{defn1}

\begin{theoreme1}[Hrushovski, \cite{Hrushovski}] \label{Hrushovski}
Soient $K$ un corps de caractéristique $p>0$ qui contient une
clôture algébrique $\overline{\mathbf{F}_p}$ de $\mathbf{F}_{p}$,
$A$ une $K$-variété abélienne, $X$ un $K$-sous-schéma fermé de
$A$, intègre, tel que l'ensemble des points de $X$ de $p'$-torsion
soit Zariski dense dans $X$. Notons $B$ le plus grand sous-schéma
abélien de $A$ qui laisse stable $X$ et soit $C$ la
($\overline{\mathbf{F}_p}$-)partie fixe de $A/B$. Alors $X$ est le
translaté par un point de $p'$-torsion d'une sous-variété de
$p'$-torsion à isogénie près de $C$.
\end{theoreme1}

\begin{remarque1} \emph{La preuve d'Hrushovski utilise la théorie des modèles.
R.Pink et D. Rössler donnent dans \cite{Pink-Roessler} une autre
preuve dans le language de la géométrie algébrique.}
\end{remarque1}


\begin{proof}[Démonstration du théorème \ref{application de theta}]
Notons $D$ contenu dans $J_s$ introduit dans le lemme \ref{lemme
sur sat}. Alors $D$ est ample. Soit $D_{\eta}'$ la réunion des
composantes $\Theta_{i,\eta}$ de $\Theta_{\eta}$ qui sont de
$p'$-torsion et dont l'adhérence schématique $\Theta_i$ a une
fibre spéciale ayant au moins une composante contenue dans $D$.
Donc $D_{\eta}'$ est ample (\ref{lemme sur sat}). Soit
$D_{i,\eta}'=a_{\eta}+H_{\eta}$ une composante abélienne de
$D_{\eta}'$, son adhérence schématique s'écrit
$\overline{D_{i,\eta}'}=a+H$, avec $H=\overline{H_{\eta}}$. Par
définition de $D$, $a_{s}+H_{s}$ est principal, donc $a_{s}\in
H_{s}$, d'où $a\in H$. Donc $E:=J/H$ est supersingulière
(\ref{lemme sur sing}). Notons $M_{\eta}$ la partie mobile de
$J_{\eta}$. On sait donc que $D_{i,\eta}'$ est trivial sur
$M_{\eta}$. Soit $D_{j,\eta}'$ une composante non abélienne de
$D_{\eta}'$, d'après Hrushovski (\ref{Hrushovski}), $D_{j,\eta}'$
est l'image réciproque d'un diviseur de la partie fixe de
$J_{\eta}$, est donc aussi trivial sur $M_{\eta}$. Finalement
$D_{\eta}'$ est à la fois ample et trivial sur $M_{\eta}$, donc
$M_{\eta}=0$. En d'autre terme, la partie fixe de $J_{\eta}$ est
égale à $J_{\eta}$.

Soit ensuite $\Theta_{\eta}' $ la réunion des composantes
irréductibles de $p'$-torsion. Par \ref{lemme sur sat}, le
diviseur $\Theta_{\eta}'$ est ample. Comme $J_{\eta}/\eta$ est
égale à sa partie fixe, il existe $\phi:A\rightarrow J_{\eta}$ une
$\eta$-isogénie de variétés abéliennes avec $A$ une variété
abélienne constante, telle que $N:=\ker(\phi)$ est un groupe
radiciel. On va montrer que $N$ est constant. Sinon, notons
$D':=\phi^{\ast}\Theta_{\eta}'$ l'image réciproque de
$\Theta_{\eta}'$ dans $A$. Alors $D'$ est un diviseur constant de
$p'$-torsion. Soit $G$ le plus grand sous-schéma en groupes
radiciel de $A$ qui laisse stable $D'$. Le fait que $D'$ est
constant implique que $G$ l'est aussi. Et par définition de $D'$,
on a $N\subset G$. Puisque $N$ n'est pas constant, $G':=G/N$ est
un groupe radiciel non trivial. Donc $\Theta_{\eta}'$ peut se
descendre en un diviseur $\Delta$ sur $J_{\eta}/G'$. Or le nombre
d'intersection $(\Delta^{g})$ est $\geq g!$, il en résulte que
$(\Theta_{\eta}'^{g})\geq p^{g}g!$, mais comme $\Theta_{\eta}$ est
algébriquement équivalent à $p-1$ fois le diviseur thêta classique
(\ref{theta est sym}), on a $(\Theta_{\eta}'^{g})\leq
(\Theta_{\eta}^{g})=(p-1)^{g}g!$. On obtient donc $p^{g}\leq
(p-1)^{g}$, d'où une contradiction. Par conséquent, le groupe
radiciel $N$ est en fait constant. En particulier, $J_{\eta}$ est
constante, ce qui implique que le groupe de Néron-Severi de
$J_{\eta}$ est un groupe étale consant. Donc sa polarisation
principale définie par le diviseur thêta classique est aussi
constante (à équivalence algébrique près). Finalement, il suffit
d'utiliser le théorème de Torelli (\cite{Oort}) pour conclure que
$X/S$ est constante.
\end{proof}

\begin{remarque1}\emph{ Revenons à la question \ref{question}. La réponse
serait positive si les deux conditions suivantes étaient
satisfaites:}

\emph{(C1): Pour $C$ une courbe lisse projective connexe de genre
$g\geq 2$, le diviseur $\Theta$ ne contient pas de composantes
abéliennes.}

\emph{(C2): Soient $A$ une variété abélienne sur une clôture du
corps premier, $D$ diviseur  ample  de  $p'$-torsion, et $F$ un
fermé de $D$ tel que $\mathrm{Sat}(F) = \mathrm{Sat}(D)$. Alors la
réunion des composantes de $D$ contenues dans $F$ est un diviseur
ample.}

\emph{En fait, soit $\Theta_{\eta}'$ la réunion des composantes de
$p'$-torsion, il résulte alors de (C1) et (C2) que
$\Theta_{\eta}'$ est ample. Ensuite, d'après Hrushovski et par
(C1), $J_{\eta}$ est égale à sa partie fixe. On conclut que $J$,
et puis $X$ sont constantes par le même argument que dans le cas
supersingulier.}
\end{remarque1}

\begin{acknowledgements}
Je suis très reconnaissant à M. Raynaud, qui m'a proposé ce sujet.
Je remercie aussi O. Debarre pour une simplification de la preuve
du lemme \ref{lemme sur effectif} et pour sa longue liste de
commentaires. Je remercie enfin le(a) referee pour sa lecture
attentive et ses remarques.
\end{acknowledgements}

\addcontentsline{toc}{section}{Bibliographie}
\bibliographystyle{alpha}

\end{document}